\newcommand{\N}{\mathbb{N}}
\newcommand{\Z}{\mathbb{Z}}
\newcommand{\R}{\mathbb{R}}
\newcommand{\C}{\mathbb{C}}
\newcommand{\om}{\omega}
\newcommand{\Pp}{\mathcal{P}}
\newcommand{\E}{\mathcal{E}}           
\newcommand{\Orth}{\mathcal{O}}
\DeclareMathOperator{\PD}{PD}
\DeclareMathOperator{\vol}{vol}
\DeclareMathOperator{\Diff}{Diff}
\newcommand{\CP}{\mathbb{CP}}
\newcommand{\w}{\mathsf{w}}
\newcommand{\lambdabar}{\overline{\lambda}}
\newcommand{\nblowup}[1]{\#\,#1\overline{\CP}\,\!^2}
\def\eoe{\unskip\ \hglue0mm\hfill$\between$\smallskip\goodbreak}
\def\cqfd{\unskip\ \hglue0mm\hfill$\Box$\smallskip\goodbreak}
\theoremstyle{plain}
\newtheorem{thm}{Theorem}[section]
\newtheorem*{thm*}{Theorem}
\newtheorem{prop}[thm]{Proposition}
\newtheorem*{prop*}{Proposition}
\newtheorem{lemma}[thm]{Lemma}
\newtheorem*{lemma*}{Lemma}
\newtheorem{cor}[thm]{Corollary}
\newtheorem*{cor*}{Corollary}
\theoremstyle{definition}
\newtheorem{defn}[thm]{Definition}
\newtheorem*{defn*}{Definition}
\newtheorem*{ackn}{Acknowledgments}
\theoremstyle{remark}
\newtheorem{remark}[thm]{Remark}
\newtheorem*{remark*}{Remark}
\newtheorem{example*}{Example}
\newtheorem*{facts*}{Facts}
\newcommand{\comment}[1]   {}
\newcommand{\mute}[2] {}
\newcommand{\printlabel}[1] {}
\newcommand{\printversion}{}
\begin{document}

%%%%%%%%%%%%%%%%%%%%%%%%%%%%%%%%%%%%%%%%%%%%%%%%%%%%%%%%%%%%%%%%%%%%%%%%%%%%%%%%
% FRONT PAGE
%%%%%%%%%%%%%%%%%%%%%%%%%%%%%%%%%%%%%%%%%%%%%%%%%%%%%%%%%%%%%%%%%%%%%%%%%%%%%%%%

% TITLE

\title{Packing numbers of rational ruled $4$-manifolds} 
\author{Olguta Buse}
\address{Department of Mathematical Sciences, IUPUI, Indianapolis, 
USA.}
\email{buse@math.iupui.edu}

\author{Martin Pinsonnault}
\address{Department of Mathematics, The University of Western Ontario, London, 
Canada.}
\email{mpinson@uwo.ca}
\thanks{Partially supported by NSERC grant RGPIN 371999.}
\printversion

% ABSTRACT

\begin{abstract} We completely solve the symplectic packing problem with equally sized balls for any rational, ruled, symplectic $4$-manifolds. We give explicit formulae for the packing numbers, the generalized Gromov widths, the stability numbers, and the corresponding obstructing exceptional classes. As a corollary, we give explicit values for when an ellipsoid of type $E(a, b)$, with $\frac{b}{a} \in \N$, embeds in a polydisc $P(s,t)$. Under this integrality assumption, we also give an alternative proof of a recent result of M. Hutchings showing that the ECH capacities give sharp inequalities for embedding  ellipsoids into  polydisks.
\end{abstract}

\maketitle

\tableofcontents

% HEADINGS

\pagestyle{myheadings}
\markboth{Packing numbers}{Buse and Pinsonnault}

% COMMENTS

%%%%%%%%%%%%%%%%%%%%%%%%%%%%%%%%%%%%%%%%%%%%%%%%%%%%%%%%%%%%%%%%%%%%%%%%%%%%%%%%
\section{Introduction and main results}
%%%%%%%%%%%%%%%%%%%%%%%%%%%%%%%%%%%%%%%%%%%%%%%%%%%%%%%%%%%%%%%%%%%%%%%%%%%%%%%%

\subsection{Background}
Let $\sqcup_k B(c)$ be the disjoint union of $k$ standard $2n$-dimensional balls of radius $r$ and capacity $c=\pi r^{2}$. The $k^{\text{th}}$ packing number of a compact, $2n$-dimensional, symplectic manifold $(M, \omega)$ is
\[ 
p_k (M,\omega) = \frac{\sup_c \vol(\sqcup_k B(c)}{\vol(M,\omega)}
\]
where the supremum is taken over all $c$ for which there exists a symplectic embedding of $\sqcup_k B(c)$ into $(M,\omega)$. Naturally, $p_k(M,\omega) \leq 1$. When $p_k(M,\omega)=1$ we say that $(M,\omega)$ admits a full packing by $k$ balls, otherwise we say that there is a packing obstruction. An essentially equivalent invariant is the generalized $k^{\text{th}}$ Gromov width $\w_k(M,\omega)$ defined by setting
\[
\w_k(M,\omega) = \sup_{c>0} \{ c ~|~\sqcup_k B(c) \text{~embeds symplectically into~} (M,\omega) \}
\]
For a compact manifold of dimension $2n$ the width $\w_{k}$ is thus bounded by
\[0<\w_k(M^{2n},\omega)\leq c_{\vol}(M^{2n},\omega):= \sqrt{\frac{n!\vol\left(M^{2n},\omega\right)}{k}}\]
Although no general tools are known to compute those invariants for arbitrary symplectic manifolds, some results can be derived from complex algebraic geometry. For instance, in \cite{MP}, D. McDuff and L. Polterovich computed $p_{k}(\CP^{2})$, for $k\leq 9$. They also
proved that $p_{k}(\CP^{n}) = 1$ whenever $k=p^{n}$ and that $\lim_{k\to\infty} p_k(M,\omega) = 1$ for any compact symplectic manifold. In view of that later result, it is natural to ask whether the sequence $p_k(M,\omega)$ is eventually stable, that is, whether there is a number $N_{{\rm stab}}(M,\omega)$ such that $p_k(M,\omega)= 1$ for all $k \geq N_{{\rm stab}}(M,\omega)$. To date, this remains an interesting open question (see \cite{CHLS} and~\cite{Bi-SPAG} for a complete discussion). The only general result in that regard is due to
P. Biran (\cite{Bi}, \cite{Bi2}) who settled this question positively for all closed symplectic $4$-manifolds whose symplectic forms (after rescaling) are in rational cohomology classes. His techniques allowed him to obtain some lower and upper bounds for
$N_{{\rm stab}}(M^{4},\omega)$ which can be explicitly computed in some cases. In particular, he showed that $N_{{\rm stab}}(\CP^{2})\leq 9$ which, in view of McDuff and Polterovich results, is sharp.

The same techniques apply to rational ruled symplectic $4$-manifolds. Recall that, after rescaling, any such manifold is symplectomorphic to either
\begin{itemize}\label{Normalization}
\item the trivial bundle $M_{\mu}^{0} := (S^2\times S^2, \om^0_{\mu})$, where the symplectic area of the a section $S^2\times\{*\}$ is $\mu\geq 1$ and the area of a fiber $\{*\}\times S^{2}$ is 1; or
\item the non trivial bundle $M_{\mu}^{1} := (S^{2}\ltimes S^{2},\om^{1}_{\mu})$, where the symplectic area of a section of self-intersection $-1$ is $\mu>0$ and the area of a fiber is $1$.
\end{itemize}
In~\cite{Bi} Biran showed that 
\begin{equation}\label{birdiotriv}
p_k(M_{\mu}^0) =\min\left\{ 1,\frac{k}{2 \mu}\inf\left(\frac{\mu n_1 + n_2}{2 n_1 +2 n_2 -1}\right)^{2}\right\}
\end{equation}
where the infimum is taken over all naturals $n_1, n_2$ for which the Diophantine equations
\begin{equation}\label{birdioeq}
2 n_1 n_2 =\left(\sum_{i=1}^{i=k}m_i^2 \right)-1~, \qquad 2 (n_1 + n_2) =\left(\sum_{i=1}^{i=k}m_i\right) +1
\end{equation}
admit a vector solution $(m_1,\ldots, m_{k})\in \N^{k}$, while
\[
p_k(M_{\mu}^1) =
\min\left\{ 1,\frac{k}{2\mu+1}\inf\left(\frac{\mu n_1 + n_2}{n_1 +2 n_2 -1}\right)^{2}\right\}
\]
where the infimum is taken over all naturals $n_1, n_2$ for which the equations
\[n_1(2n_{2}- n_1) =\left(\sum_{i=1}^{i=k}m_i^2\right) -1 \qquad n_1 + 2n_2 =\left(\sum_{i=1}^{i=k}m_i\right) +1
\]
admit a vector solution $(m_1,\ldots, m_{k})\in\N^{k}$ (as we will see later, those equations simply ensure that the exceptional classes in the $k$-fold blow-up of $M_{\mu}^{i}$ have nonnegative symplectic areas). He also obtained the following bounds for the stability number of $M_{\mu}^{0}$:
\[2 \mu \leq  N_{{\rm stab}}(M_{\mu}^{0})\leq 8 \mu\]

Building on~\cite{Bi,MP}, F. Schlenk~\cite{Sh2} later computed the packing numbers $p_{k}(M_{\mu}^{i})$, $i=0,1$, for $k\leq 7$ (those can be found in Appendix~A), and proved that
\[
\max(8,2 \mu+1) \leq  N_{{\rm stab}}(M_{\mu}^{1}) \leq
\begin{cases}
8 \mu+4 & \text{~if~} \frac{1}{2}\leq \mu \\
\frac{2\mu+1}{\mu^{2}} & \text{~if~} \mu<\frac{1}{2}
\end{cases}
\]

The above results reduce, in principle, the computation of the packing numbers $p_{k}(M_{\mu}^{i})$, $k\geq 8$, and of the stability numbers $N_{{\rm stab}}(M_{\mu}^{i})$ to purely arithmetic problems. However, since their general solutions are not known, they do not yield explicit formulae in terms of the parameters $k$ and~$\mu$.

%%%%%%%%%%%%%%%%%%%%%%%%%%%%  Main Results  %%%%%%%%%%%%%%%%%%%%%%%%%%%%%%%%%%%%%%%%%%%
\subsection{Main results}

In this paper, we use a modified version of Li-Li's reduction algorithm \cite{Li-Li-SymplecticGenus},\cite{Li-Li-Diff}, to compute the packing numbers, the generalized Gromov widths, and the stability numbers of rational ruled symplectic 4-manifolds. We also identify the exceptional homology classes that give the obstructions to symplectic embeddings of $k$ balls in $M_{\mu}^{i}$, for $k\geq 8$. We observe that our method can be used, in principle, to compute the packing numbers of any $k$-fold symplectic blow-up of $\CP^2$. We also note that D. McDuff and F. Schlenk used a similar method in~\cite{MS} to fully describe the embedding functions of four dimensional ellipsoids into standard balls.

\subsubsection{The Trivial bundle}
For the trivial bundle $M_{\mu}^{0}$, our computations of the generalized Gromov widths $\w_{k}(M_{\mu}^{0})$ reveal that the obstructions to the embeddings of $k\geq 8$ balls in $M_{\mu}^{i}$ depend in an essential way on the parity of $k$. Indeed, fixing $k\geq 8$ and viewing $\w_{k}=\w_{k}(\mu)$ as a function of $\mu\geq 1$, we show that there are only finitely many obstructions for $k$ odd, while there are infinitely many obstructions for $k$ even. 
\begin{thm}\label{thm:MainTheoremTrivial}
Let $M^0_{\mu}=(S^{2}\times S^{2},\mu\sigma\oplus\sigma)$ with $\mu\geq 1$.
\begin{enumerate}
\item When $k=2p+1$ is odd, the generalized Gromov width $\w_{2p+1}(M_{\mu}^{0})$ is given by
\[
\w_{2p+1}(M_{\mu}^{0}) =
\begin{cases}
c_{\vol}=\sqrt{\frac{ 2 \mu}{2p+1}} & \text{~if~}\mu\in\left[ 1,~ p+1-\sqrt{2p+1} \right)   \\
\frac{\mu+p}{2p+1}         & \text{~if~}\mu\in\left[ p+1-\sqrt{2p+1},~ p+1 \right) \\
1                          & \text{~if~}\mu\in\left[ p+1,~ \infty \right)
\end{cases}
\]
\item When $k=2p$ is even, there exist a decreasing sequence $\{\delta_n\}$ with limit $\lambda = \frac{p-2 + \sqrt{p^2-4p}}{2} $ and intervals $I_n$ given by $I_{0}= [p,~\infty)$, $I_n =[\delta_n,~\delta_{n-1})$, and $I_{\infty} =[1,~\gamma)$, as well as a sequence of linear functions $w_{n}:\R\to\R$, $n\geq 1$, such~that
\[
\w_{2p}(M_{\mu}^{0}) =
\begin{cases}
c_{\vol}=\sqrt{\frac{\mu}{p}}  & \text{~if~}\mu\in I_{\infty}\\

 w_{n}(\mu) & \text{~if~}\mu\in I_n ,~ n \geq 1\\

1 & \text{~if~}\mu\in I_{0}
\end{cases}
\]
\end{enumerate}
\end{thm}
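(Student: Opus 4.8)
The plan is to reduce the computation of $\w_k(M_\mu^0)$ to a finite or countable optimization over exceptional classes in the $k$-fold blow-up of $M_\mu^0$, following the standard McDuff–Polterovich/Biran scheme, and then to carry out the optimization explicitly using the Li–Li reduction algorithm. First I would recall that a symplectic embedding $\sqcup_k B(c) \hookrightarrow M_\mu^0$ exists (for $c$ below the supremum) if and only if the blow-up form on $X_k := M_\mu^0 \nblowup{k}$ with fiber class $F$, section class $B$ and exceptional classes $E_1,\dots,E_k$, in the cohomology class $PD(\mu B + F - c\sum E_i)$ (suitably normalized), is symplectic; by the work of McDuff and Li–Liu on the symplectic cone, this holds precisely when that class pairs positively with every exceptional class $\mathcal E \in \mathcal E_{X_k}$ and has positive square. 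Thus $\w_k(M_\mu^0) = \sup\{ c : (\mu B + F - c\textstyle\sum E_i)\cdot \mathcal E > 0 \ \forall \mathcal E,\ (\cdots)^2 > 0\}$, which is an explicit linear-programming problem once one knows the exceptional classes. Writing $\mathcal E = dB + eF - \sum m_i E_i$ with $\mathcal E^2 = -1$, $\mathcal E \cdot (B+F-E\text{-sum normalization}) $ appropriately constrained, one gets exactly the Diophantine conditions quoted in \eqref{birdioeq}; so the width is controlled by $c_{\mathcal E} = (\mu e + d)/\sum m_i$ minimized over admissible $\mathcal E$, capped by the volume bound $c_{\vol}$ and by $1$.

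Next I would invoke the modified Li–Li reduction algorithm to enumerate precisely which exceptional classes can be \emph{reduced}, i.e.\ can act as genuine obstructions for some value of $\mu \ge 1$. The key structural input is that for $k$ balls of equal size the relevant classes are the ones symmetric enough to survive reduction; a Cremona-type move $(d;m_1,\dots,m_k) \mapsto (2d - m_{i}-m_j-m_l;\dots)$ together with permutations lets one bring any exceptional class to a normal form, and only finitely many ``new'' normal forms appear at each stage. For $k = 2p+1$ odd, I expect the algorithm to terminate after producing only the volume-type constraint, the single class giving $\tfrac{\mu+p}{2p+1}$ (coming from $d=1$, $e=p$, and all $m_i = 1$, i.e.\ the class $B + pF - \sum E_i$ up to relabeling), and the trivial constraint $\w_k \le 1$ (the line/fiber class $F$ gives $c \le 1$, forcing the value $1$ once $\mu \ge p+1$); the breakpoint $p+1-\sqrt{2p+1}$ is just where the line $\tfrac{\mu+p}{2p+1}$ meets $c_{\vol} = \sqrt{2\mu/(2p+1)}$, and $p+1$ is where it meets $1$. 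For $k = 2p$ even, the parity obstruction in the Diophantine system \eqref{birdioeq} (the constraint $2(n_1+n_2) = \sum m_i + 1$ with $\sum m_i^2 \equiv \sum m_i$ mod $2$ forces an odd count, incompatible with a fully balanced even configuration) means the single balanced class is \emph{not} available, and instead one gets an infinite family of exceptional classes whose $c_{\mathcal E}$ values are linear functions $w_n(\mu)$; these are generated recursively by the Cremona move and their domains of dominance are the intervals $I_n = [\delta_n, \delta_{n-1})$. I would show $\{\delta_n\}$ is decreasing by an explicit recursion and identify its limit $\lambda$ as the fixed point of the linear-fractional map governing the recursion, which is a root of the relevant quadratic — this is where $\sqrt{p^2-4p}$ enters, exactly as the discriminant of $x^2 - (p-2)x + 1 = 0$.

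The main obstacle I anticipate is \emph{controlling the infinite family in the even case}: one must prove that the classes produced by the recursion are genuinely the only obstructions (no other exceptional class ever dominates), that their $c_{\mathcal E}$ graphs glue into a continuous, piecewise-linear function on $[\,\gamma,\,p)$ with breakpoints exactly $\delta_n$, and that the intervals $I_n$ exhaust $[\gamma, p)$ while $\w_{2p} = c_{\vol}$ on the residual interval $I_\infty = [1,\gamma)$. Establishing that the Li–Li algorithm, when run ``with a parameter'' $\mu$, produces a finite description of the obstruction for each fixed $\mu$ but a self-similar infinite cascade as $\mu$ varies, and proving the cascade is complete (an induction on reduction length, showing any competitor class reduces to one already in the list or to one with a strictly weaker bound) is the technical heart of the argument. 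Once completeness and monotonicity of $\{\delta_n\}$ are in hand, the explicit formulae — the linear functions $w_n$, the endpoints $\gamma$, $\lambda$, and the matching conditions at $\mu = p$ (where $w_1$ meets $1$) and at $\mu = 1$ — follow from routine but careful affine algebra, which I would relegate to a computational lemma rather than carry out here.
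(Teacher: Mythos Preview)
Your plan is correct and follows essentially the same route as the paper, with one difference of perspective: the paper runs the reduction algorithm directly on the parametrized \emph{symplectic class} vector $v_{\mu,c}^0 = (\mu+1-c;\,\mu-c,\,c^{\times(k-1)},\,1-c)$ (working in the $\CP^2\nblowup{(k+1)}$ model), watching when a coefficient first becomes negative as $c$ increases, whereas you frame it as enumerating obstructing exceptional classes. These are dual computations and lead to the same answers; the paper even recovers the exceptional classes a posteriori by dualizing the automorphism produced by the algorithm. Your identification of the odd-case obstruction, the parity reason for the even-case cascade, and the quadratic $x^2-(p-2)x+1$ governing $\lambda$ are all exactly right.

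The one concrete ingredient your plan does not name, and which is the technical heart of the even case, is \emph{how} the completeness argument is organized. After the first $p-2$ Cremona steps the paper observes that the reduced vectors depend on only three free coordinates $(B_n,C_n,D_n)$ evolving under a fixed $3\times 3$ unimodular matrix $T$ with eigenvalues $1,\lambda,\bar\lambda$; passing to the differences $R_n=B_n-C_n$, $S_n=C_n-D_n$ gives a $2\times 2$ hyperbolic linear system. The decisive lemma is geometric: the volume constraint $c\le c_{\vol}$ traces a parabola in the $(R,S)$-plane that is \emph{tangent} to both eigendirections of this system, so every admissible initial point lies in the convex region the parabola bounds, orbits stay in the upper half-plane ($S_n>0$ for all $n$), and $R_n$ becomes nonpositive after finitely many steps. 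This yields at once that the sequence $D_n$ (hence $w_n$) has at most one minimum and that $c_{\vol}\le w_n$ whenever the sequence is increasing --- exactly your ``completeness and monotonicity'' step. Your proposed ``induction on reduction length, showing any competitor reduces to one already in the list'' would have to rediscover this structure; the dynamical/tangency picture is what makes the argument short and transparent.
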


In Section~\ref{section:TrivialBundle}, Proposition~\ref{thm:MainTheoremTrivialExplicitEven}
gives explicit formulae for the functions $w_{n}$, as well as complete descriptions of the even generalized Gromov widths $\w_{2p}$ as piecewise linear functions of~$\mu$. As an immediate corollary, we get the packing numbers of $M^0_{\mu}$  (see~Corollaries~\ref{cor:MainPackingNumbersTrivialOdd} and~\ref{cor:MainPackingNumbersTrivialEven}) and we compute the stability numbers of $M_{\mu}^{0}$, namely

\begin{cor}\label{cor:MainStabilityNumbersTrivial}
The odd stability number of $M_{\mu}^{0}$ is 
\[
N_{\text{odd}}(M_{\mu}^{0})=
\begin{cases}
7 &\text{~if~}\mu=\frac{8}{7}\\
9 &\text{~if~}\mu\in\left[1,~\frac{8}{7}\right)\cup\left(\frac{8}{7},~2\right]\\
2\left\lceil\mu+\sqrt{2\mu}\right\rceil + 1 
					&\text{~if~}\mu\in\left(2,~\infty\right]
\end{cases}
\]
while its even stability number is given by 
\[N_{\text{even}}(M_{\mu}^{0})=2\left\lceil \mu + 2 + \frac{1}{\mu}\right\rceil\] 
It follows that 
\[N_{\text{stab}}(M_{\mu}^{0})=
\begin{cases}
9 & \text{~if~} \mu=\frac{8}{7}\\
N_{\text{odd}} & \text{~if~} \mu\in\left[1,~\frac{8}{7}\right)\cup\left(\frac{8}{7},~2\right]\\
N_{\text{odd}}(M_{\mu}^{0})-1 & \text{~if~}\mu\in\left(2,~\infty\right]\end{cases}
\]
\end{cor}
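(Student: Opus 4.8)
The plan is to derive the stability numbers directly from the explicit descriptions of the generalized Gromov widths in Theorem~\ref{thm:MainTheoremTrivial}, together with the low-index packing numbers recorded in Appendix~A and Schlenk's computations. By definition, the stability number is the smallest $N$ such that $\w_k(M_\mu^0)=c_{\vol}$ (equivalently, $p_k=1$) for all $k\ge N$, and it is natural to separate the analysis according to the parity of $k$, since Theorem~\ref{thm:MainTheoremTrivial} shows that the obstructions behave very differently in the two cases. So I would first define $N_{\text{odd}}(M_\mu^0)$ and $N_{\text{even}}(M_\mu^0)$ as the corresponding parity-restricted stability numbers and compute each separately, then combine them at the end.

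For the odd case, fix $k=2p+1\ge 9$. Part~(1) of Theorem~\ref{thm:MainTheoremTrivial} says $\w_{2p+1}(M_\mu^0)=c_{\vol}$ precisely when $\mu\in[1,\,p+1-\sqrt{2p+1})$, i.e.\ there is no packing obstruction for the $k$-ball problem exactly when $\mu< p+1-\sqrt{2p+1}$. Thus, for a given $\mu$, the odd values $k=2p+1$ that give a full packing are those with $p+1-\sqrt{2p+1}>\mu$; writing $p+1-\sqrt{2p+1} = p - \sqrt{2p+1} + 1$ and noting this is increasing in $p$, the threshold is governed by the inequality $p > \mu + \sqrt{2p+1} - 1$, which after the standard estimate $\sqrt{2p+1}\approx\sqrt{2\mu}$ near the critical $p$ rearranges to the condition $p \geq \lceil \mu + \sqrt{2\mu}\,\rceil$, giving $N_{\text{odd}} = 2\lceil \mu+\sqrt{2\mu}\rceil+1$ for $\mu$ large. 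For $\mu\in[1,2]$ the asymptotic formula would predict a value near $9$, but one must check the small cases $k=7$ (and below) by hand using Schlenk's Appendix~A packing numbers: the point is that $p_7(M_\mu^0)=1$ fails for generic small $\mu$ but happens to equal $1$ at the single value $\mu=8/7$, which is exactly the exceptional entry. So the odd case reduces to (i) solving the inequality $p+1-\sqrt{2p+1}>\mu$ carefully with the ceiling function, including verifying the boundary behaviour at the endpoints of the intervals, and (ii) a finite check of $k\le 7$ against Appendix~A.

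For the even case, fix $k=2p\ge 8$. Part~(2) of Theorem~\ref{thm:MainTheoremTrivial} gives $\w_{2p}(M_\mu^0)=c_{\vol}$ exactly when $\mu\in I_\infty=[1,\gamma)$ where $\gamma=\gamma(p)$ is the right endpoint determined (via Proposition~\ref{thm:MainTheoremTrivialExplicitEven}) by the first obstructing exceptional class; one identifies $\gamma$ explicitly from that proposition. A full even packing by $2p$ balls at a given $\mu$ then requires $\gamma(p)>\mu$, and since $\gamma(p)$ is increasing in $p$ one inverts this to get the least admissible $p$. I would extract from Proposition~\ref{thm:MainTheoremTrivialExplicitEven} that $\gamma(p)>\mu$ is equivalent to $p \geq \lceil \mu+2+\tfrac{1}{\mu}\rceil$, whence $N_{\text{even}}(M_\mu^0)=2\lceil\mu+2+\tfrac1\mu\rceil$, again with a finite check of $k=6$ and below against Appendix~A to confirm no smaller even value works. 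Finally, $N_{\text{stab}}=\min\{N_{\text{odd}},N_{\text{even}}\}$ adjusted so that \emph{all} subsequent $k$ (of both parities) give full packings: since $\w_k$ is eventually $c_{\vol}$ for each parity, once both $N_{\text{odd}}$ and $N_{\text{even}}$ are passed every $k$ is full, so $N_{\text{stab}}$ is whichever of the two is larger unless the larger one is odd and the even threshold sits just below it, in which case $N_{\text{stab}}=N_{\text{odd}}-1$; comparing $2\lceil\mu+\sqrt{2\mu}\rceil+1$ with $2\lceil\mu+2+\tfrac1\mu\rceil$ for $\mu>2$ shows the even bound is the smaller and the stated case distinction follows, with the value $9$ at $\mu=8/7$ coming from the exceptional odd entry.

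The main obstacle I anticipate is the careful bookkeeping with the ceiling functions and the half-open intervals: translating ``$\w_k=c_{\vol}$ for all $k\ge N$'' into a clean closed form requires pinning down exactly when the threshold inequalities $p+1-\sqrt{2p+1}>\mu$ and $\gamma(p)>\mu$ flip, being attentive to whether the inequalities are strict, and confirming that the formula obtained from large $p$ actually remains valid down to the first relevant $k$ (which is why the finite check against Appendix~A for $k\le 7$ is essential — the asymptotic formula could in principle fail for the first few values, and indeed the exceptional point $\mu=8/7$ shows that it does behave subtly there). Everything else is a direct consequence of Theorem~\ref{thm:MainTheoremTrivial} and Proposition~\ref{thm:MainTheoremTrivialExplicitEven}.
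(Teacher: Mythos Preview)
Your approach is essentially the one the paper takes: it splits the result into Corollary~\ref{cor:OddStabilityNumbersTrivial} and Corollary~\ref{cor:EvenStabilityNumberProduct}, in each case solving for the threshold $p$ at which the volume bound meets the first obstruction (setting $c_{\vol}=\frac{\mu+p}{2p+1}$ in the odd case and $\mu=\lambda(p)$ in the even case), and then checking the low values $k\le 7$ against Appendix~A. One point worth cleaning up: your ``standard estimate $\sqrt{2p+1}\approx\sqrt{2\mu}$'' is not an approximation at all---squaring $p+1-\mu=\sqrt{2p+1}$ gives $p^{2}-2\mu p+\mu^{2}-2\mu=0$ with larger root exactly $p=\mu+\sqrt{2\mu}$, which is how the paper obtains $r(\mu)$ directly; presenting it as an asymptotic estimate makes the ceiling bookkeeping look more delicate than it is. Similarly, in the even case the paper just inverts $\mu=\lambda(p)=\tfrac{p-2+\sqrt{p^{2}-4p}}{2}$ to get $p=\mu+2+\tfrac{1}{\mu}$ exactly.
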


Before we move on  to describe our results in the twisted case, let us explain the following consequence of Theorem~\ref{thm:MainTheoremTrivial}. Given positive real numbers $a,b,s,t$, recall that the standard $4$-dimensional ellipsoid $E(a,b)$ is defined by setting
\[
  E(a,b) :=
  \left\{ z \in \C^2 ~|~ \frac{\pi|z_1|^2}{a} + \frac{\pi |z_2|^2}{b} \leq 1\right\}
\]
while the standard $4$-dimensional polydisk $P(s,t)$ is given by
\[
  P(s ,t) :=
  \left\{ z \in \C^2 ~|~ \pi|z_1|^2 \leq s,~ \pi|z_2|^2\leq t \right\}
\]
Recently D. Muller~\cite{Mu} showed that the problem of embedding an ellipsoid into a polydisc is equivalent with embedding a collection of balls of various sizes into the polydisc. Using this, we can state
\begin{cor}\label{muler} 
Let $k$ be any integer greater than $8$ and let $a,s,t$ be any positive real numbers with $s < t.$  Denote by $\mu = \frac{a}{s}$. The following are equivalent:
\renewcommand{\labelenumi}{\roman{enumi})}
\renewcommand{\theenumi}{\roman{enumi})}
\begin{enumerate}
  \item $E(a, k a) \hookrightarrow P(s,t)$ 
  \item If $k =2p+1 $ then $\frac{a}{s} \leq \w_k =\min\{1, c_{\vol},
    \frac{\mu + p}{2p+1} \}.$ 
 If $k=2p$  then $\frac {a}{s} \leq \w_{k}
    =\min_{n \in \N} \{1, c_{\vol}, w_n \}.$ 
Moreover, there is an $n \in \N \cup \{\infty \}$ so that $\mu=\frac{a}{s} \in I_n$ and the precise value of this $\w_k$ is given by  Theorem \ref{thm:MainTheoremTrivial} part (ii).
\end{enumerate}
\end{cor}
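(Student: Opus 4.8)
The plan is to reduce Corollary~\ref{muler} to Theorem~\ref{thm:MainTheoremTrivial} via two external inputs already cited in the text: Muller's equivalence between embedding an ellipsoid into a polydisk and embedding a collection of balls into that polydisk, and the classical identification of a polydisk $P(s,t)$ (after rescaling by $1/s$) with a ball-packing problem inside the rational ruled surface $M^0_{t/s}=(S^2\times S^2,\,\tfrac{t}{s}\sigma\oplus\sigma)$. Concretely, by symplectic rescaling we may normalize $s=1$, so that $P(1,t)$ corresponds (up to a subset of measure zero, which does not affect embedding questions for open domains) to the complement of the skeleton in $M^0_{t}$; under this identification, an embedding $\bigsqcup_k B(c)\hookrightarrow P(1,t)$ is equivalent to $\bigsqcup_k B(c)\hookrightarrow M^0_t$. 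Since in Corollary~\ref{muler} we have $\mu=a/s$ and the second factor of $P(s,t)$ is $t$, the relevant ruled surface is exactly $M^0_{t/s}$, whose parameter I will call $\nu=t/s>1$; the Gromov width functions $\w_k$ appearing in the statement are those of Theorem~\ref{thm:MainTheoremTrivial} evaluated at this $\nu$, and the hypothesis $s<t$ guarantees $\nu>1$ so that we are genuinely in the normalized range $\mu\geq 1$ of that theorem.

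\medskip

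\emph{Step 1.} Invoke Muller's theorem: $E(a,ka)\hookrightarrow P(s,t)$ if and only if a certain finite collection of balls embeds into $P(s,t)$. The key arithmetic point is that when $b/a=k\in\N$, the ellipsoid $E(a,ka)$ has a particularly simple "ball decomposition": its weight sequence (in the sense of McDuff--Traynor) is $a$ repeated $k$ times, so $E(a,ka)\hookrightarrow P(s,t)$ is equivalent to $\bigsqcup_k B(a)\hookrightarrow P(s,t)$. I would state this as a lemma, citing \cite{Mu} (and \cite{MS} for the ball-packing formalism), and verify the weight expansion of $E(a,ka)$ directly from the continued-fraction/Fibonacci-type recursion — this is the one genuinely computational ingredient, but it is short because $ka/a=k$ is an integer so the expansion terminates immediately with $k$ equal weights.

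\medskip

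\emph{Step 2.} Translate $\bigsqcup_k B(a)\hookrightarrow P(s,t)$ into $\bigsqcup_k B(a/s)\hookrightarrow M^0_{t/s}$ by rescaling and using the polydisk–to–$S^2\times S^2$ correspondence (this is standard; see the discussion in \cite{MP} and the references therein for the fact that removing the isotropic skeleton from $M^0_\nu$ yields $P(1,\nu)$, or an equivalent ball-removal argument). Then, by the very definition of the generalized Gromov width, $\bigsqcup_k B(c)\hookrightarrow M^0_\nu$ for all $c<\w_k(M^0_\nu)$ and for no $c>\w_k(M^0_\nu)$; hence $E(a,ka)\hookrightarrow P(s,t)$ iff $a/s\le\w_k(M^0_{t/s})$, up to the usual care at the boundary value $c=\w_k$ (open versus closed embeddings), which one handles exactly as in the cited works — for the open ellipsoid the non-strict inequality is correct. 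This yields equivalence (i) $\Leftrightarrow$ the inequality $\mu\le\w_k$. Finally, substitute the explicit values of $\w_k=\w_k(M^0_\mu$ with parameter $t/s)$ from Theorem~\ref{thm:MainTheoremTrivial}: in the odd case $k=2p+1$ the theorem gives $\w_k=\min\{1,\,c_{\vol},\,\tfrac{\mu+p}{2p+1}\}$ after unwinding the three regimes, and in the even case $k=2p$ it gives $\w_k=\min_{n}\{1,\,c_{\vol},\,w_n\}$ with the interval $I_n$ containing $\mu$ pinned down by part (ii). This is precisely statement (ii) of the corollary, completing the proof.

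\medskip

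\emph{Main obstacle.} The only real subtlety is bookkeeping around which parameter plays the role of $\mu$ in Theorem~\ref{thm:MainTheoremTrivial}: the corollary writes $\mu=a/s$, but the Gromov-width functions must be evaluated at the polydisk aspect ratio $t/s$, and one must check that the two coincide in the regime where the obstruction is active — i.e. that the binding constraint $a/s\le\w_k$ is self-consistent only when $t/s$ lies in the asserted interval $I_n$. Disentangling this, and being scrupulous about strict versus non-strict inequalities at the finitely many corner points of the piecewise-linear function $\w_{2p}$, is where care is needed; everything else is a direct appeal to \cite{Mu}, the standard polydisk correspondence, and Theorem~\ref{thm:MainTheoremTrivial}.
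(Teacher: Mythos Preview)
Your proposal is correct and follows essentially the same route as the paper's proof (given as Corollary~\ref{muler2} in Section~5): both invoke Muller's equivalence between ellipsoid-into-polydisk embeddings and ball packings, identify the polydisk packing problem with ball packings of $M^0_{t/s}$, and then read off the answer from Theorem~\ref{thm:MainTheoremTrivial}. You package Muller's input via the weight sequence $(a^{\times k})$ of $E(a,ka)$ whereas the paper quotes her result in the form $\sqcup_k B(c)\cup B(1)\cup B(\mu)\hookrightarrow B(1+\mu)$, but this is cosmetic; and you are right to flag the $\mu=a/s$ versus $\mu=t/s$ discrepancy --- the paper's own proof silently switches to $\mu=t/s$, confirming your reading.
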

The proof of that corollary is given at the end of the paper. In that section, we also give an alternative proof, valid only for the case when $b/a$ is an integer, of a recent result of M. Hutchings~\cite{H2}, \cite{H4} stating that the embedded contact homology (ECH) capacities give sharp conditions under which an ellipsoid of type $E(a, b)$ embeds in a polydisk $P(\nu,\mu).$

\subsubsection{The Nontrivial bundle}

One would expect results similar to those of Theorem~\ref{thm:MainTheoremTrivial} to hold for the twisted bundle $M_{\mu}^{1}$. However, it turns out there is no essential difference between odd and even widths. Instead, all the complexity appears at the special value $k=8$.

\printlabel{thm:MainTheoremTwistedK=8}
\begin{thm}\label{thm:MainTheoremTwistedK=8}
There exist three functions $u_{1}(\mu,n)$, $u_{2}(\mu,n)$, $u_{3}(\mu,n)$, depending on $\mu\in(0,\infty)$ and $n\in\Z\setminus{0}$, all linear in $\mu$, such~that
\[
\w_{8}\left(M_{\mu}^{1}\right) = \min_{n\in\Z\setminus\{0\}} 
\left\{
c_{\vol}=\sqrt{\frac{2\mu+1}{8}},~u_{1}(\mu,n),~u_{2}(\mu, n),~u_{3}(\mu,n),~\frac{6\mu+6}{17}
\right\}
\]
\end{thm}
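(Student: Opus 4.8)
The plan is to compute $\w_8(M_\mu^1)$ by passing to the $8$-fold symplectic blow-up and invoking the standard obstruction-versus-packing dichotomy: by the work of McDuff (and its use in Biran's approach that underlies the formulae quoted in the introduction), the ball $\sqcup_8 B(c)$ embeds into $M_\mu^1$ if and only if the class $[\om_\mu^1]-c(E_1+\dots+E_8)$ lies in the symplectic cone of the blow-up $X=M_\mu^1\nblowup{8}$, which in turn holds if and only if it pairs nonnegatively with every exceptional class $E$ (those $E\in H_2(X;\Z)$ with $E^2=-1$, $K_X\cdot E=-1$, and $E$ represented by a symplectically embedded sphere). Thus $\w_8(M_\mu^1)=\min_E \, \frac{[\om_\mu^1]\cdot \iota_*E}{c(E)}$ where $c(E)$ is the total $E_i$-coefficient of $E$, the minimum also being capped by the volume constraint $c_{\vol}=\sqrt{(2\mu+1)/8}$. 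So the first step is to make this reduction precise, citing the relevant embedding/inflation results that are available earlier in the paper.

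The second step is to enumerate the exceptional classes of $M_\mu^1\nblowup 8$ — equivalently, since $M_\mu^1\nblowup 8 \cong \CP^2\nblowup 9$, the exceptional classes of the nine-fold blow-up of the plane — and organize them by how their coefficients behave. Here I would use the modified Li–Li reduction algorithm advertised in the Main Results section: every exceptional class can be reduced, via Cremona-type transformations, to a short list of "reduced" representatives, and the point is that for a \emph{fixed} number of blow-ups ($k=8$, so effectively $\CP^2$ blown up $9$ times once we account for the ruled structure) the reduction terminates and the classes fall into finitely many families, each family parametrized by a single integer $n\in\Z\setminus\{0\}$ coming from the rational surfaces' infinite symmetry group (the affine Weyl group of type $E_8$, or more concretely the repeated application of a single Cremona move that shifts a coefficient by a unit). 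Each such family, evaluated against $[\om_\mu^1]$ and divided by its ball-coefficient, produces one of the linear-in-$\mu$ functions; I expect exactly three such families to survive the minimization (the ones the theorem calls $u_1,u_2,u_3$), with all other classes either dominated by these or by $c_{\vol}$, plus the single isolated class giving the constant-slope term $\tfrac{6\mu+6}{17}$.

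The third step is the verification that the claimed list is complete and correct: one must check (a) that each $u_j(\mu,n)$ and the term $\frac{6\mu+6}{17}$ genuinely arise from honest exceptional classes, hence are genuine upper bounds for $\w_8$; and (b) that no other exceptional class gives a smaller value, i.e. that the minimum over the three families plus $c_{\vol}$ plus $\frac{6\mu+6}{17}$ already realizes the full constraint. Part (a) is bookkeeping once the classes are written down. Part (b) is where the reduction algorithm does the real work: one argues that any exceptional class not reducible into one of the three families has strictly larger ratio $\frac{[\om_\mu^1]\cdot E}{c(E)}$ than one that is, by tracking how the ratio changes under the reduction moves (a Cremona move replaces a class by one with smaller total coefficient while not increasing the relevant pairing, or vice versa in a controlled way). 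This monotonicity-under-reduction estimate — showing the algorithm decreases the obstruction ratio — is the technical heart and the main obstacle; it is exactly the place where $k=8$ is special, because for $k=8$ the relevant Weyl group is affine (infinite) so the families are genuinely infinite in $n$, yet the reduction still controls them uniformly in $n$, whereas for $k<8$ the group is finite and for $k>8$ the whole analysis changes character.

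Finally, I would assemble the pieces: combine the finitely many "sporadic" exceptional classes (giving $\frac{6\mu+6}{17}$ and whatever fixed bounds feed into the endpoints) with the three infinite families $u_1,u_2,u_3$ and the volume bound, observe that on $(0,\infty)$ the pointwise minimum of these is a well-defined function of $\mu$, and conclude $\w_8(M_\mu^1)=\min_{n\in\Z\setminus\{0\}}\{c_{\vol},u_1(\mu,n),u_2(\mu,n),u_3(\mu,n),\frac{6\mu+6}{17}\}$. The explicit formulae for the $u_j$ and the resulting piecewise-linear description of $\w_8$ as a function of $\mu$ I would defer to the detailed computation in the body of the section, just as the statement does.
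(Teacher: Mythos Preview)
Your framework is right --- reduce $\w_8(M_\mu^1)$ to pairings of the blow-up form with exceptional classes in $X_9=\CP^2\nblowup{9}$, capped by $c_{\vol}$ --- and you correctly sense that the affine $E_8$ structure is what makes $k=8$ special. But your proposed mechanism for the enumeration and completeness step differs from the paper's, and as written it has a gap.

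You plan to extract the three families by running the reduction algorithm and to prove completeness via a monotonicity-of-the-obstruction-ratio under Cremona moves. The paper explicitly \emph{abandons} the reduction algorithm for $k=8$, noting that for $\mu\in(0,\tfrac{7}{4})$ ``the branching pattern of the algorithm becomes surprisingly hard to analyze.'' Your step~(b) is exactly where that difficulty would bite, and you give no argument for it beyond naming it the ``main obstacle.''

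The paper replaces this with two tools you do not mention. First, a direct norm estimate (Lemma~\ref{lemma:ExplicitTypes}): writing an obstructing class $E$ as $\tfrac{e_0}{\mu+1}w+\epsilon$ where $w=(\mu+1;\mu,c_{\vol}^{\times 8})$ is the null volume vector, the conditions $w\cdot w=0$, $E\cdot E=-1$, and $w\cdot E\le 0$ force $\|\epsilon\|^2\le 1$. Since the entries of $E$ are integers, this means the eight ball-coefficients of $E$ must all equal some $\ell$ with at most one deviating by $\pm 1$. That already cuts the problem to three \emph{types} of classes, with no Cremona bookkeeping and no monotonicity argument needed. Second (Lemma~\ref{lemma:ExplicitClasses}), the paper uses the explicit bijection $T\colon Q_8\to\E_9$, $T(\alpha)=E_9-\alpha-\tfrac12(\alpha\cdot\alpha)K$, between the $E_8$ root lattice and the exceptional set; under $T^{-1}$ each of the three types becomes an affine line in $\Z^8$, hence a one-parameter family indexed by $n\in\Z$. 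Inverting $T$ along these lines produces the explicit classes and the functions $u_1,u_2,u_3$ (with $n=0$ in type~(iii) giving the isolated bound $\tfrac{6\mu+6}{17}$). Completeness is then automatic: Lemma~\ref{lemma:ExplicitTypes} has already excluded every other class.

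So the missing ingredients are the ``almost parallel'' norm estimate and the lattice bijection; once you have them the proof is short and the reduction algorithm never enters.
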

In Section~\ref{section:TwistedBundle}, we give explicit formulae for the functions $u_{i}(\mu,n)$, as well as complete descriptions of the generalized Gromov width $\w_{8}$ as a piecewise linear functions of~$\mu$. As a corollary, we show that there exist infinitely values of $\mu$ for which we can fully pack the nontrivial bundle with $8$ disjoint balls. Indeed, if we define the set $\mathcal{S}\subset(0,\infty)$ by setting
\[
\mathcal{S} = \left\{\frac{8n^{2}-8n+1}{16n^{2}},~\frac{1}{2},~\frac{8n^{2} + 8n + 1}{16n^{2}}\right\},~n\geq 1
\]
then we have
\begin{cor}
There is a full packing of the nontrivial bundle $M_{\mu}^{1}$ by $8$  balls if, and only if, $\mu\in\mathcal{S}$.
\end{cor}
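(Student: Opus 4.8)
The plan is to deduce the corollary directly from the explicit form of $\w_{8}(M_{\mu}^{1})$ provided by Theorem~\ref{thm:MainTheoremTwistedK=8} together with the volume identity $\vol(M_{\mu}^{1},\om_{\mu}^{1}) = 2\mu+1$ (up to the usual normalization), which gives $c_{\vol} = \sqrt{(2\mu+1)/8}$. A full packing by $8$ balls means exactly $\w_{8}(M_{\mu}^{1}) = c_{\vol}$, i.e.\ that the volume bound is attained and none of the exceptional obstructions $u_{1}, u_{2}, u_{3}$, nor the residual term $\frac{6\mu+6}{17}$, is strictly smaller. So the task reduces to solving, over $\mu\in(0,\infty)$, the system of inequalities
\[
c_{\vol} \leq u_{i}(\mu,n)\ \text{ for all } i\in\{1,2,3\},\ n\in\Z\setminus\{0\}, \qquad c_{\vol}\leq \tfrac{6\mu+6}{17}.
\]
First I would record from Section~\ref{section:TwistedBundle} the explicit affine formulae for $u_{1},u_{2},u_{3}$ in $\mu$ (with integer parameter $n$) coming from the relevant exceptional classes in the $8$-fold blow-up of $M_{\mu}^{1}$.

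Next I would treat the inequality $c_{\vol}\leq u_{i}(\mu,n)$ for each fixed $i$ and $n$ as a comparison between $\sqrt{(2\mu+1)/8}$ and a line in $\mu$; squaring, this becomes a quadratic inequality $Q_{i,n}(\mu)\geq 0$. For "most" pairs $(i,n)$ the line lies strictly above $c_{\vol}$ on all of $(0,\infty)$, so $Q_{i,n}>0$ identically and imposes no constraint; the interesting pairs are those where the line is tangent to, or dips just below, the curve $y=c_{\vol}(\mu)$. A tangency of a line to $y = \sqrt{(2\mu+1)/8}$ forces the discriminant of $Q_{i,n}$ to vanish, and solving that discriminant condition in $n$ is what produces the arithmetic progressions $8n^{2}\mp 8n+1$ appearing in $\mathcal{S}$: at a tangency point $\mu_{0}$ the constraint $c_{\vol}\leq u_{i}$ is satisfied only at the single value $\mu=\mu_{0}$ (equality), so the set of $\mu$ surviving all constraints is discrete. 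I expect the three families $u_{1},u_{2},u_{3}$ to contribute, respectively, the tangency values $\frac{8n^{2}-8n+1}{16n^{2}}$, the single value $\frac12$ (the symmetric fiber-section exchange point, where $M_{1/2}^{1}$ is the one-point blow-up of $\CP^{2}$ with a balanced form), and $\frac{8n^{2}+8n+1}{16n^{2}}$ — after checking that $\frac{6\mu+6}{17}\geq c_{\vol}$ automatically holds at all these points, which is a single elementary quadratic check.

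Conversely, for each $\mu\in\mathcal{S}$ I would verify that $\w_{8}(M_{\mu}^{1}) = c_{\vol}$ exactly: since each such $\mu$ is by construction a point where some $u_{i}(\cdot,n)$ equals $c_{\vol}$ and all other obstruction functions are $\geq c_{\vol}$ there, the minimum defining $\w_{8}$ equals $c_{\vol}$, hence $p_{8}(M_{\mu}^{1})=1$. The forward direction — that these are the \emph{only} such $\mu$ — is the substantive part: one must show that on each interval between consecutive tangency values at least one $u_{i}(\mu,n)$ is \emph{strictly} less than $c_{\vol}$, so that $\w_{8}<c_{\vol}$ there. This follows because as $n\to\infty$ the tangency abscissae $\frac{8n^{2}\pm 8n+1}{16n^{2}}$ accumulate at $\mu=\frac12$ from both sides and the corresponding lines sweep out a neighborhood of the curve, while for $\mu$ bounded away from $\frac12$ one can exhibit a specific obstructing class with small $n$.

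The main obstacle I anticipate is bookkeeping rather than conceptual: one has to pin down the exact affine expressions $u_{i}(\mu,n)$ and the precise ranges of validity of the minimum in Theorem~\ref{thm:MainTheoremTwistedK=8}, then carry out the tangency/discriminant analysis carefully enough to be sure that (a) no constraint other than the three tangency families ever forces equality, and (b) the families do not overlap or leave gaps — in particular that $\frac12$ is genuinely isolated from the two sequences except as their common limit, and that $\frac{6\mu+6}{17}$ never becomes the binding constraint at a candidate point. All of these are finite, explicit quadratic computations, so once the formulae from Section~\ref{section:TwistedBundle} are in hand the verification is mechanical.
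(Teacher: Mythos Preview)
Your overall strategy---deduce the corollary from Theorem~\ref{thm:MainTheoremTwistedK=8} by determining exactly when the minimum defining $\w_{8}$ equals $c_{\vol}$---is precisely what the paper does. However, the mechanism you propose is wrong. The lines $u_{i}(\cdot,n)$ are \emph{not} tangent to the curve $c_{\vol}(\mu)=\sqrt{(2\mu+1)/8}$: for each $(i,n)$ the quadratic $Q_{i,n}$ you describe has \emph{positive} discriminant, so each line crosses $c_{\vol}$ transversally at two points and lies strictly below it on the interval between them. Your discriminant-vanishing computation would therefore return nothing, and your attribution (that $u_{1}$, $u_{2}$, $u_{3}$ respectively produce the two sequences and the value $\tfrac12$) is incorrect.

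The paper's argument is different and simpler. One computes the abscissae $s_{1}(n)$, $s_{2}(n)$, $s_{3}(n)$ at which consecutive obstruction lines meet: $u_{1}(\cdot,n)=u_{2}(\cdot,n)$ at $s_{1}(n)$, $u_{2}(\cdot,n)=u_{3}(\cdot,n)$ at $s_{2}(n)$, and $u_{3}(\cdot,n)=u_{1}(\cdot,n+1)$ at $s_{3}(n)$. A direct evaluation shows the common value is strictly less than $c_{\vol}$ at $s_{1}(n)$ and $s_{2}(n)$, but equals $c_{\vol}$ at $s_{3}(n)=\frac{8n^{2}+8n+1}{16(n+1)^{2}}$. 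Since the $s_{i}(n)$ interlace and accumulate at $\tfrac12$ from both sides, the piecewise linear function $\w_{8}$ touches $c_{\vol}$ exactly at the points $s_{3}(n)$, $n\in\Z\setminus\{-1\}$, together with the limit $\tfrac12$; reindexing gives $\mathcal{S}$. Note in particular that both sequences in $\mathcal{S}$ come from the single family $s_{3}$ (positive and negative $n$), not from separate $u_{i}$'s, and that $\tfrac12$ arises as the accumulation point, not from $u_{2}$.
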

The general case $k\geq 9$ is easier to deal with as the number of obstructions is always finite, namely
\printlabel{thm:MainTheoremTwisted}
\begin{thm}\label{thm:MainTheoremTwisted} 
Given $k\geq 9$, let us write $k=2p$ or $k=2p+1$ depending on the
parity of $k$, and let $\mu \in (1/2,\infty)$. Then the 
$k^{\text{th}}$ generalized Gromov width of $M_{\mu}^{1}$ is given by:
\[
  \w_{2p}(M_{\mu}^{1}) =
  \begin{cases}
    c_{\vol}=\sqrt{\frac{2\mu+1}{2p}} & \text{~if~} \mu\in \left[1/2,~ p-\sqrt{2p}\right)\\
    \frac{p+\mu}{2p} & \text{~if~} \mu\in\left[p-\sqrt{2p}~ , p\right) \\
    1 & \text{~if~} \mu\in\left[p,~\infty\right)\\
  \end{cases}
\]
and
\[
  \w_{2p+1}(M_{\mu}^{1}) =
  \begin{cases}
    c_{\vol}=\sqrt{\frac{2\mu+1}{2p+1}} & \text{~if~} \mu\in \left[1/2,~ \frac{p^{3}-2p^{2}+1-(p-1)\sqrt{2p+1}}{p^{2}}\right)\\
    \frac{p(p+\mu-1)}{2p^{2}-p-1} & \text{~if~} \mu\in \left[\frac{p^{3}-2p^{2}+1-(p-1)\sqrt{2p+1}}{p^{2}},~ \frac{p(p-1)}{p+1}\right)\\
    \frac{p+\mu}{2p} & \text{~if~} \mu\in\left[\frac{p(p-1)}{p+1} ,~ p\right) \\
    1 & \text{~if~} \mu\in\left[p,~\infty\right)\\
  \end{cases}
\] 
\end{thm}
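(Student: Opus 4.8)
The plan is to reduce the computation of $\w_k(M_\mu^1)$ to a finite linear programming problem over exceptional classes in the $k$-fold symplectic blow-up $X_k$ of $M_\mu^1$, and then to solve that problem explicitly. First I would recall the standard characterization (following McDuff's work on ball packings and its reformulation via the blow-up): a symplectic embedding $\sqcup_k B(c)\hookrightarrow M_\mu^1$ exists if and only if the class $\om^1_\mu - c(e_1+\dots+e_k)$ (in the obvious basis $F,B,e_1,\dots,e_k$ of $H^2(X_k)$, where $B$ is the $(-1)$-section and $F$ the fiber) lies in the symplectic cone of $X_k$. By the structure theorem for the symplectic cone of rational surfaces, this holds precisely when the class has positive square, pairs positively with $F$ and $B$, and pairs positively with every exceptional class $E\in\mathcal{E}_k$. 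Thus
\[
\w_k(M_\mu^1) = \min\Bigl\{\, c_{\vol},\ \inf_{E\in\mathcal{E}_k}\tfrac{\langle \om^1_\mu, E\rangle}{\langle e_1+\dots+e_k, E\rangle}\,\Bigr\},
\]
where the infimum runs over classes $E=dB+mF-\sum a_i e_i$ with $E^2=-1$, $c_1(E)=1$, $\sum a_i\ge 0$, and $d,m$ subject to the adjunction-type constraints. The heart of the argument is then to show that, for $k\ge 9$, only finitely many such $E$ can ever be obstructive, and to identify the optimal ones.

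The key mechanism is the reduction (Cremona) algorithm of Li--Li, adapted as announced in Section~\ref{section:TwistedBundle}: every exceptional class can be brought, by a sequence of Cremona transformations that preserve the Poincaré pairing with the ``balanced'' diagonal class $e_1+\dots+e_k$ in a controlled way, to one of a short list of normal forms. I would run this reduction and, using the volume constraint $c^2 k < 2\mu+1$ together with the positivity $\langle\om^1_\mu,E\rangle>0$, derive an a priori bound on the coefficient $d$ of $B$ (equivalently on the ``degree'' of $E$): large-degree classes are automatically non-obstructive because their volume cost exceeds what the manifold allows once $k\ge 9$. This is exactly why the even/odd dichotomy of the trivial bundle collapses here — the lattice $H^2(M_\mu^1)$ has odd intersection form on the section, so the relevant Diophantine conditions behave uniformly. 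After the degree bound, one is left with finitely many candidate classes (essentially: the class giving the $\frac{p+\mu}{2p}$ bound, coming from $E\sim (p{-}1)\text{-type}$ combinations symmetric in the $e_i$; the class $\frac{p(p+\mu-1)}{2p^2-p-1}$ specific to odd $k$; and $c_{\vol}$), and the theorem is proved by comparing these finitely many linear-in-$\mu$ functions and locating their crossover points, which produces the stated interval endpoints $p-\sqrt{2p}$, $\frac{p(p-1)}{p+1}$, and $\frac{p^3-2p^2+1-(p-1)\sqrt{2p+1}}{p^2}$.

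Concretely I would organize it as: (1) set up the homological/symplectic-cone dictionary and write $\w_k$ as the min over $\mathcal{E}_k$; (2) state the Li--Li normal forms and the balanced-reduction lemma from Section~\ref{section:TwistedBundle}; (3) prove the degree bound using volume positivity, reducing to a finite set; (4) among the finite set, show that for $k\ge 9$ the only classes that are ever minimal are the three listed candidates, by direct comparison of their coefficients; (5) compute the crossover values of $\mu$ and check that $c_{\vol}$ dominates exactly on the first interval, then assemble the piecewise formula; separately verify $\w_k=1$ for $\mu\ge p$ (this is where one exhibits an actual full packing, or invokes the known full-packing result for $M_\mu^1$ when $\mu$ is large). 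The main obstacle I anticipate is step (3)–(4): making the degree bound sharp enough, and proving that the intermediate-degree exceptional classes (which do obstruct for smaller $k$, cf. Schlenk's $k\le 7$ table and the $k=8$ case of Theorem~\ref{thm:MainTheoremTwistedK=8}) genuinely drop out once $k\ge 9$ — this requires a careful case analysis of the reduced forms rather than a single clean inequality, and it is the place where the hypothesis $k\ge 9$ is actually used.
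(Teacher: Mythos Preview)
Your outline is a valid strategy, but it is \emph{not} the route the paper takes, and it is worth understanding why the paper's approach is cleaner here.

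You propose to work on the \emph{exceptional-class side}: characterize $\w_k$ as $\min\{c_{\vol},\inf_{E}\langle\omega,E\rangle/\langle\sum e_i,E\rangle\}$, then reduce obstructing exceptional classes to normal forms via Cremona moves, establish a degree bound, and compare the surviving finite list. This is essentially the method the paper uses \emph{only for the exceptional case $k=8$} (Lemmas~\ref{lemma:ExplicitTypes} and~\ref{lemma:ExplicitClasses}), where the $E_8$ root lattice makes the enumeration tractable. For $k\ge 9$ the paper does something different and more direct: it runs the reduction algorithm \emph{on the symplectic class vector} $v_0=(\mu+1\,;\mu,c^{\times k})$ itself, not on exceptional classes. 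Each $\mathfrak{R}\mathfrak{C}$ move produces a new vector $v_n$; the algorithm tracks when $v_n$ is ordered, when a coordinate becomes negative (yielding an upper bound on $c$), and when the vector becomes reduced (certifying that no further obstruction exists). Lemma~\ref{lemma:TwistedVp-2ordered} is the key technical input: it shows that $v_{p-2}$ is already ordered for $k\ge 9$, after which two or three further moves exhaust all cases and the algorithm terminates in a bounded number of steps. The obstructing classes are then read off \emph{a posteriori} as $\phi^*E_{k+1}$ for the automorphism $\phi$ the algorithm produced.

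What this buys the paper, compared to your plan, is precisely that it avoids your anticipated ``main obstacle'': there is no need for an a priori degree bound on exceptional classes or a case analysis of reduced forms. The finiteness is automatic because the algorithm on the symplectic side terminates in a uniformly bounded number of steps once Lemma~\ref{lemma:TwistedVp-2ordered} holds. Your approach would work, but you would have to prove the degree bound (something like the $\|\epsilon\|\le 1$ estimate of Lemma~\ref{lemma:ExplicitTypes}) and then carefully rule out all intermediate classes---which is doable but more laborious. Also, your explanation of why the even/odd dichotomy collapses (``odd intersection form on the section'') is not the operative reason; in the paper it is simply that the initial vector $(\mu+1\,;\mu,c^{\times k})$ lacks the trailing $1-c$ entry present in the trivial-bundle vector~\eqref{eq:InitialVectorTrivial}, which changes how the $\mathfrak{R}\mathfrak{C}$ moves interact with the last coordinates.
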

The previous results allow us to compute the stability numbers of the nontrivial bundle, namely
\printlabel{cor:MainPackingNumbersTwisted}
\begin{cor}\label{cor:MainPackingNumbersTwisted}
The stability number $N_{{\rm stab}}$ of $M_{\mu}^{1}$ is
\[
N_{{\rm stab}}=
\begin{cases}
8 & \text{~if~} \mu\in  \mathcal{S} \\
9 & \text{~if~} \mu\in   \left(0,~\frac{3}{2}\right) \\
N_{{\rm even}}-1  & \text{~if~} \mu\in \left[\frac{3}{2},~\infty \right)
\end{cases}
\]
\end{cor}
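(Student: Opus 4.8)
The strategy is to read off the stability number from the generalized Gromov width computations of Theorems~\ref{thm:MainTheoremTwistedK=8} and~\ref{thm:MainTheoremTwisted}, together with the corresponding packing-number corollaries. By definition, $N_{\text{stab}}(M_{\mu}^{1})$ is the least integer $N$ such that $p_k(M_{\mu}^{1})=1$ — equivalently $\w_k(M_{\mu}^{1})=c_{\vol}$ — for every $k\geq N$. So the plan is: for each fixed $\mu$, determine the set of $k$ for which there is a packing obstruction (i.e.\ $\w_k<c_{\vol}$), find the largest such $k$, and add $1$. The work splits according to the three regimes of $\mu$ appearing in the statement.

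First, I would handle the regime $\mu\in\mathcal{S}$. Here the corollary following Theorem~\ref{thm:MainTheoremTwistedK=8} tells us $M_{\mu}^{1}$ admits a full packing by $8$ balls, so $p_8=1$. One then checks, using Theorem~\ref{thm:MainTheoremTwisted}, that $p_k=1$ for all $k\geq 9$ as well when $\mu\in\mathcal{S}$ — this requires verifying that for these specific $\mu$ (which all lie in $(0,1)$, in fact $\mu\in[\tfrac12-\varepsilon,\tfrac12+\varepsilon]$-type clusters) the first branch $c_{\vol}$ of each $\w_k$ formula is the active one, which amounts to checking the interval membership $\mu\in[1/2,\,p-\sqrt{2p})$ or its odd analogue for all $p$ with $2p\geq 9$ or $2p+1\geq 9$. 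Since $\mathcal{S}\subset(0,1)$ and $p-\sqrt{2p}>1$ for $p\geq 5$ (and the odd threshold is likewise $>1$ for $p\geq 5$), this holds, and for $k\in\{9,\dots\}$ we stay in the volume regime. Combined with $p_8=1$ and the fact (to be checked) that $p_k<1$ for some $k\leq 7$ for these $\mu$ — or more simply, that $8$ is forced to be optimal because $\mathcal{S}$ is exactly the full-packing locus and the general lower bound $N_{\text{stab}}\geq \max(8,2\mu+1)$ from Schlenk gives $N_{\text{stab}}\geq 8$ — we conclude $N_{\text{stab}}=8$ on $\mathcal{S}$.

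Second, for $\mu\in(0,\tfrac32)\setminus\mathcal{S}$: the same interval analysis shows $p_k=1$ for all $k\geq 9$ (the $c_{\vol}$ branches are active, as above), while $p_8<1$ because $\mu\notin\mathcal{S}$ and the corollary after Theorem~\ref{thm:MainTheoremTwistedK=8} is an iff. Hence the largest obstructed $k$ is $8$, giving $N_{\text{stab}}=9$. Third, for $\mu\in[\tfrac32,\infty)$: now $\mu$ is large enough that for each parity there are genuinely obstructed $k\geq 8$, and the last obstruction occurs at an even value $k=2p$. One extracts from Theorem~\ref{thm:MainTheoremTwisted} the largest $p$ for which $\mu$ lies \emph{outside} the volume interval $[1/2,\,p-\sqrt{2p})$, i.e.\ for which $\mu\geq p-\sqrt{2p}$; equivalently $p\leq \mu+\sqrt{2\mu}$ up to the usual rearrangement, so the largest obstructed even index is $2\lceil\text{(appropriate expression)}\rceil$, which one identifies with $N_{\text{even}}$ as defined just before the corollary. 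A parallel computation for odd $k$ shows the largest obstructed odd index is smaller, so that $N_{\text{even}}-1$ dominates; hence $N_{\text{stab}}=N_{\text{even}}-1$. The one subtlety is confirming that on $[\tfrac32,\infty)$ we always have $N_{\text{even}}-1\geq 9$, so the three cases in the statement are genuinely exhaustive and consistent at the boundary $\mu=\tfrac32$ — this is a short monotonicity check on the relevant ceiling function.

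The main obstacle is bookkeeping rather than conceptual: one must carefully compare, as $\mu$ ranges over $[\tfrac32,\infty)$, the largest obstructed even index against the largest obstructed odd index — using the explicit (and asymmetric) threshold $\frac{p^{3}-2p^{2}+1-(p-1)\sqrt{2p+1}}{p^{2}}$ in the odd case versus $p-\sqrt{2p}$ in the even case — and show the even one always wins. This is an elementary but slightly delicate inequality between two algebraic functions of $p$ and $\mu$, and it is where the asserted form $N_{\text{stab}}=N_{\text{even}}-1$ (rather than something involving $N_{\text{odd}}$) is actually earned.
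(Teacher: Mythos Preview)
Your approach is essentially the same as the paper's: the paper proves Corollaries~\ref{cor:OddStabilityNumbersTwisted} and~\ref{cor:EvenStabilityNumbersTwisted} (explicit formulae for $N_{\text{odd}}$ and $N_{\text{even}}$) separately, and then states the present corollary as an immediate combination of the two, with no further argument. Your proposal re-derives the same content inline rather than packaging the odd and even analyses as standalone results first, but the substance is identical, including the final comparison of the two parities that you correctly flag as the main bookkeeping step.

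Two small corrections. First, the claim $\mathcal{S}\subset(0,1)$ is false: the paper notes $\mathcal{S}\subset(0,17/16]$ (the value $17/16=s_{3}(-2)$ lies in $\mathcal{S}$). This does not break your argument, since what you actually need is $\mathcal{S}\subset(0,3/2)$, which still holds. Second, for $\mu\in(0,1/2)$ the formulae of Theorem~\ref{thm:MainTheoremTwisted} do not apply (they are stated for $\mu>1/2$); the paper handles this range via Lemma~\ref{lemma:TwistedWkgeq9}, which gives full packing for all $k\geq 9$ directly. You should invoke that lemma rather than the interval membership $\mu\in[1/2,\,p-\sqrt{2p})$ when $\mu<1/2$.
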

To conclude this introduction, we want to point out that our presentation is written with a dual purpose in mind. First, we obtained our results through an hybrid process of mathematical reasoning and computer-aided symbolic computations using SAGE~\cite{sage}, and our exposition replicates part of that process. Secondly, although we don't discuss the arithmetic aspects of the reduction process in the present paper, we advance the idea that the reduction algorithm is an effective computational tool that can be used in many other instances where one must deal with Diophantine approximation problems involving hyperbolic lattices.
\begin{ackn}
  The authors would like to thank Yael Karshon and Dusa McDuff for
  their interest in this work and for many useful comments and
  suggestions. Many thanks to Michael Hutchings for providing us with
  an early version of \cite{H2} and sharing with us his computations
  for the ECH capacities of polydiscs. We thank  MSRI where
  part of this work was completed.
\end{ackn}

%%%%%%%%%%%%%%%%%%%%%%%%%%%%%%%%%%%%%%%%%%%%%%%%%%%%%%%%%%%%%%%%%%%%%%%%%%%%%%%%%
\section{Embedding balls in symplectic $4$-dimensional rational manifolds}
%%%%%%%%%%%%%%%%%%%%%%%%%%%%%%%%%%%%%%%%%%%%%%%%%%%%%%%%%%%%%%%%%%%%%%%%%%%%%%%%%

\subsection{Symplectic embeddings of balls and symplectic blow-ups}
Using the correspondence between ball embeddings and blow-ups, the problem of deciding whether a collection $\mathcal{B}=\sqcup_{i}B(\delta_{j})$ of $k$ disjoint balls of capacities $\delta_{j}$ embeds symplectically in $M_{\mu}^{i}$ reduces to the question of understanding the symplectic cone of the $k$-fold blow-up of $M_{\mu}^{i}$ (see, for instance, McDuff-Polterovich~\cite{MP}). Because the $k$-fold blow-up of $M_{\mu}^{i}$ is diffeomorphic to $\CP^{2}$ blown-up $(k+1)$ times, this is in turn equivalent to understanding the symplectic cone of the rational surfaces $\CP^{2}\nblowup{(k+1)}$, for $k\geq 1$.

%%%%%%%%%%%%%%%%%%%%%%%%%%%%%%%%%%%%%%%%%%%%%%%%%%%%%%%%%%%%%%%%%%%%%%%%%%%%%%%%
\subsection{Reduced classes and symplectic cones of rational surfaces}
Given $n\geq 1$, let us denote by $X_{n}:=(X_{n},\om_{\lambda;\delta_{1},\ldots,\delta_{n}})$ the $n$-fold symplectic blow-up of $(\CP^{2},\om_{\lambda})$ at $n$ disjoint balls of capacities $\delta_{1},\dots\delta_{n}$. Let $\{L,E_{1},\cdots,E_{n}\}$ be the standard basis of $H_{2}(X_{n};\Z)$ consisting of the class of a line $L$, and the classes $E_{i}$, $1\leq i\leq n$, of the exceptional divisors. Using Poincaré duality, the cohomology class of the symplectic form on $X_{n}$ is identified with $\lambda L-\sum_{i}\delta_{i}E_{i}$ while, given any compatible almost-complex structure $J$ on $X_{n}$, the first Chern class
$c_{1}:=c_{1}(J)\in H^{2}(X_{n};\Z)$ is identified with the homology class $K:=3L-\sum_{i} E_{i}$.

The intersection product gives $H_{2}(X_{n};\Z)$ the structure of an odd unimodular lattice of type $(1,n)$, while $H_{2}(X_{n};\R)$ becomes an inner product space of signature $(1,n)$. Let $\Pp$ and $\Pp_{+}$ denote, respectively, the positive cone and the forward cone in
$H_{2}(X_{n};\R)$:
\[
  \Pp :=
  \left\{ A\in H_{2}(X_{n};\R)~|~ A\neq 0\text{,~and~} A\cdot A\geq 0 \right\},
\]
\[
  \Pp_{+} :=
  \left\{a_{0}L-\sum_{i} a_{i}E_{i}\in\Pp~|~ a_{0}\geq 0 \right\}.
\]
Let $\mathcal{C}_{K}\subset H_{2}(X_{n};\R)$ be the $K$-symplectic
cone, that is,
\[
  \mathcal{C}_{K} =
  \{A\in H_{2}(X_{n};\Z)~|~A=\PD[\om]\text{~for some~}\om\in\Omega_{K}\},
\]
where $\Omega_{K}$ is the set of orientation-compatible symplectic forms with $K$ as the symplectic canonical class. Similarly, let $\E_{K}\subset H_{2}(X_{n};\Z)$ be the set of symplectic exceptional homology classes, that is,
\[
  \E_{K} :=
  \left\{E~|~E\cdot E=-1\text{~and $E$ is represented by some embedded
                          $\om$-symplectic sphere, }\om\in\Omega_{K}\right\}.
\]
Building on the work of Taubes on Seiberg-Witten and Gromov invariants, T.-J. Li and A.-K. Liu characterized the symplectic cone of smooth, closed, oriented 4-manifolds with $b^{+}=1$ in terms of exceptional classes. In the case of $X_{n}$, this gives
\printlabel{thm:CharacterizationEK}
\begin{thm}[see \cite{Li-Liu-Uniqueness}, Theorem 3]
 \label{thm:CharacterizationEK}
  \[
    \mathcal{C}_{K} =
    \{A\in\Pp_{+}~|~A\cdot E >0\text{~for all~}E\in\E_{K}\}.
  \]
\end{thm}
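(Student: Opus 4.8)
The plan is to establish the two inclusions separately, since one direction is essentially formal while the other carries all the analytic weight. For the inclusion $\mathcal{C}_{K}\subseteq\{A\in\Pp_{+}\mid A\cdot E>0\text{ for all }E\in\E_{K}\}$, I would argue as follows. If $A=\PD[\om]$ for some $\om\in\Omega_{K}$, then $A\cdot A=\int_{X_{n}}\om\wedge\om=\vol(X_{n},\om)>0$, so $A\in\Pp$; positivity of the coefficient of $L$ follows because $A\cdot(L-E_{1})=\om([\text{line minus exceptional divisor}])>0$ together with $A\cdot E_{1}>0$ — or more directly, because $A$ pairs positively with the class of an $\om$-symplectic sphere in a positive multiple of $L$, so $a_{0}>0$ and hence $A\in\Pp_{+}$. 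Finally, if $E\in\E_{K}$ is represented by an embedded $\om$-symplectic sphere then $A\cdot E=\int_{E}\om>0$ since the restriction of a symplectic form to a symplectic submanifold is an area form. This settles the easy containment.

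The reverse inclusion is the substantive one, and here I would invoke the Li--Liu structure theory directly rather than reprove it. The strategy is: given $A\in\Pp_{+}$ with $A\cdot E>0$ for every $E\in\E_{K}$, one must produce a symplectic form $\om\in\Omega_{K}$ with $\PD[\om]=A$. The standard route is through Taubes--Seiberg--Witten theory: on a rational surface with $b^{+}=1$, Li--Liu showed that the symplectic cone is a chamber of the positive cone cut out by the walls $E^{\perp}$, $E\in\E_{K}$, and that the relevant wall-crossing is controlled by the nonvanishing of Gromov (equivalently, Seiberg--Witten) invariants of the exceptional classes. Concretely, one first notes $X_{n}$ is the blow-up of a rational ruled surface, so it admits \emph{some} symplectic form with canonical class $K$; then one connects $A$ to the class of that reference form by a path in $\Pp_{+}$ avoiding all the walls $E^{\perp}$ (possible precisely because $A$ lies strictly on the positive side of every wall), and applies the deformation/inflation results of McDuff and the Li--Liu cone theorem along that path to conclude $A\in\mathcal{C}_{K}$. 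Since the statement is quoted verbatim as Theorem~3 of \cite{Li-Liu-Uniqueness}, in the paper I would simply cite it and spend at most a sentence indicating this mechanism.

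The main obstacle, were one to prove this from scratch, is the analytic input: showing that each exceptional class $E$ with $E\cdot E=-1$ and $E\cdot K=-1$ that pairs positively with a candidate symplectic class is in fact \emph{geometrically} represented by an embedded symplectic sphere — this is where Taubes' ``SW $\Rightarrow$ Gr'' and the wall-crossing formula for Seiberg--Witten invariants on manifolds with $b^{+}=1$ enter, and it is precisely what makes $\E_{K}$ the correct obstructing set. A secondary technical point is verifying that the description of $\E_{K}$ is stable under deformation of $\om$ within $\Omega_{K}$, so that the chamber structure is well-defined; this too is part of the Li--Liu package. For the purposes of this paper, however, none of this needs to be reproved: the theorem is a black box, and the real work downstream is the combinatorial analysis of $\E_{K}$ via the reduction algorithm, which is what determines the inequalities $A\cdot E>0$ explicitly.
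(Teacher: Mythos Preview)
The paper gives no proof of this theorem whatsoever; it is stated purely as a citation to Li--Liu, and the text moves immediately on to how the characterization is used. Your proposal correctly anticipates this (you say you would ``simply cite it''), and the sketch you provide of the underlying Taubes/Li--Liu mechanism is accurate background, though it goes well beyond anything the paper itself supplies.
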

Since $\E_{K}$ is not explicitly known for $n\geq 10$, this characterization cannot be used directly to show that a given class $A\in\Pp_{+}$ belongs to $\mathcal{C}_{K}$. However, the group $\Diff_{+}$ of orientation preserving diffeomorphisms acts on $H_{2}(X_{n};\Z)$, and any diffeomorphism preserving $K$ also preserves the sets $\E_{K}$ and $\mathcal{C}_{K}$. Let us write $\Orth(1,n)$ for the group of orthogonal transformations of $H_{2}(X_{n};\Z)$, $D(1,n)$ for the image of $\Diff_{+}$ in $\Orth(1,n)$, and $D_{K}(1,n)$ for the subgroup of $D(1,n)$ fixing $K$. Recall that if a class $A\in H_{2}(X_{n};\Z)$ of self-intersection $\pm1$ or $\pm2$ is represented by a smooth embedded sphere, then the reflection about $A$
\[r_{A}(B):=B - 2\left(\frac{A\cdot B}{A\cdot A}\right)A \]
belongs to $D(1,n)$. Assume $n\geq 3$ and set
\begin{equation}\label{def:BasicRoots}
\begin{aligned}
  \alpha_{0}&=L-E_{1}-E_{2}-E_{3}\\
  \alpha_{i}&=E_{i}-E_{i+1}\,, \quad 1\leq i\leq n-1.
\end{aligned}
\end{equation}
For $n\geq 3$, those classes are represented by smooth embedded spheres, and since $\alpha_{i}\cdot\alpha_{i} = -2$ and $K\cdot\alpha_{i}=0$, the reflections $r_{\alpha_{i}}$ belong to $D_{K}(1,n)$. The reflexion $\mathfrak{C}:=r_{\alpha_{0}}$, classically known as the \emph{Cremona} transformation, takes a class $\left(a_{0}\,; a_{1},\ldots,a_{n}\right)$ to the class 
\[\left(a_{0}-d\,; a_{1}-d,a_{2}-d, a_{2}-d, a_{3}-d,a_{4}\ldots,a_{n}\right)\]
where $d=a_{1}+a_{2}+a_{3}-a_{0}$, while the reflexion $r_{\alpha_{i}}$, $i\geq 1$, permutes the coefficients $a_{i}$ and~$a_{i+1}$.

The key ingredient to understand the action of $D_{K}(1,n)$ on the symplectic cone $\mathcal{C}_{K}$ is the notion of a \emph{reduced class}:
\printlabel{def:ReducedClasses}
\begin{defn}\label{def:ReducedClasses}
Let $k\geq 3$. A class $A=a_{0}L-\sum_{i} a_{i}E_{i}$ is said to be reduced with respect to the basis $\{L, E_{1},\ldots,E_{k}\}$ if $a_{1}\geq a_{2}\geq\cdots\geq a_{k}\geq 0$ and $a_{0} \geq a_{1}+a_{2}+a_{3}$.
\end{defn}
\printlabel{thm:Diff+ReducedClasses}
\begin{thm}\label{thm:Diff+ReducedClasses} Assume $n\geq 3$.
\begin{enumerate}
  \item {\rm(}see{\rm~\cite{Li-Li-Diff}, Theorem 3.1)} The group $D(1,n)$
    is generated by the reflections $\{r_{L}, r_{E_{1}},
    r_{\alpha_{0}},\ldots,r_{\alpha_{n}}\}$. In particular, it follows
    that the group $D_{K}(1,n)$ is generated by the reflections
    $\{r_{\alpha_{0}},\ldots,r_{\alpha_{n}}\}$.
\item {\rm(}see~{\rm\cite{Li-Li-SymplecticGenus}, Theorem D and
  \cite{Li-Liu-Uniqueness}, Theorem 1)} The group $D_{K}(1,n)$ acts
  transitively on $\E_{K}$.
\item {\rm(}see~{\rm\cite{Gao}, Proposition 2.2)} The orbit of an element
  $A\in\Pp$ under the action of $D(1,n)$ contains a unique reduced
  class.
\item {\rm(}see~{\rm\cite{Li-Liu-Uniqueness}, Proposition 4.9 (3))} A
  reduced class $A=a_{0}L-\sum_{i}a_{i}E_{i}$ belongs to
  $\mathcal{C}_{K}$ if and only if $a_{i}>0$ for all $i$.
\end{enumerate}
\end{thm}

Combining Theorem~\ref{thm:CharacterizationEK} with Theorem~\ref{thm:Diff+ReducedClasses} (2), we have
\printlabel{cor:FundamentalDomain}
\begin{cor}\label{cor:FundamentalDomain}
Let $C_{0}$ denote the set of reduced classes
$a_{0}L-\sum_{i}a_{i}E_{i}$ with $a_{i}>0$, $\forall i$.
\begin{enumerate}
\item A class $A\in\Pp_{+}$ belongs to $\mathcal{C}_{K}$ if and only
  if its orbit under $D_{K}(1,n)$ only contains classes
  $A=a_{0}L-\sum_{i}a_{i}E_{i}$ with $a_{i}>0$, $\forall i$.
\item The set $C_{0}$ is a fundamental domain of $\mathcal{C}_{K}$
  under the action of $D_{K}(1,n)$. In particular,
\[\mathcal{C}_{K} = D_{K}(1,n)\cdot C_{0}\]
\end{enumerate}
\end{cor}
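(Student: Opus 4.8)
The plan is to deduce both statements from Theorem~\ref{thm:CharacterizationEK} together with parts (2), (3), and (4) of Theorem~\ref{thm:Diff+ReducedClasses}. First I would establish part (1). The set $C_0$ of reduced classes with all $a_i>0$ is, by Theorem~\ref{thm:Diff+ReducedClasses}(4), contained in $\mathcal{C}_K$, and more importantly $\mathcal{C}_K$ is invariant under $D_K(1,n)$ since every element of $D_K$ is induced by an orientation-preserving diffeomorphism fixing $K$, hence preserves $\Omega_K$ and therefore the set of Poincar\'e duals $\mathcal{C}_K$. So if $A\in\mathcal{C}_K$, its whole $D_K(1,n)$-orbit lies in $\mathcal{C}_K\subset\Pp_+$; to conclude every orbit element has strictly positive $E_i$-coefficients I would argue that a class in $\mathcal{C}_K$ pairs positively with every $E\in\E_K$ (Theorem~\ref{thm:CharacterizationEK}), and in particular with each $E_i\in\E_K$ and with each $L-E_i-E_j$, which forces... actually the cleanest route is: since $\mathcal{C}_K$ is $D_K$-invariant and a reduced class in $\mathcal{C}_K$ has $a_i>0$ by Theorem~\ref{thm:Diff+ReducedClasses}(4), and since by Theorem~\ref{thm:Diff+ReducedClasses}(3) each $D(1,n)$-orbit (hence each $D_K$-orbit, being finer, has a reduced representative obtained from elements of $D_K$ — this needs the remark that the reduction can be performed within $D_K$, which follows because the reflections $r_{\alpha_0},\dots,r_{\alpha_n}$ generating $D_K$ are exactly the moves used in the reduction algorithm) contains a reduced class, that reduced class lies in $\mathcal{C}_K$, hence has $a_i>0$, and therefore so does every class in the orbit. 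Conversely, if the orbit of $A\in\Pp_+$ contains only classes with $a_i>0$, then in particular its reduced representative $A_{\mathrm{red}}$ has $a_i>0$, so $A_{\mathrm{red}}\in\mathcal{C}_K$ by Theorem~\ref{thm:Diff+ReducedClasses}(4), and applying the $D_K$-invariance of $\mathcal{C}_K$ back along the orbit gives $A\in\mathcal{C}_K$.

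For part (2), I would first check $C_0\subseteq\mathcal{C}_K$ (Theorem~\ref{thm:Diff+ReducedClasses}(4) again) and that $\mathcal{C}_K=D_K(1,n)\cdot C_0$: the inclusion $\supseteq$ is the $D_K$-invariance of $\mathcal{C}_K$; the inclusion $\subseteq$ is part (1) applied to produce, for $A\in\mathcal{C}_K$, a reduced representative in its orbit, which automatically has $a_i>0$ and hence lies in $C_0$. To see that $C_0$ is a genuine fundamental domain I would invoke the uniqueness clause of Theorem~\ref{thm:Diff+ReducedClasses}(3): two reduced classes in the same $D(1,n)$-orbit coincide, so a fortiori two reduced classes in the same $D_K(1,n)$-orbit coincide; thus $C_0$ meets each $D_K$-orbit in exactly one point.

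The main obstacle I anticipate is the bookkeeping point flagged above: Theorem~\ref{thm:Diff+ReducedClasses}(3) gives a reduced representative in the $D(1,n)$-orbit, but the corollary is about the $D_K(1,n)$-orbit, and I must make sure the reduction to a reduced class can be realized by elements of $D_K(1,n)$ rather than by the larger group $D(1,n)$. This is true because reducing a class amounts to iterating the Cremona move $r_{\alpha_0}$ and the transpositions $r_{\alpha_i}$ (which reorder and sign-normalize the $a_i$), all of which fix $K$ and hence lie in $D_K(1,n)$ by Theorem~\ref{thm:Diff+ReducedClasses}(1); the extra generators $r_L, r_{E_1}$ of $D(1,n)$ are not needed once we are inside $\Pp_+$ and only changing $K$ is forbidden. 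Everything else is a direct assembly of the quoted results.
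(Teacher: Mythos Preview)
Your proposal is correct, and the paper itself gives only a one-line hint (``Combining Theorem~\ref{thm:CharacterizationEK} with Theorem~\ref{thm:Diff+ReducedClasses}~(2)'') rather than a written-out argument, so there is not much to compare against. That hint does, however, point to a more direct route for part~(1) than the one you settle on: since $D_{K}(1,n)$ preserves the intersection form and acts transitively on $\E_{K}$, the $i$-th coefficient of $\phi(A)$ is $\phi(A)\cdot E_{i}=A\cdot\phi^{-1}(E_{i})$, and as $\phi$ and $i$ vary the classes $\phi^{-1}(E_{i})$ sweep out all of $\E_{K}$. Thus ``every orbit element has all $a_{i}>0$'' is literally the same statement as ``$A\cdot E>0$ for all $E\in\E_{K}$'', which is Theorem~\ref{thm:CharacterizationEK}. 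This dispenses with the reduction machinery for part~(1). Your route through parts~(3) and~(4) of Theorem~\ref{thm:Diff+ReducedClasses} is also valid, and since you need it for part~(2) anyway, nothing is wasted; the bookkeeping point you flag about reducing within $D_{K}(1,n)$ rather than $D(1,n)$ is exactly right and is what the paper's reduction algorithm makes explicit immediately after the corollary.

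One small wrinkle in your ``cleanest route'' paragraph: showing that the \emph{reduced} representative has $a_{i}>0$ does not by itself yield that \emph{every} orbit element has $a_{i}>0$. For that you really do need the direct observation you started with and then set aside, namely that any $B\in\mathcal{C}_{K}$ satisfies $b_{i}=B\cdot E_{i}>0$ because $E_{i}\in\E_{K}$. Once you keep that step, the forward direction of~(1) is immediate and the detour through the reduced representative becomes unnecessary there.
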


%%%%%%%%%%%%%%%%%%%%%%%%%%%%%%%%%%%%%%%%%%%%%%%%%%%%%%%%%%%%%%%%%%%%%%%%%%%%%%%%
\subsection{The reduction algorithm}
Theorem~\ref{thm:Diff+ReducedClasses} and Corollary~\ref{cor:FundamentalDomain} lead to a simple algorithm to decide whether a given class in $\Pp_{+}$ belongs to $\mathcal{C}_{K}$. To simplify notation, let us write $(a_{0}\,;a_{1},\ldots,a_{n})$ for the class
$a_{0}L-\sum_{i}a_{i}E_{i}$.
\begin{itemize}\setlength{\itemsep}{1mm}
\item[Step 1.] Set $\ell=-1$ and pick
  $v_{0}:=(a_{0}^{0}\,;a_{1}^{0},\ldots,a_{n}^{0})\in \Pp_{+}$.
\item[Step 2.] Increment $\ell$ by one. If $a_{1}^{\ell}\geq\cdots\geq
  a_{n}^{\ell}$, set $\hat{v}_{\ell}=v_{\ell}$ and go to Step
  3. Otherwise, using reflections $r_{\alpha_{i}}\in D_{K}(1,n)$ about
  $\alpha_{i}=E_{i}-E_{i+1}$, $1\leq i\leq n-1$, permute the coefficients of $v_{\ell}$
  so that $a_{1}^{\ell}\geq a_{2}^{\ell}\geq\cdots\geq a_{n}^{\ell}$ and write $\hat{v}_{\ell}$ for the reordered vector.
\item[Step 3.] Let $a_{n}^{\ell}$ be the last coefficient in
  $\hat{v}_{\ell}$. If $a_{n}^{\ell}<0$, then $v_{\ell}\not\in
  \mathcal{C}_{K}$. Thus $v_{0}\not\in\mathcal{C}_{K}$ and the
  algorithm stops.
\item[Step 4.] Let
  $d_{\ell}:=a_{1}^{\ell}+a_{2}^{\ell}+a_{3}^{\ell}-a_{0}^{\ell}$. If
  $d_{\ell}\leq 0$, then the class $\hat{v}_{\ell}$ is reduced. In
  that case
  \smallskip
  \begin{itemize}\setlength{\itemsep}{1mm}
    \item If $a_{n}^{\ell}>0$, then
      $\hat{v}_{\ell}\in\mathcal{C}_{K}$, hence
      $v_{0}\in\mathcal{C}_{K}$ as well, and the algorithm stops.
    \item If $a_{n}^{\ell}=0$, then $\hat{v}_{\ell}$ is in the
      boundary of $\mathcal{C}_{K}$, hence $v_{0}$ is in the boundary
      of $\mathcal{C}_{K}$ as well, and the algorithm stops.
  \end{itemize}
\item[Step 5.] The class $\hat{v}_{\ell}$ has non-negative coefficients
  but is not reduced. Apply the reflexion $r_{\alpha_{0}}\in
  D_{K}(1,n)$ about $\alpha_{0}=L-E_{1}-E_{2}-E_{3}$ to obtain the
  class vector
  \begin{align*}
    v_{\ell+1}&=(a_{0}^{\ell}-d_{\ell}\,;a_{1}^{\ell}-d_{\ell}, a_{2}^{\ell}-d_{\ell}, a_{3}^{\ell}-d_{\ell}, a_{4}^{\ell},\ldots,a_{n}^{\ell})\\
    &=(2a_{0}^{\ell}-a_{1}^{\ell}-a_{2}^{\ell}-a_{3}^{\ell};a_{0}^{\ell}-a_{2}^{\ell}-a_{3}^{\ell}, a_{0}^{\ell}-a_{1}^{\ell}-a_{3}^{\ell}, a_{0}^{\ell}-a_{1}^{\ell}-a_{2}^{\ell},a_{4}^{\ell},\ldots,a_{n}^{\ell})\\
    &=(a_{0}^{\ell+1};a_{1}^{\ell+1},\ldots,a_{n}^{\ell+1})
  \end{align*}
  and go back to Step 2.
\end{itemize}
We claim that the algorithm stops after finitely many iterations. To see this, first note that the self-intersection of all the vectors $\hat{v}_{\ell}$ is constant (since every $\hat{v}_{\ell}$ is obtained from $v_{0}$ by applying an element of $D_{K}(1,n)$). Because
$v_{0}\in\Pp_{+}$, this implies that
\[
  \hat{v}_{\ell}\cdot \hat{v}_{\ell} =
  (a_{0}^{\ell})^{2}-\sum_{i}(a_{i}^{\ell})^{2} \geq 0,
\]
so that $|a_{0}^{\ell}|\geq|a_{i}^{\ell}|$, for all $1\leq i\leq n$. Note also that since $a_{0}^{0}\geq 0$, we must have $a_{0}^{\ell}\geq 0$, for all $\ell\geq 0$. This follows from the fact that in Step (4) above, $a_{0}^{\ell+1}=2a_{0}^{\ell}-a_{1}^{\ell}-a_{2}^{\ell}-a_{3}^{\ell}<a_{0}^{\ell}$ is negative if and only if $2a_{0}^{\ell}<a_{1}^{\ell}+a_{2}^{\ell}+a_{3}^{\ell}$. But since
\[
  0 \leq
  A \cdot A =
  (a_{0}^{\ell})^{2}-\sum_{i}(a_{i}^{\ell})^{2}\quad\text{and}\quad (a_{1}^{\ell}+a_{2}^{\ell}+a_{3}^{\ell})^{2}\leq 3\left((a_{1}^{\ell})^{2}+(a_{2}^{\ell})^{2}+(a_{3}^{\ell})^{2}\right),
\]
that would imply 
\[
  (a_{1}^{\ell})^{2} + (a_{2}^{\ell})^{2} + (a_{3}^{\ell})^{2} \leq
  (a_{0}^{\ell})^{2} \leq
  \tfrac{3}{4}\left( (a_{1}^{\ell})^{2}+(a_{2}^{\ell})^{2}+(a_{3}^{\ell})^{2} \right).
\]
Hence $a_{0}^{\ell+1}$ must be non-negative. 

Since the algorithm stops whenever the smallest coefficient of some $\hat{v}_{\ell}$ is negative, let us suppose that, starting with some $v_{0}\in\Pp_{+}$, we obtain an infinite sequence of vectors $\hat{v}_{\ell}$ with non-negative coefficients $a_{0}^{\ell}\geq a_{1}^{\ell}\geq\cdots\geq a_{n}^{\ell}\geq 0$. By Corollary~\ref{cor:FundamentalDomain}, all the vectors $\hat{v}_{\ell}$ are in the closure of $\mathcal{C}_{K}$. Note also that none of the vectors $\hat{v}_{\ell}$ is reduced, so that $d_{\ell}>0$, for all $\ell\geq 0$.

Suppose that $v_{0}\in\mathcal{C}_{K}$, that is, all the coefficients $a_{i}^{\ell}$ are strictly positive. In that case, there exists a symplectic form $\omega_{0}$ such that $[\omega_{0}]=v_{0}$. The process produces a sequence $\omega_{\ell}$ of symplectic forms which are, by construction, diffeomorphic to the initial symplectic form $\omega_{0}$ and such that, for at least one index $i\geq 1$, $a_{i}^{(\ell)}:=\left< [\omega_{\ell}] , E_{i}\right>$ is a strictly decreasing sequence of positive numbers. By Taubes results on the equivalence of Seiberg-Witten and Gromov invariants, the Gromov invariant of $E_{i}$ only depends on the underlying smooth structure of $X_{n}$. Thus, the class $E_{i}$ contains an embedded $\omega_{\ell}$-symplectic sphere whose size is $a_{i}^{\ell}$. The diffeomorphism from $(X_{n},\omega_{\ell})$ to $(X_{n},\omega_{0})$ carries this sphere to an embedded symplectic sphere in $(X_{n},\omega_{0})$ of size $a_{i}^{\ell}$. However, the set of symplectic areas of exceptional spheres does not have accumulation points, see for instance~\cite{KKP} Lemma~4.1. Thus, the sequence $a_{i}^{\ell}$, being a decreasing sequence of positive numbers that does not have accumulation points, must be finite.

If $v_{0}$ is in the boundary of $\mathcal{C}_{K}$, then all $\hat{v}_{\ell}$ belong to $\partial\mathcal{C}_{K}$ and there are integers $N\geq 0$ and $2\leq m\leq n-1$ such that, for all $\ell\geq N$, we have $a_{m}^{\ell}\neq 0$ and
\[
  \hat{v}_{\ell} =
  (a_{0}^{\ell}\,;a_{1}^{\ell}, a_{2}^{\ell},\ldots, a_{m}^{\ell},0,\ldots,0)
\]
Because $\hat{v}_{\ell}\cdot\hat{v}_{\ell}\geq 0$ and $d_{\ell}>0$, we must have $m\geq 2$. If $m=2$, then $a_{3}^{N}=0$, so that $a_{3}^{\ell+1}=a_{2}^{\ell}-d_{\ell} = -d_{\ell}<0$, which contradicts the fact that $a_{i}^{\ell}\geq 0$, for all $\ell\geq
0$. Hence, $m\geq 3$. The vectors $(a_{0}^{\ell}\,;a_{1}^{\ell},\ldots,a_{m}^{\ell})$, $\ell>N$, have strictly positive coefficients and thus represent classes in $H_{2}(X_{m};\Z)$ which, by Corollary~\ref{cor:FundamentalDomain}, must be in $\mathcal{C}_{K}(X_{m})$. By the previous argument, the sequence $\hat{v}_{\ell}$ must be finite.

%%%%%%%%%%%%%%%%%%%%%%%%%%%%%%%%%%%%%%%%%%%%%%%%%%%%%%%%%%%%%%%%%%%%%%%%%%%%%%%%
\subsection{Strategy for the computation of $\w_{k}(M_{\mu}^{i})$ and $p_{k}(M_{\mu}^{i})$}\label{sec:strategy}
Recall that any rational ruled $4$-manifold is, after rescaling, symplectomorphic to either
\begin{itemize}
  \item the trivial bundle $M_{\mu}^{0} := (S^2\times S^2,
    \om^0_{\mu})$, where the symplectic area of the a section
    $S^2\times\{*\}$ is $\mu\geq 1$ and the area of a fiber
    $\{*\}\times S^{2}$ is 1; or
  \item the nontrivial bundle $M_{\mu}^{1} := (S^{2}\ltimes
    S^{2},\om^{1}_{\mu})$, where the symplectic area of a section of
    self-intersection $-1$ is $\mu>0$ and the area of a fiber is $1$.
\end{itemize}

We identify the $k$-fold blow-up of $M^0_{\mu}$ of equal sizes $c=c_{1}=\cdots = c_{k}$, with $\CP^{2}$ blown up $(k+1)$ times endowed with a symplectic form $\om_{\mu,c}^{0}$ which gives area $\mu+1-c$ to a line and areas $\{\mu-c, 1-c, c,\ldots, c\}$ to the $(k+1)$ exceptional divisors. We represent the Poincaré dual of $[\om_{\mu,c}^{0}]$ by the vector
\begin{equation}\label{eq:InitialVectorTrivial}
  v_{\mu,c}^{0} :=
  \left(\mu+1-c\,; \mu-c, c^{\times (k-1)}, 1-c\right)\in H_{2}(X_{k+1};\R)
\end{equation}

Similarly, we identify the $k$-fold blow-ups of $M^1_{\mu}$ of equal sizes $c=c_{1}=\cdots = c_{k}$, with $\CP^{2}$ blown up $(k+1)$ times endowed with a symplectic form $\om_{\mu,c}^{1}$ which gives area $\mu+1$ to a line and areas $\{\mu, c,\ldots, c\}$ to the $(k+1)$
exceptional divisors. The Poincaré dual of $[\om_{\mu,c}^{1}]$ is thus represented by the vector
\begin{equation}\label{eq:InitialVectorTwisted}
  v_{\mu,c}^{1} :=
  \left(\mu+1\,; \mu, c^{\times k}\right)\in H_{2}(X_{k+1};\R)
\end{equation}

The existence of an embedding of $k$ disjoint balls of capacity $c$ into $M^i_{\mu}$ is then equivalent to $v_{\mu,c}^{i}$ belonging to the symplectic cone $\mathcal{C}_{K}$ of $X_{k+1}$, where $K:=(3\,;1^{\times (k+1)})$. Hence, given $k\geq 8$, the computation of the generalized Gromov widths $\w_{k}(M^0_{\mu})$ and of the packing numbers $p_{k}(M^0_{\mu})$ reduces to finding the largest capacity $c>0$ such that
\begin{itemize}
  \item $v_{\mu,c}^{i}\in\mathcal{P}_{+}$ (i.e.,
    $v_{\mu,c}^{i}\cdot v_{\mu,c}^{i}\geq 0$);
  \item the orbit of $v_{\mu,c}^{i}$ under $D_{K}(1,k+1)$ only
    contains non-negative vectors or, equivalently, the reduction
    algorithm applied to $v_{\mu,c}^{i}$ produces a reduced and
    non-negative vector.
\end{itemize}

A posteriori, once one knows the generalized Gromov widths given in Theorem~\ref{thm:MainTheoremTrivial} and Theorem~\ref{thm:MainTheoremTwisted}, one can easily check that those numbers are the right ones. Indeed, given $\mu$ and $\w_{k}=\w_{k}(\mu)$, it is enough to find two automorphisms $\phi_{1},
\phi_{2}\in D_{K}(1,k+1)$ such that
\begin{itemize}
  \item $\phi_{1}(v_{\mu,\w_{k}})$ is non-negative and reduced
  \item for each $\epsilon>0$, $\phi_{2}(v_{\mu,\w_{k}+\epsilon})$
    contains a negative coefficient.
\end{itemize}
This can be done using the reduction algorithm. Our strategy is then to proceed backward: (i)~use the algorithm to find upper bounds $w_{n}=w_{n}(\mu)$ for the value of $\w_{k}$, (ii)~find the smallest one, say $w_{n_{0}}$, and (iii)~show that one gets a nonnegative reduced vector after setting  $c=w_{n_{0}}$ in $v_{\mu,c}^{i}$. Since the algorithm consists in applying elements of $D_{K}(1,k+1)$ to the initial vector $v_{\mu,c}^{i}$, a simple dualization gives us an exceptional class $E\in\mathcal{C}_{K}(X_{k+1})$ that defines the obstruction. More precisely, if $\phi\in D_{k}$ is the automorphism corresponding to the upper bound $\w_{k}=w_{n_{0}}$, then
\[0=\left(\phi\,v_{\mu,\w_{k}}, E_{k+1}\right)=\left(v_{\mu,\w_{k}}, \phi^{*}\,E_{k+1}\right)\]
so that $\phi^{*}\,E_{k+1}$ is an obstructing exceptional class. Such a class is generally not unique and, in fact, the reduction process often gives finitely many choices.

%%%%%%%%%%%%%%%%%%%%%%%%%%%%%%%%%%%%%%%%%%%%%%%%%%%%%%%%%%%%%%%%%%%%%%%%%%%%%%%%%
\section{Embeddings of $k\geq 8$ disjoint balls in the trivial bundle $M_{\mu}^{0}$}\label{section:TrivialBundle}
%%%%%%%%%%%%%%%%%%%%%%%%%%%%%%%%%%%%%%%%%%%%%%%%%%%%%%%%%%%%%%%%%%%%%%%%%%%%%%%%%

% Trivial Bundle %%%%%%%%%%%%%%%%%%%%%%%%%%%%%%%%%%%%%%%%%%%%%%%%%%%%%%%%%%%%%%%%
This section will be dedicated to proving the results in Theorem~\ref{thm:MainTheoremTrivial} as well as introducing several immediate corollaries. As explained in Section~\ref{sec:strategy}, the computation of the generalized Gromov widths $\w_{k}(M^0_{\mu})$ and of the packing numbers $p_{k}(M^0_{\mu})$ reduces to finding the largest capacity $c>0$ such that the vector
\[v_{0}:=v_{\mu,c}^{0}=\left(\mu+1-c\,; \mu-c, c^{\times (k-1)}, 1-c\right)\] 
belongs to the closure of the symplectic cone of $X_{k+1}$.

The initial step of the reduction algorithm already gives nontrivial results. Indeed, the vector $v_{0}$ is non-negative only if $c\leq 1$, which is equivalent to the fact that the Gromov width is $\w_{1}(M^0_{\mu})=1$. This obviously gives an upper bound for $\w_{k}$, and we note that this bound is stronger than the volume condition whenever $\mu\geq \frac{k}{2}$. Now, the vector $v_{0}$ is ordered only if we suppose $c\geq 1/2$. When $0<c\leq\frac{1}{2}$, the reordered vector $\hat{v}_{0}$ is
\[
  \hat{v}_{0} =
  \left(\mu+1-c\,; \mu-c, 1-c, c^{\times (k-1)}\right),
\]
which is positive and reduced (since its defect is zero). Now, we observe that $c_{\vol}\leq 1/2$ if and only if $k\geq8\mu$. Hence, $\w_{k}(M_{\mu}^{0}) = c_{\vol}$ whenever $\mu\leq \frac{k}{8}$. That implies we have full packing by $k$ balls for all $\mu\in[1, k/8]$ while, for $\mu\in(k/8, \infty)$, we have the lower and upper bounds $1/2<\w_{k}\leq\min\{1,c_{\vol}\}$.

Given $n\in\N$, let us write 
\[\lambda_{n} = \mu+1-c-nd_{0}\]
where $d_{0}=2c-1$. Using this notation, we have
\[v_{0} = \left( \lambda_{0}\,;\lambda_{0}-1, c^{k-1}, (1-c) \right).\]
Given $\mu>k/8$ and $c\in(1/2,1]$, the vector $v_{0}$ is ordered, and has defect $d_{0}=2c-1>0$. Applying a Cremona transformation $\mathfrak{C}$ followed by the permutation $\mathfrak{R}:=(1, 2, k+1, k+2, 3, \ldots, k)$, we get the vector
\[
  v_{1} =
  \left(\lambda_{1}\,; \lambda_{1}-1, c^{\times(k-3)}, (1-c)^{\times 3}\right),
\]
which is ordered if and only if $\lambda_{1}-1 = \mu+1-3c \geq c$. Assuming $v_{1}$ ordered, its defect is also $d_{0}=2c-1>0$, so that we can apply another Cremona move $\mathfrak{C}$ and a reordering $\mathfrak{R}$ to get
\[v_{2} =
  \left( \lambda_{2}\,; \lambda_{2}-1, c^{\times(k-5)}, (1-c)^{\times 5} \right)
\]
provided $k\geq 5$. Clearly, we can repeat this process $n$ times to get a vector
\[v_{n} := (\mathfrak{R} \mathfrak{C})^{n}v_{0} =
  \left( \lambda_{n}\,; \lambda_{n}-1, c^{\times(k-2n)}, (1-c)^{\times (2n+1)} \right),
\]
as long as $2n\leq k$ and $\lambda_{n-1}-1=\mu+1-c-(n-1)d_{0}\geq c$.
\begin{lemma}\label{lemma:TrivialBundleEvenOddDichotomy}
Given $k\geq 4$, let us write $k=2p$ or $k=2p+1$ depending on the parity of $k$. Choose any $\mu\geq1$ and $c\in(1/2, c_{\vol}]$. Then the following holds:
\begin{itemize}
  \item If $k$ is even, the vector $v_{p-3}$ is ordered.
  \item If $k$ is odd, the vector $v_{p-2}$ is ordered.
\end{itemize}
\end{lemma}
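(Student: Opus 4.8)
The plan is to reduce the statement to a single scalar inequality, and then to control that inequality using the volume bound $c\le c_{\vol}$. Write $N=p-3$ when $k=2p$ and $N=p-2$ when $k=2p+1$, and recall that $\lambda_{n}=\mu+1-c-nd_{0}$ with $d_{0}=2c-1>0$, so that, as in the construction preceding the lemma, $v_{n}=(\lambda_{n};\lambda_{n}-1,c^{\times(k-1-2n)},(1-c)^{\times(2n+1)})$ whenever the procedure reaches it. First I would observe that, since $c>1/2$ gives $c>1-c$, the only comparison among the coefficients of $v_{n}$ that is not automatic is $\lambda_{n}-1\ge c$; hence $v_{n}$ is ordered precisely when $\lambda_{n}-1\ge c$. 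As $n\mapsto\lambda_{n}$ is strictly decreasing, it then suffices to prove the single inequality $\lambda_{N}-1\ge c$: this forces $\lambda_{n}-1\ge\lambda_{N}-1\ge c$ for all $0\le n\le N$, so $v_{0},\dots,v_{N}$ are all ordered, and moreover $k-1-2n\ge 4$ throughout this range (it equals $5$, resp.\ $4$, at $n=N$), which is more than enough for $v_{n}$ to have the displayed normal form and for each step $v_{n}\mapsto v_{n+1}$ to be the Cremona-plus-reordering move described before the lemma. Thus the procedure genuinely reaches $v_{N}$ and $v_{N}$ is ordered.

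It remains to verify $\lambda_{N}-1\ge c$, which is equivalent to $\mu\ge(2N+2)c-N$, i.e.\ to $\mu\ge(2p-4)c-(p-3)$ in the even case and to $\mu\ge(2p-2)c-(p-2)$ in the odd case. Here the hypothesis enters: from $c\le c_{\vol}$ and $c_{\vol}^{2}=2\mu/k$ one gets $\mu\ge\tfrac{k}{2}c^{2}$, so it is enough to check that $\tfrac{k}{2}c^{2}-(2N+2)c+N\ge 0$ for every real $c$. When $k=2p$ this quadratic is $pc^{2}-(2p-4)c+(p-3)$, whose discriminant is $(2p-4)^{2}-4p(p-3)=4(4-p)$; when $k=2p+1$, after clearing the denominator it is $(2p+1)c^{2}-(4p-4)c+2(p-2)$, whose discriminant is $(4p-4)^{2}-8(2p+1)(p-2)=8(4-p)$. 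In both cases the leading coefficient is positive, and since $k\ge 8$ forces $p\ge 4$ the discriminant is $\le 0$; hence the quadratic is nonnegative on all of $\R$, which completes the proof. (At $k=8$ the even quadratic is the perfect square $(2c-1)^{2}$, so the estimate is sharp there; for $k\le 7$ the index $N$ is nonpositive and the statement carries content only in the range $k\ge 8$ considered in this section.)

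All the arithmetic is routine, so the step I expect to cost the most care is the reduction to the single inequality $\lambda_{N}-1\ge c$: checking that orderedness of $v_{n}$ is governed by that one comparison, that it propagates down from $n=N$ by monotonicity of $\lambda_{n}$, and that the number $k-1-2n$ of $c$-entries stays $\ge 2$ all the way to $N$ — if it dropped below $2$, the defect of $v_{n}$ would no longer equal $d_{0}=2c-1$, the normal form would change, and the construction of $v_{n}$ would break. Once this bookkeeping is secured, the inequality $\mu\ge\tfrac{k}{2}c^{2}$ extracted from $c\le c_{\vol}$ does the rest, and the heart of the estimate is the discriminant identity $4(4-p)$ (resp.\ $8(4-p)$), which is exactly where the hypothesis $k\ge 8$ is used.
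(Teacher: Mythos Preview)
Your argument is correct and follows essentially the same route as the paper: reduce orderedness of $v_{N}$ to the single inequality $\lambda_{N}-1\ge c$, then use the volume bound and a discriminant computation. The only cosmetic difference is that you treat the resulting inequality as a quadratic in $c$ (with discriminant $4(4-p)$, resp.\ $8(4-p)$), whereas the paper substitutes $c=c_{\vol}(\mu)$ and treats it as a quadratic in $\mu$; your version yields slightly cleaner arithmetic and avoids locating the roots relative to $\mu=1$. One small inaccuracy in your closing parenthetical: for $k=7$ one has $N=p-2=1>0$, so the index is not nonpositive there --- but since the discriminant $8(4-p)$ is strictly positive at $p=3$, the statement genuinely fails in that case, confirming (as in the paper) that the argument only covers $p\ge 4$.
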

\begin{proof}
Let $k=2p$. Then the vector $v_{p-3}$ is ordered if and only if $\lambda_{p-3}-1\geq c$, which is equivalent to
\[\frac{\mu+p-3}{2(p-2)} \geq c.\]
Clearly, it is sufficient to consider the case $c=c_{\vol}$ for which the previous inequality becomes
\[\frac{\mu+p-3}{2(p-2)}\geq \sqrt{\frac{\mu}{p}}.\]
This is equivalent to $f(\mu) = p(\mu+p-3)^{2}-4\mu(p-2)^{2}\geq 0$, which is a quadratic polynomial in $\mu$ with positive leading coefficient, whose roots are
\[\frac{p^{2}-5p+8 \pm 4(p-2)\sqrt{4-p}}{p}.\] 
So, for $p\geq 5$, there are no real roots, while in the case $p=4$, we have a double root at $\mu=1$.

\medskip
Similarly, for $k=2p+1$, the vector $v_{p-2}$ is ordered if and only if 
\[\frac{\mu+p-2}{2(p-1)} \geq c.\]
For $c = c_{\vol}$, we get
\[\frac{\mu+p-2}{2(p-1)}\geq \sqrt{\frac{2\mu}{2p+1}},\]
which is equivalent to $f(\mu) = (2p+1)(\mu+p-2)^{2}-8\mu(p-1)^{2}\geq 0$. Again, this is a quadratic polynomial in $\mu$ with positive leading coefficient, whose roots are
\[\frac{2p^{2}-5p+6 \pm (p-1)\sqrt{8(4-p)}}{2p+1}\] 
So, for $p\geq 5$, there are no real roots while, in the case $p=4$, we have a double root at $\mu=2$.
\end{proof}
\begin{remark}\label{rmk:AutomorphismsTrivialInitialStep}
Observe that the vector $v_{i+1}$ is obtained from $v_{i}$ by applying a Cremona transformation followed by the permutation $\mathfrak{R}:=(1, 2, k+1, k+2, 3, \ldots, k)$. For convenience, we denote the corresponding element of $D_{K}(1, k+1)$ by $\mathfrak{R} \mathfrak{C}$. We can thus write $v_{i}=( \mathfrak{R} \mathfrak{C} )^{i}v_{0}$.
\end{remark}

We now differentiate our discussion depending on whether $k$ isodd or even.

% Trivial Bundle, odd case %%%%%%%%%%%%%%%%%%%%%%%%%%%%%%%%%%%%%%%%%%%%%%%%%%%%%%
\subsection{The odd case $k=2p+1$}
This subsection will provide the proof of part $(i)$ of Theorem~\ref{thm:MainTheoremTrivial}.

By Lemma~\ref{lemma:TrivialBundleEvenOddDichotomy}, we know that the vector
\[v_{p-2} =
  \left(\lambda_{p-2}\,; \lambda_{p-2}-1,c^{\times 4},(1-c)^{\times (2p-3)} \right)
\]
is ordered whenever $k\geq 4$, $\mu\geq 1$, and $c\in(1/2, c_{\vol}]$. So, we can perform a $\mathfrak{R} \mathfrak{C}$ move to get
\[
  v_{p-1} =
  \left( \lambda_{p-1}\,;\lambda_{p-1}-1, c^{\times 2}, (1-c)^{\times (2p-1)} \right)
\]
We now consider two cases depending on whether $v_{p-1}$ is ordered or not.
\medskip
{\it Case 1}: Suppose $v_{p-1}$ is not ordered, that is,
$\lambda_{p-1}-1<c$. Since $\lambda_{p-2}-1\geq c$, we must have
\[
  (\lambda_{p-1}-1) - (1-c) =
  \lambda_{p-2}-(2c-1)-2+c =
  (\lambda_{p-2}-1)-c\geq 0,
\]
so that
\[
  \hat{v}_{p-1} =
  \left(\lambda_{p-1}\,; c^{\times 2}, \lambda_{p-1}-1, (1-c)^{\times(2p-1)} \right)
\]
is ordered with defect $d_{p-1}=2c-1>0$. Performing another $\mathfrak{R} \mathfrak{C}$ move gives
\[
  \hat{v}_{p} =
  \left(\lambda_{p}\,; (1-c)^{\times (2p+1)}, \lambda_{p}-1\right).
\]
Since $d_{p}=3(1-c)-\lambda_{p}=3(1-c)-\lambda_{p-1}+2c-1=(1-c)-(\lambda_{p-1}-1)
\leq 0$, the vector $\hat{v}_{p}$ is reduced. It is positive if and only if $\lambda_{p}-1\geq 0$. Solving for $c$ in the equation $\lambda_{p}-1\geq0$, we obtain a new upper bound for the generalized width $\w_{2p+1}$, namely
\[\w_{2p+1}\leq\frac{\mu+p}{2p+1}.\]
We note that this bound is stronger than the previous bound $\w_{2p+1}\leq 1$ only when $\mu\leq p+1$, while it is stronger than the volume condition whenever $\mu\in [\alpha_{-},\alpha_{+}]$, where $\alpha_{\pm} = p+1\pm\sqrt{2p+1}$.

\medskip
{\it Case 2}: Suppose that $v_{p-1}$ is ordered. Performing a $\mathfrak{R} \mathfrak{C}$
move yields
\[\hat{v}_{p} = \left(\lambda_{p}\,;\lambda_{p}-1, (1-c)^{\times(2p+1)}\right)\]
with defect
\[d_{p}=2(1-c)-1=1-2c<0.\]
Therefore, $\hat{v}_{p}$ is reduced and positive, so that there is an embedding of $2p+1$ balls of size $c$ into $M_{\mu}^{0}$. That occurs unless $\lambda_{p-1}-1<c$.  Solving for $c$ in the equation $\lambda_{p-1}-1\leq c$, we get a new lower bound for $\w_{2p+1}$,
namely
\[ \frac{\mu+p-1}{2p}\leq \w_{2p+1}.\]

\medskip
We therefore have:
\printlabel{prop:TrivialBundleOddCase}
\begin{prop}\label{prop:TrivialBundleOddCase}           
Let $k=2p+1\geq 9$ and consider $\mu\geq 1$. The $(2p+1)^{\text{th}}$
generalized Gromov width of $M_{\mu}^{0}$ is
\[
  \w_{2p+1}(M_{\mu}^{0})=
  \begin{cases}
    c_{\vol}            & \text{~if~}\mu\in
                           \left[ 1,~ p+1-\sqrt{2p+1} \right) \\
    \frac{\mu+p}{2p+1} & \text{~if~}\mu\in
                           \left[ p+1-\sqrt{2p+1},~ p+1 \right) \\
    1                  & \text{~if~}\mu\in  \left[ p+1,~ \infty \right)
  \end{cases}
\]
  \end{prop}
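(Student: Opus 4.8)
The plan is to establish the three-part formula for $\w_{2p+1}(M_{\mu}^{0})$ by combining the upper bounds and lower bounds already extracted from the reduction algorithm, and then checking that they match on each interval. The entire computation preceding the statement has, in effect, already done the analytic heavy lifting: Lemma~\ref{lemma:TrivialBundleEvenOddDichotomy} guarantees that for $\mu\geq 1$ and $c\in(1/2,c_{\vol}]$ the vector $v_{p-2}$ is ordered, so the $\mathfrak{R}\mathfrak{C}$-moves can be iterated up to step $p-1$, and the two-case analysis after Lemma~\ref{lemma:TrivialBundleEvenOddDichotomy} yields, on the one hand, the upper bound $\w_{2p+1}\leq\frac{\mu+p}{2p+1}$ (Case~1, when $v_{p-1}$ fails to be ordered and $\hat v_{p}$ becomes a reduced vector positive precisely when $\lambda_{p}-1\geq0$), and on the other hand the lower bound $\w_{2p+1}\geq\frac{\mu+p-1}{2p}$ (Case~2, when $v_{p-1}$ is ordered). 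Together with the trivial bound $\w_{k}\leq 1$ coming from non-negativity of the initial vector $v_{0}$, and the volume bound $\w_{k}\leq c_{\vol}$, we already have a complete list of candidate upper bounds, namely $\{1,\ c_{\vol},\ \tfrac{\mu+p}{2p+1}\}$.

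First I would determine, as a function of $\mu$, which of these three bounds is smallest; this is an elementary comparison. The bound $\tfrac{\mu+p}{2p+1}\leq 1$ holds exactly when $\mu\leq p+1$, and $\tfrac{\mu+p}{2p+1}\leq c_{\vol}=\sqrt{\tfrac{2\mu}{2p+1}}$ holds exactly on $[\alpha_{-},\alpha_{+}]$ with $\alpha_{\pm}=p+1\pm\sqrt{2p+1}$ (this is the quadratic inequality already recorded right after Case~1, obtained by squaring). Since $\alpha_{+}=p+1+\sqrt{2p+1}>p+1$, the minimum of the three candidates is: $c_{\vol}$ on $[1,\alpha_{-})$, then $\tfrac{\mu+p}{2p+1}$ on $[\alpha_{-},p+1)$, then $1$ on $[p+1,\infty)$. (One should note $\alpha_{-}=p+1-\sqrt{2p+1}\geq 1$ for $p\geq 4$, so the first interval is nonempty and the case division is consistent with $k=2p+1\geq 9$.) This is precisely the claimed piecewise formula, so it remains only to prove that on each interval the relevant upper bound is actually attained.

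Next I would verify attainment interval by interval. On $[p+1,\infty)$ the bound $\w_{k}=1$ follows because setting $c=1/2<\tfrac{1}{2}$—more precisely, for any $c\in(1/2,c_{\vol}]$ with $c\le 1$ one runs the algorithm: since $\mu\geq p+1$ one checks $v_{p-1}$ is ordered (Case~2 applies) and $\hat v_{p}$ is reduced and positive whenever $c<1$, so every $c<1$ is admissible, giving $\w_{k}\geq 1$; combined with $\w_{k}\leq 1$ this gives equality. On $[\alpha_{-},p+1)$ one sets $c=\tfrac{\mu+p}{2p+1}$: then $\lambda_{p}-1=0$, so $\hat v_{p}$ (in the Case~1 normal form) is reduced and non-negative, hence lies in the closure of $\mathcal{C}_K$, which shows $\w_{k}\geq\tfrac{\mu+p}{2p+1}$; the reverse inequality is the Case~1 upper bound. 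On $[1,\alpha_{-})$ one sets $c=c_{\vol}$: here $c\leq 1/2$ would put us in the trivial reduced regime, but for $\mu>k/8$ we have $c_{\vol}>1/2$, so instead one checks that with $c=c_{\vol}$ the inequality $\lambda_{p-1}-1\geq c$ holds (equivalently $\tfrac{\mu+p-1}{2p}\geq c_{\vol}$, a quadratic inequality in $\mu$ valid on $[1,\alpha_{-})$ — this is where the precise value $\alpha_{-}=p+1-\sqrt{2p+1}$ comes from), so Case~2 applies and $\hat v_{p}$ is reduced with $\lambda_{p}-1\geq0$, giving a full packing and $\w_{k}=c_{\vol}$; equivalently one may simply invoke the earlier observation that $\w_{k}=c_{\vol}$ for $\mu\le k/8$ together with the lower bound $\w_{2p+1}\geq\tfrac{\mu+p-1}{2p}$ from Case~2 on the remaining subinterval.

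The main obstacle, such as it is, is bookkeeping rather than conceptual: one must carefully confront the two lower-bound expressions $\tfrac{\mu+p-1}{2p}$ (from Case~2) and the upper bound $\tfrac{\mu+p}{2p+1}$ (from Case~1) and check they pinch at exactly the right place, i.e. that the open condition $\lambda_{p-1}-1<c$ delimiting Case~1 is complementary to $\lambda_{p-1}-1\ge c$ delimiting Case~2, and that the resulting breakpoint in $c$, when solved against $c=c_{\vol}$, reproduces $\alpha_{-}$. Concretely: the assertion reduces to showing that for $\mu\in[\alpha_{-},p+1)$ one has $\tfrac{\mu+p}{2p+1}\le c_{\vol}$ and the reduced vector at $c=\tfrac{\mu+p}{2p+1}$ is genuinely non-negative (so the supremum is attained at the boundary), while for $\mu<\alpha_{-}$ the volume bound is already achievable. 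Both are quadratic-inequality verifications of the type already performed in the proof of Lemma~\ref{lemma:TrivialBundleEvenOddDichotomy}, so no new idea is needed; the care lies in making sure the endpoints of the intervals are assigned to the correct branch (the statement uses half-open intervals $[\,\cdot\,,\,\cdot\,)$, matching the fact that at $\mu=\alpha_{-}$ the two formulas agree and at $\mu=p+1$ we have $\tfrac{\mu+p}{2p+1}=1$).
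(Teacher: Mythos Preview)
Your approach is exactly the paper's: the proof there is the single line ``The previous discussion shows that $\w_{2p+1}(M_{\mu}^{0}) = \min\{c_{\vol}, 1, \frac{\mu+p}{2p+1}\}$,'' and you are simply spelling out the interval comparison that makes this minimum explicit.

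There is, however, one slip in your attainment argument on $[1,\alpha_{-})$. You claim that at $c=c_{\vol}$ the inequality $\lambda_{p-1}-1\geq c$ (equivalently $\tfrac{\mu+p-1}{2p}\geq c_{\vol}$) holds on this whole interval, so that Case~2 applies. This is false: for instance with $p=4$ and $\mu=1.9<\alpha_{-}=2$ one has $\tfrac{\mu+p-1}{2p}=\tfrac{4.9}{8}=0.6125$ while $c_{\vol}=\sqrt{3.8/9}\approx 0.6498$. The value $\alpha_{-}$ is \emph{not} the root of $\tfrac{\mu+p-1}{2p}=c_{\vol}$; it is the root of $\tfrac{\mu+p}{2p+1}=c_{\vol}$. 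The correct (and simpler) argument is the one you almost write in your final paragraph: for $\mu<\alpha_{-}$ one has $c_{\vol}<\tfrac{\mu+p}{2p+1}$ by the very definition of $\alpha_{-}$, so whether one lands in Case~1 or Case~2 at $c=c_{\vol}$, the resulting reduced vector is non-negative and hence $c_{\vol}$ is admissible. No separate case analysis of which branch applies is needed.
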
             
\begin{proof} 
The previous discussion shows that $\w_{2p+1}(M_{\mu}^{0}) =
\min\left\{c_{\vol}, 1, \frac{\mu+p}{2p+1}\right\}$.
\end{proof}
An immediate consequence is the following
\begin{cor}\label{cor:MainPackingNumbersTrivialOdd}
Let $k=2p+1\geq 9$ and consider $\mu\geq 1$. The $(2p+1)^{\text{th}}$
packing number of $M_{\mu}^{0}$ is
\[
  p_{2p+1}(M_{\mu}^{0}) =
  \begin{cases}
    1                             & \text{~if~}\mu\in
                                      \left[ 1,~ p+1-\sqrt{2p+1} \right) \\
    \frac{(\mu+p)^{2}}{2\mu(2p+1)} & \text{~if~}\mu\in
                                      \left[ p+1-\sqrt{2p+1},~ p+1 \right) \\
    \frac{2p+1}{2\mu}             & \text{~if~}\mu\in
                                      \left[ p+1,~ \infty \right)
  \end{cases}
\]
\end{cor}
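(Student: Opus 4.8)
The statement to prove is Corollary~\ref{cor:MainPackingNumbersTrivialOdd}, which translates the generalized Gromov width from Proposition~\ref{prop:TrivialBundleOddCase} into packing numbers.

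The plan is to derive the packing numbers directly from the relation between $\w_k$ and $p_k$ together with Proposition~\ref{prop:TrivialBundleOddCase}. First I would recall the definitional identity linking the two invariants: for $k$ equally sized balls of capacity $c$ in a symplectic $4$-manifold, the total volume occupied is $\tfrac12 k c^2$ (since $\vol(B(c)) = \tfrac{c^2}{2}$ in dimension $4$), and the volume of $M_\mu^0$ is $\mu$ (the form is $\mu\sigma\oplus\sigma$ on $S^2\times S^2$, so $\vol = \int \om^2/2! = \mu$). Therefore
\[
  p_k(M_\mu^0) = \sup_c \frac{\tfrac12 k c^2}{\mu} = \frac{k\,\w_k(M_\mu^0)^2}{2\mu},
\]
with the understanding that if the supremum value exceeds $1$ it is truncated to $1$ — equivalently, $p_k = \min\{1, k\w_k^2/(2\mu)\}$. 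In fact since $\w_k \le c_{\vol} = \sqrt{2\mu/(2p+1)}$ always holds, one has $k\w_k^2/(2\mu) \le 1$ automatically, and the truncation is active precisely when $\w_k = c_{\vol}$, in which case $p_k = 1$.

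Next I would substitute the three cases of Proposition~\ref{prop:TrivialBundleOddCase} with $k = 2p+1$. On the interval $\mu \in [1,\,p+1-\sqrt{2p+1})$ we have $\w_{2p+1} = c_{\vol}$, hence $p_{2p+1} = \tfrac{(2p+1)}{2\mu}\cdot\tfrac{2\mu}{2p+1} = 1$. On the middle interval $\mu \in [p+1-\sqrt{2p+1},\,p+1)$ we have $\w_{2p+1} = \tfrac{\mu+p}{2p+1}$, so
\[
  p_{2p+1} = \frac{(2p+1)}{2\mu}\cdot\frac{(\mu+p)^2}{(2p+1)^2} = \frac{(\mu+p)^2}{2\mu(2p+1)}.
\]
On the last interval $\mu \in [p+1,\infty)$ we have $\w_{2p+1} = 1$, giving $p_{2p+1} = \tfrac{2p+1}{2\mu}$. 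This reproduces exactly the three-case formula in the statement.

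The only point requiring a little care — and the closest thing to an obstacle, though it is minor — is justifying that $p_k$ really equals $\min\{1, k\w_k^2/(2\mu)\}$ rather than merely being bounded by it; this is the standard observation that the packing problem with equal balls is monotone in $c$, so the supremum defining $p_k$ is attained in the limit $c \to \w_k$, and the volume ratio is continuous in $c$. One should also note that the breakpoints match up: at $\mu = p+1-\sqrt{2p+1}$ one checks $\tfrac{(\mu+p)^2}{2\mu(2p+1)} = 1$ (equivalently $(\mu+p)^2 = 2\mu(2p+1)$, i.e. $\mu^2 - 2(p+1)\mu + p^2 = 0$, whose smaller root is indeed $p+1-\sqrt{2p+1}$), and at $\mu = p+1$ both middle and last formulas give $\tfrac{(2p+1)^2}{2(p+1)(2p+1)} = \tfrac{2p+1}{2(p+1)}$, confirming continuity. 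With these verifications the corollary follows immediately from Proposition~\ref{prop:TrivialBundleOddCase}.
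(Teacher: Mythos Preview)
Your proof is correct and follows the same approach as the paper, which simply states the corollary as ``an immediate consequence'' of Proposition~\ref{prop:TrivialBundleOddCase} without writing out the substitution. Your additional continuity checks at the breakpoints and the remark about the supremum being a genuine equality are more than the paper provides, but the underlying argument is identical.
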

\printlabel{cor:OddStabilityNumbersTrivial}
\begin{cor}\label{cor:OddStabilityNumbersTrivial}
The odd stability number of $M_{\mu}^{0}$ is 
\[
N_{{\rm odd}}(M_{\mu}^{0})=
\begin{cases}
7 &\text{~if~}\mu=\frac{8}{7}\\
9 &\text{~if~}\mu\in\left[1,~\frac{8}{7}\right)\cup\left(\frac{8}{7},~2\right]\\
2\left\lceil\mu+\sqrt{2\mu}\right\rceil + 1 
					&\text{~if~}\mu\in\left(2,~\infty\right]
\end{cases}
\]
\end{cor}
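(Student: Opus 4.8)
The plan is to determine, for each $\mu\ge 1$, the smallest odd $k=2p+1$ for which $\w_{k}(M_{\mu}^{0})=1$, i.e.\ for which $M_{\mu}^{0}$ admits a full packing by $k$ equal balls. The main tool is Proposition~\ref{prop:TrivialBundleOddCase} together with the values of $\w_{k}(M_{\mu}^{0})$ for $k\le 7$ recorded in Appendix~A (or Schlenk~\cite{Sh2}). First I would separate the range $k\ge 9$ from the small cases $k=3,5,7$. For $k=2p+1\ge 9$, Proposition~\ref{prop:TrivialBundleOddCase} says $\w_{2p+1}(M_{\mu}^{0})=1$ exactly when $\mu\ge p+1$ \emph{and} $c_{\vol}=\sqrt{2\mu/(2p+1)}\ge 1$, the latter being $\mu\ge (2p+1)/2=p+1/2$; since $p+1>p+1/2$, full packing holds iff $\mu\ge p+1$. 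So for a given $\mu$, the smallest such $p$ is $p=\lceil\mu\rceil-1$ when $\mu\notin\Z$ and $p=\mu-1$ when $\mu\in\Z$; in both cases $p=\lceil\mu-1\rceil=\lceil\mu\rceil-1$ fails and we need $p\ge\mu-1$... more carefully, $p+1\le\mu$ forces $p\le\mu-1$, so the least admissible $p$ is $\lceil\mu\rceil-1$ if $\mu\notin\Z$ and $\mu-1$ if $\mu\in\Z$; either way $k=2p+1$. But one must also check that this $k$ is $\ge 9$, i.e.\ $p\ge 4$, i.e.\ $\mu\ge 5$ (resp.\ $\mu>4$); for $\mu$ just above $2$ the naive formula gives $k<9$ and Proposition~\ref{prop:TrivialBundleOddCase} does not literally apply, so the small-$k$ data must be invoked.

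The key subtlety is therefore the transition region $\mu\in(2,5]$, where the candidate odd $k$ from the above count is $3,5,7$. Here I would use the known values of $\w_{3},\w_{5},\w_{7}(M_{\mu}^{0})$ from Appendix~A: for $\mu\in(2,\infty]$ one checks directly that $\w_{7}(M_{8/7}^0)=1$ but that $\w_{7}(M_{\mu}^{0})<1$ for $\mu$ in the punctured neighbourhood, and more generally one reads off from the tables the exact set of $\mu$ for which $\w_{k}=1$ for each $k\le 7$. Comparing these, the claimed formula $N_{\rm odd}=2\lceil\mu+\sqrt{2\mu}\rceil+1$ for $\mu\in(2,\infty]$ should be verified: here the binding constraint for $k=2p+1\ge 9$ is not $\mu\ge p+1$ but the volume bound, because one must also ensure $\w_{2p+1}=1$ is \emph{not} obstructed by the middle branch $\frac{\mu+p}{2p+1}$; actually full packing needs both $c_{\vol}\ge 1$ and $\frac{\mu+p}{2p+1}\ge 1$, and since $\frac{\mu+p}{2p+1}\ge 1\iff \mu\ge p+1\iff c_{\vol}^2=\frac{2\mu}{2p+1}\ge\frac{2(p+1)}{2p+1}>1$, the two are consistent and the true threshold is $\mu\ge p+1$. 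Hence I would re-examine whether the intended reading is that $\w_{2p+1}=c_{\vol}$ on $[1,p+1-\sqrt{2p+1})$ and $=1$ only on $[p+1,\infty)$, giving least $p$ with $p\ge\mu-1$; reconciling this with the stated closed form $2\lceil\mu+\sqrt{2\mu}\rceil+1$ is the crux, and I expect it follows from the equivalence $\w_{2p+1}(M_{\mu}^0)=1 \iff \mu \le$ (value of width) combined with the fact that at the stability threshold the relevant constraint flips to the volume branch; the identity $p+1-\sqrt{2p+1}\le\mu \iff p\ge \mu+\sqrt{2\mu}-\tfrac12+o(1)$ should pin down $\lceil\mu+\sqrt{2\mu}\rceil$ after a short monotonicity/rounding argument.

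Finally I would assemble the three cases: (a) $\mu=8/7$ is the unique value in $[1,2]$ where $k=7$ already gives a full packing — verified from the $k=7$ table — hence $N_{\rm odd}=7$ there; (b) for all other $\mu\in[1,8/7)\cup(8/7,2]$, none of $k=3,5,7$ suffices (again from the tables) but $k=9$ does, because for $\mu\le 2\le 5=p+1$ with $p=4$ Proposition~\ref{prop:TrivialBundleOddCase} gives $\w_{9}=\min\{c_{\vol},1,\frac{\mu+4}{9}\}$ and $c_{\vol}=\sqrt{2\mu/9}\ge\sqrt{2/9}$, $\frac{\mu+4}{9}\ge\frac{5}{9}$... wait, that is $<1$, so one needs $\mu\ge 5$ for $k=9$; thus for $\mu\in[1,2]$ the full packing by $9$ balls must come from the fact that $\mu\le 2 \le$ some explicit width value — here $\w_9=c_{\vol}$ requires $\mu\le 5-3=2$, and $c_{\vol}=\sqrt{2\mu/9}\le\sqrt{4/9}<1$, so $\w_9<1$ on $[1,2)$ too. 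This shows the statement "$N_{\rm odd}=9$ on $[1,8/7)\cup(8/7,2]$" cannot mean full packing by $9$ balls in the naive sense; I would therefore interpret $N_{\rm odd}$ as defined in the paper (smallest $k$ beyond which $p_k=1$ for all larger $k$ \emph{of that parity}, i.e.\ a stability index, not a single-value full-packing index), so that $N_{\rm odd}(M_\mu^0)=9$ means $p_{2j+1}(M_\mu^0)=1$ for all $j\ge 4$ but not for $j=3$; this follows since for $\mu\le 2$ and $p\ge 4$ we have $c_{\vol}=\sqrt{2\mu/(2p+1)}\le\sqrt{4/9}<1$... which still fails. The genuine hard part, then, is correctly unwinding the definition of $N_{\rm odd}$ against $\w_{2p+1}$; once that is fixed, (a) is a one-line table check, (b) is a finite comparison of the $k\le 7$ formulas with $c_{\vol}$, and (c) reduces to the clean inequality manipulation $\w_{2p+1}(M_\mu^0)=1\iff\mu\ge p+1$, whence $N_{\rm odd}=2(\lceil\mu\rceil-1)+1$ for $\mu$ non-integral and the rounding identity $\lceil\mu\rceil-1=\lceil\mu+\sqrt{2\mu}\rceil-\text{(correction)}$ on $(2,\infty]$ is checked by evaluating both sides at integers and using monotonicity. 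I expect the rounding/equivalence bookkeeping in case (c), and the precise reconciliation of the definition in (b), to be the only real obstacles; everything else is routine substitution into Proposition~\ref{prop:TrivialBundleOddCase} and Appendix~A.
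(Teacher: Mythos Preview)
There is a genuine gap: you have misidentified the condition for full packing. You repeatedly test whether $\w_{2p+1}(M_\mu^0)=1$, but full packing means $p_{2p+1}(M_\mu^0)=1$, which is equivalent to $\w_{2p+1}(M_\mu^0)=c_{\vol}$, \emph{not} to $\w_{2p+1}=1$. This is why your computations in cases (b) and (c) keep collapsing: you observe that $c_{\vol}=\sqrt{2\mu/9}<1$ for $\mu\le 2$ and conclude that ``$\w_9<1$ \dots\ which still fails'', when in fact $\w_9=c_{\vol}$ on that range is precisely the statement that $p_9=1$. The same confusion drives your final summary, where you write ``$\w_{2p+1}(M_\mu^0)=1\iff\mu\ge p+1$'' as the key equivalence; the relevant equivalence is $\w_{2p+1}=c_{\vol}\iff \mu\le p+1-\sqrt{2p+1}$, coming from the \emph{first} branch of Proposition~\ref{prop:TrivialBundleOddCase}, not the third.

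Once you use the correct criterion, the argument is short and matches the paper's. From Proposition~\ref{prop:TrivialBundleOddCase} (and Corollary~\ref{cor:MainPackingNumbersTrivialOdd}), for $k=2p+1\ge 9$ one has $p_{2p+1}=1$ iff $\mu\le p+1-\sqrt{2p+1}$. Solving $\mu=p+1-\sqrt{2p+1}$ for $p$ gives the quadratic $(p-\mu)^2=2\mu$, whose larger root is $r(\mu)=\mu+\sqrt{2\mu}$; hence $p_{2p+1}=1$ iff $p\ge r(\mu)$, so the contribution from $k\ge 9$ is $J(\mu)=\max\{9,\,2\lceil\mu+\sqrt{2\mu}\rceil+1\}$. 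For the small cases, Proposition~\ref{prop:PackingNumbersProduct<8} shows the only odd $k\le 7$ with $p_k=1$ is $(\mu,k)=(8/7,7)$. Since $J(8/7)=9$ and $J(\mu)=9$ for all $\mu\in[1,2]$ (because $\mu+\sqrt{2\mu}\le 4$ there), the three cases in the statement follow immediately. No rounding identities or ``correction terms'' are needed.
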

\begin{proof}
By Proposition~\ref{prop:PackingNumbersProduct<8}, the only pair $(\mu,k)$ for which we have full packing by $k=2p+1\leq 7$ balls is $(8/7,~7)$. On the other hand, the largest root of the polynomial in $p$
\[
(2p+1)(\mu+p)^{2}-2\mu(2p+1)^{2}
\]
obtained by setting
\[c_{\vol}^{2}=\frac{\mu+p}{2p+1}\]
is
\[r(\mu) =  \mu+\sqrt{2\mu}\]
The integer $J(\mu) := \max \{9,~ 2\lceil r(\mu) \rceil + 1\}$ gives the odd stability number in the range $k\geq 9$. Since $J(8/7)=9$, the result follows.
\end{proof}
As explained in Section~\ref{sec:strategy}, we can combine the above results with Remark~\ref{rmk:AutomorphismsTrivialInitialStep} to find obstructing exceptional classes in $H_{2}(X_{2p+2};\Z)$. These results can be easily translated into curves in the $2p+1$-fold blow-up of $M_{\mu}^{0}$ as well by using the identification of the two spaces.

\printlabel{cor:TrivialOddCaseObstructionClasses}
\begin{cor}\label{cor:TrivialOddCaseObstructionClasses}
The exceptional classes in $H_{2}(X_{2p+2};\Z)$ that give the obstructions to the embedding of $2p+1$ balls into $M_{\mu}^{0}$ are of type
\begin{align*}
  \left( 1\,; 1^{\times 2}, 0^{\times (2p-1)}\right) 
  	& \text{~when~} \mu\in\left[ p+1,~ \infty \right);\\
  \left( p\,; p-1, 1^{\times 2p}, 0\right) 
  	& \text{~when~} \mu\in\left[ p+1-\sqrt{2p+1},~ p+1 \right).
\end{align*}
For $\mu\in\left[ 1, p+1-\sqrt{2p+1} \right)$, the only obstruction is given by the volume condition.
\end{cor}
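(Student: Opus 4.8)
\textbf{Proof proposal for Corollary~\ref{cor:TrivialOddCaseObstructionClasses}.}
The plan is to follow the recipe from Section~\ref{sec:strategy}: the reduction algorithm realizes each upper bound $\w_{2p+1}=w_{n_0}(\mu)$ as the value of $c$ at which some coefficient of an iterate $\phi(v_{\mu,c}^{0})$ vanishes, and dualizing the automorphism $\phi$ produces the obstructing exceptional class $\phi^{*}E_{k+1}$. So the work is entirely bookkeeping on the two branches analyzed in the proof of Proposition~\ref{prop:TrivialBundleOddCase}.

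First I would handle the range $\mu\in[p+1,\infty)$, where $\w_{2p+1}=1$. Here the obstruction already appears at the initial step: $v_{0}=v_{\mu,c}^{0}$ is non-negative only if $c\le 1$, i.e. the last coefficient $1-c$ vanishes at $c=1$. Thus $\langle v_{\mu,1}^{0},E_{k+1}\rangle=0$, and since $X_{k+1}=X_{2p+2}$, the obstructing class is $E_{k+1}=(0\,;0^{\times(k-1)},1)$ — but after the reordering $\hat v_0$ that places $1-c$ in the third slot, the relevant class is the image of $E_{k+1}$ under that permutation; one should present it in the normalized (reduced) form $(1\,;1^{\times 2},0^{\times(2p-1)})$, which is the standard exceptional class $L-E_1-E_2$ in $H_2(X_{2p+2};\Z)$ and indeed pairs with $v_{\mu,c}^{0}$ to give $(\mu+1-c)-(\mu-c)-c=1-c$, vanishing exactly at $c=1$. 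So this branch reduces to identifying which exceptional class reduces, via $D_K(1,k+1)$, to $E_{k+1}$, or equivalently to computing $\phi^{*}E_{k+1}$ where $\phi$ is trivial (just a permutation) and checking the pairing directly.

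Next, for $\mu\in[p+1-\sqrt{2p+1},\,p+1)$ the bound is $\w_{2p+1}=\frac{\mu+p}{2p+1}$, produced in \emph{Case 1} of the proof of Proposition~\ref{prop:TrivialBundleOddCase}: one applies $\phi=(\mathfrak{R}\mathfrak{C})^{?}$ — explicitly the composite of $p-1$ moves $(\mathfrak{R}\mathfrak{C})$, then a reordering, then one more $\mathfrak{R}\mathfrak{C}$ — landing on $\hat v_{p}=(\lambda_p\,;(1-c)^{\times(2p+1)},\lambda_p-1)$, whose last coefficient $\lambda_p-1$ vanishes precisely at $c=\frac{\mu+p}{2p+1}$. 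Hence $0=\langle \phi(v_{\mu,\w_{2p+1}}^{0}),E_{k+1}\rangle=\langle v_{\mu,\w_{2p+1}}^{0},\phi^{*}E_{k+1}\rangle$, so $E:=\phi^{*}E_{k+1}$ is the obstructing class. I would compute $E$ by tracking $E_{k+1}$ backwards through the inverses of the elementary moves (each $\mathfrak{C}^{-1}=\mathfrak{C}$ acting on $(b_0;b_1,\dots)$ by $(b_0+e;b_1+e,b_2+e,b_3+e,b_4,\dots)$ with $e=b_1+b_2+b_3-b_0$, and $\mathfrak{R}^{-1}$ the inverse permutation), and verify by direct pairing that $\langle v_{\mu,c}^{0},E\rangle = \lambda_p-1 = \frac{\mu+p-(2p+1)c}{1}$ up to the harmless rescaling, which should collapse to $E=(p\,;p-1,1^{\times 2p},0)$ after normalizing to reduced form. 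A sanity check: $E\cdot E = p^2-(p-1)^2-2p = -1$ and $K\cdot E = 3p-(p-1)-2p = 1$, so $E$ is a genuine exceptional class, and $\langle v_{\mu,c}^{0},E\rangle = p(\mu+1-c)-(p-1)(\mu-c)-2p\cdot c = \mu+p-(2p+1)c$, vanishing exactly at $c=\frac{\mu+p}{2p+1}$ as required.

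Finally, for $\mu\in[1,p+1-\sqrt{2p+1})$ the width equals $c_{\vol}$, which is the volume bound and corresponds to no exceptional obstruction — this is immediate from Proposition~\ref{prop:TrivialBundleOddCase} and needs only a sentence. The main obstacle is purely computational: carefully composing the string of Cremona-plus-permutation moves (there are order-$p$ of them, with a parity-dependent reordering in the middle) and transposing to get $\phi^{*}E_{k+1}$ in closed form; the cleanest route is not to multiply matrices but to guess the answer from the vanishing-coefficient condition and then certify it by the two identities $E\cdot E=-1$, $K\cdot E=1$ together with the explicit value of $\langle v_{\mu,c}^{0},E\rangle$, which I did above. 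One should also remark that, as noted in Section~\ref{sec:strategy}, the reduction process typically offers several automorphisms $\phi$ realizing the same bound, hence several obstructing classes in the same $D_K$-orbit; the statement records one representative of each type.
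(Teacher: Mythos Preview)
Your approach is correct and is essentially the one the paper uses --- indeed, the paper's own proof here is terser than yours: it simply records the automorphism $\phi$ produced by the algorithm on each interval (namely $(\mathfrak{RC})^{p}$ and $(\mathfrak{RC})(\mathfrak{SC})(\mathfrak{RC})^{p-2}$, with $\mathfrak{S}$ the permutation $(1,4,2,3,5,\ldots,k+2)$) and declares the obstructing class to be $\phi^{*}E_{k+1}$. Your route --- guess the class, then certify it by checking $E\cdot E=-1$, $K\cdot E=1$, and computing $v_{0}\cdot E$ explicitly --- is exactly what the paper does in the parallel even case (Corollary~\ref{cor:TrivialevenCaseObstructionClasses}), and it is arguably cleaner since it avoids composing order-$p$ strings of matrices.

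Two small points to tighten. First, in the range $\mu\ge p+1$ your narrative is muddled: $E_{k+1}$ and $L-E_{1}-E_{2}$ are \emph{not} related by a permutation of exceptional coordinates --- they are genuinely different exceptional classes that happen to pair with $v_{0}$ to the same linear form $1-c$. Your direct pairing computation is what actually proves the claim, so just drop the sentence about reordering. Second, the numerical checks $E\cdot E=-1$ and $K\cdot E=1$ do not by themselves guarantee $E\in\mathcal{E}_{K}$; you should either observe that your $E$ arises as $\phi^{*}E_{k+1}$ for some $\phi\in D_{K}(1,k+1)$ (which you sketched), or reduce $E$ to a standard $E_{i}$ via the algorithm. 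For the two classes here this is immediate ($L-E_{1}-E_{2}$ is standard, and one Cremona move on $(p\,;p-1,1^{\times 2p},0)$ already drops the degree), but it deserves a sentence.
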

\begin{proof} 
On each interval, the reduction algorithm defines an automorphism $\phi\in D_{K}(1,k+1)$ as a composition of Cremona moves and reorderings. The obstructing exceptional class is then given by $\phi^{*} E_{k+1}$. In the present cases, the automorphism is
\[
\phi =
\begin{cases}
(\mathfrak{RC})^{p} 
	& \text{~for~} \mu \in \left[ p+1,~ \infty \right)\\
(\mathfrak{R}\mathfrak{C})(\mathfrak{S}\mathfrak{C})(\mathfrak{R}\mathfrak{C})^{p-2}
	& \text{~for~} \mu \in \left[ p+1-\sqrt{2p+1},~ p+1 \right)
\end{cases}
\]
where $\mathfrak{R}$ and $\mathfrak{C}$ are the automorphisms defined in Remark~\ref{rmk:AutomorphismsTrivialInitialStep}, and where $\mathfrak{S}$ corresponds to the permutation $(1,4,2,3,5,\ldots,k+2)$.
\end{proof}

% Trivial Bundle, even case %%%%%%%%%%%%%%%%%%%%%%%%%%%%%%%%%%%%%%%%%%%%%%%%%%%%%
\subsection{The even case $k=2p$}

This subsection is dedicated to proving the following proposition, which is just a more precise formulation of part $(ii)$ in Theorem~\ref{thm:MainTheoremTrivial}

\begin{prop}\label{thm:MainTheoremTrivialExplicitEven}
 There exist two sequences $a_n $ and $\gamma_n$ satisfying the recurrence relations
\begin{equation}
a_{n+3}=(p-1)a_{n+2}-(p-1)a_{n+1}+a_{n}
\end{equation}
\begin{equation}
\gamma_{n+3}=(p-1)\gamma_{n+2}-(p-1)\gamma_{n+1}+\gamma_{n}
\end{equation}
with initial conditions
\[ 
a_{0} =0, \quad a_{1} =1,\quad a_{2}  = (p-1).
\]
\[
\gamma_{0} = 1, \quad \gamma_{1} = p,\quad \gamma_{2} = (p-1)^2 
\]
so that the generalized Gromov width $\w_{2p}(M_{\mu}^{0})$ is given as a piecewise linear function by       
\[
\w_{2p}(M_{\mu}^{0}) =
\begin{cases}
c_{\vol}=\sqrt{\frac{\mu}{p}}  & \text{~if~}\mu\in\left[ 1,~ \frac{p-2 + \sqrt{p^2 -4p}}{2}\right)\\

\frac{a_{n-1}\mu + a_n}{2 (a_n+a_{n-1}) - 1} & \text{~if~}\mu\in\left[ \frac{\gamma_{n}}{\gamma_{n-1}},~ \frac{\gamma_{n-1}}{\gamma_{n-2}} \right) ,~n\geq 2\\

1 & \text{~if~}\mu\in\left[ p,~ \infty \right)
\end{cases}
\]
\end{prop}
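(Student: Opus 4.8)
The plan is to continue running the reduction algorithm in the even case $k=2p$ exactly as was done for the odd case, but now tracking how the Cremona-reorder moves interact with the changing length of the blocks of $c$'s and $(1-c)$'s. By Lemma~\ref{lemma:TrivialBundleEvenOddDichotomy}, starting from $v_0=v_{\mu,c}^0$ with $c\in(1/2,c_{\vol}]$, the vector $v_{p-3}=\left(\lambda_{p-3}\,;\lambda_{p-3}-1,c^{\times 6},(1-c)^{\times(2p-5)}\right)$ is ordered. First I would apply one more $\mathfrak{R}\mathfrak{C}$ move to reach $v_{p-2}$, and then analyze, just as in Cases 1 and 2 of the odd subsection, whether the successive vectors $v_{p-2},v_{p-1},\ldots$ remain ordered: each time a vector fails to be ordered (because some $\lambda_j-1<c$), a reordering $\mathfrak{S}$-type permutation interleaves a $c$ into the block of $(1-c)$'s, changes the defect, and produces a new linear upper bound $w_n(\mu)$ for $\w_{2p}$ by solving the positivity condition on the last coefficient. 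The key structural difference from the odd case is that here the process does \emph{not} terminate after one or two steps: because $k$ is even, the ``extra'' entry keeps shifting and one can keep Cremona-ing, generating an infinite family of candidate bounds $w_n$.

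The heart of the argument is to identify the recursion governing this infinite family. I would set up, at stage $n$, a vector of the shape $\left(b_0\,; b_1, b_2, c^{\times j_n},(1-c)^{\times(k-j_n)}\right)$ (or a small variant), write down how $(b_0,b_1,b_2)$ transform under $\mathfrak{C}$ followed by the appropriate reordering, and extract a linear recurrence for the relevant coefficients. The claim is that the numerators/denominators of the bounds $w_n(\mu)=\frac{a_{n-1}\mu+a_n}{2(a_n+a_{n-1})-1}$ are governed by the order-$3$ linear recurrence $x_{n+3}=(p-1)x_{n+2}-(p-1)x_{n+1}+x_n$; this should drop out of the characteristic polynomial of the $3\times 3$ integer matrix implementing ``$\mathfrak{C}$ then reorder'' on the top three coordinates, which factors as $(x-1)$ times a quadratic $x^2-(p-2)x+1$ (note the limit $\lambda=\frac{p-2+\sqrt{p^2-4p}}{2}$ in the theorem statement is precisely the larger root of that quadratic, and $\gamma_n/\gamma_{n-1}\to\lambda$). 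I would verify the stated initial conditions $a_0,a_1,a_2$ and $\gamma_0,\gamma_1,\gamma_2$ by running the algorithm by hand for the first two or three steps, and then induct.

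Next I would determine, for each $n$, the exact sub-interval of $\mu$ on which $w_n$ is the operative (smallest) bound. This is where the sequence $\gamma_n$ enters: the breakpoint between the regime where $v_{j}$ is still ordered and the regime where it first fails is exactly the condition $\lambda_{n}-1 = c$ evaluated at the relevant stage, which, after substituting the already-established form of the bound, rearranges into $\mu = \gamma_{n-1}/\gamma_{n-2}$. So I would (i) show $w_n(\mu)$ agrees with $w_{n-1}(\mu)$ at $\mu=\gamma_{n-1}/\gamma_{n-2}$ (continuity of the piecewise-linear width), (ii) show the intervals $I_n=[\gamma_n/\gamma_{n-1},\,\gamma_{n-1}/\gamma_{n-2})$ are nested decreasing with intersection point $\lambda$, using the two-term behavior of the quadratic factor of the recursion, and (iii) check the endpoints: $\w_{2p}=1$ for $\mu\ge p$ comes from the $c\le 1$ bound of the very first step (here $p=\gamma_1/\gamma_0$), and $\w_{2p}=c_{\vol}$ for $\mu<\lambda$ comes from the dichotomy lemma together with the fact that none of the finitely-many applicable bounds $w_n$ beats the volume bound there. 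Finally, for the reverse inequality (that these bounds are actually attained), I would invoke the ``proceed backward'' strategy of Section~\ref{sec:strategy}: at $c=w_{n_0}(\mu)$ exhibit the explicit automorphism $(\mathfrak{R}\mathfrak{C})^{\bullet}(\mathfrak{S}\mathfrak{C})(\mathfrak{R}\mathfrak{C})^{\bullet}$ carrying $v_{\mu,c}^0$ to a nonnegative reduced vector, and at $c=w_{n_0}(\mu)+\epsilon$ show the same automorphism produces a negative coefficient.

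The main obstacle I anticipate is bookkeeping: keeping precise track of which permutation is needed at each stage (the block lengths $j_n$ shrink by $2$ each Cremona, and eventually the $c$-block and the $(1-c)$-block ``collide,'' changing the combinatorics), and then proving cleanly that the resulting coefficients satisfy the claimed order-$3$ recurrence uniformly in $n$ rather than just for small $n$. A secondary subtlety is checking that the $\gamma_n/\gamma_{n-1}$ are genuinely the transition points — i.e.\ that on $I_n$ the bound $w_n$ is strictly smaller than all $w_m$ with $m\ne n$ and than $c_{\vol}$ and $1$ — which requires monotonicity estimates on the ratios $\gamma_n/\gamma_{n-1}$ and on the slopes $a_{n-1}/(2(a_n+a_{n-1})-1)$ that follow from the sign pattern of the quadratic factor $x^2-(p-2)x+1$ but need to be stated carefully. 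Everything else (the algebra of each individual Cremona step, solving the linear positivity conditions for $c$) is routine.
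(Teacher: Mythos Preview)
Your overall strategy---iterate the reduction algorithm, encode the iteration as a $3\times 3$ linear map, read off the order-$3$ recurrence from its characteristic polynomial $(x-1)(x^2-(p-2)x+1)$, and then locate the breakpoints---is exactly the paper's approach. Two points of execution differ from what you sketch, and knowing them in advance will save you the bookkeeping you anticipate.

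First, the recurring object is not a single Cremona-plus-reorder acting on a vector with shrinking $c$-block. What actually happens is that after you first reach $\hat v_{p-2}=(\lambda_{p-2};c^{\times 3},\lambda_{p-2}-1,(1-c)^{\times(2p-3)})$, one Cremona plus reordering sends you back to a vector \emph{of the same shape} $(A;B^{\times 3},C,D^{\times(2p-3)})$ with new values $(B,C,D)$, and then $(p-3)$ further $\mathfrak{RC}$ moves bring you to the next $\hat v_{p-2}^{(i)}$. So the block sizes are \emph{fixed} at $(3,1,2p-3)$ and the map $T$ you want is the composite of an entire cycle (roughly $(\mathfrak{RC})^{p-3}$ plus one special move), acting on the triple $(B,C,D)$. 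Your intuition about the characteristic polynomial is nonetheless correct: $T$ has exactly the eigenvalues $1,\lambda,\bar\lambda$ you predict.

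Second, the ``secondary subtlety'' you flag---proving that $w_n$ really is the minimum on $I_n$ and that the orbit stays nonnegative until the predicted step---is where the paper introduces a tool you do not mention. They pass to the $2$-dimensional system $R_n=B_n-C_n$, $S_n=C_n-D_n$ governed by a $2\times 2$ matrix $M$ with eigenvalues $\lambda,\bar\lambda$; the orbit of $(R_1,S_1)$ then lies on a hyperbola, and the decisive observation is that the volume constraint $c\le c_{\vol}$ confines $(R_1,S_1)$ to a convex region bounded by a parabola \emph{tangent} to the two eigendirections of $M$. This geometric picture gives for free that $S_n>0$ always and that $R_n$ changes sign at most once, which is precisely the ``almost monotone'' behaviour of $D_n$ (and hence of $w_n$) that you need. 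Your proposed route via sign patterns of the quadratic factor would likely get there too, but the $(R,S)$-plane argument is cleaner and explains conceptually why $\mu=\lambda$ is the exact threshold between full packing and infinitely many obstructions.
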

The computations of the obstructing classes, as well as the stability numbers are immediate consequences of the whole argument and thus only appear at the end of this subsection.

To start with, we know from Lemma~\ref{lemma:TrivialBundleEvenOddDichotomy}, that the
vector
\[
  v_{p-3} =
  \left( \lambda_{p-3}\,; \lambda_{p-3}-1,c^{\times 5},(1-c)^{\times (2p-5)} \right)
\]
is ordered whenever $k\geq 4$, $\mu\geq 1$, and $c\in(1/2,
\min\{c_{\vol}, 1\}]$. Thus, we can perform a $\mathfrak{R} \mathfrak{C}$ move to get
\[
  v_{p-2} =
  \left( \lambda_{p-2}\,;\lambda_{p-2}-1, c^{\times 3}, (1-c)^{\times (2p-3)} \right)
\]
If $v_{p-2}$ is ordered, then another $\mathfrak{R} \mathfrak{C}$ move gives
\[
  v_{p-1} =
  \left( \lambda_{p-1}\,;\lambda_{p-1}-1, c^{\times 1}, (1-c)^{\times (2p-1)} \right),
\]
which is not necessarily ordered, but which is non-negative and whose defect is always zero. The vector $v_{p-1}$ is thus reduced, and we conclude that the $(2p)^{\text{th}}$ generalized Gromov width is $\w_{2p}(M_{\mu}^{0}) = \min\{c_{\vol}, 1\}$.

If $v_{p-2}$ is not ordered, then
\begin{equation}\label{vp-2}
  \hat{v}_{p-2} =
  \left(\lambda_{p-2}\,; c^{\times 3}, \lambda_{p-2}-1, (1-c)^{\times (2p-3)}\right)
\end{equation}
is ordered with defect $d' := 3c - \lambda_{p-2} > d_{0} = 2c-1>0$. Performing a Cremona move and reordering the resulting vector gives
\[
  \hat{v}^{(2)}_{1} :=
  \left(\lambda_{p-2}-d'\,; \lambda_{p-2}-1, (1-c)^{\times (2p-3)}, (c-d')^{\times 3}\right),
\]
which is of the same form as $v_{1}$. This new vector is non-negative if and only if $c-d'\geq 0$. Since $d'>d_{0}$, we have $1-c=c-d_{0}> c-d'$, so that $c-d'$ is a new upper bound for $\w_{2p}(M_{\mu}^{0})$. The defect of $\hat{v}_{1}^{(2)}$ is $d_{0}^{(2)}=d'-d_{0}>0$, so the vector is not reduced and we can perform a $\mathfrak{R} \mathfrak{C}$ move that results in a vector $v_{2}^{(2)}$ of the same form as $v_{2}$. We can repeat this process until we reach the vector 
\[
  v_{p-2}^{(2)} =
  \left(\lambda_{p-2}-d'-(p-3)d_{0}^{(2)}\,; \lambda_{p-2}-1-(p-3)d_{0}^{(2)}, (1-c)^{\times 3}, (c-d')^{\times (2p-3)}\right),
\]
which is of the same type as $v_{p-2}$. Again, we have the following alternative: if $v_{p-2}^{(2)}$ is non-negative and ordered, the algorithm gives a reduced and non-negative vector after one more step. Otherwise, the algorithm enters a new cycle that starts with a vector $v_{1}^{(3)}$ of the same form as $v_{1}$. We claim that we get all the possible obstructions to the embedding of $k=2p$ balls in $M_{\mu}^{0}$ by iterating this simple procedure.

For consistency, let us write $v_{p-2}^{(1)}$ for the vector $\hat{v}_{p-2}$ obtained in~\eqref{vp-2}. We now define
\[v_{p-2}^{(i)} =
  \left( A_{i}\,; B_{i}^{3}, C_{i}, D_{i}^{\times (2p-3)}\right),\quad i=1,2
\]
Since $A_{i}=B_{i}+C_{i}+D_{i}$, those vectors are completely determined by the triples $(B_{i},C_{i},D_{i})$. Using this shorthand notation, the automorphism of $D_{K}(1,k+1)$ that takes the vector $v_{p-2}^{(1)}$ to $v_{p-2}^{(2)}$ is represented by the matrix
\[
  T = \begin{pmatrix}
  0      & 0     & 1 \\
  -p + 3 & p - 2 & 0 \\
  -1     & 1     & 1
\end{pmatrix}
\]
of determinant one. For each integer $n\geq 0$, we define
\begin{equation}\label{SequenceTriples}
  V_{n+1} :=
  \left(B_{n+1},C_{n+1},D_{n+1}\right) = T^{n}(B_{1},C_{1},D_{1})
\end{equation}
The sequence $V_{n}$ can be understood by looking at the Jordan normal form of $T$.  When $p=4$, the matrix $T$ has a single eigenvalue $1$ and can be written as $E\cdot\Delta\cdot E^{-1}$ where
\[ 
  \Delta = 
  \begin{pmatrix}
    1 & 1 & 0 \\
    0 & 1 & 1 \\
    0 & 0 & 1
  \end{pmatrix}
  \qquad\text{and}\qquad
  E = 
  \begin{pmatrix}
     1 & 0 & 0 \\
    1 & 1 & 1 \\
    1 & 1 & 0
  \end{pmatrix}.
\]
Hence, the orbit of $(B_{1},C_{1},D_{1})$ is contained in a plane on which $T$ acts as a shear map.

For $p\geq 5$, we can write $T=E\cdot\Delta\cdot E^{-1}$ where
\[ 
  \Delta = 
  \begin{pmatrix}
    1 & 0 & 0 \\
    0 & \lambda & 0 \\
    0 & 0 & \lambdabar
  \end{pmatrix}
  \qquad\text{and}\qquad
  E = 
  \begin{pmatrix}
    1 & \lambdabar & \lambda \\
    1 & p-3 & p-3 \\
    1 & 1 & 1
  \end{pmatrix},
\]
and where 
\[
  \lambda =
  \frac{p-2+\sqrt{p^{2}-4p}}{2}, \qquad \lambdabar = \frac{p-2-\sqrt{p^{2}-4p}}{2}.
\]
Since $\lambda\lambdabar=1$, the orbit of a point $(B,C,D)\in\R^{3}$ under repeated multiplication by $\Delta$ is contained in the standard hyperbola
\[\left\{ yz = CD, ~x= B\right\}.\]
It follows that the orbit of $(B_{1},C_{1},D_{1})$ under repeated multiplication by $T$
traces a hyperbola contained in an affine plane generated by the eigenvectors $(\lambdabar,p-3,1)$ and $(\lambda,p-3,1)$. For all $p\geq 4$, the orbit of an initial triple $(B_{1},C_{1},D_{1})$ may be reduced to a two dimensional system by the change of variables
\[R_{n} = B_n-C_n \quad \text{~and~}\quad S_{n} = C_n-D_n.\]
In particular, $R_{1}$ and $S_{1}$ are then given by
\[R_{1}=1-\mu+(p-1)(2c-1)\quad\text{~and~}\quad S_{1}=\mu-1-(p-2)(2c-1),\]
and we can write
\[
  \begin{pmatrix} R_{n+1} \\ S_{n+1} \end{pmatrix} =
  M \begin{pmatrix} R_n\\ S_n \end{pmatrix}
\]
where $M$ is the matrix
\begin{equation}
 \label{eqn:Matrix}
  M =
  \begin{pmatrix}
    (p-3)  & -1 \\
    -(p-4) & 1
  \end{pmatrix}
\end{equation}
with eigenvalues $\lambda$ and $\overline{\lambda}$.  When $p \geq 5$, the orbit of a general point $(R_1, S_1)$ under repeated multiplication by $M$ lies along a hyperbola whose asymptotes extend in the eigendirections of $\lambda$ and $\lambdabar$. A quick computation gives eigenvectors $\left(1, \lambda-1\right)$ and $\left(1, \lambdabar -1 \right)$. Note that the asymptote $S=(\lambda-1)R$, corresponding to the eigenvalue $\lambdabar$, has
positive slope, while the asymptote $S = (\overline{\lambda}-1)R$ has negative slope.
\begin{lemma}\label{lemma:VolumeAndAsymptotes}
Given $p\geq 4$ and $\mu\geq 1$, let $c\in(1/2,c_{\vol}]$. Then  the initial point $(R_1(c,\mu), S_1(c,\mu))$  sits in a convex region determined by a parabola  tangent to the lines 
$S=(\lambda-1)R$ and $S = (\overline{\lambda}-1)R$. The orbit of the point satisfies $S_n > 0$ for all $n$. Moreover, 
\begin{itemize}
  \item if $R_1\leq 0$ then $R_n\leq 0$ for all $n$,
  \item if $R_1>0$ then there exists $N>0$ such that $R_n\leq 0$ if and only if $n\geq N$.
\end{itemize}
\end{lemma}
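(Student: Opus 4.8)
The plan is to analyze the dynamics of the linear map $M$ in \eqref{eqn:Matrix} acting on the initial point $(R_1,S_1)$, using the eigenstructure already described. Since $M$ has eigenvalues $\lambda>1>\lambdabar>0$ with $\lambda\lambdabar=1$, and eigenvectors $(1,\lambda-1)$ and $(1,\lambdabar-1)$, I would first express $(R_1,S_1)$ in the eigenbasis: write $(R_1,S_1)=\alpha(1,\lambda-1)+\beta(1,\lambdabar-1)$, so that $(R_n,S_n)=\alpha\lambda^{n-1}(1,\lambda-1)+\beta\lambdabar^{n-1}(1,\lambdabar-1)$. Solving the $2\times 2$ system gives $\alpha$ and $\beta$ as explicit affine functions of $(R_1,S_1)$, hence (via the formulas $R_1=1-\mu+(p-1)(2c-1)$, $S_1=\mu-1-(p-2)(2c-1)$) as explicit affine functions of $\mu$ and $c$. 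The behavior of the orbit is then governed entirely by the signs of $\alpha$ and $\beta$: since $\lambda^{n-1}\to\infty$ and $\lambdabar^{n-1}\to 0$, the eventual sign of both $R_n$ and $S_n$ is the sign of $\alpha$ (unless $\alpha=0$).

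Next I would establish the geometric claim that the initial point lies in the convex region bounded by a parabola tangent to the two asymptote lines $S=(\lambda-1)R$ and $S=(\lambdabar-1)R$. Concretely, as $c$ ranges over $(1/2,c_{\vol}]$ with $\mu$ fixed, the point $(R_1(c,\mu),S_1(c,\mu))$ traces a line segment (both coordinates are affine in $c$); but the relevant parabola arises because the constraint is really the volume inequality $v_{\mu,c}^0\cdot v_{\mu,c}^0\ge 0$, which is quadratic. I would compute $v_0\cdot v_0$ in terms of $R_1,S_1$ — since the self-intersection is $D_K$-invariant, it equals $\hat v_{p-2}\cdot\hat v_{p-2}$, a quadratic form in $(B_i,C_i,D_i)$ and hence in $(R_1,S_1)$ — and show that the sublevel set $\{v_0\cdot v_0\ge 0\}$ is exactly (the closure of) a region cut out by such a tangent parabola. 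The key point to extract is that on this region one always has $S_1>0$: this should follow because $c\le c_{\vol}$ forces $\mu\ge c^2/(\text{something})$... more precisely I would verify directly that $S_1=\mu-1-(p-2)(2c-1)\ge 0$ whenever $c\le c_{\vol}=\sqrt{\mu/p}$ and $\mu\ge 1$, reducing as in Lemma~\ref{lemma:TrivialBundleEvenOddDichotomy} to checking a quadratic in $\mu$ has the right sign — and that $S_1>0$ in the interior.

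Given $S_1>0$ and the parabola-region constraint, I would deduce that $\beta>0$: the asymptote $S=(\lambdabar-1)R$ has negative slope, the point lies on the side of both asymptotes forced by the parabola being tangent to them, and unwinding the eigenbasis decomposition shows the $\lambdabar$-component coefficient $\beta$ is positive (this is where tangency to the negative-slope asymptote matters — it pins down which half-plane the point is in). Then $S_n=\alpha\lambda^{n-1}(\lambda-1)+\beta\lambdabar^{n-1}(\lambdabar-1)$; when $\alpha\ge 0$ this is a sum whose first term is $\ge 0$ and whose second term, with $\lambdabar-1<0$, I must still show cannot overwhelm it — but since the orbit lies on a branch of a hyperbola on the correct side, $S_n>0$ follows from the invariance of the hyperbola together with $S_1>0$ (the branch never crosses the axes in that quadrant); when $\alpha<0$ a separate check is needed and in fact $S_n\to-\infty$ would occur, so the parabola constraint must also preclude $\alpha<0$, i.e. the admissible region forces $\alpha\ge 0$. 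Finally, for $R_n$: $R_n=\alpha\lambda^{n-1}+\beta\lambdabar^{n-1}$ with $\beta>0$ and $\alpha\ge 0$. If $R_1\le 0$ then (since $R_1=\alpha+\beta$) we get $\alpha=R_1-\beta<0$, contradicting $\alpha\ge 0$ unless $\alpha=0$, in which case $R_n=\beta\lambdabar^{n-1}$ stays positive — so I should restate/reexamine: the correct dichotomy is $\alpha=0$ (then $R_1=\beta>0$, impossible for the $R_1\le 0$ branch) versus $\alpha>0$. I expect the precise bookkeeping of the degenerate $p=4$ case (where $M$ is not diagonalizable and one instead uses the shear form $\Delta$, with $R_n$ linear rather than exponential in $n$) to be the main technical nuisance, and the sharp statement "$R_n\le 0\iff n\ge N$" to require carefully tracking that $R_n$ is eventually monotone — decreasing through zero exactly once — which follows from $R_{n+1}-R_n$ having a constant sign once $n$ is large, again read off from the eigenbasis expansion. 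The main obstacle is therefore establishing the parabola-tangency picture rigorously and using it to control the sign of $\alpha$ (equivalently, to guarantee the orbit stays on the good hyperbola branch), since everything downstream is routine once the correct half-plane/region is identified.
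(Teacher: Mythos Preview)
Your eigenbasis approach is a legitimate alternative to the paper's geometric argument, but there is a concrete error that causes the whole sign analysis to unravel: you have the eigenvector--eigenvalue pairing backwards. As the paper notes just before the lemma, the vector $(1,\lambda-1)$ is the eigenvector for $\lambdabar$, not for $\lambda$ (and symmetrically $(1,\lambdabar-1)$ is the $\lambda$-eigenvector). One checks directly that $M(1,\lambda-1)^{T}=\lambdabar\,(1,\lambda-1)^{T}$ using $\lambda+\lambdabar=p-2$ and $\lambda\lambdabar=1$. With your pairing, the expanding direction has positive $S$-slope, so $\alpha>0$ would force $R_n\to+\infty$, and this is exactly why your case analysis at the end jams: you deduce $\alpha\geq 0$ from the parabola constraint and then cannot account for $R_1\leq 0$.

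With the correct pairing, write $(R_1,S_1)=\alpha(1,\lambdabar-1)+\beta(1,\lambda-1)$ where $\alpha$ is the $\lambda$-coefficient. Then $S-(\lambda-1)R=\alpha(\lambdabar-\lambda)$ and $S-(\lambdabar-1)R=\beta(\lambda-\lambdabar)$, so the convex region above both asymptotes is precisely $\{\alpha<0,\ \beta>0\}$. Now $R_n=\alpha\lambda^{n-1}+\beta\lambdabar^{n-1}$ and $R_{n+1}-R_n=\alpha\lambda^{n-1}(\lambda-1)+\beta\lambdabar^{n-1}(\lambdabar-1)$, which is a sum of two negative terms; hence $R_n$ is strictly decreasing to $-\infty$, giving both bullets of the lemma at once. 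Similarly $S_n=\alpha(\lambdabar-1)\lambda^{n-1}+\beta(\lambda-1)\lambdabar^{n-1}$ is a sum of two positive terms. So once the pairing is fixed, your strategy closes cleanly and is arguably more transparent than the paper's argument.

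For comparison, the paper does not decompose in the eigenbasis at all. It parametrizes the volume boundary $c=c_{\vol}(\mu)$ as the curve $c\mapsto(R(c,c^2p),S(c,c^2p))$, checks by a direct slope computation that this parabola is tangent to each asymptote, and then argues geometrically: the asymptote lines are $M$-invariant, so an orbit starting above both stays above both (hence $S_n>0$), and in the first quadrant the invariant hyperbola must cross the $S$-axis, with discreteness of the orbit giving the required~$N$. The $p=4$ case is dispatched separately since $M$ is then a shear; in your approach it would need a separate Jordan-block computation (your ``main technical nuisance'' remark is apt). The paper's route is shorter and avoids solving for $\alpha,\beta$, while yours yields the stronger fact that $R_n$ is strictly monotone, which you would otherwise have to argue for separately.
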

\begin{proof}
Assume first $p\geq 5$. Then all the points $(R(c,\mu), S(c,\mu))$ for which $c \leq c_{\vol} = \sqrt{\mu/p}$ sit in the convex region determined by the parametrized parabola $(R(c), S(c)):=\left(R(c,c^2 p), S(c, c^2 p)\right)$, see Figure~\ref{fig1} below. All is left to show is the tangency of this parabola to the lines $S = (\overline{\lambda}-1)R$ and $S=(\lambda-1)R$.

Let us show the tangency to $S=(\lambda-1)R$; the other one is similar. The value $c_{\lambda}$ for which the slope of the parabola is $\lambda-1$ at the point $(R(c_{\lambda}),~ S(c_{\lambda}))$ is
\begin{equation}
  c_{\lambda} = \frac {(p-1)\lambda -1}{ p \lambda} =c_{\vol}(\lambda):=\sqrt{\lambda/p}
\end{equation}
and it is immediate to check that the point $(R(c_{\lambda}), S(c_{\lambda}))$ is on
the line $S(c) -(\lambda-1)R(c)=0$, that~is
\begin{equation}
  S(c_{\lambda}) -(\lambda-1)R(c_{\lambda}) = \frac{\lambda^2 +(p-2) \lambda -1}{p \lambda} = 0.
\end{equation} 
This proves the first assertion. To prove the second statement, we observe that if the point in the upper plane lies above the two asymptotes, so does its hyperbolic orbit. Thus $S_n>0$. On the other hand if the point $(R_1,S_1)$ is in the first quadrant, then its corresponding hyperbola intersects the vertical axis. Since the orbit does not have accumulation points, that finishes the proof in the case~$p\geq 5$. When $p=4$, the matrix~\eqref{eqn:Matrix} becomes
\begin{equation}   
 M =
    \begin{pmatrix}
      1  & -1 \\
      0 & 1
    \end{pmatrix},
\end{equation}
which shears the points in the first quadrant horizontally toward the left. The conclusion follows readily. 
\end{proof}
\begin{figure}[htb!]
\centering%
\includegraphics[scale=0.25]{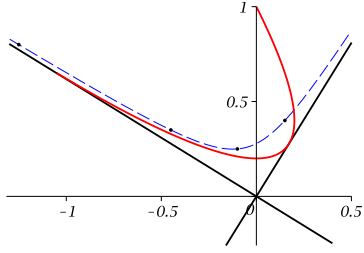}
\caption{The volume curve $(R(c,c^{2}p),~S(c,c^{2}p))$ (solid red curve), eigendirections, and an orbit under iterates of $M$ (dashed blue curve) are graphed for the value $p=5$. In general, the intersections of the volume curve with the vertical axis occur at $c=1$ and $c=(p-2)/p$ which correspond, respectively, to $\mu=p$ and $\mu=(p-2)^{2}/p$. The tangency point with positive slope occurs at $c_{\lambda}$, which corresponds to $\mu=\lambda$. We have packing obstructions for points on the volume curve between $\mu = p$ and $\mu=\lambda$, while we have full packings for points on the volume curve between $\mu=\lambda$ and $\mu = 1\geq (p-2)^{2}/p$.}
\label{fig1}
\end{figure}

For the initial vector $v_{0}=(\mu+1-c\,;\mu-c, c^{\times (2p-1)}, 1-c)$ to belong to the closure of the symplectic cone, it is necessary that all triples $(B_{n},C_{n},D_{n})$ be non-negative. Before we investigate the positivity of these coordinates using the two dimensional picture, we will take a short necessary excursion into the standard theory of recurrent sequences. By the Cayley-Hamilton theorem, the vectors $V_{i}=(B_{i}, C_{i},
D_{i})$ must satisfy the recurrence relation defined by the characteristic polynomial of $T$, namely
\begin{equation}\label{MainRecurrence}
V_{n+3} = (p-1)V_{n+2}-(p-1)V_{n+1}+V_{n}
\end{equation}
It follows that any linear combination of the coefficients $B_{i}$, $C_{i}$, and $D_{i}$ satisfies the same recurrence as well. In particular, we have
\[  D_{n+3} = (p-1)D_{n+2} - (p-1)D_{n+1} + D_{n}\]
with initial conditions
\[
  D_{0} = 0 \quad
  D_{1} = 1-c, \quad
  D_{2} = (p+\mu-1)+(1-2p)c ,\quad D_{3} =
  (p-1)(p+\mu-2)+(-2p^2 + 4p - 1)c.
\]
In fact, since the matrix $T$ is unimodular and has eigenvalues $1,~\lambda,~\overline{\lambda}$, any affine combination of sequences that satisfy the relation will satisfy it too. One more subtle relation is the following lemma: 
\begin{lemma}\label{lemma:crossedsequence}
If two sequences  $x_n, y_n$ satisfy a recurrence of type~\eqref{MainRecurrence} then the sequence 
\begin{equation}\label{thecrossedsequence}
  \phi_n = x_n y_n - x_{n+n_0} y_{n-n_0}
\end{equation}
satisfy the same recurrence.
\end{lemma}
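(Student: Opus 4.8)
The plan is to work at the level of the characteristic polynomial
\[
\chi(t) = t^{3} - (p-1)t^{2} + (p-1)t - 1
\]
of the recurrence~\eqref{MainRecurrence}, and exploit the key arithmetic fact about its roots, namely that $\chi$ is \emph{palindromic}: its coefficients read $1, -(p-1), (p-1), -1$, so its roots come in reciprocal pairs. Since the three roots are $1$, $\lambda$, $\lambdabar$ with $\lambda\lambdabar = 1$, each root $\rho$ has the property that $\rho^{-1}$ is also a root. (Equivalently: if a sequence $a_n$ satisfies the recurrence then so does $a_{-n}$, after extending to all of $\Z$.) The crossed sequence $\phi_n = x_n y_n - x_{n+n_0} y_{n-n_0}$ is built from products of solutions, so the natural arena is the $9$-dimensional space of sequences satisfying the ``tensor-square'' recurrence whose characteristic roots are the nine products $\rho_i \rho_j$ with $\rho_i,\rho_j \in \{1,\lambda,\lambdabar\}$; one checks this list is $\{1,1,1,\lambda,\lambda,\lambdabar,\lambdabar,\lambda^{2},\lambdabar^{2}\}$. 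I would prove the lemma by showing that $\phi_n$ actually lies in the $3$-dimensional subspace spanned by the roots $\{1,\lambda,\lambdabar\}$ — i.e.\ that the ``extra'' roots $\{1,1,\lambda,\lambdabar,\lambda^2,\lambdabar^2\}$ do not appear in the spectral expansion of $\phi_n$.

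Concretely, first I would reduce to the case where $x_n$ and $y_n$ are pure geometric solutions $\rho_1^{n}$ and $\rho_2^{n}$ with $\rho_1,\rho_2 \in \{1,\lambda,\lambdabar\}$; by bilinearity in $(x_n)$ and in $(y_n)$ the general case follows, since every solution of~\eqref{MainRecurrence} is a linear combination of $1^n, \lambda^n, \lambdabar^n$ (the roots being distinct when $p \geq 5$, with the confluent $p=4$ case handled by a limiting/continuity argument or by using $n\lambda^n$-type solutions and the same cancellation). For geometric solutions one computes directly:
\[
\phi_n = \rho_1^{n}\rho_2^{n} - \rho_1^{n+n_0}\rho_2^{n-n_0}
       = (\rho_1\rho_2)^{n}\bigl(1 - \rho_1^{n_0}\rho_2^{-n_0}\bigr),
\]
so $\phi_n = c\,(\rho_1\rho_2)^{n}$ is itself a geometric sequence with ratio $\rho_1\rho_2$. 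Thus it suffices to verify that for every pair $\rho_1,\rho_2 \in \{1,\lambda,\lambdabar\}$ the product $\rho_1\rho_2$ is a root of $\chi$. Using $\lambda\lambdabar = 1$: the products are $1\cdot 1 = 1$, $1\cdot\lambda = \lambda$, $1\cdot\lambdabar = \lambdabar$, $\lambda\cdot\lambdabar = 1$, $\lambda\cdot\lambda = \lambda^2$, $\lambdabar\cdot\lambdabar = \lambdabar^2$. The first four are manifestly roots; the remaining two, $\lambda^2$ and $\lambdabar^2$, are \emph{not} roots of $\chi$ in general — and this is exactly where the antisymmetric combination in~\eqref{thecrossedsequence} does its work. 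For the pair $(\rho_1,\rho_2) = (\lambda,\lambda)$ we get $\phi_n = \lambda^{2n}(1 - \lambda^{n_0}\lambda^{-n_0}) = \lambda^{2n}\cdot 0 = 0$, and likewise $\phi_n \equiv 0$ for $(\lambdabar,\lambdabar)$; the ``bad'' ratios $\lambda^{2},\lambdabar^{2}$ appear only with zero coefficient. Hence in every case $\phi_n$ is a combination of geometric sequences with ratios in $\{1,\lambda,\lambdabar\}$, so it satisfies~\eqref{MainRecurrence}.

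The one genuine subtlety — the step I expect to need the most care — is the degenerate case $p=4$, where $\chi(t) = (t-1)^{3}$ and the ``solution basis'' is $1, n, n^2$ rather than three distinct geometric sequences; here the tensor-square space is spanned by $1, n, n^2, n^3, n^4$ and one must check that the antisymmetrization $x_n y_n - x_{n+n_0}y_{n-n_0}$ kills the $n^3$ and $n^4$ terms (equivalently, drops the polynomial degree by at least two), leaving a sequence of the form $An^2 + Bn + C$, which satisfies $(t-1)^3$. This follows from a short direct computation: writing $x_n, y_n$ as quadratics in $n$, the degree-$4$ and degree-$3$ parts of $x_n y_n$ depend only on the leading coefficients and are symmetric under $n \mapsto$ the shift structure in a way that cancels in the difference $x_n y_n - x_{n+n_0} y_{n-n_0}$, because $(n+n_0) + (n-n_0) = 2n$ and $(n+n_0)(n-n_0) = n^2 - n_0^2$ differ from $n\cdot n = n^2$ by a lower-order term. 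Alternatively, one can avoid the case split entirely by treating $p$ as a formal parameter (or a real parameter ranging over an interval where the roots are distinct) and invoking continuity of the recurrence coefficients of $\phi_n$ in $p$, since those coefficients are polynomial in the entries of $T$. Either route closes the argument.
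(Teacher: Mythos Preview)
Your proposal is correct and follows essentially the same approach as the paper: both arguments decompose solutions of~\eqref{MainRecurrence} over the root set $\{1,\lambda,\lambdabar\}$, observe that the crossed combination kills the $\lambda^{2n}$ and $\lambdabar^{2n}$ contributions (the paper states this cancellation directly, you explain it via the vanishing coefficient $1-\rho_1^{n_0}\rho_2^{-n_0}$ when $\rho_1=\rho_2$), and then treat $p=4$ separately by noting that solutions are quadratic polynomials in $n$. Your presentation is somewhat more explicit and structured---the bilinearity reduction to pure geometric sequences makes the cancellation mechanism transparent---but the underlying idea is identical.
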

\begin{proof}
We first consider the case $p\geq 5$. We will use the shorthand  $[\lambda]^n$ for the vectors $[\lambda^n,\overline{\lambda}\,^n,1]= [\lambda^n,{\lambda}^{-n},1]$. Since the three eigenvalues $1$, $\lambda$, and $\lambdabar$ are distinct, a sequence $y_n$ satisfies the recurrence~\eqref{MainRecurrence} if, and only if $y_n= [d] \cdot [\lambda]^{n}$ for some row vector $[d]=[d_1, d_2,d_3]$. Then the sequence \[\phi_n =  \left ([d] \cdot [\lambda]^{n} \right )\left ([d'] \cdot [\lambda]^{n}\right )
-  \left ([d] \cdot [\lambda]^{n+n_0}\right )\left ([d'] \cdot [\lambda]^{n-n_0}\right )\] 
 
Note that the terms containing the powers $\lambda^{2n}$ and  $\lambda^{-2n}$ will cancel out; one can easily check that an expansion of the rest of the expression  will be a new linear combination
 \[\phi_n= [d''] \cdot [\lambda]^{n}\] 
and thus verifies the recurrence~\eqref{MainRecurrence}. 

When $p=4$, the characteristic polynomial of the recurrence has a single root of order three. In that case, the general theory of recurrences implies that a sequence satisfies~\eqref{MainRecurrence} if, and only if, it is given by a quadratic polynomial in $n$. The lemma can be easily verified.
\end{proof}
Let us write now $D_{n}=\alpha_{n} - \beta_{n}c$, the coefficients $\alpha_{n}$
and $\beta_{n}$ must satisfy the recurrence~\eqref{MainRecurrence}
with initial conditions
\[
  \alpha_{1}=1, \qquad \alpha_{2}=p+\mu-1, \qquad \alpha_{3}=(p-1)(p+\mu-2),
\]
and
\[
  \beta_{1}=1, \qquad \beta_{2}=2p-1, \qquad \beta_{3}=2p^{2}-4p+1.
\]
 
Thus all components $D_n(c, \mu)$ depend linearly on $c$ so there is a sequence of positive numbers $w_n, n \geq 1$ such that $D_n(c, \mu) >0$ if and only if $c \leq w_n$. The sequence $w_n$ is  obtained as follows:
\begin{equation}\label{thew}
  c \leq w_n = \frac{\alpha_n }{\beta_n},
\end{equation}
with the first initial few given by
\[
  w_{1} = 1,\quad w_{2}=\frac{p+\mu-1}{2p-1},\quad w_{3}= \frac{(p-1)(p+\mu-2)}{2p^{2}-4p+1}.
\]
The sequence $\{w_n\}$ satisfies several identities. For our purpose, one of the most useful is the following alternative definition whose equivalence with~\eqref{thew} can be easily checked by an easy induction argument:
\begin{equation}\label{thewwitha}
  w_n(\mu) = \frac{a_n +a_{n-1} \mu}{2(a_n+a_{n-1})-1}.
\end{equation}
where $\{a_n\}$ is an increasing sequence that satisfies the recurrence~\eqref{MainRecurrence} with initial conditions
\[
  a_0=0,\quad a_{1}=1,\quad a_{2}=p-1,a_{3}=(p-1)(p-2)
\]
The following lemma translates the results obtained by studying the two dimensional linear system $M$ in the variables $(R,S)$ into conclusions about the three dimensional linear system $T$ in the variables $(B,C,D)$.
\begin{lemma}\label{thednbehaviour}
Fix $p \geq 4$, $\mu \geq 1$ and $c$ so that $1/2\leq c\leq c_{\vol}$. Additionally,
restrict to only those pairs $(c,\mu)$ for which  $(B_1,C_1,D_1)$ is in the first octant.
The following facts hold:
\begin{enumerate}
  \item The orbit $(B_n,C_n,D_n)$ of $(B_1,C_1,D_1)$ remains in
    the first octant as long as the coordinate $D_n$ remains positive.
    Thus, the vector $v_{p-2}(c,\mu)$ is in the symplectic cone
    $\mathcal{C}_{K}$ if and only if $D_n(c,\mu) \geq 0$ for all $n
    \in \N$.  

  \item The sequence $D_n$ has an almost monotone behavior, that is,
    only one of the following statements holds:
    \begin{itemize}
      \item The sequences $D_n(c, \mu)$ and $w_n(\mu)$ are strictly
        increasing. Additionally, $c_{\vol}(\mu) \leq w_n(\mu)$ for all $n \in \N$
      \item There exist a natural number $N >1$ such that $D_n(c,
        \mu)$ and $w_n(\mu)$ are decreasing for $n \leq N$ and
        increasing for $n \geq N$. 
    \end{itemize}
\end{enumerate}
\end{lemma}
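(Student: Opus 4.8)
The plan is to prove both parts by exploiting the reduction of the three-dimensional linear system $T$ acting on triples $(B_n, C_n, D_n)$ to the two-dimensional system $M$ acting on $(R_n, S_n) = (B_n - C_n, C_n - D_n)$, together with the geometric picture established in Lemma~\ref{lemma:VolumeAndAsymptotes}. For part (1), I would first record the elementary algebraic relations $B_n = D_n + R_n + S_n$ and $C_n = D_n + S_n$, so that the positivity of $(B_n, C_n, D_n)$ is equivalent to the three inequalities $D_n \geq 0$, $D_n + S_n \geq 0$, and $D_n + R_n + S_n \geq 0$. By Lemma~\ref{lemma:VolumeAndAsymptotes}, under the hypothesis that $(c,\mu)$ lies in the region $1/2 \leq c \leq c_{\vol}$ with $(B_1, C_1, D_1)$ in the first octant, we already know $S_n > 0$ for all $n$. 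Hence $D_n \geq 0$ immediately forces $D_n + S_n > 0$. For the remaining inequality $B_n = D_n + R_n + S_n \geq 0$, I would split into the two cases of Lemma~\ref{lemma:VolumeAndAsymptotes}: if $R_1 \leq 0$, then $R_n \leq 0$ for all $n$ and one must check that $S_n$ dominates $|R_n|$ — this follows because the initial point lies above both asymptotes (in particular above $S = (\lambdabar - 1)R$, the one of negative slope), and this ``above the asymptotes'' condition is preserved by the hyperbolic dynamics of $M$, giving $S_n + R_n \geq 0$ hence $B_n \geq D_n \geq 0$. If $R_1 > 0$, then $R_n > 0$ for $n < N$ (so $B_n \geq D_n + S_n > D_n \geq 0$ trivially when $D_n \geq 0$) and $R_n \leq 0$ for $n \geq N$, where the same asymptote argument applies. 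This establishes that the orbit stays in the first octant exactly as long as $D_n \geq 0$; combined with Corollary~\ref{cor:FundamentalDomain} and the discussion preceding the lemma identifying $v_{p-2}(c,\mu)$ with these triples, part (1) follows.

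For part (2), the key observation is that $D_n = \alpha_n - \beta_n c$ with $\alpha_n, \beta_n > 0$ both satisfying the recurrence~\eqref{MainRecurrence}, and $w_n = \alpha_n/\beta_n$. The sign of $D_{n+1} - D_n$ is governed by $(\alpha_{n+1} - \alpha_n) - (\beta_{n+1} - \beta_n)c$, so the monotonicity of the sequence $D_n$ at a fixed $c$ is controlled by comparing $c$ with the ratios $(\alpha_{n+1}-\alpha_n)/(\beta_{n+1}-\beta_n)$; I would show these ratios interleave with the $w_n$ appropriately. The cleanest route is via the two-dimensional picture: since $D_n = D_1 + \sum_{j=1}^{n-1}(D_{j+1} - D_j)$ and $D_{j+1} - D_j = -(S_{j+1} - S_j) + \text{(lower order)}$... more precisely, I would use the explicit relation $D_{n+1} - D_n$ in terms of $R_n, S_n$ obtained by unwinding the definitions, and observe that the sign of $D_{n+1}-D_n$ changes at most once because it is, up to positive constants, a fixed linear functional evaluated along the orbit $(R_n, S_n)$, which by the convexity/hyperbola structure of Lemma~\ref{lemma:VolumeAndAsymptotes} crosses any line at most once (the orbit lies on a branch of a hyperbola, hence meets any affine line in at most the relevant number of points, and the monotone drift toward the asymptote pins this down). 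This yields the dichotomy: either $D_n$ is strictly increasing for all $n$ (the case $R_1 \leq 0$, corresponding geometrically to the initial point lying on the far side, so that $w_n$ increases past $c_{\vol}$ and never obstructs), or there is a unique $N > 1$ with $D_n$ decreasing for $n \leq N$ and increasing thereafter (the case $R_1 > 0$). The statement about $w_n$ being correspondingly monotone follows from $w_n = \alpha_n/\beta_n$ together with the same analysis applied to the sequence $D_n(w_m, \mu)$ for the critical index, or by a direct induction using~\eqref{thewwitha} and the increasing sequence $a_n$; I would also need the inequality $c_{\vol}(\mu) \leq w_n(\mu)$ in the increasing case, which follows from the tangency computation in Lemma~\ref{lemma:VolumeAndAsymptotes} (the volume curve lies in the convex region bounded by the parabola, which is tangent to the asymptotes, so the volume value of $c$ never exceeds any $w_n$ when the orbit is of increasing type).

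The main obstacle I anticipate is the ``at most one sign change'' claim for $D_{n+1} - D_n$, i.e., the almost-monotonicity: a priori a sequence satisfying a third-order recurrence with eigenvalues $1, \lambda, \lambdabar$ (with $\lambda > 1 > \lambdabar > 0$) is a combination $A + B\lambda^n + C\lambdabar^n$, and such a combination of $D_{n+1} - D_n = B(\lambda-1)\lambda^{n-1} + C(\lambdabar - 1)\lambdabar^{n-1}$ — this is $B(\lambda-1)\lambda^{n-1} - C(1-\lambdabar)\lambdabar^{n-1}$, a difference of two terms, one growing and one decaying (both with a definite sign once $n$ is large), so it indeed changes sign at most once once we know the signs of $B$ and $C$. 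Pinning down those signs is where the geometry (first-octant hypothesis, position relative to the asymptotes, the tangent parabola bounding the volume locus) does the real work, and I expect the bulk of the careful argument to be bookkeeping that the case $R_1 \leq 0$ versus $R_1 > 0$ matches the case $C \geq 0$ (pure growth, no obstruction from $D_n$ beyond $w_1 = 1$) versus $C < 0$ (a genuine dip, producing the sequence of obstructions $w_n$), and verifying $N > 1$ rather than $N \geq 1$, which amounts to checking $D_2 < D_1$ strictly in the obstructed case, i.e., $w_2 < 1$, which holds whenever $\mu < p$. The $p = 4$ case, where the recurrence is degenerate and $D_n$ is quadratic in $n$, will be handled separately and more easily since a quadratic changes its difference's sign at most once automatically.
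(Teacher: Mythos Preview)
Your overall strategy --- reduce to the two--dimensional system $(R_n,S_n)$ and invoke Lemma~\ref{lemma:VolumeAndAsymptotes} --- is the same as the paper's. But you are missing the two one--line identities that make the proof immediate, and this causes a genuine error in part~(1).

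Look at the matrix $T$. Its first row is $(0,0,1)$, so $B_{n+1}=D_n$, i.e.\ $B_n=D_{n-1}$. Its third row is $(-1,1,1)$, so $D_{n+1}=-B_n+C_n+D_n=D_n-R_n$. These two relations are all that is needed. For part~(1): $S_n>0$ gives $C_n=D_n+S_n>0$ whenever $D_n>0$, and $B_n=D_{n-1}>0$ by the inductive hypothesis on the previous step. For part~(2): $D_{n+1}-D_n=-R_n$, so the monotonicity of $D_n$ is exactly the sign of $R_n$, and Lemma~\ref{lemma:VolumeAndAsymptotes} says $R_n$ changes sign at most once.

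Your argument for part~(1) in the case $R_1\le 0$ is actually incorrect. You claim that ``above both asymptotes'' forces $S_n+R_n\ge 0$, but being above the line $S=(\lambdabar-1)R$ only gives $S_n\ge(1-\lambdabar)|R_n|$, and $1-\lambdabar<1$, so this does not imply $S_n\ge|R_n|$. In fact the opposite holds: using the identities above, $S_n+R_n=B_n-D_n=D_{n-1}-D_n=R_{n-1}\le 0$ in this regime. So your proposed bound on $B_n$ via $S_n+R_n\ge 0$ fails; the correct route is simply $B_n=D_{n-1}$.

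For part~(2), your eigenvalue expansion and ``linear functional along a hyperbola'' arguments are valid in principle but unnecessary once you have $D_{n+1}-D_n=-R_n$; the dichotomy then follows in one line from the dichotomy for $R_n$ already established in Lemma~\ref{lemma:VolumeAndAsymptotes}.
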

\begin{proof}
The proofs are immediate. By Lemma~\ref{lemma:VolumeAndAsymptotes} $S_n =C_n -D_n$ is always positive.  It immediately follows that $D_n>0$ implies $C_n>0$. Moreover, since $B_n=D_{n-1}$, it immediately follows that $B_n>0$.

For part (2), recall that the volume condition implies $S_1 > 0$ and $D_{2} = D_1 -(B_1 - C_1)= D_1-R_1$.

In the case that $R_1 > 0$, using Lemma~\ref{lemma:VolumeAndAsymptotes} again, there exists $N > 0$ such that $R_n \leq 0$ if and only if $n \geq N$. Since $D_{n+1} = D_n - R_n$, the sequence $D_n$ is decreasing for $n \leq N$ and increasing for $n \geq N$.  If $R_1 \leq 0$ then as explained in the proof of Lemma~\ref{lemma:VolumeAndAsymptotes} the orbit $(R_n, S_n)$ approaches the asymptote $S = (\overline{\lambda} -1)R$ in the second quadrant, hence $R_n$ remains negative for all $n$. This implies that $D_n$ is always increasing.

Clearly, $w_n(\mu)$ has the same behavior as the sequences $D_n(c,\mu)$. Moreover, in the case when $D_1 <0$, since the sequence $w_n (\mu)$ is increasing, it is sufficient to show that $c_{\vol} \leq w_1 =1$ which is clear. 
\end{proof}
We can now state the main result of this section, namely
\begin{cor}\label{widthismin}
The $(2p)^{\text{th}}$ generalized Gromov width of the trivial bundle $M_{\mu}^{0}$ is 
\[\w_{2p}\left(M_{\mu}^{0}\right)=\min_{i \in \N}\left\{c_{\vol},~w_i(\mu)\right\}\]
where the sequence $w_i$ has at most one minimum. 
\end{cor}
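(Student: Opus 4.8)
The plan is to combine the reduction algorithm analysis carried out above with Lemma~\ref{thednbehaviour}, tracking carefully what happens when the algorithm terminates versus when it enters a new cycle. First I would observe that the discussion preceding Lemma~\ref{lemma:VolumeAndAsymptotes} shows that, for $\mu \geq 1$ and $c \in (1/2, \min\{c_{\vol},1\}]$, the reduction algorithm applied to $v_{0} = v_{\mu,c}^{0}$ produces, after the initial block of $\mathfrak{RC}$-moves, the vector $v_{p-2}$, and that the algorithm then either terminates with a reduced nonnegative vector (when $v_{p-2}$ is ordered) or enters the cyclic regime governed by the triples $(B_n, C_n, D_n) = T^{n-1}(B_1,C_1,D_1)$. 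The key dichotomy is that a full packing by $2p$ balls of size $c$ exists if and only if $v_0$ lies in the closure of $\mathcal{C}_{K}(X_{2p+1})$, which by Corollary~\ref{cor:FundamentalDomain} happens if and only if every vector in the $D_{K}$-orbit is nonnegative; by Lemma~\ref{thednbehaviour}(1), once $(B_1,C_1,D_1)$ is in the first octant, this is equivalent to $D_n(c,\mu) \geq 0$ for all $n \in \N$. Meanwhile, if $(B_1,C_1,D_1)$ is \emph{not} in the first octant, then since $B_1 = D_0 = 1-c \geq 0$ and $C_1 = c - d' $ with $D_1 = c-d'-\cdots$, the first failure of positivity already occurs at one of $C_1$ or $D_1$, and one checks $C_1 \geq 0 \Leftrightarrow c \leq w_1 = 1$ is automatic while $D_1 \geq 0$ is the constraint $c \leq w_2$; so this case is subsumed by the statement $c \leq \min_i w_i$.

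Next I would assemble these facts. Since each $D_n(c,\mu)$ is affine in $c$ with $D_n(c,\mu) \geq 0 \Leftrightarrow c \leq w_n(\mu)$ (this is precisely~\eqref{thew}), the condition ``$D_n(c,\mu)\geq 0$ for all $n$'' is exactly ``$c \leq \inf_n w_n(\mu)$''. Combined with the standing volume constraint $c \leq c_{\vol}$ and the constraint $c \leq 1$ (which is $w_1$), we get that a size-$c$ packing exists precisely when $c \leq \min\{c_{\vol}, \inf_{i\in\N} w_i(\mu)\}$, hence
\[
\w_{2p}(M_{\mu}^{0}) = \min\Bigl\{ c_{\vol},\ \inf_{i\in\N} w_i(\mu) \Bigr\}.
\]
To replace the infimum by a minimum and to justify ``at most one minimum,'' I would invoke Lemma~\ref{thednbehaviour}(2): the sequence $\{w_n(\mu)\}$ is either strictly increasing (in which case $\inf_n w_n = w_1 = 1$, realized) or there is an integer $N \geq 1$ with $w_n$ decreasing for $n \leq N$ and increasing for $n \geq N$, so the infimum is attained at $n = N$. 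In either case the infimum is a genuine minimum and is attained at a single index (or along a ``flat'' pair in the transitional case $p=4$, where $M$ is a shear — here one checks directly from $\beta_n = 2p-1, 2p^2-4p+1,\dots$ that the $w_n$ still have a unique turning point). Either way the sequence $\{w_n\}$ has at most one minimum, which is the last clause of the corollary.

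The main obstacle I anticipate is the direction ``full packing $\Rightarrow$ $c \leq \min_i w_i$'' in the boundary case, i.e. confirming that no obstruction is missed when $v_0$ lies on $\partial\mathcal{C}_{K}$ rather than in its interior: one must verify that the reduction algorithm's termination analysis (the argument in the paragraph after~\eqref{vp-2} showing every cycle reproduces a vector of the form of $v_{p-2}$, together with the claim there that ``we get all the possible obstructions by iterating this simple procedure'') genuinely exhausts the orbit, so that nonnegativity of the full orbit reduces to nonnegativity of the $D_n$ alone and not of some auxiliary coordinate that the shorthand $(B_n,C_n,D_n)$ suppresses. Lemma~\ref{thednbehaviour}(1) is the crucial input here — it is exactly the statement that in the first octant the coordinate $D_n$ is the binding one — so the proof reduces to citing it, and the remaining work is the bookkeeping of matching the intermediate upper bounds $c - d'$, $c - d^{(2)}$, etc., produced by the algorithm against the values $w_2(\mu), w_3(\mu), \ldots$, which is routine from the recurrence~\eqref{MainRecurrence} and the initial conditions for $\alpha_n, \beta_n$.
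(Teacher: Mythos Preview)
Your proposal is correct and follows essentially the same route as the paper: both arguments reduce the question to Lemma~\ref{thednbehaviour}, using part~(1) to translate ``$v_{0}\in\overline{\mathcal{C}_{K}}$'' into ``$D_{n}(c,\mu)\geq 0$ for all $n$'', and part~(2) to conclude that the sequence $\{w_{n}(\mu)\}$ either increases or has a single turning point, so the infimum is attained. The only cosmetic difference is that the paper phrases the argument by fixing $c=w_{N}(\mu)$ (or $c=c_{\vol}$) and verifying explicitly that the algorithm terminates at a reduced nonnegative vector once $R_{n}$ changes sign, whereas you package the same content as the equivalence $D_{n}\geq 0 \Leftrightarrow c\leq w_{n}$ and take the minimum directly.

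One small bookkeeping slip to fix: in your paragraph handling the case where $(B_{1},C_{1},D_{1})$ might fail to lie in the first octant, you write $D_{1}=c-d'-\cdots$ and $C_{1}\geq 0\Leftrightarrow c\leq w_{1}=1$. In fact $D_{1}=1-c$ and $C_{1}=\lambda_{p-2}-1$, and it is $D_{1}\geq 0$ that gives $c\leq 1=w_{1}$, while $C_{1}>0$ holds automatically under the standing hypothesis $c\leq c_{\vol}$ (this is exactly the content of Lemma~\ref{lemma:TrivialBundleEvenOddDichotomy}). So the first-octant hypothesis of Lemma~\ref{thednbehaviour}(1) is always satisfied in the regime $c\in(1/2,\min\{1,c_{\vol}\}]$, and your side case is vacuous rather than merely ``subsumed''. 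This does not affect the argument.
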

\begin{proof}
Let us first assume that the sequence $\{w_{n}(\mu)\}$ attains a minimum at $n=N>1$. From 
Lemma~\ref{thednbehaviour}, that minimum is positive, and setting $c=w_{N}(\mu)$ in the initial vector
 \[v_{0}=\left(\mu+1-c\,;\mu-c,c^{\times(2p-1)}, 1-c\right)\]
the algorithm produces a sequence of vectors
\[v_{p-2}^{(n)} =
  \left( A_{n}\,; B_{n}^{3}, C_{n}, D_{n}^{\times (2p-3)}\right)
\]
which, by Lemma~\ref{thednbehaviour}, are all nonnegative. For all $n\leq N$ those vectors are ordered since $R_{n}=B_{n}-C_{n}>0$. However, for $n>N$, we have $R_{n}=B_{n}-C_{n}<0$, which shows that, after reordering, $v_{p-2}^{(n)}$ becomes
\[\hat{v}_{p-2}^{(N)} =
  \left( A_{i}\,; C_{i}, B_{i}^{3}, D_{i}^{\times (2p-3)}\right)
\]
Applying a $\mathfrak{R}\mathfrak{C}$ move then yields a reduced, nonegative vector. Consequently, $\w_{2p}(\mu)\geq w_{N}(\mu)$. Since, by construction, each $w_{n}(\mu)$ gives an upper bound on the width $\w_{2p}(\mu)$, we conclude that $\w_{2p}(\mu)=w_{N}(\mu)$.
 
If the sequence $\{w_{i}(\mu)\}$ is increasing, then $c_{\vol} < w_{i}(\mu)$ for all $i\geq 1$. In particular, $\mu$ must belong to the interval $[1,\lambda]$. Setting $c=c_{\vol}(\mu)$ in $v_{0}$, Lemma~\ref{thednbehaviour} shows that the algorithm still produces a sequence of nonnegative vectors
\[v_{p-2}^{(n)} =
  \left( A_{n}\,; B_{n}^{3}, C_{n}, D_{n}^{\times (2p-3)}\right)
\]
As before, those vectors are ordered until $R_{n}=B_{n}-C_{n}<0$. Since $R_{1}\geq 0$ whenever $\mu\geq (p-2)^{2}/p$, and that $1\geq (p-2)^{2}/p$ for $p\geq 4$, Lemma~\ref{thednbehaviour} shows that there exists $N\geq 1$ such that $R_{n}<0$ for all $n>N$. As in the previous case, this implies that the algorithm produces a reduced vector after finitely many steps. Therefore, $\w_{2p}(\mu)=c_{\vol}$.
\end{proof}

In order to write $\w_{2p}(\mu)$ as a piecewise linear function, our next goal is to find an optimal interval $I_n\subset[1,\infty)$ on which $w_n(\mu)$ is the minimum in the sequence $\{w_i(\mu)\}$.

\begin{lemma}\label{muandgamma}
  Let $p\geq 4$ be fixed. Then there exist a sequence $\{\gamma_{n}\}$ given by
\[\gamma_{-1}=0, \qquad \gamma_{0}=1,\qquad \gamma_{1}=p\]

\begin{equation}\label{gammarec}
\gamma_{n+3}=(p-1)\gamma_{n+2}-(p-1)\gamma_{n+1}+\gamma_{n} 
\end{equation}
such that
    \[w_{n+1}\leq w_{n} \iff \mu \leq \frac{\gamma_{n}}{\gamma_{n-1}}.\]
\end{lemma}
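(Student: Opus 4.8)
The plan is to read off the sign of $w_{n+1}(\mu)-w_n(\mu)$ directly from the closed formula~\eqref{thewwitha}. Write $w_n=N_n/D_n$ with $N_n=N_n(\mu)=a_n+a_{n-1}\mu$ and $D_n=2(a_n+a_{n-1})-1$. Since $\{a_n\}$ is a nonnegative increasing sequence with $a_0=0$, one has $D_1=1$ and $D_n>0$ for all $n\geq1$; hence $D_nD_{n+1}>0$, and the sign of $w_{n+1}-w_n$ is that of the affine-in-$\mu$ numerator
\[
\Phi_n(\mu):=N_{n+1}D_n-N_nD_{n+1}.
\]
The whole lemma thus reduces to identifying the coefficients of $\Phi_n$.

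The key algebraic input is the Casoratian-type identity $a_{n+1}a_{n-1}-a_n^2=-a_n$ for $n\geq0$, where $a_{-1}=0$ is obtained by running the recurrence~\eqref{MainRecurrence} backwards (the coefficient of $V_n$ being $1$). This is an instance of Lemma~\ref{lemma:crossedsequence}: the crossed sequence $\phi_n=a_na_n-a_{n+1}a_{n-1}$ obtained from $x_n=y_n=a_n$ and $n_0=1$ satisfies~\eqref{MainRecurrence}, and since $\phi_0=0$, $\phi_1=1$, $\phi_2=(p-1)^2-(p-1)(p-2)=p-1$ agree with $a_0,a_1,a_2$, and a solution of the order-three recurrence is determined by three consecutive values, we get $\phi_n=a_n$ for all $n$, i.e. the claimed identity. (For $p=4$, where the characteristic polynomial has a triple root, the identity still holds: it is a polynomial identity in $p$ valid for all $p\geq5$, hence for $p=4$ as well; alternatively one invokes the $p=4$ case of Lemma~\ref{lemma:crossedsequence}.)

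Now I expand $\Phi_n(\mu)$ and collect powers of $\mu$. In the constant term the products $a_{n+1}a_n$ cancel, leaving $2(a_{n+1}a_{n-1}-a_n^2)-(a_{n+1}-a_n)$; in the $\mu$-term the products $a_{n-1}a_n$ cancel, leaving $-2(a_{n+1}a_{n-1}-a_n^2)+(a_{n-1}-a_n)$. Substituting the Casoratian identity collapses both to
\[
\Phi_n(\mu)=(a_n+a_{n-1})\,\mu-(a_{n+1}+a_n).
\]
Since $a_n+a_{n-1}>0$ for $n\geq1$, this gives $w_{n+1}\leq w_n$ if and only if $\mu\leq\dfrac{a_{n+1}+a_n}{a_n+a_{n-1}}$. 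To finish, set $\gamma_n:=a_{n+1}+a_n$: because $\{a_n\}$ satisfies~\eqref{MainRecurrence}, so does the shifted sum $\{a_{n+1}+a_n\}$, i.e. $\gamma_{n+3}=(p-1)\gamma_{n+2}-(p-1)\gamma_{n+1}+\gamma_n$, while $\gamma_{-1}=a_0+a_{-1}=0$, $\gamma_0=a_1+a_0=1$, $\gamma_1=a_2+a_1=p$. Thus $\gamma_n/\gamma_{n-1}=(a_{n+1}+a_n)/(a_n+a_{n-1})$ is exactly the threshold found above, which proves the lemma.

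The only delicate point is the expansion of $\Phi_n(\mu)$ before the Casoratian collapse — it is bulky but entirely mechanical — together with the preliminary verification that the crossed sequence argument applies; no further case analysis (beyond the $p=4$ remark above) is needed.
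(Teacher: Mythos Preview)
Your proof is correct and follows the same overall plan as the paper: cross-multiply $w_{n+1}\leq w_n$, then use the crossed-sequence lemma (Lemma~\ref{lemma:crossedsequence}) to identify the recurrence satisfied by the threshold. The execution differs in one substantive way. The paper keeps the numerator and denominator in the unreduced form $a_n\beta_{n+1}-a_{n+1}\beta_n$ and $a_n\beta_n-a_{n-1}\beta_{n+1}$, applies Lemma~\ref{lemma:crossedsequence} twice to show each satisfies~\eqref{gammarec}, and then separately verifies (via initial conditions) that the denominator is the numerator shifted by one. You instead invoke Lemma~\ref{lemma:crossedsequence} once, with $x_n=y_n=a_n$, to extract the Casoratian identity $a_n^2-a_{n+1}a_{n-1}=a_n$; this collapses $\Phi_n(\mu)$ to $(a_n+a_{n-1})\mu-(a_{n+1}+a_n)$, so that the closed form $\gamma_n=a_{n+1}+a_n$ and the index-shift relation are immediate. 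Your route is shorter and yields the explicit formula for $\gamma_n$ for free; the paper's route avoids the preliminary Casoratian computation but has to check more by hand at the end. Both are valid, and in fact one can check that $a_n\beta_{n+1}-a_{n+1}\beta_n=a_{n+1}+a_n$ via your identity, so the two definitions of $\gamma_n$ agree.
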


\begin{proof}
For ease of writing we will use the notation $\beta_n=2(a_{n-1}+a_n)-1$ which was previously introduced. Notice  that  $w_{n+1}\leq w_{n}$ is equivalent with  $\frac{a_{n+1} +a_{n} \mu}{\beta_{n+1}}   \leq \frac{a_{n} +a_{n-1} \mu}{ \beta_{n} }$. 
This, in turn, translates into

\begin{equation}\label{thegamma}
  \mu \leq
  \frac{ a_n \beta_{n+1} - a_{n+1} \beta_n }
       { a_n \beta_n - a_{n-1} \beta_{n+1} }
\end{equation}
We can prove, by using the Lemma~\ref{lemma:crossedsequence} twice, that both the numerator sequence and denominator sequence satisfy the recurrence~\eqref{MainRecurrence}. We will then define 
\[\gamma_n := a_n \beta_{n+1} - a_{n+1}\beta_n\] 
We leave it to the reader to check that the initial condition are those listed in the statement. To show that the numerator satisfies the recurrence~\eqref{gammarec}, one uses Lemma~\ref{lemma:crossedsequence} with $x_n= a_n, y_n=  \beta_{n+1}$ and $n_0= -1$. For the denominator, the same Lemma~\ref{lemma:crossedsequence} with $x_n = a_n, y_n = \beta_n$ and $n_0=1$ yields that the sequence made with the numerators in~\eqref{thegamma} satisfies the recurrence as well. To show that the denominator is just the numerator sequences with an index shift of 1 it is sufficient to verify this for $n=1,2,3$ using the given initial conditions for $a_n$; we leave this as an exercise. 
\end{proof}

The sequence of quotients $\left( \frac{\gamma_{n}}{\gamma_{n-1}} \right)_{n \in \N}$ is monotone decreasing and converges to $\lambda$. The following computational lemma will be used both for the next results as well as in Section~5.
\begin{lemma}\label{lemma:numerical}
\begin{equation}\label{numerical}
  \frac{(a_n+a_{n-1})^2}{p} - \frac{a_n+a_{n-1}}{p} = a_n a_{n-1}.
\end{equation}
\begin{equation}\label{numerical2}
  \beta_n^2 = 4 p a_n a_{n-1} +1.
\end{equation}
\end{lemma}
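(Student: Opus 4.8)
The plan is to prove both identities \eqref{numerical} and \eqref{numerical2} by using the theory of recurrences developed earlier, reducing each claim to a finite verification. First I would recall that the sequence $\{a_n\}$ satisfies the order-three recurrence \eqref{MainRecurrence} with characteristic roots $1, \lambda, \lambdabar$ (when $p\geq 5$; the degenerate case $p=4$ is handled separately as in Lemma~\ref{lemma:crossedsequence}). The key structural observation is that $\beta_n = 2(a_n + a_{n-1}) - 1$ is an affine combination of two shifted copies of $\{a_n\}$, hence itself satisfies \eqref{MainRecurrence}. Consequently, for identity \eqref{numerical2}, the left-hand side $\beta_n^2$ is a quadratic expression in sequences satisfying \eqref{MainRecurrence}, and by Lemma~\ref{lemma:crossedsequence} (applied with appropriate choices of $x_n, y_n$, and index shift $n_0$) any such "crossed" quadratic combination again satisfies \eqref{MainRecurrence}. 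The same is true of the right-hand side $4p\,a_n a_{n-1} + 1$: the constant sequence satisfies the recurrence (since $1$ is a root of the characteristic polynomial), and $a_n a_{n-1} = a_n a_{n-1} - a_{n+n_0}a_{n-n_0}$ with a suitable $n_0$ fits the template of Lemma~\ref{lemma:crossedsequence} after writing it as a difference of crossed products.

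Once both sides of \eqref{numerical2} are known to satisfy the \emph{same} linear recurrence \eqref{MainRecurrence}, it suffices to check equality for three consecutive indices, say $n = 1, 2, 3$, using the initial conditions $a_0 = 0$, $a_1 = 1$, $a_2 = p-1$, $a_3 = (p-1)(p-2)$ and the corresponding values $\beta_1 = 1$, $\beta_2 = 2p-1$, $\beta_3 = 2p^2 - 4p + 1$. This is a routine polynomial identity in $p$; for instance at $n=2$ one checks $(2p-1)^2 = 4p(p-1)\cdot 1 + 1$, which is immediate. Identity \eqref{numerical} then follows by an essentially algebraic manipulation from \eqref{numerical2}: expanding $\beta_n^2 = (2(a_n+a_{n-1})-1)^2 = 4(a_n+a_{n-1})^2 - 4(a_n+a_{n-1}) + 1$ and equating with $4p\,a_n a_{n-1} + 1$ gives $(a_n+a_{n-1})^2 - (a_n+a_{n-1}) = p\,a_n a_{n-1}$, which is exactly \eqref{numerical} after dividing by $p$. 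Alternatively, \eqref{numerical} can be proved directly by the same recurrence-plus-initial-conditions method, and then \eqref{numerical2} deduced from it.

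The main obstacle I anticipate is purely bookkeeping: correctly identifying which instances of Lemma~\ref{lemma:crossedsequence} to invoke and with which index shifts so that the products $a_n a_{n-1}$ and the squares $(a_n + a_{n-1})^2$ are genuinely expressed in the crossed form \eqref{thecrossedsequence}. A clean way to sidestep any subtlety is to argue directly with the closed form: writing $a_n = d_1 + d_2 \lambda^n + d_3 \lambdabar^n$ for constants $d_i$ depending on $p$ (using $\lambda\lambdabar = 1$), both $\beta_n^2$ and $4p\,a_n a_{n-1}+1$ expand, after cancellation of the $\lambda^{2n}$ and $\lambdabar^{2n}$ terms (which cancel because $\lambda^n \lambdabar^{n} = 1$ keeps the cross terms bounded), into the same span $\langle 1, \lambda^n, \lambdabar^n \rangle$, so they agree as soon as they agree at $n=1,2,3$. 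The degenerate case $p=4$, where $\lambda = \lambdabar = 1$ and sequences satisfying \eqref{MainRecurrence} are quadratic polynomials in $n$, is handled by the same three-point check since both sides are then polynomials of degree at most $2$ in $n$.
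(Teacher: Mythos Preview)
Your derivation of \eqref{numerical} from \eqref{numerical2} by expanding $\beta_n^2=(2(a_n+a_{n-1})-1)^2$ is correct and is exactly what the paper does. The gap is in your argument for \eqref{numerical2} itself.

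The central error is the claim that $\beta_n^2$ and $4p\,a_na_{n-1}+1$ each satisfy the third-order recurrence \eqref{MainRecurrence}, so that three initial checks suffice. They do not. Lemma~\ref{lemma:crossedsequence} applies only to the specific \emph{difference} $x_ny_n - x_{n+n_0}y_{n-n_0}$, in which the $\lambda^{2n}$ and $\lambdabar^{2n}$ contributions cancel by design; a bare product like $\beta_n\cdot\beta_n$ or $a_n\cdot a_{n-1}$ is not of this form, and your sentence ``$a_na_{n-1}=a_na_{n-1}-a_{n+n_0}a_{n-n_0}$ with a suitable $n_0$'' has no valid choice of $n_0$. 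Concretely, from the closed form $\beta_n=\frac{1}{4-p}(-p+2\lambda^n+2\lambdabar^n)$ one sees that $\beta_n^2$ carries a nonzero $\lambda^{2n}$ coefficient $4/(4-p)^2$; the same happens for $a_na_{n-1}$. Thus each side lies only in the span of $\{1,\lambda^n,\lambdabar^n,\lambda^{2n},\lambdabar^{2n}\}$ and satisfies a fifth-order recurrence, not \eqref{MainRecurrence}. Your parenthetical ``the $\lambda^{2n}$ terms cancel because $\lambda^n\lambdabar^n=1$'' conflates the cross term $\lambda^n\lambdabar^n$ (which does equal $1$) with the genuine $\lambda^{2n}$ term (which does not vanish). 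The same issue bites at $p=4$: there $a_n=n^2$ and $\beta_n=(2n-1)^2$, so both sides of \eqref{numerical2} are degree-four polynomials in $n$, not degree two, and three checks are again insufficient.

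Two easy fixes: either (i) observe that both sides satisfy the \emph{fifth}-order recurrence with characteristic roots $1,\lambda,\lambdabar,\lambda^2,\lambdabar^2$ and verify five consecutive values $n=1,\dots,5$; or (ii) do what the paper does, namely substitute the explicit closed forms for $a_n$ and $\beta_n$ and check directly (using $\lambda\lambdabar=1$ and $\lambda+\lambdabar=p-2$) that the $\lambda^{2n}$, $\lambdabar^{2n}$, $\lambda^n$, $\lambdabar^n$, and constant coefficients on both sides match. Either route is short; the point is simply that the cancellation of the $\lambda^{\pm 2n}$ terms between the two sides is the actual content of the identity and must be verified, not assumed.
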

\begin{proof}
To see how this holds first note that from the recurrence~\eqref{MainRecurrence} for the sequences $a_n, \beta_n, \gamma_n$ we obtain the general
formulae
\[
  a_n =
  \frac{1}{4-p} \left(-1+
    \frac{p+\sqrt{p^2-4p}}{2p}\lambda^n +
    \frac{p-\sqrt{p^2-4p}}{2p}\overline{\lambda}^n \right)
  \]
\[
  \beta_n =
  \frac{1}{4-p} \left(-p+
    2\lambda^n + 2 \overline{\lambda}^n \right)    
  \]

\[
  \gamma_n =
  \frac{1}{4-p} \left(-2+
    \lambda^{n+1} + \overline{\lambda}^{n+1} \right)    
  \]
Using this equations  as well as the fact that $\lambda \overline{\lambda}=1$ one can verify by a straightforward but lengthy computation the relation~\eqref{numerical2}. The relation~\eqref{numerical} is just an algebraic reformulation of relation~\eqref{numerical2} obtained by completing the square.  Finally, we should point out that in the case $p=4$, the sequences are easy quadratic polynomials, namely $a_n=n^2,\beta_n =(2n-1)^2, \gamma= (n+1)^2$ and the relations above are easily verifiable.
\end{proof}

\begin{lemma}\label{thegammabehaviour}
Let $p\geq 4$ and $n\geq 0$ be a fixed natural numbers. Then
\begin{enumerate}
\item  
\[
w_{n}(\mu) = \min_{i \in \N} \left\{w_i(\mu)\right\} \iff\mu\in\left[\frac{\gamma_{n}}{\gamma_{n-1}},~ \frac{\gamma_{n-1}}{\gamma_{n-2}}\right]
\]
\item 
\[ w_{n}(\mu) < c_{\vol}(\mu) \iff \mu\in\left[\frac{\gamma_{n}}{\gamma_{n-1}},~ \frac{\gamma_{n-1}}{\gamma_{n-2}}\right]
\]
\end{enumerate}
\end{lemma}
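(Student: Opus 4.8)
The plan is to reduce both statements to monotonicity properties of the two auxiliary sequences $\{w_n(\mu)\}$ and $\{\gamma_n/\gamma_{n-1}\}$ already established in Lemma~\ref{muandgamma}, together with the algebraic identity~\eqref{numerical} of Lemma~\ref{lemma:numerical}. For part~(1), I would start from the equivalence $w_{n+1}\leq w_n \iff \mu\leq \gamma_n/\gamma_{n-1}$ provided by Lemma~\ref{muandgamma}. Since the sequence of quotients $\gamma_n/\gamma_{n-1}$ is strictly decreasing with limit $\lambda$ (as remarked after Lemma~\ref{muandgamma}), this tells us exactly how the ``direction of descent'' of the sequence $\{w_i(\mu)\}$ depends on $\mu$: for a fixed $\mu$, one has $w_{i+1}\leq w_i$ precisely for those indices $i$ with $\gamma_i/\gamma_{i-1}\geq\mu$, i.e. for all small $i$ up to some threshold, and $w_{i+1}>w_i$ afterward. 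Hence $\{w_i(\mu)\}$ is unimodal with a unique minimum (consistent with Lemma~\ref{thednbehaviour}(2) and Corollary~\ref{widthismin}), and $w_n$ is that minimum exactly when $w_{n-1}\geq w_n$ and $w_n\leq w_{n+1}$, which translates via Lemma~\ref{muandgamma} into $\gamma_{n-1}/\gamma_{n-2}\geq\mu$ and $\mu\geq\gamma_n/\gamma_{n-1}$. This is precisely the claimed interval $\left[\gamma_n/\gamma_{n-1},\,\gamma_{n-1}/\gamma_{n-2}\right]$. One has to be slightly careful at the endpoints (where $w_{n-1}=w_n$ or $w_n=w_{n+1}$, so the minimizer is not unique) and at the low-index boundary cases $n=0,1$ where $\gamma_{-1}=0$ makes the right endpoint $\gamma_{n-1}/\gamma_{n-2}$ formally infinite; these are handled by inspection using the initial conditions $\gamma_{-1}=0,\gamma_0=1,\gamma_1=p$ and $w_1=1$.

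For part~(2), the plan is to compare $w_n(\mu)$ directly with $c_{\vol}(\mu)=\sqrt{\mu/p}$. Writing $w_n(\mu) = \dfrac{a_n + a_{n-1}\mu}{2(a_n+a_{n-1})-1} = \dfrac{a_n+a_{n-1}\mu}{\beta_n}$, the inequality $w_n(\mu) < c_{\vol}(\mu)$ is equivalent (both sides positive by Lemma~\ref{thednbehaviour}) to
\[
\bigl(a_n + a_{n-1}\mu\bigr)^2 < \frac{\mu}{p}\,\beta_n^2 .
\]
Using $\beta_n^2 = 4p\,a_n a_{n-1} + 1$ from~\eqref{numerical2}, this becomes a quadratic inequality in $\mu$,
\[
a_{n-1}^2\,\mu^2 - \Bigl(\tfrac{4 a_n a_{n-1} p + 1}{p} - 2 a_n a_{n-1}\Bigr)\mu + a_n^2 < 0,
\]
i.e. $a_{n-1}^2\mu^2 - \bigl(2 a_n a_{n-1} + \tfrac1p\bigr)\mu + a_n^2 < 0$. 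The key point is to show that the two roots of this quadratic are exactly $\gamma_n/\gamma_{n-1}$ and $\gamma_{n-1}/\gamma_{n-2}$. For this I would verify that their product equals $a_n^2/a_{n-1}^2$ and their sum equals $\bigl(2 a_n a_{n-1} + \tfrac1p\bigr)/a_{n-1}^2$, using identity~\eqref{numerical} (which rewrites $a_n a_{n-1}$ in terms of $a_n+a_{n-1}$) together with analogous closed-form expressions for $\gamma_n$ in terms of $\lambda^{n+1}+\overline\lambda^{\,n+1}$ given in the proof of Lemma~\ref{lemma:numerical}; since $\lambda\overline\lambda=1$, the products $\gamma_n\gamma_{n-2}$ and $\gamma_{n-1}^2$ can be expanded and compared to $a_n^2,a_{n-1}^2$ directly. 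Once the roots are identified, the quadratic (with positive leading coefficient $a_{n-1}^2$) is negative precisely on the open interval between them, and combining with the boundary analysis gives the stated closed interval.

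The main obstacle I anticipate is the bookkeeping in part~(2): one must establish the precise algebraic relations between the crossed products $\gamma_n\gamma_{n-2}$, $\gamma_{n-1}^2$ and the squares $a_n^2$, $a_{n-1}^2$, and between $\gamma_{n-1}\gamma_{n-2}+\gamma_n\gamma_{n-2}$-type sums and $2a_na_{n-1}+\tfrac1p$. This is exactly the kind of computation that Lemma~\ref{lemma:crossedsequence} and Lemma~\ref{lemma:numerical} were designed to feed, so the cleanest route is to push everything through the eigenvalue expressions $\lambda^{n},\overline\lambda^{\,n}$ and let the relations $\lambda\overline\lambda=1$, $\lambda+\overline\lambda=p-2$ collapse the cross terms, rather than arguing by induction on $n$. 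The degenerate case $p=4$ (where $a_n=n^2$, $\beta_n=(2n-1)^2$, $\gamma_n=(n+1)^2$) should be checked separately but is elementary. Finally, I would note that once both intervals in (1) and (2) are shown to coincide, the two statements together reconcile Corollary~\ref{widthismin} with the piecewise-linear description in Proposition~\ref{thm:MainTheoremTrivialExplicitEven}: on $[\gamma_n/\gamma_{n-1},\gamma_{n-1}/\gamma_{n-2}]$ the width equals $w_n(\mu)$ and this is genuinely below the volume bound, while outside all such intervals (i.e. on $[1,\lambda)$) the width equals $c_{\vol}$.
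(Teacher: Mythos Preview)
Your proposal is correct and follows essentially the same route as the paper. For part~(1) the paper simply says it is ``an easy combination of Lemma~\ref{thednbehaviour} and Lemma~\ref{muandgamma}'', which is exactly the unimodality-plus-threshold argument you spell out. For part~(2) the paper also squares $w_n(\mu)\leq c_{\vol}(\mu)$, obtains the same quadratic in $\mu$, and reduces to showing that $\gamma_n/\gamma_{n-1}$ and $\gamma_{n-1}/\gamma_{n-2}$ are its two roots. The only technical difference is in how the roots are identified: the paper observes that identity~\eqref{numerical2} forces the discriminant to equal~$1$, so the roots are explicitly $(\beta_n\pm 1)^2/(4p\,a_{n-1}^2)$, and then checks $(\beta_n+1)^2\gamma_{n-2}=4p\,\gamma_{n-1}a_{n-1}^2$ and $(\beta_n-1)^2\gamma_{n-1}=4p\,\gamma_n a_{n-1}^2$ via the closed forms for $a_n,\beta_n,\gamma_n$; you propose instead to verify Vieta's relations for the sum and product. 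Both routes rely on the same eigenvalue expressions and the same degenerate-case check at $p=4$, so the difference is cosmetic. The paper's discriminant shortcut is marginally cleaner (one uses~\eqref{numerical2} once rather than juggling two separate sum/product identities), but your plan is entirely sound.
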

\begin{proof} 
The first statement is an easy combination of Lemma~\ref{thednbehaviour} and Lemma~\ref{muandgamma}. For the second part note that the relation is equivalent with  $\frac{a_{n+1} +a_{n} \mu}{\beta_{n+1}}   \leq \sqrt{\frac{\mu}{p}}$ if and only if $\mu\in\left[\frac{\gamma_{n}}{\gamma_{n-1}},~ \frac{\gamma_{n-1}}{\gamma_{n-2}}\right]$. Thus is sufficient to verify that $\frac{\gamma_{n}}{\gamma_{n-1}}$ and $\frac{\gamma_{n-1}}{\gamma_{n-2}}$ are the two roots of the quadratic equation.

\begin{equation}
 \label{thevolq}
  \frac{(a_{n} +a_{n-1} \mu)^2}{\beta^2_{n}}   = \frac{\mu}{p}
\end{equation}
But using the identity~\eqref{numerical2}, the two roots of this equation are of the form $\frac{(\beta_n \pm 1)^2}{4pa^2_{n-1}}$. 
We claim that the general formulae for $a_n,\beta_n,\gamma_n$ can be used to verify the relations 
\begin{equation}
(\beta_{n}+1)^2 \gamma_{n-2}=4p \gamma_{n-1} a_{n-1}^2, (\beta_{n}-1)^2 \gamma_{n-1}=4p \gamma_n a_{n-1}^2, 
\end{equation}
We will omit the computation and simply observe that the two presentations of the roots are equal.
\end{proof}
\begin{remark}\label{rmk:AutomorphismsTrivialFancyStep}
The automorphism $\phi_{n}$ produced by the reduction algorithm when $\mu\in I_n$ is given by
\begin{equation}\label{phi-n}
\phi_{n}:=\left[\mathfrak{BCA}(\mathfrak{RC})^{p-3}\right]^{n-1}\mathfrak{RC}
\end{equation}
where $\mathfrak{R}$ and $\mathfrak{C}$ are defined in Remark~\ref{rmk:AutomorphismsTrivialFancyStep} and where $\mathfrak{A}$ and $\mathfrak{B}$ are the permutation matrices
\[\mathfrak{A}=(1\,; 5,2,3,4,6,\ldots, k+2)\qquad
\mathfrak{B}=(1\,; k, k+1, k+2, 5, \ldots, k-1)\]
\end{remark}\eoe

\begin{remark}\label{rmk:RegionsInTheGraph}
The piecewise linear function $\w_{2p}(\mu)$ approximates $c_{\vol}(\mu)$ from below on the interval $(\lambda, \infty)$. Each function $w_{n}(\mu)$ defines a line in the $RS$ plane, namely $(R(\mu,~w_{n}(\mu)) ,~S(\mu,~w_{n}(\mu))$. That line intersects the volume curve in two points. Any choice of a point in the region delimited by the volume curve and that line yields a vector $v_{0}$ outside the symplectic cone. Each point in the region bounded by the axis $R=0$, the lines $w_{n}$, and the portion of the volume curve between $\mu=1$ and $\mu=\lambda$ gives, after reduction, a nonnegative reduced vector, see Figure~\ref{fig2} below.
\end{remark}\eoe
\begin{figure}[htb!]
\centering%
\includegraphics[scale=0.25]{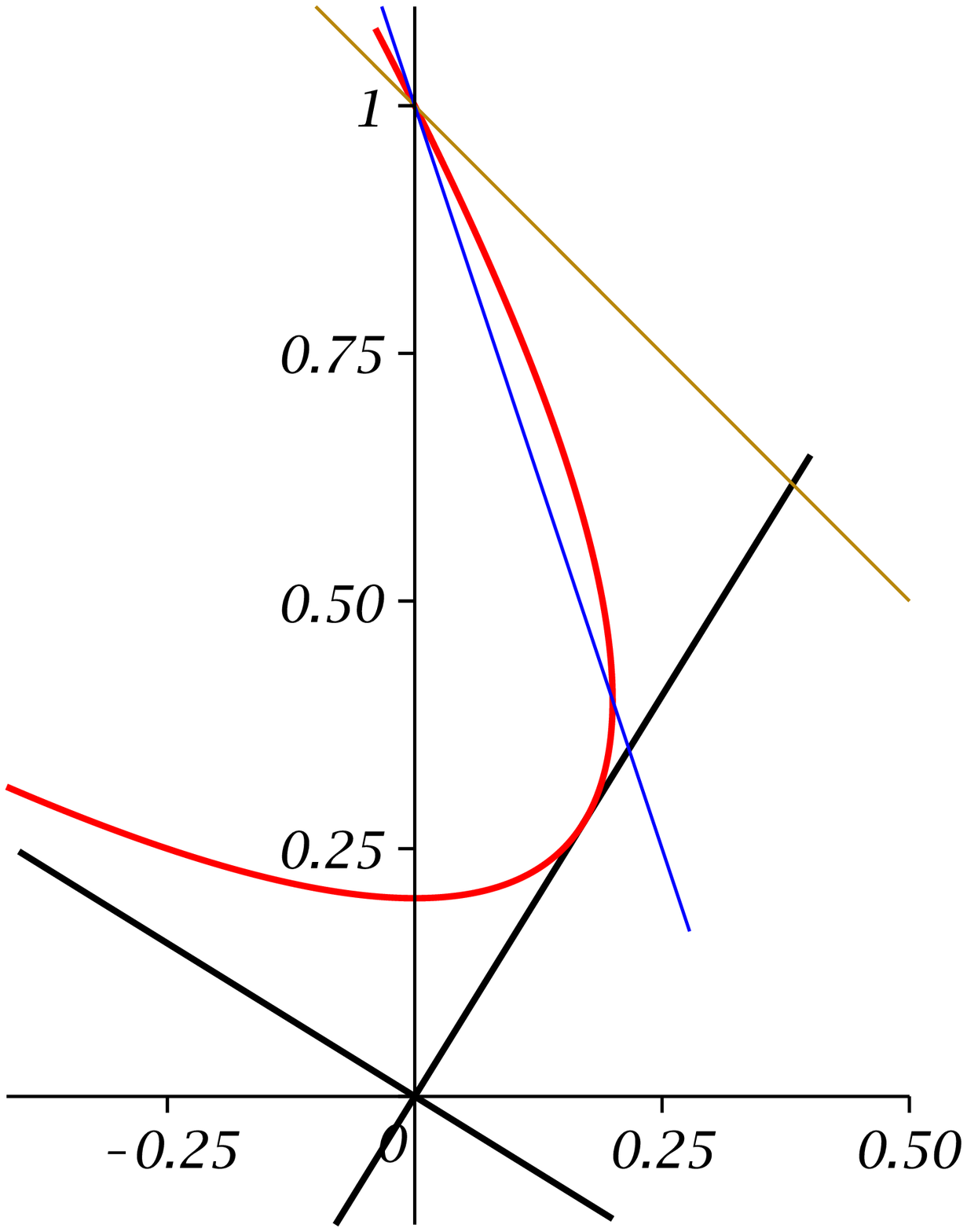}\includegraphics[scale=0.25]{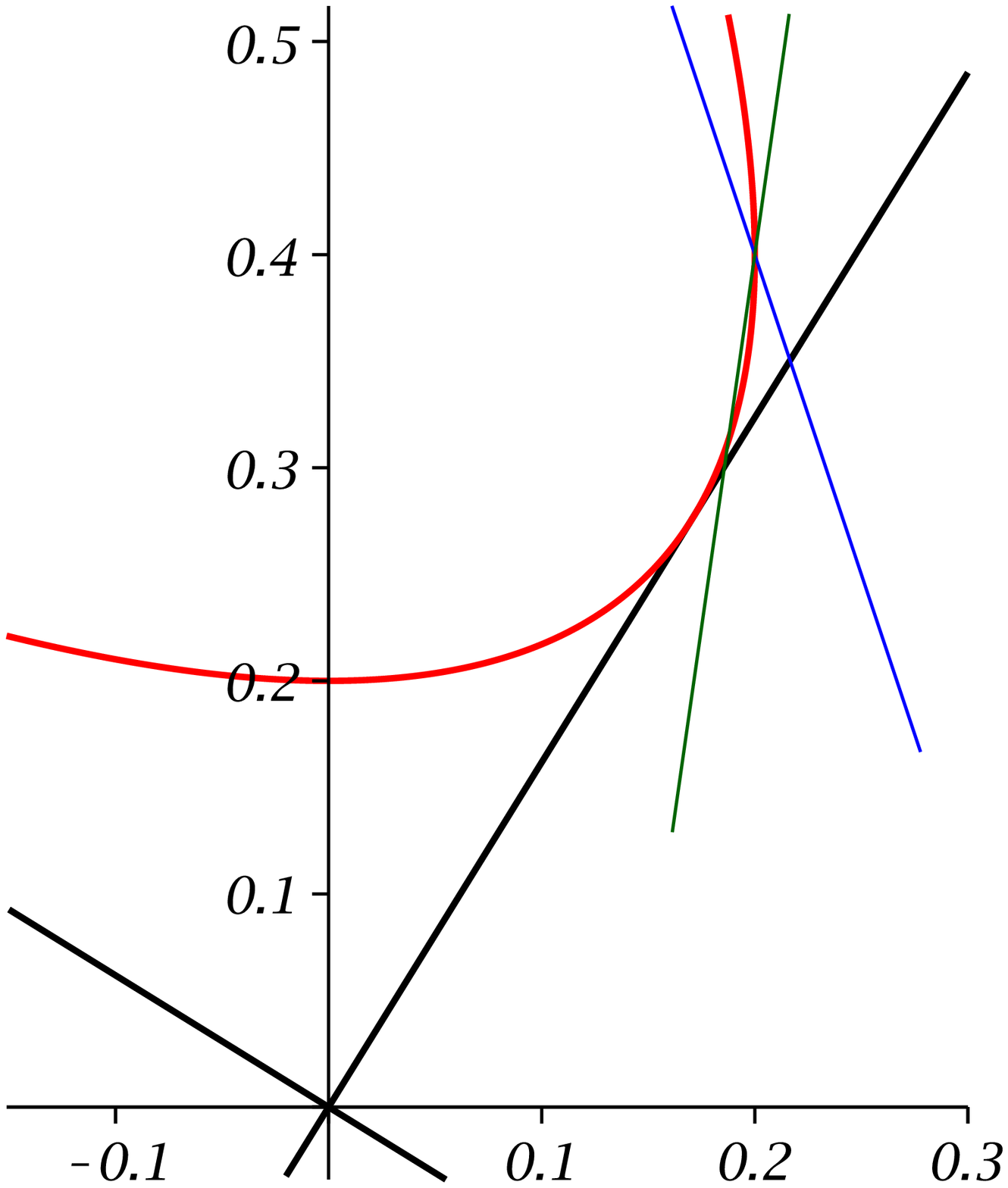}
\caption{The volume curve $(R(c,c^{2}p),~S(c,c^{2}p))$ (solid red curve), together with the lines traced by $w_{1}(\mu)=1$ (orange), $w_{2}(\mu)$ (blue), and $w_{3}(\mu)$ (green) in the $RS$ plane. The piecewise linear function $\w_{2p}(\mu)$ approximates $c_{\vol}(\mu)$ from below on the interval $(\lambda, \infty)$.} 
\label{fig2}
\end{figure}
Observe that Proposition~\ref{thm:MainTheoremTrivialExplicitEven} follows as an immediate consequence of Corollary~\ref{widthismin} and~Corollary~\ref{thegammabehaviour}. Moreover, the computations of the packing numbers and of the stability numbers are easy consequences of~Corollary~\ref{widthismin}.

\begin{cor}\label{cor:MainPackingNumbersTrivialEven}
Let $k=2p\geq 8$ and consider $\mu\geq 1$. Then the  $k^{\text{th}}$
packing numbers of $M_{\mu}^{0}$ are
\[
p_{2p}(M_{\mu}^{0})=
\begin{cases}
1 & \text{~if~}\mu\in\left[ 1,~  \frac{p-2 + \sqrt{p^2 -4p}}{2}    \right)   \\

\frac {p}{\mu} \frac{(a_{n-1}\mu + a_n)^2}{(2 (a_n+a_{n-1}) - 1)^2} & \text{~if~}\mu\in\left[\frac{\gamma_{n}}{\gamma_{n-1}},~ \frac{\gamma_{n-1}}{\gamma_{n-2}}\right)  ,~n\geq 2\\

\frac{p}{\mu} & \text{~if~}\mu\in\left[ p,~ \infty \right)
\end{cases}
\]
\end{cor}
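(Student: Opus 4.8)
The plan is to obtain this corollary directly from Proposition~\ref{thm:MainTheoremTrivialExplicitEven}, converting widths into packing numbers by the volume normalization, so that essentially no new work is required. First I would record the standard identity relating the two invariants: in a $2n$-dimensional symplectic manifold a symplectic ball of capacity $c$ has volume $c^{n}/n!$, a quantity which is continuous and strictly increasing in $c$, so passing to the supremum over admissible capacities in the definition of $p_{k}$ gives
\[
p_{k}(M,\om)=\frac{k\,\w_{k}(M,\om)^{n}}{n!\,\vol(M,\om)}.
\]
Then I would note that for the trivial bundle $\int_{M^0_{\mu}}(\om^{0}_{\mu})^{2}=2\mu$, hence $\vol(M^0_{\mu})=\mu$; since here $n=2$ and $k=2p$, this specializes to
\[
p_{2p}(M^0_{\mu})=\frac{2p}{2\mu}\,\w_{2p}(M^0_{\mu})^{2}=\frac{p}{\mu}\,\w_{2p}(M^0_{\mu})^{2}.
\]

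Next I would substitute the three cases for $\w_{2p}(M^0_{\mu})$ furnished by Proposition~\ref{thm:MainTheoremTrivialExplicitEven}. On $\left[1,\frac{p-2+\sqrt{p^{2}-4p}}{2}\right)$ the width equals $c_{\vol}=\sqrt{\mu/p}$, so $p_{2p}=\frac{p}{\mu}\cdot\frac{\mu}{p}=1$, which is the full-packing range. On each interval $\left[\frac{\gamma_{n}}{\gamma_{n-1}},\frac{\gamma_{n-1}}{\gamma_{n-2}}\right)$ with $n\geq 2$ the width equals the linear function $w_{n}(\mu)=\frac{a_{n-1}\mu+a_{n}}{2(a_{n}+a_{n-1})-1}$ (using the presentation~\eqref{thewwitha}); squaring it and multiplying by $p/\mu$ produces exactly the middle line of the statement. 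Finally, on $[p,\infty)$ the width equals $1$, giving $p_{2p}=p/\mu$.

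All the genuine content sits in Proposition~\ref{thm:MainTheoremTrivialExplicitEven}, so there is no real obstacle here; the only point to watch is the purely combinatorial bookkeeping inherited from that proposition, namely that the quotients $\gamma_{n}/\gamma_{n-1}$ partition $[1,\infty)$ consistently and that the open/closed conventions at the endpoints are carried over verbatim. In particular one should keep in mind that the two descriptions $w_{n}=\alpha_{n}/\beta_{n}$ and $w_{n}=(a_{n}+a_{n-1}\mu)/(2(a_{n}+a_{n-1})-1)$ coincide, as recorded around~\eqref{thewwitha}, so that the expression appearing in the statement is unambiguous.
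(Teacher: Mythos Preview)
Your proposal is correct and matches the paper's own approach: the paper simply records that the packing numbers are ``easy consequences'' of Corollary~\ref{widthismin} (equivalently, of Proposition~\ref{thm:MainTheoremTrivialExplicitEven}) without writing out any details, and what you have done is precisely to fill in the elementary conversion $p_{2p}(M^0_{\mu})=\dfrac{p}{\mu}\,\w_{2p}(M^0_{\mu})^{2}$ and substitute case by case.
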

\printlabel{cor:EvenStabilityNumberProduct}
\begin{cor}\label{cor:EvenStabilityNumberProduct}
The even stability number of $M_{\mu}^{0}$ is
$N_{{\rm even}}(M_{\mu}^{0})=2\left\lceil \mu + 2 + \frac{1}{\mu}\right\rceil.$ 
\end{cor}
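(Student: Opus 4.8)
The plan is to extract the even stability number directly from the piecewise description of $\w_{2p}(M_\mu^0)$ in Proposition~\ref{thm:MainTheoremTrivialExplicitEven} (equivalently, from Corollary~\ref{cor:MainPackingNumbersTrivialEven}). By definition, $N_{\text{even}}(M_\mu^0)$ is the smallest even integer $2p$ such that $p_{2p}(M_\mu^0)=1$, i.e.\ such that $\w_{2p}(M_\mu^0)=c_{\vol}=\sqrt{\mu/p}$. From Corollary~\ref{widthismin}, full packing by $2p$ balls occurs exactly when the sequence $\{w_i(\mu)\}$ is increasing, which by Lemma~\ref{thegammabehaviour}(2) (or the discussion around Figure~\ref{fig1}) happens precisely when $\mu\le\lambda=\lambda(p)=\frac{p-2+\sqrt{p^2-4p}}{2}$, the common limit of the decreasing sequence $\gamma_n/\gamma_{n-1}$. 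So the first step is to record the clean criterion: $p_{2p}(M_\mu^0)=1 \iff \mu\le\frac{p-2+\sqrt{p^2-4p}}{2}$ (with the convention that for $p\le 3$ one is outside the regime of the theorem, but those values of $p$ are never minimal for the stability question when $\mu\ge 1$, as can be checked against Appendix~A / Proposition~\ref{prop:PackingNumbersProduct<8}).

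The second step is to invert this inequality in $p$. One wants the smallest integer $p$ with $\mu\le\frac{p-2+\sqrt{p^2-4p}}{2}$, then $N_{\text{even}}=2p$. I would solve the boundary equation $2\mu = p-2+\sqrt{p^2-4p}$ for $p$ in terms of $\mu$: rearranging gives $\sqrt{p^2-4p}=2\mu-p+2$, and squaring yields $p^2-4p = 4\mu^2 + p^2 + 4 -4\mu p + 8\mu - 4p$, so the $p^2$ and $-4p$ terms cancel on the relevant side and one is left with a \emph{linear} equation in $p$: $0 = 4\mu^2 + 4 - 4\mu p + 8\mu$, i.e.\ $p = \mu + 2 + \frac{1}{\mu}$. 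Thus the threshold value of $p$ is exactly $\mu+2+\frac1\mu$, and $p_{2p}=1$ iff $p\ge\mu+2+\frac1\mu$, so the smallest such integer is $p=\lceil \mu+2+\frac1\mu\rceil$, giving $N_{\text{even}}(M_\mu^0)=2\lceil \mu+2+\frac1\mu\rceil$.

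The third step is to check the monotonicity/consistency issues that make the inversion in step two legitimate: namely that $p\mapsto \frac{p-2+\sqrt{p^2-4p}}{2}$ is increasing for $p\ge 4$ (so that "$\mu\le\lambda(p)$" really is an upward-closed condition on integers $p$), that the squaring step introduces no spurious roots (the sign condition $2\mu-p+2\ge 0$ holds at and below the threshold), and that the boundary case $\mu=\lambda(p)$ (when $\mu+2+\frac1\mu$ is itself an integer) is handled by the closed interval in Lemma~\ref{thegammabehaviour}, so the ceiling is the correct rounding. I expect the main obstacle to be purely bookkeeping: confirming that no smaller \emph{even} $k=2p\le 7$ yields full packing for the relevant range of $\mu\ge 1$ — which is where one invokes Appendix~A / Proposition~\ref{prop:PackingNumbersProduct<8} — and double-checking the edge case $p=4$, where $\lambda(4)=1$ and the formula gives $N_{\text{even}}=2\lceil\mu+2+\frac1\mu\rceil$ consistent with $p_{8}=1\iff\mu=1$ only at $\mu=1$, matching $\lceil 1+2+1\rceil=4$. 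Everything else is the elementary algebra displayed above.
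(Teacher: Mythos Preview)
Your proposal is correct and follows essentially the same route as the paper: identify the full-packing threshold $\mu\le\lambda(p)=\frac{p-2+\sqrt{p^2-4p}}{2}$ from Proposition~\ref{thm:MainTheoremTrivialExplicitEven}, invert it algebraically to obtain $p=\mu+2+\tfrac{1}{\mu}$, take the ceiling, and then rule out interference from the sporadic even full packings with $k\le 6$ via Proposition~\ref{prop:PackingNumbersProduct<8}. Your write-up is in fact more explicit than the paper's (which compresses the inversion into one line and writes the boundary equation somewhat imprecisely as ``$c_{\vol}(\mu)=\lambda$''); your attention to the monotonicity of $\lambda(p)$, the sign check before squaring, and the integer boundary case are exactly the bookkeeping points the paper leaves to the reader.
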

\begin{proof} In the range $k\geq 8$, the stability number $J(\mu)$ is obtained by solving for $p$ in the equation $c_{\vol}(\mu)=\lambda$, which gives 
\[J(\mu) = \max\left\{8,~2\left\lceil \mu+2+\frac{1}{\mu} \right\rceil\right\}= 2\left\lceil \mu+2+\frac{1}{\mu} \right\rceil \quad \text{whenever~} \mu\geq 1\]
On the other hand, Proposition~\ref{prop:PackingNumbersProduct<8} shows that we also have full packings for the sporadic pairs $(\mu,k)\in\left\{(1,2),~(2,4),~(4/3, 6),~(3,6)\right\}$. However, since $J(4/3)=10$ and $J(3)=12$, we conclude that $N_{{\rm even}}(M_{\mu}^{0})=J(\mu)$.
\end{proof}
Combining the above results with the Remark~\ref{rmk:AutomorphismsTrivialFancyStep} and the strategy presented in Section~\ref{sec:strategy}, we can present the obstruction curves for this case as well:
\begin{cor}\label{cor:TrivialevenCaseObstructionClasses}
Using our identification of the $k$-fold blow-up of $M_{\mu}^{0}$ with $X_{k+1}$, a set of  exceptional classes in $H_{2}(X_{2p+2};\Z)$ giving the
obstructions to the embedding of $2p$ balls into $M_{\mu}^{0}$ is 
\begin{align*}
  \left( 1\,;1^{\times 2}, 0^{\times (2p)}\right) & \text{~when~} \mu\in\left[ p,~ \infty \right);\\
\left(d_{n}\,; z_{n}, y_{n}, x_{n}^{\times (2p-2)}, t_{n}\right)  & \text{~when~} \mu\in \left[\frac{\gamma_{n}}{\gamma_{n-1}},~ \frac{\gamma_{n-1}}{\gamma_{n-2}}\right]
\end{align*}
where the coefficients are given recursively in terms of the sequence $\{a_{n}\}$ by
\[x_n = \frac{2(a_n+a_{n-1}) -1 +(-1)^n}{2p}\]
\begin{gather*}
d_n = a_n + a_{n-1} - x_n~,\quad z_n =a_n - x_n \\
y_n = x_n - (-1)^{n}~,\quad t_n =a_{n-1} - x_n
\end{gather*}
For $\mu\in\left[ 1,   \lambda   \right)$, the only obstruction is given by the volume condition.
\end{cor}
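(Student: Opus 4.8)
The plan is to exploit the dualization principle explained in Section~\ref{sec:strategy}: the reduction algorithm, when applied to the initial vector $v_{\mu,c}^{0}$ on a given interval $I_{n}$, is the same thing as applying a single automorphism $\phi_{n}\in D_{K}(1,2p+2)$, and the obstructing exceptional class is then $\phi_{n}^{*}E_{2p+2}$. So the real content is just to read off from the computation carried out in the proof of Corollary~\ref{widthismin} which basis vector ends up in the last slot at the critical value $c=w_{n}(\mu)$, and then to transport it back through $\phi_{n}^{-1}$ (equivalently, to apply $\phi_{n}^{*}$). First I would record the two easy endpoint cases. For $\mu\in[p,\infty)$ the width is $\w_{2p}=1$, realized by the very first step of the algorithm (the vector $v_{0}$ is non-negative only when $c\le 1$, i.e. $\lambda_{1}(M_{\mu}^{0})=1$), and the obstructing class is the pullback of $E_{k+1}$ under the trivial automorphism, which in the $X_{k+1}$ basis is the class of a fiber, $\bigl(1;1^{\times 2},0^{\times 2p}\bigr)$ — one simply checks $E\cdot E=-1$, $K\cdot E=-1$, and that $E$ pairs to zero with $v_{\mu,1}^{0}$. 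For $\mu\in[1,\lambda)$ the width equals $c_{\vol}$, and there is no exceptional obstruction: this is exactly the statement of Lemma~\ref{thednbehaviour}(2), first bullet (the sequences $D_{n}$ are increasing and all lie above $c_{\vol}$), so only the volume condition is active.

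The main case is $\mu\in I_{n}=\bigl[\gamma_{n}/\gamma_{n-1},\gamma_{n-1}/\gamma_{n-2}\bigr]$ with $n\ge 2$. Here I would argue as follows. By Remark~\ref{rmk:AutomorphismsTrivialFancyStep}, $\phi_{n}=\bigl[\mathfrak{BCA}(\mathfrak{RC})^{p-3}\bigr]^{n-1}\mathfrak{RC}$. The obstructing class is $E=\phi_{n}^{*}E_{2p+2}$, and rather than compute $\phi_{n}^{*}$ directly on $E_{2p+2}$ I would use the defining equation $0=\bigl(\phi_{n}v_{\mu,\w_{2p}},\,E_{2p+2}\bigr)=\bigl(v_{\mu,\w_{2p}},\,E\bigr)$ together with the structural facts already proven: $E$ is exceptional (so $E\cdot E=-1$, $K\cdot E=-1$), and $E$ must pair to zero with $v_{0}$ precisely when $c=w_{n}(\mu)=\frac{a_{n}+a_{n-1}\mu}{2(a_{n}+a_{n-1})-1}$, which forces the linear-in-$\mu$ coefficients of $E$. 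Concretely, writing $E=\bigl(d_{n};z_{n},y_{n},x_{n}^{\times(2p-2)},t_{n}\bigr)$ and imposing $\langle v_{\mu,w_{n}(\mu)}^{0},E\rangle=0$ as an identity in $\mu$, $c$ (with $c=w_{n}$), together with $E\cdot E=-1$ and $K\cdot E=-1$, yields a finite linear/quadratic system whose solution is the claimed formula $x_{n}=\frac{2(a_{n}+a_{n-1})-1+(-1)^{n}}{2p}$ and the rest. The recurrence~\eqref{MainRecurrence} for $a_{n}$ and the numerical identity~\eqref{numerical2}, $\beta_{n}^{2}=4pa_{n}a_{n-1}+1$, are exactly what is needed to verify that $x_{n}$ so defined is an integer (one checks $\beta_{n}\equiv (-1)^{n+1}\bmod p$, or rather that $\beta_{n}+(-1)^{n}$ is divisible by $2p$ using that $\beta_{n}^{2}\equiv 1\bmod p$) and that $E\cdot E=-1$ holds.

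The step I expect to be the main obstacle is verifying the integrality and the self-intersection identity $E\cdot E=-1$ for the general-interval class $\bigl(d_{n};z_{n},y_{n},x_{n}^{\times(2p-2)},t_{n}\bigr)$, since this is where the arithmetic of the sequences $a_{n}$, $\beta_{n}$, $\gamma_{n}$ enters in an essential way rather than formally. Concretely one must show
\[
d_{n}^{2}-z_{n}^{2}-y_{n}^{2}-(2p-2)x_{n}^{2}-t_{n}^{2}=-1,
\]
and substituting the definitions of $d_{n},z_{n},y_{n},t_{n}$ in terms of $a_{n},a_{n-1},x_{n}$ reduces this, after expansion, to a polynomial identity in $a_{n},a_{n-1}$ which is equivalent to~\eqref{numerical} and~\eqref{numerical2} of Lemma~\ref{lemma:numerical}; the $(-1)^{n}$-twist in $x_{n}$ and $y_{n}$ is precisely the bookkeeping that makes the parity work out. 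The remaining checks — that $K\cdot E=-1$ (a short linear computation: $3d_{n}-z_{n}-y_{n}-(2p-2)x_{n}-t_{n}=-1$, again via the recurrence), that $E$ is in fact represented by an embedded symplectic sphere (automatic, by Theorem~\ref{thm:Diff+ReducedClasses}(2), since it is the $D_{K}$-image of $E_{2p+2}$), and that it indeed annihilates $v_{\mu,w_{n}(\mu)}^{0}$ — are then routine. The only subtlety beyond the identity above is confirming that the automorphism of Remark~\ref{rmk:AutomorphismsTrivialFancyStep} really does send $E_{2p+2}$ to this class, which one does by inducting on $n$: one step of $\mathfrak{BCA}(\mathfrak{RC})^{p-3}$ carries the triple $(z_{n},y_{n},t_{n})$-data by the same matrix $T$ of~\eqref{SequenceTriples}, so the $a_{n}$-recurrence propagates the coefficients exactly as claimed, and the base case $n=1$ (where $\phi_{1}=\mathfrak{RC}$ gives the fiber class) is immediate.
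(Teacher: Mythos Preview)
Your approach is essentially the paper's: identify the obstructing class as $\phi_{n}^{*}E_{2p+2}$, note that its shape is forced to be $(d_{n};z_{n},y_{n},x_{n}^{\times(2p-2)},t_{n})$ by the form of $\phi_{n}$, and then verify the three conditions $E_{n}\cdot E_{n}=-1$, the pairing with $K$, and $v_{0}\cdot E_{n}=D_{n}(\mu,c)$ using Lemma~\ref{lemma:numerical}. The paper simply states the coefficient formulae and checks these three identities directly rather than presenting it as ``solving a system,'' but the content is the same.

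There is, however, a repeated sign slip you should fix: exceptional classes satisfy $K\cdot E=+1$, not $-1$ (adjunction for a rational $(-1)$-curve gives $c_{1}\cdot E=1$). Concretely, for the fibre class $(1;1^{\times 2},0^{\times 2p})$ one has $3\cdot 1-1-1=1$, and for the general class the paper's computation gives
\[
3d_{n}-z_{n}-y_{n}-(2p-2)x_{n}-t_{n}=2(a_{n}+a_{n-1})-2p\,x_{n}+(-1)^{n}=1,
\]
not $-1$. This does not affect your strategy, but the stated target value is wrong in all three places where you invoke it, and if you actually carried out the ``short linear computation'' aiming for $-1$ you would (incorrectly) conclude the formula fails.
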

\begin{proof}
On each interval $I_{n}$ the reduction algorithm defines an automorphism $\phi_n \in D_{K}(1, 2p+1)$ such that $\phi_n^{*} E_{k+1}$ is an obstructing exceptional class. From the description of $\phi_{n}$ given in Remark~\ref{rmk:AutomorphismsTrivialFancyStep}, one can see that those classes must be of the form
\[E_{n}:=\left(d_{n}\,; z_{n}, y_{n}, x_{n}^{\times (2p-2)}, t_{n}\right)\]
In order to prove that the formulae for the coefficients given above yield obstructing exceptional classes, we only need to check that (i) $E_{n}\cdot E_{n}=-1$, (ii) $K\cdot E_{n}=1$, and (iii) $v_{0} \cdot E_{n} = D_{n}(\mu,c)$. Indeed, we have
\begin{align*}
E_{n}\cdot E_{n} &= d^{2}_{n}-z^{2}_{n}-y^{2}_{n}-(2p-2)x^{2}_{n}-t^{2}_{n}\\
& = 2a_{n}a_{n-1} -2px^{2}_{n}+2x_{n}(-1)^{n}-1\\
& = 2a_{n}a_{n-1} -\frac{\left(2(a_{n}+a_{n-1})-1\right)^{2}+1}{2p}-1\\
& = 2\,\left(a_{n}a_{n-1}-\frac{(a_{n}+a_{n-1})^{2}}{p}+\frac{(a_{n}+a_{n-1})}{p}\right)-1\\
& = -1
\end{align*}
where the last equality follows from Lemma~\ref{lemma:numerical}. Similarly,
\begin{align*}
K\cdot E_{n} & = 3d_{n}-z_{n}-y_{n}-(2p-2)x_{n}-t_{n}\\
&= 2(a_{n}+a_{n-1}) - 2p\,x_{n}+(-1)^{n}\\
&= 2(a_{n}+a_{n-1}) -\left(2(a_{n}+a_{n-1}) -1 +(-1)^{n}\right)    +(-1)^{n}\\
&=1
\end{align*}
and
\begin{align*}
v_{0}\cdot E_{n}&=(\mu+1-c)d_{n}-(\mu-c)z_{n}-cy_{n}-c(2p-2)x_{n}-(1-c)t_{n}\\
&= \mu(d_{n}-z_{n}) +(z_{n}-y_{n}-(2p-2)x_{n}+t_{n}-d_{n})c + (d_{n}-t_{n})\\
&= a_{n-1}\mu + a_{n} - \left(2\,(a_{n}+a_{n-1})-1\right) c\\
&= D_{n}(\mu,c)
\end{align*}

\end{proof}

\begin{remark}
To illustrate the previous corollary, the obstructing classes correponding to the intervals $I_{2}$, $I_{3}$, and $I_{4}$ are of types
\[\left(p-1\,; p-2, 0, 1^{\times(2p-2)}, 0\right), \quad 
	\left(p^{2}-3p+3\,; (p-2)^{2}, p-1, (p-2)^{\times(2p-2)}, 1\right)\]
\[ \left((p-2)^{2}+(p-2)^{3}\,; (p-1)(p-2)(p-3), (p-1)(p-3), \left((p-2)^{2}\right)^{\times(2p-2)}, p-2\right)\]
\end{remark}

\begin{remark}
Note that the arguments in Corollary~\ref{cor:TrivialevenCaseObstructionClasses} explain why we expressed the functions $w_n(\mu)$ using the formula~\eqref{thewwitha}. Namely, the sequence $\{a_{n}\}$ establishes a direct connection between the bounds $w_n(\mu)$ and Biran's result~\eqref{birdiotriv} presented in terms of the Diophantine equations~\eqref{birdioeq}. As expected, a consequence of finding obstructing classes that give the packing numbers is that we can provide the solutions for the Diophantine minimizing problem described in~\eqref{birdioeq}. When $k$ is odd, the relation between the generalized Gromov widths and the Diophantine equations is particularly easy to see. Indeed, for a fixed $k=2p+1$, and any $n \geq 2$, let us take $n_1= 1$ and $n_2= p$. Then the Diophantine equations~\eqref{birdioeq} have solutions $m_i = 1, i=1,2p+1.$  These solutions correspond exactly to the coefficients of our obstructing curves from Corollary~\ref{cor:TrivialOddCaseObstructionClasses} when translated back to the base of the homology of $S^2 \times S^2$. Thus our results could be interpreted as providing the infimum from the relation~\eqref{birdiotriv} without going through the extremely difficult task of solving all other possible Diophantine equation involved. We should also remark that similar solutions can be provided for all other cases that we discuss in the paper. 
\end{remark}

%%%%%%%%%%%%%%%%%%%%%%%%%%%%%%%%%%%%%%%%%%%%%%%%%%%%%%%%%%%%%%%%%%%%%%%%%%%%%%%%%
\section{Embeddings of $k\geq 8$ disjoint balls in the non-trivial bundle $M_{\mu}^{1}$}\label{section:TwistedBundle}
%%%%%%%%%%%%%%%%%%%%%%%%%%%%%%%%%%%%%%%%%%%%%%%%%%%%%%%%%%%%%%%%%%%%%%%%%%%%%%%%%

This section is dedicated to providing the proofs of Theorem~\ref{thm:MainTheoremTwisted} and its immediate corollaries. As explained in Section~\ref{sec:strategy}, given $k\geq 8$, and $\mu>0$, our goal is to find the largest capacity $c$ for which the vector $v_{0} = \left(\mu+1\,; \mu, c^{\times k}\right)$ belongs to the closure of the symplectic cone. As before, the volume condition gives an upper bound on $\w_{k}$, namely
\[\w_{k} \leq c_{\vol}=\sqrt{\frac{2\mu+1}{k}} \]
Because $\mu$ can take values in $(0,1)$, we cannot assume $c\leq\mu$, so that $v_{0}$ may not be ordered.

\begin{lemma}
Let $k\geq 8$ and suppose $\w_{k}(\mu)\geq\mu$. Then $\mu\leq 1/2$. Consequently, for $\mu\geq 1/2$, we can assume $c\leq\w_{k}\leq\mu$.
\end{lemma}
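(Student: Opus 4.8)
The plan is to argue by contradiction, using the volume bound together with the constraint imposed by a single obstructing exceptional class in the blow-up $X_{k+1}$. Suppose $\w_k(\mu)\geq\mu$; then there exists $c>0$ with $c\geq\mu$ for which the vector $v_0=(\mu+1\,;\mu,c^{\times k})$ lies in the closure of the symplectic cone $\mathcal{C}_K$ of $X_{k+1}$. First I would invoke the volume condition $c\leq c_{\vol}=\sqrt{(2\mu+1)/k}$, which combined with $c\geq\mu$ forces $\mu\leq\sqrt{(2\mu+1)/k}$, i.e. $k\mu^2\leq 2\mu+1$. For $k\geq 8$ the positive root of $8\mu^2-2\mu-1=0$ is $\mu=\tfrac12$, and since $k\mu^2-2\mu-1$ is increasing in $k$ for $\mu>0$, any $k\geq 8$ already yields $\mu\leq\tfrac12$ from the volume bound alone. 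This is in fact the cleanest route and it avoids exceptional classes entirely.

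If one wants the sharper statement with equality handled correctly (and to match the phrasing that $c\leq\w_k\leq\mu$ is \emph{allowed}, not forced, once $\mu\geq 1/2$), the second sentence follows immediately: for $\mu\geq 1/2$ the first part gives that $\w_k(\mu)<\mu$ is impossible to \emph{exceed}, so $\w_k(\mu)\leq\mu$, hence the optimal capacity $c=\w_k$ satisfies $c\leq\mu$ and the initial vector $v_0=(\mu+1\,;\mu,c^{\times k})$ is automatically ordered. The only subtlety is the boundary case $k=8$, $\mu=\tfrac12$: here $k\mu^2=2=2\mu+1$, so the volume inequality degenerates to an equality $c=c_{\vol}=\mu=\tfrac12$, which is consistent and still gives $c\leq\mu$; one checks directly that $v_0=(3/2\,;1/2,(1/2)^{\times 8})$ is non-negative and reduced with zero defect, so it does lie on $\partial\mathcal{C}_K$, and the statement $c\leq\mu$ holds with equality.

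The main obstacle, such as it is, is purely bookkeeping: making sure the chain of inequalities $\mu\leq c\leq\sqrt{(2\mu+1)/k}$ is set up with the correct (non-strict versus strict) inequalities so that the single boundary value $\mu=1/2$ is included, and confirming monotonicity in $k$ so that the case $k\geq 9$ needs no separate treatment. No deep input is required — only Theorem~\ref{thm:CharacterizationEK} through the volume membership condition $v_0\in\Pp_+$, i.e. $v_0\cdot v_0\geq 0$, which unwinds to exactly $(\mu+1)^2-\mu^2-kc^2\geq 0$, the volume bound used above.
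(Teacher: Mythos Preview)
Your proof is correct and takes essentially the same approach as the paper: both use the volume bound $\w_k\leq c_{\vol}=\sqrt{(2\mu+1)/k}$ to deduce $\mu\leq c_{\vol}$, then solve the resulting quadratic inequality $k\mu^2-2\mu-1\leq 0$ at $k=8$ and invoke monotonicity in $k$. The paper phrases the last step as ``$\frac{1+\sqrt{k+1}}{k}$ is decreasing in $k$ and equals $1/2$ at $k=8$,'' while you phrase it as ``$k\mu^2-2\mu-1$ is increasing in $k$''; these are equivalent, and your additional discussion of the boundary case $\mu=1/2$, $k=8$ is correct but more detailed than the paper's terse treatment.
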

\begin{proof}
The inequality $\w_{k}(\mu)\geq\mu$ implies that $c_{\vol}\geq\mu$, which is equivalent to $\mu \in \left( 0,\frac{1+\sqrt{k+1}}{k}\right)$. Now, $\frac{1+\sqrt{k+1}}{k}$ is a decreasing function of $k$ that takes the value $1/2$ at $k=8$.
\end{proof}

Assuming $\mu\geq 1/2$, the vector $v_{0}$ is ordered and positive, with defect $d_{0}=2c-1$, so that $v_{0}$ is reduced whenever $c\leq 1/2$. Consequently, we have the lower bound $\w_{k}\geq 1/2$. We note, in particular, that for $\mu=1/2$ and $k=8$, we have $c_{\vol}=1/2=\mu$, which shows that
\[\w_{8}(1/2) = 1/2\]
For $c>1/2$, applying a sequence of $\mathfrak{R}\mathfrak{C}$ moves leads to vectors $v_{n}$ of the form
\[
  v_{n} =
  \left( \mu+1-nd_{0}\,;\mu-nd_{0},c^{\times (k-2n)},(1-c)^{\times (2n)} \right)
\]
\printlabel{lemma:TwistedVp-2ordered}
\begin{lemma}\label{lemma:TwistedVp-2ordered}
  Given $k\geq 8$, let write $k=2p$ or $k=2p+1$ depending on the parity
  of $k$. Choose any $\mu\geq1/2$ and $c\in(1/2, \min\{1, c_{\vol},
  \mu\}]$. Then the following holds:
  \begin{itemize}
    \item If $k\geq 9$, then the vector $v_{p-2}$ is ordered and positive.
    \item If $k=8$, the vector $v_{2}$ is ordered and positive whenever
      $\mu\geq \frac{7}{4}$.
  \end{itemize}
\end{lemma}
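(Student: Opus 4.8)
The plan is to reduce the ordering of $v_{p-2}$ to a single scalar inequality in $c$ and $\mu$ and then to verify that inequality against the constraint $c\le c_{\vol}$ by a discriminant computation. First I would note, from the displayed form
\[
  v_n=\left(\mu+1-nd_0\,;\ \mu-nd_0,\ c^{\times(k-2n)},\ (1-c)^{\times 2n}\right),\qquad d_0=2c-1,
\]
that for $n\le p-2$ (so that at least one entry $c$ and at least one entry $1-c$ really occur) all coefficients past the first exceptional one are already in the correct relative order, since $c>1-c$ when $c>1/2$. Thus $v_n$ is ordered if and only if its leading exceptional entry dominates the $c$'s, that is $\mu-nd_0\ge c$, equivalently $c\le\varphi_n$ with $\varphi_n:=\frac{\mu+n}{2n+1}$; and whenever this holds $v_n$ is automatically positive, because $\mu-nd_0\ge c>0$, $c>0$, and $1-c\ge 0$ (the last since $c\le 1$).

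Next I would exploit the monotonicity of $n\mapsto\varphi_n$: for $\mu\ge 1/2$,
\[
  \varphi_n-\varphi_{n+1}=\frac{2\mu-1}{(2n+1)(2n+3)}\ge 0 ,
\]
so $\varphi_0=\mu\ge\varphi_1\ge\cdots\ge\varphi_{p-2}$. Hence as soon as $c\le\varphi_{p-2}$ one has $c\le\varphi_n$ for every $0\le n\le p-2$; consequently $v_0,\dots,v_{p-2}$ are all ordered, no reordering intervenes before step $p-2$, the successive $\mathfrak{R}\mathfrak{C}$ moves are genuine reduction steps, and $v_{p-2}$ is ordered and positive. The lemma therefore reduces to proving $c\le\varphi_{p-2}=\frac{\mu+p-2}{2p-3}$ under the hypotheses $\mu\ge 1/2$ and $c\le\min\{1,c_{\vol},\mu\}$.

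To finish I would distinguish two cases. If $\mu\ge p-1$ then $\varphi_{p-2}\ge 1\ge c$ and there is nothing more to do. If $\mu<p-1$ then $\mu+p-2>0$ and it suffices to show $c_{\vol}\le\frac{\mu+p-2}{2p-3}$; squaring and using $c_{\vol}^2=\tfrac{2\mu+1}{k}$ turns this into the claim that the quadratic
\[
  h_k(\mu):=k(\mu+p-2)^2-(2\mu+1)(2p-3)^2
\]
is nonnegative. It has positive leading coefficient, and a direct computation gives
\[
  \operatorname{disc}(h_{2p})=-4(2p-9)(2p-3)^2,\qquad
  \operatorname{disc}(h_{2p+1})=-8(2p-7)(2p-3)^2 .
\]
When $k\ge 9$ we have $p\ge 5$ in the even case and $p\ge 4$ in the odd case, so the discriminant is strictly negative and $h_k(\mu)>0$ for every $\mu$: this settles the first bullet. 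When $k=8$ we have $p=4$ and $h_8(\mu)=8\mu^2-18\mu+7=(2\mu-1)(4\mu-7)$, which is nonnegative precisely for $\mu\le 1/2$ or $\mu\ge 7/4$, so under $\mu\ge 1/2$ it is exactly the condition $\mu\ge 7/4$: this settles the second bullet.

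The only delicate point is the monotonicity step: it is precisely the hypothesis $\mu\ge 1/2$ that makes $n\mapsto\varphi_n$ non-increasing and so guarantees that $v_0,\dots,v_{p-3}$ never have to be reordered, which is what lets the reduction algorithm produce $v_{p-2}$ in the stated form. Everything after that is the routine discriminant factorization, which conveniently pins down the thresholds $p=5$ (even case), $p=4$ (odd case), and the exceptional value $\mu=7/4$ at $k=8$.
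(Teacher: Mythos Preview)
Your proof is correct and follows essentially the same route as the paper: reduce the ordering condition for $v_{p-2}$ to $\mu-(p-2)(2c-1)\ge c$, bound $c$ by $c_{\vol}$, and analyze the sign of the resulting quadratic $h_k(\mu)=k(\mu+p-2)^2-(2\mu+1)(2p-3)^2$ via its discriminant, finding no real roots for $k\ge 9$ and roots $\{1/2,7/4\}$ for $k=8$. Your monotonicity argument on $n\mapsto\varphi_n$ is a welcome bit of extra rigor the paper omits: it explicitly justifies that $v_0,\dots,v_{p-3}$ are already ordered, so the iterated $\mathfrak{R}\mathfrak{C}$ moves really produce $v_{p-2}$ in the displayed form; the case split at $\mu=p-1$ is harmless but unnecessary, since the discriminant argument already covers all $\mu$.
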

\begin{proof}
The vector $v_{p-2}$ is ordered if and only if $\mu-(p-2)(2c-1)\geq c$. Since we must have $c\leq c_{\vol}=\sqrt{(2\mu+1)/k}$, it is sufficient to assume $c=c_{\vol}$, in which case we get
\[
  f(\mu) :=
  k(\mu+p-2)^{2}-(2\mu+1)(2p-3)^{2} \geq
  0
\]
When $k=2p+1$ is odd, the discriminant of this polynomial is $2(7-2p)$, so that $f$ has no real roots whenever $p\geq 4$ and hence must be positive. When $k=2p$ the discriminant is $2(9-2p)$, showing that $f$ has real roots only for $p=4$, that is, for $k=8$. In that case, the roots are $\{1/2, 7/4\}$.
\end{proof}

Now let assume $\mu\in(0,1/2)$. As before, the vector $v_{0}=(\mu+1\,;\mu,c^{\times k})$ is ordered only if $\mu \geq c$, in which case its defect is $2c-1$. Hence, $v_{0}$ is positive and reduced whenever $0<c<\mu\leq 1/2$. On the other hand, when $c>\mu$, the reordering of $v_{0}$ gives the vector $\hat{v}_{0}=(\mu+1\,; c^{\times k},\mu)$ with defect $3c-\mu-1$. Hence, that vector is positive and reduced whenever $0<\mu<c\leq (\mu+1)/3$. Now, we have
$c_{\vol}\leq (\mu+1)/3$ if and only if $k\mu^{2}+2(k-9)\mu+(k-9)\geq 0$, which is true whenever $k\geq 9$. Therefore,
\printlabel{lemma:TwistedWkgeq9}
\begin{lemma}\label{lemma:TwistedWkgeq9}
Assume $k\geq 9$ and $\mu\in (0,1/2]$. Then $\w_{k}(M_{\mu}^{1})=c_{\vol}$, that is, we have full packing of $M_{\mu}^{1}$ by $k$ equal balls. For $k=8$ and $\mu\in (0,1/2]$, we have the lower bound $\mu\leq \w_{8}(M_{\mu}^{1})$ with equality when $\mu=1/2$.
\end{lemma}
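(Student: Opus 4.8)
The plan is to produce, for each admissible pair $(k,\mu)$, an explicit capacity $c$ at which the initial vector $v_{0}=(\mu+1\,;\mu,c^{\times k})$ (or its reordering) is already a nonnegative reduced class, so that no further step of the reduction algorithm is needed; combined with the volume upper bound $\w_{k}\leq c_{\vol}$, this pins down $\w_{k}$.

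For $k\geq 9$ I would take $c=c_{\vol}=\sqrt{(2\mu+1)/k}$ and split into the two cases appearing in the discussion preceding the lemma. If $c_{\vol}\leq\mu$, then $v_{0}$ is ordered, its defect equals $2c_{\vol}-1\leq 0$ because $c_{\vol}\leq\mu\leq 1/2$, hence $v_{0}$ is reduced, and it is positive since $\mu>0$ and $c_{\vol}>0$. If instead $\mu<c_{\vol}$, the reordered vector $\hat v_{0}=(\mu+1\,;c_{\vol}^{\times k},\mu)$ has defect $3c_{\vol}-\mu-1$, which is $\leq 0$ exactly when $c_{\vol}\leq(\mu+1)/3$; squaring, this is the inequality $k\mu^{2}+2(k-9)\mu+(k-9)\geq 0$, plainly true for $k\geq 9$ and $\mu>0$. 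In both cases $v_{0}$ at $c=c_{\vol}$ represents a class in $\mathcal{C}_{K}$, so $\w_{k}(M_{\mu}^{1})\geq c_{\vol}$, and the volume bound forces equality, i.e. a full packing by $k$ equal balls.

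For $k=8$ and $\mu\in(0,1/2]$ I would instead set $c=\mu$: then $v_{0}=(\mu+1\,;\mu^{\times 9})$ is ordered, its defect is $3\mu-(\mu+1)=2\mu-1\leq 0$, so $v_{0}$ is reduced, and it is positive since $\mu>0$. Hence $\w_{8}(M_{\mu}^{1})\geq\mu$. For the equality case $\mu=1/2$ one computes $c_{\vol}=\sqrt{2/8}=1/2=\mu$, so the volume bound gives $\w_{8}(1/2)\leq 1/2\leq\w_{8}(1/2)$, as was already observed.

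All the arithmetic is routine; the only point deserving a second look is the sign of the defect in the reordered branch for $k\geq 9$, but this reduces to the single inequality $c_{\vol}\leq(\mu+1)/3$, which is precisely the threshold that first becomes available when $k=9$. There is no genuine obstacle here — the statement is essentially a bookkeeping consequence of the case analysis carried out just above it.
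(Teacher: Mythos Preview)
Your proof is correct and follows essentially the same approach as the paper: the lemma is stated there as an immediate consequence (``Therefore,'') of the preceding paragraph, which carries out exactly the case split you reproduce --- checking that $v_{0}$ (or $\hat v_{0}$) is already reduced and nonnegative at the relevant capacity, with the key inequality $c_{\vol}\leq(\mu+1)/3$ reducing to $k\mu^{2}+2(k-9)\mu+(k-9)\geq 0$ for $k\geq 9$. Your version is slightly more explicit in plugging in $c=c_{\vol}$ directly rather than arguing for all $c$ in a range, but the substance is identical.
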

We now discuss the following cases separately:
\begin{itemize}
  \item $k=2p+1\geq 9$ and $\mu > 1/2$
  \item $k=2p\geq 10$ and $\mu > 1/2$
  \item $k=8$ 
\end{itemize}

% Twisted Bundle, odd case k=2p+1\geq 9 and 1/2\leq \mu %%%%%%%%%%%%%%%%%%%%%%%%
\subsection{The odd case $k=2p+1\geq 9$ and $\mu > 1/2$} 
By Lemma~\ref{lemma:TwistedVp-2ordered}, the vector 
\[
  v_{p-2} =
  \left( \mu+1-(p-2)d_{0}\,;\mu-(p-2)d_{0},c^{\times 5},(1-c)^{\times (2p-4)} \right)
\]
is ordered and positive. A $\mathfrak{R}\mathfrak{C}$ move leads to
\[
  v_{p-1} =
  \left( \mu+1-(p-1)d_{0}\,;\mu-(p-1)d_{0},c^{\times 3},(1-c)^{\times (2p-2)} \right)
\]
which is positive but not necessarily ordered. 

If $v_{p-1}$ is ordered, that is, if $\mu-(p-1)d_{0}\geq c$, then its defect is $d_{0}=2c-1>0$ and another $\mathfrak{R}\mathfrak{C}$ move yields
\[
  v_{p} =
  \left(\mu+1-pd_{0}\,;\mu-pd_{0},c,(1-c)^{\times (2p)} \right)
\]
which, again, is positive but not necessarily ordered. However, since $\mu-(p-1)d_{0}\geq c$ and $(c-1)=c-d_{0}$ implies $\mu-pd_{0}\geq (1-c)$, its defect is
\[(\mu-pd_{0})+c+(1-c)-(\mu+1-pd_{0}) = 0\]
so that $v_{p}$ is positive and reduced.

If $v_{p-1}$ is not ordered, that is, if $\mu-(p-1)d_{0}< c$, then the reordered vector is
\[
  \hat{v}_{p-1} =
  \left(\mu+1-(p-1)d_{0}\,;c^{\times 3},\mu-(p-1)d_{0},(1-c)^{\times (2p-2)} \right)
\]
with defect $d_{p-1}=(2p+1)c-\mu-p>d_{0}$. Hence, applying a Cremona move and a reordering gives
\[
  v_{p} =
  \left( 2\lambda_{p-1}-3c\,; \lambda_{p-1}-1, (1-c)^{\times
    (2p-2)},(\lambda_{p-1}-2c)^{\times 3} \right)
\]
where we have set $\lambda_{p-1}=\mu+1-(p-1)d_{0}$. The vector $v_{p}$ is always ordered, and it is non-negative if and only if $\lambda_{p-1}-2c\geq 0$, which is equivalent to
\[c \leq \frac{p+\mu}{2p}\]
The defect of $v_{p}$ is $d_{p}=1+c-\lambda_{p-1}$, which is positive if and only if we assume $v_{p-1}$ not ordered. Thus, $v_{p}$ is not reduced, and we can apply a sequence of $(p-3)$ $\mathfrak{R}\mathfrak{C}$ moves to obtain the vector
\[
  v_{2p-3} =
  \left( 2\lambda_{p-1}-3c-(p-3)d_{p}\,;
  \lambda_{p-1}-1-(p-3)d_{p},(1-c)^{\times
    4},(\lambda_{p-1}-2c)^{\times (2p-3)} \right)
\]
That vector is ordered if $\lambda_{p-1}-1-(p-3)d_{p}\geq(1-c)$, which is equivalent to
\[c \leq \frac{(p-2)\mu+p^{2}-3p+1}{p(2p-5)}\]
For $c = c_{\vol}$, this becomes
\[
  \left( 2p^{3} - 7p^{2} + 4p + 4 \right) u^{2} + 36p^{3} - 2 \left( 2p^{4} - 11p^{3} + 16p^{2} - 3p + 2\right) u + 2p^{5} - 15p^{4} - 26p^{2} - 4p + 1 \geq
  0
\]
which has a double root for $p=4$ and no real roots for $p\geq 5$. Hence, $v_{2p-3}$ is ordered for all $c\leq c_{\vol}$. A $\mathfrak{R}\mathfrak{C}$ move applied to $v_{2p-3}$ then yields
\[
  v_{2p-2} =
  \left( 2\lambda_{p-1}-3c-(p-2)d_{p}\,;
  \lambda_{p-1}-1-(p-2)d_{p},(1-c)^{\times
    2},(\lambda_{p-1}-2c)^{\times (2p-1)} \right)
\]
which is positive but not necessarily ordered. 

Assuming $v_{2p-2}$ ordered, a last $\mathfrak{R}\mathfrak{C}$ move gives
\[
  v_{2p-1} =
  \left( 2\lambda_{p-1}-3c-(p-1)d_{p}\,; \lambda_{p-1}-1-(p-1)d_{p},
  (\lambda_{p-1}-2c)^{\times (2p+1)} \right)
\]
which is positive with defect $d_{2p-1}=\lambda_{p-1}-1-c=-d_{p}$. Hence, $v_{2p-1}$ is reduced.

If $v_{2p-2}$ is not ordered, then the reordered vector is
\[
  \hat{v}_{2p-2} =
  \left( 2\lambda_{p-1}-3c-(p-2)d_{p}\,; (1-c)^{\times
    2},\lambda_{p-1}-1-(p-2)d_{p},(\lambda_{p-1}-2c)^{\times (2p-1)}
  \right)
\]
and another Cremona move and reordering gives
\[
  v_{2p-1} =
  \left( 2\lambda_{p-1}-3c-(p-1)d_{p}\,; (\lambda_{p-1}-2c)^{\times
    (2p+1)} , \lambda_{p-1}-1-(p-1)d_{p}\right)
\]
which is non-negative only if $\lambda_{p-1}-1-(p-1)d_{p}\geq 0$, which
is equivalent to
\[c \leq \frac{p(p+\mu-1)}{2p^{2}-p-1}\] 
Its defect is $d_{2p-1}=3(\lambda_{p-1}-2c)-(2\lambda_{p-1}-3c-(p-1)d_{p})$ which is positive if $c\geq \frac{(p-2)u + p^{2} - 3p + 1}{2p^{2} - 5p}$, that is, if $v_{2p-3}$ is not ordered. So, $v_{2p-1}$ is again reduced.

Summing up, we see that the algorithm produces a non-negative reduced vector provided $c$ is not greater than any of the upper bounds $c_{\vol}$, $1$, $\frac{p+\mu}{2p}$, and
$\frac{p(p+\mu-1)}{2p^{2}-p-1}$.
\printlabel{prop:TwistedOddGeneric}
\begin{prop}\label{prop:TwistedOddGeneric}
Let $k=2p+1\geq 9$ and $\mu\in(1/2,\infty)$. Then the
$(2p+1)^{\text{th}}$ generalized Gromov width of $M_{\mu}^{1}$ is
\[
  \w_{2p+1}(M_{\mu}^{1}) =
  \begin{cases}
    c_{\vol} & \text{~if~} \mu\in \left[1/2,~ \frac{p^{3}-2p^{2}+1-(p-1)\sqrt{2p+1}}{p^{2}}\right)\\
    \frac{p(p+\mu-1)}{2p^{2}-p-1} & \text{~if~} \mu\in \left[\frac{p^{3}-2p^{2}+1-(p-1)\sqrt{2p+1}}{p^{2}},~ \frac{p(p-1)}{p+1}\right)\\
    \frac{p+\mu}{2p} & \text{~if~} \mu\in\left[\frac{p(p-1)}{p+1} ,~ p\right) \\
    1 & \text{~if~} \mu\in\left[p,~\infty\right)\\
  \end{cases}
\]
\end{prop}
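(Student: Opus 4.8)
The plan is to read the formula off from the analysis that precedes the statement. That discussion shows that running the reduction algorithm on $v_{0}=\left(\mu+1\,;\mu,c^{\times(2p+1)}\right)$, for $\mu>1/2$, produces a nonnegative reduced vector precisely when $c$ does not exceed any of the four quantities $c_{\vol}$, $1$, $\frac{p+\mu}{2p}$, and $\frac{p(p+\mu-1)}{2p^{2}-p-1}$: here $1$ is the constraint $\w_{1}(M_{\mu}^{1})=1$, $c_{\vol}$ is the volume constraint, and the last two appear as the capacities at which a specific coefficient of an intermediate vector $\phi\,v_{0}$, $\phi\in D_{K}(1,2p+2)$, changes sign, so that for larger $c$ the $v_{0}$-pairing of the corresponding exceptional class becomes negative. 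Hence $\w_{2p+1}(M_{\mu}^{1})=\min\bigl\{c_{\vol},\,1,\,\frac{p+\mu}{2p},\,\frac{p(p+\mu-1)}{2p^{2}-p-1}\bigr\}$, and everything reduces to deciding which term is smallest for each value of $\mu$.

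First I would dispose of the three affine comparisons. Writing $2p^{2}-p-1=(2p+1)(p-1)$, elementary algebra gives $\frac{p(p+\mu-1)}{2p^{2}-p-1}\le\frac{p+\mu}{2p}\iff(p+1)\mu\le p(p-1)$, while $\frac{p+\mu}{2p}\le 1\iff\mu\le p$ and $\frac{p(p+\mu-1)}{2p^{2}-p-1}\le 1\iff\mu\le p-\frac1p$. Since $\frac{p(p-1)}{p+1}<p-\frac1p<p$ for $p\ge 2$, the lower envelope of these three functions is $\frac{p(p+\mu-1)}{2p^{2}-p-1}$ on $(1/2,\frac{p(p-1)}{p+1})$, then $\frac{p+\mu}{2p}$ on $[\frac{p(p-1)}{p+1},p)$, then $1$ on $[p,\infty)$.

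The main step is comparing $c_{\vol}=\sqrt{(2\mu+1)/(2p+1)}$ with this envelope. Both sides being positive for $\mu>1/2$, squaring reduces $c_{\vol}\le\frac{p(p+\mu-1)}{2p^{2}-p-1}$ to $Q(\mu):=p^{2}(p+\mu-1)^{2}-(2\mu+1)(2p+1)(p-1)^{2}\ge 0$; expanding and using $2p^{2}-p-1=(2p+1)(p-1)$ once more, the discriminant of the quadratic $Q$ collapses to $2p+1$, so $Q$ has roots $\mu_{\pm}=\frac{p^{3}-2p^{2}+1\pm(p-1)\sqrt{2p+1}}{p^{2}}$ and $c_{\vol}\le\frac{p(p+\mu-1)}{2p^{2}-p-1}$ holds exactly on $[\mu_{-},\mu_{+}]$. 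Similarly $c_{\vol}\ge\frac{p+\mu}{2p}$ reduces to $(2p+1)(p+\mu)^{2}-4p^{2}(2\mu+1)\le 0$, a quadratic with roots $\frac{p(2p-3)}{2p+1}$ and $p$, and $c_{\vol}\ge 1\iff\mu\ge p$. To assemble all this I would then establish the chain
\[
\tfrac12<\mu_{-}<\tfrac{p(2p-3)}{2p+1}<\tfrac{p(p-1)}{p+1}<\mu_{+}<p,
\]
valid for $p\ge 4$ (each inequality, after clearing denominators and squaring where needed, becomes a polynomial inequality in $p$ that is routine to check). Granting the chain, the regimes fit together exactly as stated: $c_{\vol}$ is the minimum on $[1/2,\mu_{-})$, then $\frac{p(p+\mu-1)}{2p^{2}-p-1}$ on $[\mu_{-},\frac{p(p-1)}{p+1})$, then $\frac{p+\mu}{2p}$ on $[\frac{p(p-1)}{p+1},p)$, and finally $1$ on $[p,\infty)$.

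The delicate link in the chain is $\mu_{+}\ge\frac{p(p-1)}{p+1}$, which pins down the right end of the middle regime and cannot be obtained from a crude bound on the irrational $\mu_{+}$; I expect this to be the main obstacle. The clean way to get it is to evaluate $Q$ at $\mu_{0}=\frac{p(p-1)}{p+1}$, where $\frac{p(p+\mu-1)}{2p^{2}-p-1}$ and $\frac{p+\mu}{2p}$ coincide, and to observe that $\mu_{0}$ already lies in the open interval $(\frac{p(2p-3)}{2p+1},p)$ on which $c_{\vol}>\frac{p+\mu}{2p}$; hence $Q(\mu_{0})<0$, which forces $\mu_{0}$ to lie strictly between the roots $\mu_{-}$ and $\mu_{+}$ and yields $\mu_{-}<\mu_{0}<\mu_{+}$ at once. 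The easier companion inequality $\mu_{-}<\frac{p(2p-3)}{2p+1}$ ensures $c_{\vol}<\frac{p+\mu}{2p}$ on $[1/2,\mu_{-})$, so that $c_{\vol}$ really is the minimum there. With all endpoints ordered, the piecewise formula follows by inspection; the packing numbers and the obstructing exceptional classes are then extracted, exactly as in the trivial case, from the automorphisms $\phi$ that the algorithm produces on each subinterval.
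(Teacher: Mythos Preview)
Your approach is correct and matches the paper's, whose proof is the single sentence ``This follows readily from the fact that $\w_{2p+1}=\min\{c_{\vol},1,\frac{p+\mu}{2p},\frac{p(p+\mu-1)}{2p^{2}-p-1}\}$''; you simply supply the piecewise comparison that the paper leaves to the reader. One slip to fix: since $Q$ has positive leading coefficient $p^{2}$, the condition $Q(\mu)\ge 0$ (equivalently $c_{\vol}\le\frac{p(p+\mu-1)}{2p^{2}-p-1}$) holds \emph{outside} $(\mu_{-},\mu_{+})$, not on $[\mu_{-},\mu_{+}]$ as you wrote---your final assembly already uses the correct direction, so this is purely cosmetic.
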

\begin{proof}
  This follows readily from the fact that $\w_{2p+1}=\min\left\{c_{\vol}, 1, \frac{p+\mu}{2p}, \frac{p(p+\mu-1)}{2p^{2}-p-1}\right\}$.
\end{proof}
\begin{cor}
Let $k=2p+1\geq 9$ and consider $\mu\geq 1/2$. The $(2p+1)^{\text{th}}$ packing number of $M_{\mu}^{1}$ is
\[
  p_{2p+1}(M_{\mu}^{1}) =
  \begin{cases}
    1  &\text{~if~}\mu\in\left[1/2,~ \frac{p^{3}-2p^{2}+1-(p-1)\sqrt{2p+1}}{p^{2}}\right) \\
     \frac{p^{2}(p+\mu-1)^{2}}{(2\mu+1)(p-1)^{2}} & \text{~if~} \mu\in \left[\frac{p^{3}-2p^{2}+1-(p-1)\sqrt{2p+1}}{p^{2}},~ \frac{p(p-1)}{p+1}\right)\\
    \frac{(2p+1)}{(2u+1)}\left(\frac{p+\mu}{2p}\right)^{2} & \text{~if~} \mu\in\left[\frac{p(p-1)}{p+1} ,~ p\right) \\
     \frac{2p+1}{2\mu+1} & \text{~if~} \mu\in\left[p,~\infty\right)\\
  \end{cases}
\]
\end{cor}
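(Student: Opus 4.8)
The plan is to derive the packing numbers directly from the generalized Gromov width already computed in Proposition~\ref{prop:TwistedOddGeneric}, using nothing more than the elementary dictionary between $p_{k}$ and $\w_{k}$. First I would record that a $4$-dimensional ball of capacity $c$ has symplectic volume $\tfrac12 c^{2}$, so that $\vol\bigl(\sqcup_{k}B(c)\bigr)=\tfrac{k}{2}c^{2}$ is a strictly increasing, continuous function of $c$. Since the set of capacities $c$ for which $\sqcup_{k}B(c)$ embeds symplectically into $M_{\mu}^{1}$ is an interval with supremum $\w_{k}(M_{\mu}^{1})$, the definition of $p_{k}$ yields
\[
p_{k}(M_{\mu}^{1})=\frac{\tfrac{k}{2}\,\w_{k}(M_{\mu}^{1})^{2}}{\vol(M_{\mu}^{1})}=\frac{k\,\w_{k}(M_{\mu}^{1})^{2}}{2\mu+1},
\]
where the last equality uses $\vol(M_{\mu}^{1})=\tfrac{2\mu+1}{2}$, the same normalization already encoded in $c_{\vol}=\sqrt{(2\mu+1)/k}$. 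This identity holds whether or not the supremum defining $\w_{k}$ is attained, precisely because $\vol(\sqcup_{k}B(c))$ is monotone and continuous in $c$.

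The second step is simply to substitute the four branches of $\w_{2p+1}(M_{\mu}^{1})$ supplied by Proposition~\ref{prop:TwistedOddGeneric} into this formula, one interval at a time. On $\bigl[\tfrac12,\ \tfrac{p^{3}-2p^{2}+1-(p-1)\sqrt{2p+1}}{p^{2}}\bigr)$ one has $\w_{2p+1}=c_{\vol}$, hence $\w_{2p+1}^{2}=\tfrac{2\mu+1}{2p+1}$ and $p_{2p+1}=1$, i.e.\ a full packing. On $[p,\infty)$ one has $\w_{2p+1}=1$, giving $p_{2p+1}=\tfrac{2p+1}{2\mu+1}$. On $\bigl[\tfrac{p(p-1)}{p+1},\ p\bigr)$ one has $\w_{2p+1}=\tfrac{p+\mu}{2p}$, and squaring produces $p_{2p+1}=\tfrac{2p+1}{2\mu+1}\bigl(\tfrac{p+\mu}{2p}\bigr)^{2}$. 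On the remaining branch $\w_{2p+1}=\tfrac{p(p+\mu-1)}{2p^{2}-p-1}$, and the only nonroutine point is the factorization $2p^{2}-p-1=(2p+1)(p-1)$; after using it, $\w_{2p+1}^{2}=\tfrac{p^{2}(p+\mu-1)^{2}}{(2p+1)^{2}(p-1)^{2}}$ and substitution into the displayed identity gives the middle-branch value.

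Since each of these is a one-line algebraic substitution, I do not anticipate any genuine obstacle: all of the substantive work — the reduction-algorithm analysis and the identification of the interval endpoints — was already carried out in the proof of Proposition~\ref{prop:TwistedOddGeneric}. The only things that require a little care are transcribing the endpoints faithfully (they are inherited verbatim from that proposition) and noticing the quadratic identity $2p^{2}-p-1=(2p+1)(p-1)$ that makes the branch $\w_{2p+1}=\tfrac{p(p+\mu-1)}{2p^{2}-p-1}$ collapse; everything else is routine arithmetic.
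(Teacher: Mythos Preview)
Your proposal is correct and matches the paper's approach exactly: the paper gives no explicit proof for this corollary (it is closed with a bare $\Box$), treating it as an immediate consequence of Proposition~\ref{prop:TwistedOddGeneric} via the identity $p_{k}=k\,\w_{k}^{2}/(2\mu+1)$, which is precisely what you spell out. Your observation that $2p^{2}-p-1=(2p+1)(p-1)$ is the only nontrivial simplification needed, and it is the same one implicitly used by the authors.
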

\cqfd
\printlabel{cor:OddStabilityNumbersTwisted}
\begin{cor}\label{cor:OddStabilityNumbersTwisted}
The odd stability number of $M_{\mu}^{1}$ is
\[
N_{{\rm odd}}(\mu)=
\begin{cases}
7 & \text{~if~} \mu\in\left\{\frac{1}{7},~ \frac{3}{8} \right\} \\
9 & \text{~if~} \mu\in
		\left(0,~1\right)\setminus\left\{\frac{1}{7},~ \frac{3}{8} \right\} \\
2\left\lceil \frac{u + 2 + \sqrt{(u+2)^{2} + 4\sqrt{2u + 1}}}{2} \right\rceil+1
		& \text{~if~} \mu \in \left[1,\infty\right)
\end{cases}
\]
\end{cor}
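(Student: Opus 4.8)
I would mirror the proof of Corollary~\ref{cor:OddStabilityNumbersTrivial}: settle the stable range $k=2p+1\geq 9$ using the explicit widths of Proposition~\ref{prop:TwistedOddGeneric} and Lemma~\ref{lemma:TwistedWkgeq9}, and then dispose of the finitely many sporadic cases $k\leq 7$ via Proposition~\ref{prop:PackingNumbersProduct<8} (and Appendix~A). For fixed odd $k=2p+1\geq 9$, write $g(p):=\frac{p^{3}-2p^{2}+1-(p-1)\sqrt{2p+1}}{p^{2}}$ for the left endpoint of the second interval in Proposition~\ref{prop:TwistedOddGeneric}, i.e.\ the smaller root in $\mu$ of the boundary equation $c_{\vol}(\mu)=\frac{p(p+\mu-1)}{2p^{2}-p-1}$. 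Since that boundary value agrees with $c_{\vol}(\mu)$ at $\mu=g(p)$, and since $g(p)\geq g(4)=\tfrac32>\tfrac12$, combining Lemma~\ref{lemma:TwistedWkgeq9} (for $\mu\leq\tfrac12$) with Proposition~\ref{prop:TwistedOddGeneric} (for $\mu>\tfrac12$) yields
\[
p_{2p+1}(M_{\mu}^{1})=1 \iff \w_{2p+1}(\mu)=c_{\vol}(\mu) \iff \mu\leq g(p).
\]
So $N_{{\rm odd}}(\mu)$, the smallest odd $k_{0}$ with $p_{k}(M_{\mu}^{1})=1$ for all odd $k\geq k_{0}$, is governed in the range $k\geq 9$ by the set $\{p\geq 4:g(p)\geq\mu\}$.

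\textbf{Inverting $g$.} First I would check $g$ is strictly increasing on $[4,\infty)$: writing $g(p)=p-2+p^{-2}-(p-1)(2p+1)^{1/2}p^{-2}$, only the last term is nontrivial and it differentiates to $\frac{p^{2}-2p-2}{p^{3}(2p+1)^{1/2}}$, which is positive for $p\geq 4$; hence $g'(p)>1-2p^{-3}>0$. Thus for $\mu\geq g(4)=\tfrac32$ the set $\{p\geq 4:g(p)\geq\mu\}$ is the tail $\{p\geq g^{-1}(\mu)\}$, while for $\mu<\tfrac32$ it is all of $\{p\geq 4\}$. To compute $g^{-1}$ I would square the boundary equation and use $2p^{2}-p-1=(2p+1)(p-1)$, getting $(2\mu+1)(2p+1)(p-1)^{2}=p^{2}(p+\mu-1)^{2}$, which expands to $p^{4}-(2\mu+4)p^{3}+(\mu+2)^{2}p^{2}-(2\mu+1)=0$; the key identity $p^{4}-(2\mu+4)p^{3}+(\mu+2)^{2}p^{2}=\bigl(p^{2}-(\mu+2)p\bigr)^{2}$ then turns this into
\[
\bigl(p^{2}-(\mu+2)p\bigr)^{2}=2\mu+1,\qquad\text{i.e.}\qquad\bigl(p^{2}-(\mu+2)p-\sqrt{2\mu+1}\bigr)\bigl(p^{2}-(\mu+2)p+\sqrt{2\mu+1}\bigr)=0,
\]
whose unique root exceeding $\mu+2$ is $r(\mu):=\tfrac12\bigl(\mu+2+\sqrt{(\mu+2)^{2}+4\sqrt{2\mu+1}}\bigr)$ (the second factor has both roots $<\mu+2$). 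Since $g(\mu+2)=\mu+\frac{1-(\mu+1)\sqrt{2\mu+5}}{(\mu+2)^{2}}<\mu$ and $g^{-1}(\mu)$ is by construction a root of the quartic, monotonicity forces $g^{-1}(\mu)>\mu+2$, hence $g^{-1}(\mu)=r(\mu)$; one checks $r(\tfrac32)=4$, consistent with $g(4)=\tfrac32$.

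\textbf{Assembling.} Set $J(\mu):=\max\{9,\,2\lceil r(\mu)\rceil+1\}$, which by the above is the stability threshold contributed by the range $k\geq 9$ (it equals $9$ when $\mu<\tfrac32$, where every odd $k\geq 9$ packs, and $2\lceil r(\mu)\rceil+1$ when $\mu\geq\tfrac32$). Since $r$ is increasing with $r(1)=\tfrac12(3+\sqrt{9+4\sqrt 3})>3$ and $r(\tfrac32)=4$, we get $\lceil r(\mu)\rceil\geq 4$, hence $J(\mu)=2\lceil r(\mu)\rceil+1\geq 9$, for all $\mu\geq 1$, while $J(\mu)=9$ throughout $(0,1)$. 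Finally, Proposition~\ref{prop:PackingNumbersProduct<8} (cf.\ Appendix~A) shows that the only full packings of $M_{\mu}^{1}$ by an odd number $k\leq 7$ of balls occur at $(k,\mu)\in\{(7,\tfrac17),(7,\tfrac38)\}$, and that $k=5$ does not pack at those two values. Combining: for $\mu\geq 1$ no odd $k\leq 7$ packs, so $N_{{\rm odd}}(\mu)=J(\mu)=2\lceil r(\mu)\rceil+1$; for $\mu\in(0,1)\setminus\{\tfrac17,\tfrac38\}$ no odd $k\leq 7$ packs and every odd $k\geq 9$ does, so $N_{{\rm odd}}(\mu)=9$; and for $\mu\in\{\tfrac17,\tfrac38\}$, $k=7$ and all larger odd $k$ pack while $k=5$ does not, so $N_{{\rm odd}}(\mu)=7$. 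This is the asserted formula.

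\textbf{Main obstacle.} I expect the crux to be the middle step: spotting the perfect-square completion $p^{4}-(2\mu+4)p^{3}+(\mu+2)^{2}p^{2}=\bigl(p^{2}-(\mu+2)p\bigr)^{2}$ that collapses the opaque radical $g(p)$ to the clean root $r(\mu)$, and then tracking carefully which of the (up to four) roots of the resulting quartic is the branch of $g^{-1}$ that is actually needed. The monotonicity of $g$ and the bookkeeping with the $k\leq 7$ packing numbers are comparatively routine.
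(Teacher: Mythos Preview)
Your approach is essentially the paper's own: determine the stable range $k\geq 9$ by finding the largest root $r(\mu)$ of the quartic coming from $c_{\vol}^{2}=\bigl(\tfrac{p(p+\mu-1)}{2p^{2}-p-1}\bigr)^{2}$, set $J(\mu)=\max\{9,\,2\lceil r(\mu)\rceil+1\}$, and then compare against the finitely many sporadic $k\leq 7$ cases from Appendix~A. Your completing-the-square derivation of $r(\mu)$ and your explicit monotonicity check for $g$ are in fact more detailed than the paper's treatment, which simply asserts the value of the largest root.

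There is one slip. You cite Proposition~\ref{prop:PackingNumbersProduct<8}, which concerns the \emph{trivial} bundle $M_{\mu}^{0}$; the relevant result for $M_{\mu}^{1}$ is Proposition~\ref{prop:PackingNumbersTwisted<8}. That proposition lists \emph{four} odd sporadic full packings, namely $(\mu,k)\in\{(1,3),\,(\tfrac17,7),\,(\tfrac38,7),\,(3,7)\}$, not just the two you name. Consequently your intermediate claim ``for $\mu\geq 1$ no odd $k\leq 7$ packs'' is false at $\mu=1$ (where $k=3$ packs) and at $\mu=3$ (where $k=7$ packs). These extra cases do not change the final formula---$k=5$ fails at $\mu=1$ and $k=9$ fails at $\mu=3$ (since $g(4)=\tfrac32<3$), so neither sporadic packing lowers $N_{{\rm odd}}$ below $J(\mu)$---but the argument as written contains a false step and should be patched accordingly.
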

\begin{proof}
By Proposition~\ref{prop:PackingNumbersTwisted<8}, the pairs $(\mu,k)$ for which we have full packings by $k=2p+1\leq 7$ balls are $\left\{(1, 3),~ (1/7, 7),~ (3/8, 7),~ (3, 7)\right\}$. On the other hand, for $\mu\in(0,1/2]$, Lemma~\ref{lemma:TwistedWkgeq9} shows that we have full packings whenever $k\geq 9$. These two facts together prove our claim for $\mu\in(0,1/2]$. When $\mu\in(1/2, \infty)$, the largest root of the polynomial in $p$
\[
\left(2p+1\right)\left(p^{4}-2p^{3}\mu+p^{2}\mu^{2}-4p^{3}+4p^{2}\mu+4p^{2}-2\mu-1\right)
\]
obtained by setting
\[c_{\vol}^{2}=\frac{p^{2}(p+\mu-1)^{2}}{(2p^{2}-p-1)^{2}}\]
is
\[r(\mu) =  \frac{\mu + 2 + \sqrt{(\mu+2)^{2} + 4\sqrt{2\mu + 1}}}{2}\]
The integer $J(\mu) := \max \{9,~ 2\lceil r(\mu) \rceil + 1\}$ gives the odd stability number in the range $k\geq 9$. The results follows by comparing $J(\mu)$, $\mu\geq 1/2$, with the exceptional full packings by $k\leq 7$ balls listed above.
\end{proof}
\printlabel{cor:TwistedOddCaseObstructionClasses}
\begin{cor}\label{cor:TwistedGenericOddCaseObstructionClasses}
The exceptional classes in $H_{2}(X_{2p+2};\Z)$ that give the obstructions to the embedding of $2p+1\geq 9$ balls into $M_{\mu}^{1}$, $\mu\geq 1/2$, are of type
\begin{align*}
\left(1\,;1^{\times 2},0^{\times 2p}\right) 
  	& \text{~for~} \mu\in[p,~\infty)\\
\left(p\,; p-1, 1^{\times 2p}, 0\right) 
  	& \text{~for~} \mu\in\left[\frac{p(p-1)}{p+1},~ p\right)\\
\left(p(p-1)\,; p(p-2), (p-1)^{\times (2p+1)}\right) 
  	& \text{~for~} \mu\in\left[\frac{p^{3}-2p^{2}+1-(p-1)\sqrt{2p+1}}{p^{2}},~ \frac{p(p-1)}{p+1}\right)
\end{align*}
For $\mu\in\left[1/2,~ \frac{p^{3}-2p^{2}+1-(p-1)\sqrt{2p+1}}{p^{2}}\right)$, the only obstruction is given by the volume condition.
\end{cor}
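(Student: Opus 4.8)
The plan is to mimic the proof of Corollary~\ref{cor:TrivialevenCaseObstructionClasses}. Recall from Section~\ref{sec:strategy} that, by Theorem~\ref{thm:CharacterizationEK}, an obstruction to embedding $2p+1$ equal balls of capacity $c$ into $M_{\mu}^{1}$ is an exceptional class $E\in\E_{K}(X_{2p+2})$ with $v_{0}\cdot E<0$ for $c$ just above $\w_{2p+1}(\mu)$, where $v_{0}=\left(\mu+1\,;\mu,c^{\times(2p+1)}\right)$ and $K=\left(3\,;1^{\times(2p+2)}\right)$, and that such a class has the form $\phi^{*}E_{2p+2}$ for the automorphism $\phi\in D_{K}(1,2p+2)$ obtained by composing the Cremona moves and reorderings used in the reduction procedure that establishes Proposition~\ref{prop:TwistedOddGeneric}. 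The last sentence of the corollary is then immediate: on $\left[1/2,~\frac{p^{3}-2p^{2}+1-(p-1)\sqrt{2p+1}}{p^{2}}\right)$ Proposition~\ref{prop:TwistedOddGeneric} gives $\w_{2p+1}=c_{\vol}$, so $v_{0}\in\overline{\mathcal{C}_{K}}$ for all $c\le c_{\vol}$ and no exceptional class obstructs; only the volume bound does.

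On each of the other three intervals $\w_{2p+1}(\mu)$ equals, respectively, $1$, $\frac{p+\mu}{2p}$, and $\frac{p(p+\mu-1)}{2p^{2}-p-1}$, and each of these three numbers occurs in the algorithm as the vanishing of one coordinate of a vector in the reduction chain: the last coordinate $1-c$ of the Case~1 reduced vector $v_{p}$ (equivalently, the $(1-c)$ coordinates already present from the first $\mathfrak{R}\mathfrak{C}$ move on); the coordinate $\lambda_{p-1}-2c$ of the Case~2 vector $v_{p}$; and the last coordinate $\lambda_{p-1}-1-(p-1)d_{p}$ of the Case~2 vector $v_{2p-1}$. Dualising the corresponding $\phi$ and tracking which exceptional divisor $E_{j}$ is sent onto the relevant coordinate, and using the explicit shape of the $\mathfrak{R}\mathfrak{C}$-chains, one is led to the candidate classes
\[
E^{(1)}=\left(1\,;1^{\times 2},0^{\times 2p}\right),\qquad E^{(2)}=\left(p\,;p-1,1^{\times 2p},0\right),\qquad E^{(3)}=\left(p(p-1)\,;p(p-2),(p-1)^{\times(2p+1)}\right).
\]

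It then remains to confirm, exactly as in Corollary~\ref{cor:TrivialevenCaseObstructionClasses}, that these are genuine obstructing exceptional classes. For each $i$ I would check (i) $E^{(i)}\cdot E^{(i)}=-1$; (ii) $K\cdot E^{(i)}=1$, which reconfirms $E^{(i)}\in\E_{K}$ (this also follows formally, since $E^{(i)}=\phi^{*}E_{2p+2}$ with $\phi\in D_{K}(1,2p+2)$ and $E_{2p+2}\in\E_{K}$ by Theorem~\ref{thm:Diff+ReducedClasses}(2)); and (iii) $v_{0}\cdot E^{(i)}$ equals $1-c$, $\,p+\mu-2pc$, and $\,p(\mu+p-1)-(2p^{2}-p-1)c$ respectively. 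Assertion (iii) shows $v_{0}\cdot E^{(i)}\ge 0$ precisely when $c\le 1$, $c\le\frac{p+\mu}{2p}$, $c\le\frac{p(p+\mu-1)}{2p^{2}-p-1}$, which are exactly the inequalities defining $\w_{2p+1}$ on the three intervals by Proposition~\ref{prop:TwistedOddGeneric}; hence on each interval $E^{(i)}$ obstructs precisely the values $c>\w_{2p+1}$, while for $c\le\w_{2p+1}$ nothing obstructs by that same proposition. All three verifications are short computations — for $E^{(3)}$ one uses $2p^{2}-p-1=(2p+1)(p-1)$ — and, unlike the even trivial case, no recursion identities such as Lemma~\ref{lemma:numerical} are required.

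The main nuisance is the bookkeeping alluded to in the second paragraph: identifying precisely which permutations and how many Cremona moves make up $\phi$ on each interval, since the reordering steps in the proof of Proposition~\ref{prop:TwistedOddGeneric} are somewhat delicate. In practice this can be almost entirely bypassed once Proposition~\ref{prop:TwistedOddGeneric} is known, because then the binding linear inequality on each interval is already identified; it suffices to exhibit the three classes above and run the verifications (i)--(iii), completeness of the list being guaranteed by the combination of Theorem~\ref{thm:CharacterizationEK} and Proposition~\ref{prop:TwistedOddGeneric}.
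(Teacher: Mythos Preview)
Your approach is correct and essentially the same as the paper's. The paper's proof proceeds by explicitly writing down the automorphism $\phi$ on each interval---namely $(\mathfrak{RC})^{p}$, $(\mathfrak{RC})^{p-1}\mathfrak{BCA}(\mathfrak{RC})^{p-1}$, and $\mathfrak{BCS}(\mathfrak{RC})^{p-2}\mathfrak{BCA}(\mathfrak{RC})^{p-1}$---and declaring the obstructing class to be $\phi^{*}E_{2p+2}$, exactly in the spirit of Corollary~\ref{cor:TrivialOddCaseObstructionClasses}; you instead follow the template of Corollary~\ref{cor:TrivialevenCaseObstructionClasses}, writing down the candidate classes and certifying them via the checks (i)--(iii), which sidesteps the permutation bookkeeping you flag as a nuisance. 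Both routes are short here, and your observation that Proposition~\ref{prop:TwistedOddGeneric} already pins down the binding linear constraint on each interval (so that exhibiting one exceptional class realising it suffices) is exactly what makes the direct verification legitimate.
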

\begin{proof} 
As in Corollary~\ref{cor:TrivialOddCaseObstructionClasses}, this follows from applying to the vector $(0\,;0^{\times (2p+1)}, -1)$ the adjoint of the automorphism $\phi$ produced by the algorithm on each interval. That automorphism is
\[
\phi = 
\begin{cases}
(\mathfrak{RC})^{p} 
	&  \text{~for~} \mu\in[p,~\infty)\\
(\mathfrak{RC})^{p-1}\mathfrak{BCA}(\mathfrak{RC})^{p-1} 
	& \text{~for~} \mu\in\left[\frac{p(p-1)}{p+1},~ p\right)\\
\mathfrak{BCS}(\mathfrak{RC})^{p-2}\mathfrak{BCA}(\mathfrak{RC})^{p-1} 
	& \text{~for~} \mu\in\left[\frac{p^{3}-2p^{2}+1-(p-1)\sqrt{2p+1}}{p^{2}},~ \frac{p	
		(p-1)}{p+1}\right)
\end{cases}
\]
\end{proof}
%

% Twisted Bundle, even case k=2p\geq 10 and 1/2\leq \mu %%%%%%%%%%%%%%%%%%%%%%%%
\subsection{The even case $k=2p\geq 10$ and $\mu > 1/2$}
Let $d_{0}=2c-1$. By Lemma~\ref{lemma:TwistedVp-2ordered}, the vector
\[
  v_{p-2} =
  \left(\mu+1-(p-2)d_{0}\,;\mu-(p-2)d_{0},c^{\times 4},(1-c)^{\times (2p-4)} \right)
\]
is ordered and positive. A $\mathfrak{R}\mathfrak{C}$ move leads to
\[
  v_{p-1} =
  \left(\mu+1-(p-1)d_{0}\,;\mu-(p-1)d_{0},c^{\times 2},(1-c)^{\times (2p-2)} \right)
\]
which is positive but not necessarily ordered. In any case, its defect is still $d_{0}$ so that a $\mathfrak{R}\mathfrak{C}$ move yields
\[
  v_{p} =
  \left(\mu+1-pd_{0}\,;\mu-pd_{0},(1-c)^{\times 2p} \right)
\]
which is non-negative only if $\mu-pd_{0}\geq 0$, which is equivalent to
\[c \leq \frac{p+\mu}{2p}\]
If $v_{p}$ is non-negative and ordered, then its defect is zero, so that $v_{p}$ is reduced. If $v_{p}$ is non-negative but not ordered, then its reordering gives
\[
  \hat{v}_{p} =
  \left(\mu+1-pd_{0}\,;(1-c)^{\times 2p}, \mu-pd_{0} \right)
\]
whose defect is $d_{p}=3-3c-(\mu+1-pd_{0})$. That defect is positive only if
\[c > \frac{p+u-2}{2p-3}\]
However, that would imply
\[c_{\vol} = \sqrt{\frac{2\mu+1}{2p}} > \frac{p+u-2}{2p-3}\]
which is impossible for $k=2p\geq 10$. Hence, $\hat{v}_{p}$ is also reduced.

The previous discussion shows that the algorithm produces a non-negative reduced vector provided $c$ is not greater than any of the upper bounds $c_{\vol}$, $1$, and $\frac{p+\mu}{2p}$.
\printlabel{prop:TwistedEvenGeneric}
\begin{prop}\label{prop:TwistedEvenGeneric}
Let $k=2p\geq 10$ and $\mu\in(1/2,\infty)$. Then the $(2p)^{\text{th}}$ generalized Gromov width of $M_{\mu}^{1}$ is
\[
  \w_{2p}(M_{\mu}^{1}) =
  \begin{cases}
    c_{\vol} & \text{~if~} \mu\in \left[1/2,~ p-\sqrt{2p}\right)\\
    \frac{p+\mu}{2p} & \text{~if~} \mu\in\left[p-\sqrt{2p} ,~ p\right) \\
    1 & \text{~if~} \mu\in\left[p,~\infty\right)\\
  \end{cases}
\]
\end{prop}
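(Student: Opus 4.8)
The key observation is that the reduction-algorithm computation carried out in the paragraphs preceding the statement has already done most of the work: running the algorithm on the initial vector $v_{0}=(\mu+1\,;\mu,c^{\times 2p})$ (whose membership in $\overline{\mathcal{C}_{K}}\subset H_{2}(X_{2p+1};\Z)$ governs $\w_{2p}(M_{\mu}^{1})$, by Section~\ref{sec:strategy}) produces, after $p$ applications of $\mathfrak{R}\mathfrak{C}$, a vector that is non-negative and reduced exactly when $c\le\min\{c_{\vol},\,1,\,\frac{p+\mu}{2p}\}$. So the plan is to assemble these facts carefully, verify the one inequality that makes the argument go through in the range $k=2p\ge 10$, and then turn the resulting minimum into the claimed piecewise-linear function of $\mu$.

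First I would collect the three upper bounds on $\w_{2p}$: the bound $c\le 1$ from non-negativity of $v_{0}$; the bound $c\le c_{\vol}$ from $v_{0}\in\mathcal{P}_{+}$; and --- after using Lemma~\ref{lemma:TwistedVp-2ordered} to start the algorithm and then performing the two $\mathfrak{R}\mathfrak{C}$ moves $v_{p-2}\to v_{p-1}\to v_{p}$ --- the bound $c\le\frac{p+\mu}{2p}$, which is exactly the non-negativity condition $\mu-p\,d_{0}\ge 0$ for $v_{p}=(\mu+1-p\,d_{0}\,;\mu-p\,d_{0},(1-c)^{\times 2p})$. For the converse I would split into two subcases according to whether $v_{p}$ is ordered. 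If it is, its defect is $0$ and it is reduced. If it is not, the reordered vector $\hat v_{p}$ has defect $d_{p}=3-3c-(\mu+1-p\,d_{0})$, and the point is to show $d_{p}\le 0$ on the admissible range; since $c\le c_{\vol}$, it suffices to check $c_{\vol}\le\frac{p+\mu-2}{2p-3}$, i.e. $2p(p+\mu-2)^{2}\ge(2\mu+1)(2p-3)^{2}$, a quadratic in $\mu$ whose discriminant is negative precisely when $2p\ge 10$. Hence $\hat v_{p}$ is also reduced, the algorithm terminates with a non-negative reduced vector whenever $c\le\min\{c_{\vol},1,\frac{p+\mu}{2p}\}$, and combining with the three upper bounds gives $\w_{2p}(M_{\mu}^{1})=\min\{c_{\vol},1,\frac{p+\mu}{2p}\}$.

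The remaining step is an elementary comparison of the three functions on $\mu\in(1/2,\infty)$. One has $1\le\frac{p+\mu}{2p}\iff\mu\ge p$, and for such $\mu$ also $c_{\vol}\ge 1$, giving $\w_{2p}=1$ on $[p,\infty)$. For $\mu<p$ one compares $c_{\vol}$ with $\frac{p+\mu}{2p}$; squaring reduces this to $(\mu-p)^{2}\le 2p$, i.e. $\mu\ge p-\sqrt{2p}$, so $\w_{2p}=\frac{p+\mu}{2p}$ on $[p-\sqrt{2p},p)$ and $\w_{2p}=c_{\vol}$ on $[1/2,p-\sqrt{2p})$, where indeed $c_{\vol}<1$ since $\sqrt{2p}>1/2$. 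This is exactly the asserted formula.

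I expect the only genuine obstacle to be the verification that the defect $d_{p}$ never becomes positive, i.e. the inequality $2p(p+\mu-2)^{2}\ge(2\mu+1)(2p-3)^{2}$ for $k=2p\ge 10$; this is precisely where the hypothesis $k\ge 10$ (as opposed to $k=8$) is genuinely used, and the degenerate case $k=8$ must be treated separately, as the paper does. Everything else is either a direct quotation of the algorithm already run above or a routine comparison of three explicit functions of $\mu$.
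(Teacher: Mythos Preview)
Your proposal is correct and mirrors the paper's approach exactly: the algorithm run in the preceding paragraphs yields $\w_{2p}=\min\{c_{\vol},\,1,\,\frac{p+\mu}{2p}\}$, and you then carry out explicitly the elementary comparison of these three functions that the paper leaves to the reader. One minor slip worth noting: for the twisted bundle the initial vector is $v_{0}=(\mu+1\,;\mu,c^{\times 2p})$, which has no $(1-c)$ entry, so the bound $c\le 1$ does not come from non-negativity of $v_{0}$ but rather from the $(1-c)$ coordinates that first appear in $v_{1},\ldots,v_{p}$ after the $\mathfrak{R}\mathfrak{C}$ moves --- this does not affect the argument.
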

\begin{proof}
This follows readily from the fact that $\w_{2p}=\min\left\{ c_{\vol}, 1, \frac{p+\mu}{2p} \right\}$.
\end{proof}
\begin{cor}
Let $k = 2p\geq 10$ and consider $\mu\geq 1/2$. The $(2p)^{\text{th}}$ packing number of $M_{\mu}^{1}$ is
\[
  p_{2p}(M_{\mu}^{1}) =
  \begin{cases}
    1  & \text{~if~}\mu\in\left[1/2,~ p-\sqrt{2p}\right) \\
    \frac{(p+\mu)^{2}}{(2p)(2\mu+1)} & \text{~if~} \mu\in\left[p-\sqrt{2p} ,~ p\right) \\
    \frac{2p}{2\mu+1} & \text{~if~} \mu\in\left[p,~\infty\right)\\
  \end{cases}
\]
In particular, the even stability number of $M_{\mu}^{1}$ is $N_{{\rm even}}(\mu)=2p$ where $p=\left\lceil \mu+1+\sqrt{2\mu+1}
\right\rceil$.
\end{cor}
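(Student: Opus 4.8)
The plan is to deduce everything from Proposition~\ref{prop:TwistedEvenGeneric} together with the standard dictionary between generalized Gromov widths and packing numbers. First I would record the relevant volumes: the symplectic form on $M_\mu^1$ is Poincaré dual to the class $(\mu+1)L-\mu E_1$, whose square is $2\mu+1$, so $\vol(M_\mu^1)=\tfrac{2\mu+1}{2}$; and a standard $4$-dimensional ball of capacity $c$ has volume $\tfrac{c^2}{2}$. Since $\vol(\sqcup_k B(c))=kc^2/2$ is increasing in $c$ and $\sqcup_k B(c)$ embeds in $M_\mu^1$ precisely for $c\le\w_k(M_\mu^1)$, one gets
\[
p_{2p}(M_\mu^1)=\frac{2p\,\w_{2p}(M_\mu^1)^2}{2\mu+1}.
\]
Plugging in the three branches of $\w_{2p}(M_\mu^1)$ supplied by Proposition~\ref{prop:TwistedEvenGeneric}, and using $c_{\vol}^2=\tfrac{2\mu+1}{2p}$, gives $p_{2p}=1$ on $[1/2,p-\sqrt{2p})$, then $p_{2p}=\tfrac{(p+\mu)^2}{2p(2\mu+1)}$, then $p_{2p}=\tfrac{2p}{2\mu+1}$, which is the asserted formula.

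Next I would pin down exactly when $p_{2p}(M_\mu^1)=1$. The identity $(p+\mu)^2-2p(2\mu+1)=(p-\mu)^2-2p$ shows that the middle branch equals $1$ only at $\mu=p-\sqrt{2p}$ and is $<1$ on $(p-\sqrt{2p},p)$, while the last branch is always $<1$; hence $p_{2p}(M_\mu^1)=1$ if and only if $\mu\le p-\sqrt{2p}$. Since $g(p):=p-\sqrt{2p}$ is strictly increasing for $p\ge 1$, for each fixed $\mu$ the integers $p$ with $p_{2p}(M_\mu^1)=1$ form an up-set $\{p:\;p\ge\lceil p^{\ast}(\mu)\rceil\}$, where $p^{\ast}(\mu)$ solves $g(p)=\mu$. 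Substituting $t=\sqrt{2p}$ turns this into $t^2-2t-2\mu=0$, so $t=1+\sqrt{2\mu+1}$ and $p^{\ast}(\mu)=t^2/2=\mu+1+\sqrt{2\mu+1}$. Therefore, as long as $\lceil\mu+1+\sqrt{2\mu+1}\rceil\ge 5$ --- which is exactly the range in which $k=2p\ge 10$ and Proposition~\ref{prop:TwistedEvenGeneric} applies --- the even stability number is $N_{\mathrm{even}}(\mu)=2\lceil\mu+1+\sqrt{2\mu+1}\rceil$.

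For the remaining finitely many values of $\mu$ one argues as in the proof of Corollary~\ref{cor:EvenStabilityNumberProduct}: one must check that the sporadic full packings by an even number $k\le 8$ of balls do not make the stability number smaller than $2\lceil\mu+1+\sqrt{2\mu+1}\rceil$. Full packings by $8$ balls occur exactly for $\mu\in\mathcal S$ (the corollary to Theorem~\ref{thm:MainTheoremTwistedK=8}), and full packings by $k\le 7$ balls are read off from Proposition~\ref{prop:PackingNumbersTwisted<8}; comparing these with the closed form above finishes the computation.

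I do not expect a genuine obstacle here: once Proposition~\ref{prop:TwistedEvenGeneric} is in hand the whole statement is a substitution followed by the elementary inversion of $g(p)=p-\sqrt{2p}$. The only point needing a moment's care is the boundary value $\mu=p-\sqrt{2p}$, where one must verify that the two pieces of the width function agree so that full packing persists on the closed interval --- this is precisely what the identity $(p+\mu)^2-2p(2\mu+1)=(p-\mu)^2-2p$ takes care of.
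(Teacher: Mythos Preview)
Your proof is correct and follows essentially the same approach as the paper: derive $p_{2p}$ from Proposition~\ref{prop:TwistedEvenGeneric} via the width--packing dictionary, then solve $c_{\vol}=\frac{p+\mu}{2p}$ for $p$ to find the stability threshold. The paper's own argument is much terser---it simply says to solve for $p$ in $c_{\vol}^{2}-\frac{(p+\mu)^{2}}{(2p)^{2}}$ and take the largest root---whereas you go via the equivalent equation $p-\sqrt{2p}=\mu$ and the substitution $t=\sqrt{2p}$; both yield $p^{\ast}=\mu+1+\sqrt{2\mu+1}$.

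One remark: your final paragraph, checking the sporadic full packings for $k\le 8$, is more than the paper does at this point. The corollary as stated is really recording the formula coming out of the $k\ge 10$ analysis; the full even stability number, with the corrections for small $\mu$ coming from $k=8$ and $k\le 6$, is treated separately in Corollary~\ref{cor:EvenStabilityNumbersTwisted}. So you are right that the bare formula $2\lceil\mu+1+\sqrt{2\mu+1}\rceil$ only literally equals $N_{\mathrm{even}}$ once it is $\ge 10$, but the paper defers that refinement rather than handling it here.
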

\begin{proof}
The stability number is obtained by solving for $p$ in the polynomial $c_{\vol}^{2}-\frac{(p+\mu)^{2}}{(2p)^{2}}$. Since this is a degree two polynomial with negative leading term, choosing the largest root gives the result.
\end{proof}
\printlabel{cor:TwistedGenericEvenCaseObstructionClasses}
\begin{cor}\label{cor:TwistedGenericEvenCaseObstructionClasses}
The exceptional classes that give the obstructions to the embedding of $2p\geq 10$ balls into $M_{\mu}^{1}$, $\mu\geq 1/2$, are of type
\begin{align*}
  \left(1\,;1^{\times 2},0^{\times (2p-1)}\right) & \text{~for~} \mu\in[p,~\infty)\\
  \left(p\,; p-1, 1^{\times 2p}\right) & \text{~for~} \mu\in\left[p-\sqrt{2p},~ p\right)
\end{align*}
For $\mu\in\left[1/2,~ p-\sqrt{2p}\right)$, the only obstruction is given by the volume condition.
\end{cor}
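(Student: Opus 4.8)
## Proof proposal

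The plan is to follow exactly the same backward strategy used for the odd and generic even cases: the reduction algorithm has already shown that the generalized width $\w_{2p}(M_\mu^1)$ equals the minimum of the volume bound $c_{\vol}$, the Gromov-width bound $1$, and the bound $\frac{p+\mu}{2p}$ coming from positivity of the vector $v_p$. The obstruction classes are obtained by pushing the exceptional class $E_{k+1}=(0\,;0^{\times 2p},-1)$ (equivalently $E_{k+2}$ in our indexing) backward through the composition of Cremona moves and reorderings that the algorithm performs on each interval, and then verifying directly that the resulting integral vector is genuinely exceptional and pairs correctly with $v_0$.

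Concretely, I would first record the automorphism $\phi\in D_K(1,2p+1)$ produced by the algorithm on each of the two nontrivial intervals. On $[p,\infty)$ the bound $\w_{2p}=1$ is already reached at the very first step of the algorithm (the vector $v_0$ fails to be non-negative as soon as $c>1$), so $\phi=\id$ and the obstructing class is simply $E_{k+1}$ itself, which after identifying the $k$-fold blow-up of $M_\mu^1$ with $X_{k+1}$ is $\left(1\,;1^{\times 2},0^{\times(2p-1)}\right)$ — this is the class of (the proper transform of) a fiber, as expected. On $[p-\sqrt{2p},p)$, the relevant bound $\frac{p+\mu}{2p}$ is the one forcing $\mu-pd_0\ge 0$ in the vector $v_p$, which is reached after exactly $p$ iterations of the $\mathfrak{R}\mathfrak{C}$ move; hence $\phi=(\mathfrak{R}\mathfrak{C})^p$, the same automorphism appearing in Corollary~\ref{cor:TwistedGenericOddCaseObstructionClasses} on $[p,\infty)$. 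Applying $\phi^*$ to $E_{k+1}$ and reading off the coefficients gives the candidate class $E=\left(p\,;p-1,1^{\times 2p}\right)$.

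The verification step is then entirely mechanical, exactly as in the proof of Corollary~\ref{cor:TrivialevenCaseObstructionClasses}: one checks (i) $E\cdot E=-1$, i.e. $p^2-(p-1)^2-2p\cdot 1=-1$; (ii) $K\cdot E=1$, i.e. $3p-(p-1)-2p\cdot 1=1$; and (iii) that the pairing $v_0\cdot E$ with $v_0=(\mu+1\,;\mu,c^{\times 2p})$ reproduces the coordinate of $v_p$ whose positivity is governed by the bound $\frac{p+\mu}{2p}$, namely $v_0\cdot E=(\mu+1)p-\mu(p-1)-2pc=\mu+p-2pc=\mu-pd_0$, which is $\ge 0$ precisely when $c\le\frac{p+\mu}{2p}$. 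Together with Theorem~\ref{thm:CharacterizationEK} and Theorem~\ref{thm:Diff+ReducedClasses}, these three facts show $E\in\E_K$ and that $E$ is the class realizing the obstruction on this interval. For $\mu\in[1/2,p-\sqrt{2p})$ one has $\w_{2p}=c_{\vol}$ by Proposition~\ref{prop:TwistedEvenGeneric}, so the only constraint is the volume condition and there is nothing further to prove.

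I do not expect any genuine obstacle here: the hard work — running the reduction algorithm and showing it terminates in a reduced non-negative vector at the claimed bounds — was already carried out in the lead-up to Proposition~\ref{prop:TwistedEvenGeneric}. The only mild bookkeeping point is to make sure the correct permutation/reordering is tracked through the $p$ applications of $\mathfrak{R}\mathfrak{C}$ so that the exceptional divisor index $E_{k+1}$ lands in the slot producing the coefficient pattern $\left(p\,;p-1,1^{\times 2p}\right)$ rather than some permuted variant; since the algorithm's reorderings only permute the $c$-entries among themselves and the final slot $(1-c)^{\times 2p}$ of $v_p$ is symmetric, any such variant is equivalent under $D_K(1,2p+1)$, so the stated representative is canonical up to that symmetry.
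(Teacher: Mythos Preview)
Your direct verification --- checking $E\cdot E=-1$, $K\cdot E=1$, and that $v_0\cdot E$ reproduces the relevant bound --- is correct and is precisely the approach used in Corollary~\ref{cor:TrivialevenCaseObstructionClasses}; the computations you give for $E=(p\,;p-1,1^{\times 2p})$ are fine, and this alone suffices to establish the corollary.

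However, your narrative about how the algorithm produces these classes contains a confusion between the twisted and the trivial bundle. For the twisted bundle the initial vector is $v_0=(\mu+1\,;\mu,c^{\times 2p})$, which has \emph{no} $(1-c)$ entry and is therefore non-negative for all $c\ge 0$; the bound $c\le 1$ does not appear at step zero but only in the $(1-c)$ coordinates created by the $\mathfrak{R}\mathfrak{C}$ moves. Accordingly the paper's automorphisms are $\phi=(\mathfrak{R}\mathfrak{C})^p$ on $[p,\infty)$ (not $\id$), giving $\phi^*E_{2p+1}=(1\,;1^{\times 2},0^{\times(2p-1)})$, and $\phi=\mathfrak{D}(\mathfrak{R}\mathfrak{C})^p$ on $[p-\sqrt{2p},p)$, where $\mathfrak{D}$ is the cyclic reordering $(1,k+2,2,\ldots,k+1)$ that moves the $\mu-pd_0$ entry of $v_p$ from the first slot to the last. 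Relatedly, the class $(1\,;1^{\times 2},0^{\times(2p-1)})=L-E_1-E_2$ is indeed the proper transform of a fiber, but it is not ``$E_{k+1}$ itself'' under any identification: in the standard basis of $X_{k+1}$ the class $E_{k+1}$ has coordinates $(0\,;0^{\times 2p},-1)$.

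These slips are cosmetic since your verification step does not depend on identifying $\phi$ correctly, but the claims about $\phi=\id$ and about $E_{k+1}$ should be corrected.
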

\begin{proof} 
The obstructing classes are $\phi^{*}(E_{2p+1})$ where 
\[\phi = 
\begin{cases}
(\mathfrak{RC})^{p} & \text{~for~} \mu\in[p,~\infty)\\
\mathfrak{D}(\mathfrak{RC})^{p} & \text{~for~} \mu\in \left[p-\sqrt{2p},~ p\right)
\end{cases}
\]
where $\mathfrak{D}$ is the permutation $(1,k+2,2,\ldots,k+1)$. 
\end{proof}
%

% Twisted Bundle, case k=8, %%%%%%%%%%%%%%%%%%%%%%%%%%%%%%%%%%%%%%%%%%%%%%%%%%%%
\subsection{The case $k=8$}

When $k=8$,  Lemma~\ref{lemma:TwistedVp-2ordered} and Lemma~\ref{lemma:TwistedWkgeq9} show that the behaviour of the algorithm depends on whether $\mu\leq 1/2$ or $1/2\leq \mu$. In the first case, the lower bound $\mu\leq\w_{8}(\mu)$ implies that one must start with the ordered vector $\left( \mu+1\,; c^{\times 8}, \mu\right)$, while in the second case, the upper bound $\w_{8}(\mu)\leq \mu$ show that the initial vector is  $\left( \mu+1\,;\mu,c^{\times 8}\right)$. Moreover, for $\mu<\frac{7}{4}$, the initial steps are sensitive to the actual value of $\mu$. In fact, for $\mu\in(0,\frac{7}{4})$, the branching pattern of the algorithm becomes surprisingly hard to analyze. Consequently, we use a different approach which gives directly the exceptional classes defining the obstructions to the embedding of $8$ balls of capacity $c$ in $M_{\mu}^{1}$. This approach relies on the classical fact that the set of exceptional classes of $\CP^{2}\nblowup{9}$ can be described in terms of the affine root lattice of type $E_{8}$ and, as such, it only applies to the case $k=8$. 

To begin with, we show that the exceptional classes leading to embedding obstructions must be ``almost parallel'' to the vector $w=\left(\mu+1\,; \mu, (c_{\vol})^{\times 8}\right)$, see also~\S2 in~\cite{MS}.
\printlabel{lemma:ExplicitTypes}
\begin{lemma}\label{lemma:ExplicitTypes} The classes that may give obstructions to the embeddings of $8$ equal balls in $M_{\mu}^{1}$ are of the forms
\renewcommand{\labelenumi}{\roman{enumi})}
\renewcommand{\theenumi}{\roman{enumi})}
\begin{enumerate}
\item $(d\,; m, \ell-1, \ell^{\times 7})$
\item $(d\,; m, \ell^{\times 8})$
\item $(d\,; m, \ell+1, \ell^{\times 7})$
\end{enumerate}
\end{lemma}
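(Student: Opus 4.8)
The strategy is to compare a putative obstructing exceptional class $E\in\mathcal{E}_K(X_9)$ with the ``volume vector'' $w=\bigl(\mu+1\,;\mu,(c_{\vol})^{\times 8}\bigr)$, whose self-intersection is $0$ by definition of $c_{\vol}$. The point is that for $E$ to obstruct the embedding of $8$ equal balls of some capacity $c\le c_{\vol}$ we need $v_{\mu,c}^{1}\cdot E\le 0$ for the initial vector $v_{\mu,c}^{1}=(\mu+1\,;\mu,c^{\times 8})$, and since $v_{\mu,c}^{1}\to w$ as $c\nearrow c_{\vol}$, any such $E$ must satisfy $w\cdot E\le 0$, hence $w\cdot E$ is ``small'' (it lies between $0$ and the gap to the next admissible value). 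Write $E=(d\,;m,a_1,\dots,a_8)$ with $d=K\cdot E$-normalization $3d-m-\sum a_i=1$ and $E\cdot E=d^2-m^2-\sum a_i^2=-1$.

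The first step is to bound how far the last eight coefficients $a_1,\dots,a_8$ can spread. Using $w\cdot E\ge 0$ small together with the Cauchy--Schwarz inequality applied to the vector $(a_1-\bar a,\dots,a_8-\bar a)$, where $\bar a=\tfrac18\sum a_i$, and the two defining quadratic relations $E\cdot E=-1$, $K\cdot E=1$, one gets an a priori bound on $\sum_i(a_i-\bar a)^2$. Concretely, I would expand $w\cdot E$ and $w\cdot w=0$: the condition $0\le w\cdot E$ forces $\mu(d-m)+c_{\vol}\!\cdot\!(\text{something})$ to be controlled, and combined with $d^2-m^2=\sum a_i^2-1$ this pins $\sum_i(a_i-\bar a)^2$ to be at most $2$ (the key numerical input is that $9$ is exactly the boundary case where $c_{\vol}$ can equal $1/2$, i.e.\ the $E_8$/affine geometry; this is why the argument is special to $k=8$). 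Since the $a_i$ are integers, $\sum_i(a_i-\bar a)^2\le 2$ together with $8\mid$ the parity of $\sum a_i$ forces the multiset $\{a_1,\dots,a_8\}$ to be one of: all equal to some $\ell$; seven equal to $\ell$ and one equal to $\ell\pm1$. After applying a permutation in $D_K(1,9)$ (which preserves $\mathcal{E}_K$ and the equal-ball condition, since permuting $E_1,\dots,E_8$ does not change $v_{\mu,c}^1$), we may assume the exceptional coefficient, if any, sits in the first slot among the eight, giving exactly the three forms (i), (ii), (iii) with the $8$th-coefficient block written as $\ell^{\times 7}$ and a distinguished entry $\ell-1$, $\ell$, or $\ell+1$ adjacent to $m$.

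The second step is to justify that it suffices to consider such ``extremal'' classes — i.e.\ that classes with $\sum_i(a_i-\bar a)^2\ge 3$ genuinely cannot obstruct. Here I would argue that if the spread is large then $w\cdot E$ is bounded below by a positive constant independent of $\mu$, so that $v_{\mu,c}^1\cdot E>0$ for all $c\le c_{\vol}$ and all $\mu$ in the relevant range; hence such $E$ imposes no constraint beyond the volume bound and can be discarded. This uses only the two quadratic relations and the positivity $w\cdot w=0$, $w\cdot E\ge 0$ from Theorem~\ref{thm:CharacterizationEK}.

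The main obstacle I expect is making the Cauchy--Schwarz / quadratic-form bookkeeping in the first step sharp enough to land precisely on ``spread $\le 2$'' rather than a weaker bound like $\le 4$ or $\le 6$; getting the right constant requires exploiting that, for $k=8$, the volume vector $w$ is genuinely null (self-intersection exactly $0$) and sits at the ``critical'' place in the signature-$(1,9)$ lattice where the affine $E_8$ structure kicks in — so the inequality $w\cdot E\ge 0$ is a much stronger constraint than in the $k\ge 9$ cases treated before. Once the spread bound is secured the rest is an elementary integer-parity discussion, and the reduction to the first slot is just an application of the permutation subgroup of $D_K(1,9)$.
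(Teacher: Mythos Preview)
Your overall strategy---compare a putative obstructing class $E$ with the null volume vector $w=(\mu+1\,;\mu,c_{\vol}^{\times 8})$ and bound the spread of its last eight coordinates---is exactly the paper's. But there are two concrete gaps.

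First, your target bound $\sum_{i}(a_i-\bar a)^2\le 2$ is \emph{too weak} to yield the three listed forms. For eight integers, the configurations $(\ell+1,\ell+1,\ell^{\times 6})$, $(\ell+1,\ell+1,\ell+1,\ell^{\times 5})$, and $(\ell+1,\ell-1,\ell^{\times 6})$ have spread $3/2$, $15/8$, and $2$ respectively; all pass your test but none is on your list. No parity condition on $\sum a_i$ rules these out. What is actually needed is the bound $\le 1$ (the next admissible integer spread above $7/8$ is $3/2$), and you never produce it.

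Second, the route you sketch---Cauchy--Schwarz on deviations from the mean, invoking both $E\cdot E=-1$ and $K\cdot E=1$---is more complicated than necessary and does not obviously reach $\le 1$. The paper's argument bypasses $K\cdot E=1$ entirely: write $E=\lambda w+\epsilon$ with $\lambda=e_0/(\mu+1)$, so $\epsilon$ has vanishing $L$--coefficient and hence $\epsilon\cdot\epsilon=-\|\epsilon\|^2_{\mathrm{Eucl}}$. Since $w\cdot w=0$, one has $w\cdot E=w\cdot\epsilon$ and
\[
-1=E\cdot E=2\lambda\,(w\cdot E)+\epsilon\cdot\epsilon.
\]
Now the obstruction hypothesis $v_{\mu,c}^{1}\cdot E\le 0$ forces $w\cdot E\le 0$ (not $\ge 0$; your sign is reversed, and Theorem~\ref{thm:CharacterizationEK} is not what is being used here), so with $\lambda>0$ one gets $\epsilon\cdot\epsilon\ge -1$, i.e.\ $\|\epsilon\|^2\le 1$. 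Restricting to the eight ball--coordinates yields
\[
\sum_{j}\bigl(a_j-\lambda c_{\vol}\bigr)^2\le 1,
\]
a bound on the distance to a \emph{specific} real constant, which immediately forces at most one $a_j$ to differ from the common integer $\ell$ by $\pm 1$. Your final permutation step is correct.
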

\begin{proof}
Fix $\mu$ and $c\in(0,c_{\vol}]$ and define $v_{0}=\left(\mu+1\,;\mu, c^{\times 8}\right)$ and $w=\left(\mu+1\,; \mu, (c_{\vol})^{\times 8}\right)$. By definition of $c_{\vol}$ we have $w\cdot w=0$. Suppose an exceptional class $E=(e_{0}\,;e_{1},\ldots,e_{8})$ defines an obstruction, that is, suppose $v_{0}\cdot E \leq 0$. Then we must have $w\cdot E\leq 0$ as well.
If we define $\epsilon=(0, \epsilon_{1},\ldots,\epsilon_{8})$ by setting
\[E = \frac{e_{0}}{\mu+1} w + \epsilon\]
we can write
\[w\cdot E = \frac{e_{0}}{\mu+1} w\cdot w+\frac{e_{0}}{\mu+1}w\cdot\epsilon = \frac{e_{0}}{\mu+1} w\cdot\epsilon \leq 0\]
and
\[-1 = E\cdot E = \left(\frac{e_{0}}{\mu+1}\right)^{2} w\cdot w + 2\left(\frac{e_{0}}{\mu+1}\right) w\cdot \epsilon + \epsilon\cdot \epsilon \]
Those two equations imply that $\parallel\epsilon\parallel^{2} = -\epsilon\cdot\epsilon\leq 1$. Now, we observe that $\parallel\epsilon\parallel$ is simply the Euclidean distance between the vectors $\frac{e_{0}}{\mu+1}w$ and $E$. In particular, the distance between the truncated vectors $\frac{e_{0}}{\mu+1}(c_{\vol},\ldots, c_{\vol})$ and $(e_{2},\ldots e_{8})$ is bounded above by one, that is,
\[\sum_{i=2}^{8} \left(e_{i}-\frac{e_{0}c_{\vol}}{\mu+1}\right)^{2}\leq 1\]
Since the coefficients of $E$ are integers, that implies  the coefficients $\{e_{2},\ldots, e_{8}\}$ must all be equal to some integer $\ell$, with at most one exception, in which case the other coefficient must be $\ell\pm 1$. Since the product $v_{0}\cdot E$ is constant  under permuting the coefficients $\{e_{2},\ldots, e_{8}\}$, we can assume that $E$ is of the form $(d\,; m, \ell-1, \ell^{\times 7})$, $(d\,; m, \ell^{\times 8})$, or $(d\,; m, \ell+1, \ell^{\times 7})$ for some positive integers $d$ and~$m$.
\end{proof}
In order to list the exceptional classes of types (i), (ii), and (iii) above, it is useful to describe the set $\E_{9}$ of all exceptional classes in $X_{9}=\CP^{2}\nblowup{9}$ in a more concrete way. To this end, recall that the $(-2)$-homology classes $\alpha_{i}$ are defined of the standard basis $\{L,E_{1},\ldots,E_{9}\}$ by
\begin{align*}
\alpha_{0} & :=L-E_{1}-E_{2}-E_{3}\\
\alpha_{i} & :=E_{i}-E_{i+1},\quad 1\leq i\leq 7
\end{align*}
and that the Poincaré dual of the first Chern class is given by
\[K := 3L-E_{1}-\cdots-E_{9}\]
We now define the root lattice $Q_{8}\subset H_{2}(X_{9};\Z)$ by setting
\[Q_{8}:=\oplus_{i=0}^{7}\Z\alpha_{i} \simeq \Z^{8}\]
It is known (see, for instance,~\cite{HL}) that there exists a natural bijection $T: Q_{8}\to\E_{9}$ between the root lattice and the set of exceptional classes, namely
\[
T(\alpha) =  E_{9}-\alpha-\frac{1}{2}(\alpha\cdot\alpha)K
\]
whose inverse is given by
\[
T^{-1}(E) = E_{9} - E + (1+E\cdot E_{9})K
\]
Under that bijection, the curves of types (i), (ii), and (iii) take a very simple form, and that allows us to write them explicitely.
\printlabel{lemma:ExplicitClasses}
\begin{lemma}\label{lemma:ExplicitClasses} The classes that may give obstructions to packings by $8$ balls belongs to three families that can be parametrized as follows:
\renewcommand{\labelenumi}{\roman{enumi})}
\renewcommand{\theenumi}{\roman{enumi})}
\begin{enumerate}
\item $\left(n(12n-1)\,; n(4n-3), 4n^{2}-1, (4n^{2})^{\times 7}\right)$
\item $\left(4n(3n+2)\,; 4n^{2}-1, (n(4n+3)^{\times 8}\right)$
\item $\left((3n+2)(4n+3)\,; n(4n+3), 2(n+1)(2n+1)+1, (2(n+1)(2n+1))^{\times 7}\right)$
\end{enumerate}
\end{lemma}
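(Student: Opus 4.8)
The plan is to combine Lemma~\ref{lemma:ExplicitTypes} with the bijection $T\colon Q_{8}\to\E_{9}$, reducing everything to a pair of Diophantine equations in three unknowns. The first step is to record a purely numerical description of $\E_{9}$: using $K^{2}=0$ in $X_{9}$, the fact that $\alpha\cdot K=\alpha\cdot E_{9}=0$ for every $\alpha\in Q_{8}$, and conversely that $Q_{8}=K^{\perp}\cap E_{9}^{\perp}$ (both are rank-$8$ integral lattices and $Q_{8}$ has determinant $\pm1$), one checks that $T$ and the stated formula for $T^{-1}$ restrict to mutually inverse bijections between $Q_{8}$ and $\{E : E\cdot E=-1,\ K\cdot E=1\}$. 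Comparing with the bijection $T\colon Q_{8}\to\E_{9}$, this identifies $\E_{9}$ with the set of classes satisfying $E\cdot E=-1$ and $K\cdot E=1$. By Lemma~\ref{lemma:ExplicitTypes}, any obstructing class has the shape $(d\,;m,\ell+\delta,\ell^{\times 7})$ for some $\delta\in\{-1,0,1\}$, so it remains only to solve these two equations under each of the three shape constraints.

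For a fixed $\delta$, the linear equation $K\cdot E=1$ reads $3d-m-8\ell-\delta=1$, which I would use to eliminate $m=3d-8\ell-(1+\delta)$. Substituting into $E\cdot E=d^{2}-m^{2}-(\ell+\delta)^{2}-7\ell^{2}=-1$ and completing the square in $d$ via the auxiliary integer $u:=d-3\ell$ collapses the resulting conic to the parabola $\ell=4u^{2}-3(1+\delta)u+\tfrac12\bigl((1+\delta)^{2}+\delta^{2}-1\bigr)$, that is, $\ell=4u^{2},\ 4u^{2}-3u,\ 4u^{2}-6u+2$ for $\delta=-1,0,1$ respectively. Hence the integer solutions are indexed exactly by $u\in\Z$, and back-substitution gives $d$ and $m$ as explicit quadratics in $u$ (namely $d=12u^{2}-(8+9\delta)u+\tfrac32\bigl((1+\delta)^{2}+\delta^{2}-1\bigr)$ and $m=4u^{2}-3\delta u+\tfrac12\bigl((1+\delta)^{2}+\delta^{2}-1\bigr)-1-\delta$). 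Setting $n:=-u$ then reproduces, after trivial algebra, precisely the three families (i), (ii), (iii) of the statement; since each class so produced visibly satisfies $E\cdot E=-1$ and $K\cdot E=1$, it lies in $\E_{9}$, and therefore the three families are exactly the exceptional classes of the three admissible shapes.

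The main point requiring care — bookkeeping rather than a genuine obstacle — is that $u\mapsto d-3\ell$ is an honest change of variables over $\Z$, so that no solution is created or lost, and that the reindexing $n\leftrightarrow -u$ matches the specific polynomial expressions $n(12n-1)$, $4n(3n+2)$, $(3n+2)(4n+3)$, and so on. As a conceptual cross-check, and to exhibit membership in $\E_{9}$ without invoking the equality $\E_{9}=\{E : E\cdot E=-1,\ K\cdot E=1\}$, one can instead compute $\alpha=T^{-1}(E)$ for a class of each shape: the requirement that the last seven (resp. eight) coefficients of $E$ agree becomes a triangular linear system for the coordinates $c_{0},\dots,c_{7}$ of $\alpha$ in the basis $\{\alpha_{0},\dots,\alpha_{7}\}$, whose solution set is again a line $\{n\beta+\gamma : n\in\Z\}$, and feeding this into $T(\alpha)=E_{9}-\alpha-\tfrac12(\alpha\cdot\alpha)K$ recovers the same three lists.
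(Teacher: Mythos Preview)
Your argument is correct and complete. The route you take differs from the paper's in where the computation is done, though both hinge on the same bijection $T$.

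The paper computes $T^{-1}(E)$ directly in $Q_{8}$-coordinates: for a class of type $(d\,;m,\ell-1,\ell^{\times 7})$ it writes out $\alpha=E_{9}-E+(1+E\cdot E_{9})K$ in the basis $\{\alpha_{0},\dots,\alpha_{7}\}$, observes that (once the constraint $K\cdot E=1$ pins down $m$) the coordinate vector $(n+3,\,2-2n,\,3-n,\,5,4,3,2,1)$ depends only on $n=3\ell-d$, and then pushes this one-parameter family back through $T$. You instead first extract from $T$ the numerical description $\E_{9}=\{E:E\cdot E=-1,\ K\cdot E=1\}$ and then solve this pair of equations in $H_{2}$-coordinates; your substitution $u=d-3\ell$ is exactly the paper's $-n$, so the two computations are dual to one another. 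Your approach buys a slightly more elementary and self-contained calculation (a conic collapsing to a parabola), at the cost of having to justify the equality $\E_{9}=\{E\cdot E=-1,\ K\cdot E=1\}$ up front; the paper's approach avoids that step but requires the change-of-basis to $Q_{8}$. Your last paragraph is essentially the paper's method, so you have in fact covered both.
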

\begin{proof}
We first consider classes of type $(d; m, \ell-1, \ell^{\times 7})$. The bijection $\E_{9}\to Q_{8}\simeq \Z^{8}$ maps any such class to a vector of the form
\[\left( -d+3\ell+3, 2d-6\ell+2, d-3\ell+3, 5,4,3,2,1\right) \in\Z^{8}\]
Writing $n=3\ell - d$, that vector becomes
\[\left( n+3, 2-2n, 3-n, 5,4,3,2,1\right) \in\Z^{8}\]
showing that classes of type $(d\,; m, \ell-1, \ell^{\times 7})$ form a 1-parameter family indexed by $n\in\Z$. Applying the inverse bijection, we obtain an explicit parametrization of elements of $\E_{9}$, namely
\[\left(n(12n-1)\,; n(4n-3), 4n^{2}-1, (4n^{2})^{\times 7}\right),\quad n\in\Z \]
Similarly, one can check that classes of the types $(d\,; m, \ell^{\times 8})$ and $(d\,; m, \ell+1, \ell^{\times 7})$ correspond to vectors
\[\left( n+3, 1-2n, 3-n, 5,4,3,2,1\right)\quad\text{and}\quad\left( n+3, -2n, 3-n, 5,4,3,2,1\right)\]
in~$\Z^{8}$. The formulae in~(ii) and~(iii) follow readily.
\end{proof}

The previous two lemmas show that a necessary and sufficient condition for $v_{0}=\left(\mu+1\,; \mu, c^{\times 8}\right)$, with $0<c<c_{\vol}$, to belong to the symplectic cone of $X_{9}$ is the positivity of the symplectic areas of the classes of types (i), (ii), and (iii). Note that for $n=0$, we obtain curves of types $\left(0\,;0,-1,0^{\times 7}\right)$, $\left(0\,;-1,0^{\times 8}\right)$, and $\left(6\,;0,3,2^{\times 7}\right)$. The first two give the trivial lower bound $\w_{8}(\mu)>0$, while the third gives the upper bound $w_{8}(\mu)\leq\frac{6\mu+6}{17}$. For $n\in\Z\setminus\{0\}$, we get three families of upper bounds for $w_{8}$, namely
\[
u_{1}(\mu,n)=\frac{2n(4n+1)\mu +12n^{2}-n}{32n^{2}-1}, \quad
u_{2}(\mu,n)=\frac{(8n^{2}+8n+1)\mu +12n^{2}+8n}{8n(4n+3)}
\]
and
\[
u_{3}(\mu,n)=\frac{2(4n^{2}+7n+3)\mu +12n^{2}+17n+6}{32n^{2}+48n+17}
\]
Therefore,
\[
\w_{8}\left(M_{\mu}^{1}\right) = \min_{n\in\Z\setminus\{0\}} 
\left\{
c_{\vol}=\sqrt{\frac{2\mu+1}{8}},~u_{1}(\mu,n),~u_{2}(\mu, n),~u_{3}(\mu,n),~u_{3}(\mu,0)=\frac{6\mu+6}{17}
\right\}
\]
which proves Theorem~\ref{thm:MainTheoremTwistedK=8}. 

In order to describe $\w_{8}(\mu)$ explicitely as a piecewise linear function, we introduce the  functions
\[
s_{1}(n) = \frac{4n(3n-2)}{24n^{2}+8n+1}, \quad
s_{2}(n) = \frac{4n(3n+2)}{24n^{2}+40n+17}, \quad
s_{3}(n) = \frac{8n^{2}+8n+1}{16(n+1)^{2}}
\]
defined respectively on $\Z$, $\Z$, and $\Z\setminus\{-1\}$. For convenience, we  extend the domain of $s_{3}(n)$ to $\Z$ by setting $s_{3}(-1) = \infty$. Simple but rather tedious computations show that 
\[u_{1}(\mu,n) = u_{2}(\mu, n) \iff \mu = s_{1}(n)\]
\[u_{2}(\mu,n) = u_{3}(\mu, n) \iff \mu = s_{2}(n)\]
\[u_{3}(\mu,n) = u_{1}(\mu, n+1) \iff \mu = s_{3}(n)\]
and that
\[u_{1}(\mu,n) = u_{2}(\mu,n) < c_{\vol} \text{~for~} \mu=s_{1}(n)\]
\[u_{2}(\mu,n) = u_{3}(\mu,n) < c_{\vol} \text{~for~} \mu=s_{2}(n)\]
\[u_{3}(\mu,n) = u_{1}(\mu,n+1) = c_{\vol} \text{~for~} \mu=s_{3}(n)\]
Moreover, for $n\geq 0$, the $s_{i}(n)$ form interlocking increasing sequences which converge to $1/2$ as $n\to\infty$,
\[0=s_{2}(0)<\frac{1}{16}=s_{3}(0)<\cdots < s_{1}(n)<s_{2}(n)<s_{3}(n)<s_{1}(n+1)<\cdots<1/2\]
while for negative $n\leq -1$, the $s_{i}(n)$ form interlocking decreasing sequences which also converge to $1/2$ as $n\to-\infty$,
\[1/2<\cdots< s_{3}(n-1)  <s_{1}(n)<s_{2}(n)<s_{3}(n)<\cdots< 4 = s_{2}(-1)<\infty = s_{3}(-1) \]
We conclude that on the interval $(0,~4]$, the upper bounds $u_{i}(\mu, n)$ are never greater than $c_{\vol}$. On the other hand, by Lemma~\ref{lemma:TwistedVp-2ordered}, the conclusions of Proposition~\ref{prop:TwistedEvenGeneric} hold whenever $\mu\geq \frac{7}{4}$. In particular, we know that $\w_{8}(\mu) = 1 = u_{3}(\mu,-1)$ whenever $\mu\geq 4$. Together with the fact that $\w_{8}(1/2) = 1/2$, this gives a complete description of $w_{8}\left(M_{\mu}^{1}\right)$, namely
\printlabel{thm:W8PiecewiseTwisted}
\begin{thm}\label{thm:W8PiecewiseTwisted}
The generalized Gromov width $\w_{8}(M_{\mu}^{1})$ is the piecewise linear function defined~by
\[
\w_{8}(M_{\mu}^{1})=
\begin{cases}
 u_{2}(\mu,n) & \text{~if~} \mu\in\left( s_{1}(n),~ s_{2}(n) \right]\\
 u_{1}(\mu,n) & \text{~if~} \mu\in\left( s_{3}(n-1),~ s_{1}(n) \right]\\
 u_{3}(\mu,n-1) & \text{~if~} \mu\in\left( s_{2}(n-1),~ s_{3}(n-1) \right)\\
 c_{\vol}=\sqrt{\frac{2\mu+1}{8}} & \text{~if~} \mu \in 
 \left\{s_{3}(n-1),~\frac{1}{2},~s_{3}(-(n+1))\right\},\qquad \text{where $n\geq 1$}\\
 u_{1}(\mu,-n) & \text{~if~} \mu\in\left(s_{3}(-(n+1)),~ s_{1}(-n)\right]\\
 u_{2}(\mu,-n) & \text{~if~} \mu\in\left(s_{1}(-n),~ s_{2}(-n)\right]\\
 u_{3}(\mu,-n) & \text{~if~} \mu\in\left(s_{2}(-n),~ s_{3}(-n)\right)\\
\end{cases}
\] 
\end{thm}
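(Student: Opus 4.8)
The strategy is to compute, \emph{explicitly}, the lower envelope of the family of functions produced by Theorem~\ref{thm:MainTheoremTwistedK=8}. By that theorem, $\w_{8}(M_{\mu}^{1})$ is the pointwise minimum over $\mu>0$ of the function $c_{\vol}(\mu)=\sqrt{(2\mu+1)/8}$ and the countable family of functions $u_{1}(\cdot,n)$, $u_{2}(\cdot,n)$, $u_{3}(\cdot,n)$, $n\in\Z$ (with $u_{3}(\cdot,0)=\tfrac{6\mu+6}{17}$), each of which is \emph{affine} in $\mu$. Consequently the function $W(\mu):=\min_{i,n}u_{i}(\mu,n)$ is concave and piecewise linear, and the whole problem is to locate its breakpoints and to decide, on each resulting subinterval, whether $W$ or the volume curve $c_{\vol}$ realizes the minimum.

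First I would arrange the functions $u_{i}(\cdot,n)$ into a single doubly infinite chain $\cdots, u_{1}(\cdot,n), u_{2}(\cdot,n), u_{3}(\cdot,n), u_{1}(\cdot,n+1), \cdots$ in which consecutive members cross at the values $s_{1}(n)$, $s_{2}(n)$, $s_{3}(n)$, respectively. A direct computation with the explicit formulae for $u_{i}$ and $s_{i}$ verifies the three crossing identities $u_{1}(\mu,n)=u_{2}(\mu,n)\Leftrightarrow\mu=s_{1}(n)$, $u_{2}(\mu,n)=u_{3}(\mu,n)\Leftrightarrow\mu=s_{2}(n)$, and $u_{3}(\mu,n)=u_{1}(\mu,n+1)\Leftrightarrow\mu=s_{3}(n)$, and establishes that the $s_{i}(n)$ interlock into the monotone sequences $0=s_{2}(0)<s_{3}(0)<\cdots<s_{1}(n)<s_{2}(n)<s_{3}(n)<s_{1}(n+1)<\cdots<\tfrac12$ for $n\ge0$ and $\tfrac12<\cdots<s_{3}(n-1)<s_{1}(n)<s_{2}(n)<s_{3}(n)<\cdots<4=s_{2}(-1)<\infty=s_{3}(-1)$ for $n\le-1$, both accumulating at $1/2$. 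Since the crossing values are strictly ordered along the chain, the slopes of the successive affine pieces are strictly monotone along it, so the lower envelope $W$ is exactly the corresponding staircase: $W=u_{2}(\cdot,n)$ on $(s_{1}(n),s_{2}(n)]$, $W=u_{1}(\cdot,n)$ on $(s_{3}(n-1),s_{1}(n)]$, $W=u_{3}(\cdot,n-1)$ on $(s_{2}(n-1),s_{3}(n-1))$, and symmetrically for negative indices. One still has to check that each of these pieces stays below \emph{every} non-adjacent line on its own interval; this follows from the interlacing of the $s_{i}$ together with concavity of $W$, since a line dipping beneath the staircase on some interval would force an extra crossing incompatible with the established ordering.

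It remains to compare $W$ with $c_{\vol}$ and to treat the endpoints. We recorded before the statement that at a crossing of type $s_{3}$ the common value $u_{3}(\cdot,n)=u_{1}(\cdot,n+1)$ equals $c_{\vol}$, whereas at crossings of type $s_{1}$ and $s_{2}$ the common value is strictly below $c_{\vol}$; hence the staircase $W$ touches the volume curve exactly at the discrete set $\{s_{3}(n)\}\cup\{1/2\}$ and, using the stated fact that no $u_{i}(\cdot,n)$ exceeds $c_{\vol}$ on $(0,4]$, lies strictly below it elsewhere on $(0,4]$. Therefore $\w_{8}(\mu)=\min\{c_{\vol}(\mu),W(\mu)\}=W(\mu)$ for $\mu\in(0,4]$, which is the content of all lines of the formula except the volume line; at each $s_{3}(n)$ and at the accumulation point $\mu=1/2$ one has $W=c_{\vol}$ (at $1/2$ by continuity of $W$, consistent with the previously established equality $\w_{8}(1/2)=1/2$). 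Finally, for $\mu\ge\tfrac74$ Lemma~\ref{lemma:TwistedVp-2ordered} makes Proposition~\ref{prop:TwistedEvenGeneric} applicable, giving $\w_{8}(\mu)=1=u_{3}(\mu,-1)$ on $[4,\infty)=(s_{2}(-1),s_{3}(-1))$, which agrees with the remaining branch. Collating these cases gives the stated piecewise linear formula.

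The main obstacle I expect is the global minimality in the second step: proving that the staircase built from \emph{adjacent} crossings really is the lower envelope of the entire countable family, i.e. that no distant line $u_{i}(\cdot,m)$ undercuts the candidate minimizer on its interval, and simultaneously controlling the behaviour near $\mu=1/2$, where infinitely many lines and the volume curve cluster on the single point $(1/2,1/2)$. Both difficulties are resolved by the strict monotonicity and interlacing of the sequences $s_{i}(n)$, so the real work is the careful bookkeeping of those inequalities; the crossing identities and the comparisons with $c_{\vol}$ at crossing points, though lengthy, are routine algebra once the closed forms for $u_{i}$ and $s_{i}$ are available.
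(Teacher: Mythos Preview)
Your proposal is correct and follows essentially the same route as the paper: the theorem is deduced directly from Theorem~\ref{thm:MainTheoremTwistedK=8} by computing the lower envelope of the affine family $\{u_i(\cdot,n)\}$ via the crossing identities, the interlocking monotone sequences $s_i(n)$, the comparison with $c_{\vol}$ at the $s_3$-points, and the endpoint analysis at $\mu=1/2$ and $\mu\ge 4$ using Lemma~\ref{lemma:TwistedVp-2ordered} and Proposition~\ref{prop:TwistedEvenGeneric}. If anything, you are slightly more explicit than the paper about the global minimality step (ruling out undercutting by non-adjacent lines), which the paper leaves implicit in the interlacing and the observation that on $(0,4]$ every $u_i(\mu,n)$ already lies at or below $c_{\vol}$.
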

\cqfd
Let define the set $\mathcal{S}\subset (0,\infty)$ by setting
\[
\mathcal{S} = \left\{s_{3}(n-1),~\frac{1}{2},~s_{3}(-(n+1))\right\}
 = \left\{\frac{8n^{2}-8n+1}{16n^{2}},~\frac{1}{2},~\frac{8n^{2} + 8n + 1}{16n^{2}}\right\},~n\geq 1
\]
Note that $\mathcal{S}\subset (0,~ 17/16]$. Theorem~\ref{thm:W8PiecewiseTwisted} shows that $M_{\mu}^{1}$ admits a full packing by $8$ balls if, and only if, $\mu\in\mathcal{S}$. This last result allows us to complete our computations of the stability numbers of $M_{\mu}^{1}$.
\printlabel{cor:EvenStabilityNumbersTwisted}
\begin{cor}\label{cor:EvenStabilityNumbersTwisted}
The even stability number of $M_{\mu}^{1}$ is
\[
N_{{\rm even}}\left(M_{\mu}^{1}\right)=
\begin{cases}
8 & \text{~if~}\mu\in\mathcal{S}\\
10 & \text{~if~} \mu\in\left(0,\frac{3}{2}\right]\setminus \mathcal{S}\\
2\left\lceil \mu+1+\sqrt{2\mu+1}\right\rceil 
			& \text{~if~}\mu\in\left[\frac{3}{2},~\infty\right)
\end{cases}
\]
\end{cor}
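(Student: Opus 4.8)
The plan is to read off $N_{\rm even}(M_\mu^1)$, the least \emph{even} $k$ with $p_k(M_\mu^1)=1$, by comparing the three ranges $k\le 6$, $k=8$, and $k=2p\ge 10$, using the width computations already established. The only genuinely hard input, the analysis of $k=8$ culminating in Theorem~\ref{thm:W8PiecewiseTwisted}, is available, so the remaining work is essentially bookkeeping together with one inversion of an elementary function.

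First I would dispose of the small cases. From the $k\le 7$ computations (Proposition~\ref{prop:PackingNumbersTwisted<8} and the tables in Appendix~A) one sees that $M_\mu^1$ admits no full packing by an even number $k\le 6$ of balls, so $N_{\rm even}(M_\mu^1)\ge 8$ for every $\mu$. Next, Theorem~\ref{thm:W8PiecewiseTwisted} together with the description of the set $\mathcal{S}$ shows that $p_8(M_\mu^1)=1$ precisely when $\mu\in\mathcal{S}$; this already gives $N_{\rm even}=8$ on $\mathcal{S}$, and shows $N_{\rm even}\ge 10$ for $\mu\notin\mathcal{S}$. Since $\max\mathcal{S}=17/16<3/2$, every $\mu\ge 3/2$ satisfies $\mu\notin\mathcal{S}$, so in the last two clauses we always have $N_{\rm even}\ge 10$.

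For the middle clause, fix $\mu\in(0,3/2]\setminus\mathcal{S}$; by the previous paragraph it suffices to show $p_{10}(M_\mu^1)=1$. When $\mu\in(0,1/2]$ this is immediate from Lemma~\ref{lemma:TwistedWkgeq9}. When $\mu\in(1/2,3/2]$ I would apply Proposition~\ref{prop:TwistedEvenGeneric} with $p=5$: it gives $\w_{10}(M_\mu^1)=c_{\vol}$ on the interval $[1/2,5-\sqrt{10})$, and $3/2<5-\sqrt{10}$, so indeed $p_{10}(M_\mu^1)=1$. Combined with $N_{\rm even}\ge 10$ this yields $N_{\rm even}=10$.

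Finally, fix $\mu\ge 3/2$. Proposition~\ref{prop:TwistedEvenGeneric} gives, for every $p\ge 5$, that $\w_{2p}(M_\mu^1)=c_{\vol}$ on $[1/2,p-\sqrt{2p})$ and $\w_{2p}=(p+\mu)/(2p)$ on $[p-\sqrt{2p},p)$; at the common endpoint $\mu=p-\sqrt{2p}$ one has $2\mu+1=(\sqrt{2p}-1)^2$, hence $(p+\mu)/(2p)=1-1/\sqrt{2p}=c_{\vol}$, so in fact $p_{2p}(M_\mu^1)=1$ if and only if $\mu\le p-\sqrt{2p}$. The map $p\mapsto p-\sqrt{2p}$ is strictly increasing for $p\ge 1$, and substituting $t=\sqrt{2p}$ and solving $\mu=p-\sqrt{2p}$ gives $p=\mu+1+\sqrt{2\mu+1}=:p_{\ast}(\mu)$; hence $p_{2p}(M_\mu^1)=1\iff p\ge p_{\ast}(\mu)$. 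Since $p_{\ast}$ is increasing and $p_{\ast}(3/2)=9/2>4$, we get $\lceil p_{\ast}(\mu)\rceil\ge 5$ for all $\mu\ge 3/2$, so the smallest even $k\ge 10$ realizing a full packing is $k=2\lceil p_{\ast}(\mu)\rceil$; with the exclusion of $k\le 8$ above, this gives $N_{\rm even}(M_\mu^1)=2\lceil \mu+1+\sqrt{2\mu+1}\rceil$. The only points requiring real care are the boundary identity $(p+\mu)/(2p)=c_{\vol}$ at $\mu=p-\sqrt{2p}$ — which is exactly what forces the ceiling, rather than a floor, in the final formula — and the harmless consistency check that the middle and last clauses agree at $\mu=3/2$, where both return $10$; everything else reduces to quoting Proposition~\ref{prop:TwistedEvenGeneric}, Lemma~\ref{lemma:TwistedWkgeq9}, and Theorem~\ref{thm:W8PiecewiseTwisted}.
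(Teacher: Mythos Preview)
Your approach is the same as the paper's, and the argument is essentially correct, but there is one factual slip. You assert that ``$M_\mu^1$ admits no full packing by an even number $k\le 6$ of balls''; however, Proposition~\ref{prop:PackingNumbersTwisted<8} explicitly lists $(\mu,k)=(\tfrac14,6)$ as a full-packing pair. The paper handles this by observing that $\tfrac14\notin\mathcal{S}$, so $p_8(M_{1/4}^1)<1$ and hence $N_{\rm even}(M_{1/4}^1)\ge 10$ regardless of the sporadic full packing at $k=6$. With this correction your argument goes through; everything else --- the use of Lemma~\ref{lemma:TwistedWkgeq9} and Proposition~\ref{prop:TwistedEvenGeneric} with $p=5$ on $(0,\tfrac32]$, and the inversion $p_\ast(\mu)=\mu+1+\sqrt{2\mu+1}$ for $\mu\ge\tfrac32$ --- matches the paper's proof.
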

\begin{proof}
By Proposition~\ref{prop:PackingNumbersTwisted<8}, the only pair $(\mu,k)$ for which we have full packings by $k=2p\leq 6$ balls is $(1/4,~6)$, and we observe that $1/4\not\in\mathcal{S}$. On the other hand, for $\mu\in(0,1/2]$, Lemma~\ref{lemma:TwistedWkgeq9} shows that we have full packings whenever $k\geq 9$. We conclude that for $\mu\in(0,1/2]\setminus\mathcal{S}$ the even stability number is equal to $10$, while it is equal to $8$ for $\mu\in\mathcal{S}\cap(0,1/2]$. 

When $\mu\in(1/2, \infty)$, the largest root of the polynomial in $p$
\[
2p\left(p^{2} - 2(\mu+1)p + \mu^{2}\right)
\]
obtained by setting
\[c_{\vol}^{2}=\frac{(p+\mu)^{2}}{(2p)^{2}}\]
is
\[r(\mu) =  \mu+1+\sqrt{2\mu+1}\]
The integer $J(\mu) := \max \{10,~ 2\lceil r(\mu) \rceil\}$ gives the even stability number in the range $k\geq 10$. It is easy to see that on $\mathcal{S}\cap(1/2,~ 3/2]$, $J(\mu)=10$. The results follows readily.
\end{proof}
Combining Corollary~\ref{cor:OddStabilityNumbersTwisted} with Corollary~\ref{cor:EvenStabilityNumbersTwisted}, we finally get the general stability number of the twisted bundle, namely
\begin{cor}
The general stability number of $M_{\mu}^{1}$ is
\[
N_{{\rm stab}}\left(M_{\mu}^{1}\right)=
\begin{cases}
8 & \text{~if~} \mu\in  \mathcal{S} \\
9 & \text{~if~} \mu\in   \left(0,~\frac{3}{2}\right) \\
N_{{\rm even}}-1  & \text{~if~} \mu\in \left[\frac{3}{2},~\infty \right)
\end{cases}
\]
\end{cor}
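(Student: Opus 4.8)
The plan is to obtain $N_{{\rm stab}}(M_\mu^1)$ by combining the two parity‑restricted stability numbers $N_{{\rm odd}}$ and $N_{{\rm even}}$ already computed in Corollary~\ref{cor:OddStabilityNumbersTwisted} and Corollary~\ref{cor:EvenStabilityNumbersTwisted}. The starting point is that, since $\{k : p_k(M_\mu^1)\neq 1\}$ is finite (this is the content of the stability theorem), one has
\[N_{{\rm stab}}(M_\mu^1) \;=\; 1 + \max\{k : p_k(M_\mu^1)\neq 1\}.\]
Splitting this maximum according to the parity of $k$, the largest odd (resp. even) index $k$ with $p_k\neq 1$ is exactly $N_{{\rm odd}}(\mu)-2$ (resp. $N_{{\rm even}}(\mu)-2$): these equalities are forced by the minimality built into the definitions of $N_{{\rm odd}}$ and $N_{{\rm even}}$, because there is no index of the relevant parity strictly between $N_{{\rm odd}}-2$ and $N_{{\rm odd}}$ (resp. between $N_{{\rm even}}-2$ and $N_{{\rm even}}$). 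Hence
\[N_{{\rm stab}}(M_\mu^1) \;=\; \max\{N_{{\rm odd}}(\mu),\,N_{{\rm even}}(\mu)\} - 1,\]
and the entire problem is reduced to deciding, for each $\mu$, which of the two parity stability numbers dominates.

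Next I would dispatch the low‑$\mu$ regime using the explicit formulas. Since $\mathcal{S}\subset(0,\tfrac{17}{16}]$, Corollary~\ref{cor:OddStabilityNumbersTwisted} gives $N_{{\rm odd}}(\mu)=9$ throughout $(0,\tfrac{17}{16}]$ except at $\mu\in\{\tfrac{1}{7},\tfrac{3}{8}\}$, and a one‑line arithmetic check shows $\{\tfrac{1}{7},\tfrac{3}{8}\}\cap\mathcal{S}=\emptyset$; combined with $N_{{\rm even}}=8$ on $\mathcal{S}$ this yields $N_{{\rm stab}}=\max\{9,8\}-1=8$ on $\mathcal{S}$. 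For $\mu\in(0,\tfrac{3}{2})\setminus\mathcal{S}$ one has $N_{{\rm even}}(\mu)=10$ while $N_{{\rm odd}}(\mu)\le 9$ (for $\mu\in[1,\tfrac{3}{2})$ this is the statement $\lceil r_{{\rm odd}}(\mu)\rceil=4$, where $r_{{\rm odd}}(\mu):=\tfrac12\big(\mu+2+\sqrt{(\mu+2)^2+4\sqrt{2\mu+1}}\big)$), so $N_{{\rm stab}}=10-1=9$.

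The remaining — and decisive — case is $\mu\in[\tfrac{3}{2},\infty)$, where the claim $N_{{\rm stab}}=N_{{\rm even}}-1$ is, via the displayed identity, equivalent to $N_{{\rm even}}(\mu)\ge N_{{\rm odd}}(\mu)$, i.e. (opposite parities) $N_{{\rm even}}(\mu)>N_{{\rm odd}}(\mu)$. Writing $r_{{\rm even}}(\mu):=\mu+1+\sqrt{2\mu+1}$, so that $N_{{\rm even}}=2\lceil r_{{\rm even}}\rceil$ and $N_{{\rm odd}}=2\lceil r_{{\rm odd}}\rceil+1$, this amounts to $\lceil r_{{\rm even}}(\mu)\rceil\ge\lceil r_{{\rm odd}}(\mu)\rceil+1$, which holds precisely when the half‑open interval $[\,r_{{\rm odd}}(\mu),\,r_{{\rm even}}(\mu)\,)$ contains an integer. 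The useful estimate is the bound $r_{{\rm odd}}(\mu)<\mu+2+\tfrac{\sqrt{2\mu+1}}{\mu+2}$, obtained by completing the square inside the inner radical; it gives $r_{{\rm even}}-r_{{\rm odd}}>\sqrt{2\mu+1}\cdot\tfrac{\mu+1}{\mu+2}-1$, which exceeds $1$ once $\mu$ is past the (explicit) root of $(3-\mu)\sqrt{2\mu+1}=1$, thereby forcing an integer into the interval. On the complementary bounded range of $\mu$ the two ceilings take only finitely many values and jump only at the explicit points $5-\sqrt{10}$, $6-\sqrt{12}$, etc., so the integer‑containment can be settled by direct inspection; granting this, the stated formula follows and the comparison with the odd case via Corollary~\ref{cor:OddStabilityNumbersTwisted} is automatic.

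The step I expect to be the main obstacle is precisely this comparison $N_{{\rm even}}>N_{{\rm odd}}$ on $[\tfrac{3}{2},\infty)$: near $\mu=\tfrac{3}{2}$ the gap $r_{{\rm even}}-r_{{\rm odd}}$ stays well below $1$, so one cannot conclude from the size of the gap alone and must instead carefully track the fractional parts of the two explicit functions and the exact transition values where the two ceilings change — the kind of Diophantine‑approximation bookkeeping that, elsewhere in the paper, the reduction algorithm is designed to automate.
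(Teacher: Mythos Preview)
Your reduction $N_{\rm stab}=\max\{N_{\rm odd},N_{\rm even}\}-1$ is correct, and this is precisely the content of the paper's ``proof'', which consists only of the sentence ``Combining Corollary~\ref{cor:OddStabilityNumbersTwisted} with Corollary~\ref{cor:EvenStabilityNumbersTwisted}'' followed by a QED symbol; the paper gives no further argument. Your treatment of $\mu\in\mathcal S$ and $\mu\in(0,\tfrac32)\setminus\mathcal S$ is accurate, including the check that $\{\tfrac17,\tfrac38\}\cap\mathcal S=\emptyset$ and the evaluation $\lceil r_{\rm odd}(\mu)\rceil=4$ on $[1,\tfrac32)$.

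For $\mu\ge\tfrac32$ you correctly identify that the whole claim boils down to the ceiling inequality $\lceil r_{\rm even}(\mu)\rceil\ge\lceil r_{\rm odd}(\mu)\rceil+1$, and your asymptotic estimate via $\sqrt{(\mu+2)^2+4\sqrt{2\mu+1}}<(\mu+2)+\frac{2\sqrt{2\mu+1}}{\mu+2}$ is sound and does reduce the problem to a bounded interval of~$\mu$. The paper simply does not address this issue at all, so what you flag as ``the main obstacle'' is a genuine detail that the paper leaves to the reader. Your outline for handling it (the gap exceeds~$1$ for large~$\mu$; on the remaining bounded range the ceilings change only at finitely many explicit points, so one checks directly) is the right shape of argument, though you have not carried it out. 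In short: your approach is the paper's approach, but made explicit, and the one step you leave incomplete is one the paper never mentions.
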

\cqfd

%%%%%%%%%%%%%%%%%%%%%%%%%%%%%%%%%%%%%%%%%%%%%%%%%%%%%%%%%%%%%%%%%%%%%%%%%%%%%%%%
\section{Embedding ellipsoids in polydisks and comparison with ECH capacities}
%%%%%%%%%%%%%%%%%%%%%%%%%%%%%%%%%%%%%%%%%%%%%%%%%%%%%%%%%%%%%%%%%%%%%%%%%%%%%%%%

\subsection{Embedding ellipsoids in polydisks}
Using a recent result of D. Muller our results about ball packings can be translated into the following
\begin{cor}\label{muler2} 
Let $k $ be any integer greater than $8$ and let $a,s,t$ be any positive real numbers with $s < t.$ Set $\mu = a/s$. Then the following statements are equivalent:
\renewcommand{\labelenumi}{\roman{enumi})}
\renewcommand{\theenumi}{\roman{enumi})}
\begin{enumerate}
  \item $E(a, k a) \hookrightarrow P(s,t)$ 
  \item If $k =2p+1 $ then 
\[
\frac{a}{s} \leq
\begin{cases}
c_{\vol}=\sqrt{\frac{ 2 \mu}{2p+1}} & \text{~if~}\mu\in\left[ 1,~ p+1-\sqrt{2p+1} \right)   \\
\frac{\mu+p}{2p+1}         & \text{~if~}\mu\in\left[ p+1-\sqrt{2p+1},~ p+1 \right) \\
1                          & \text{~if~}\mu\in\left[ p+1,~ \infty \right)
\end{cases}
\]
If $k=2p$ then 
\[
\frac{a}{s} \leq
\begin{cases}
c_{\vol}=\sqrt{\frac{\mu}{p}}  & \text{~if~}\mu\in\left[ 1,~ \frac{p-2 + \sqrt{p^2 -4p}}{2}\right)\\
\frac{a_{n-1}\mu + a_n}{2 (a_n+a_{n-1}) - 1} & \text{~if~}\mu\in\left[ \frac{\gamma_{n}}{\gamma_{n-1}},~ \frac{\gamma_{n-1}}{\gamma_{n-2}} \right) ,~n\geq 2\\
1 & \text{~if~}\mu\in\left[ p,~ \infty \right)
\end{cases}
\]
\end{enumerate}
\end{cor}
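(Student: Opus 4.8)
The plan is to derive this corollary from Theorem~\ref{thm:MainTheoremTrivial} by composing two standard reductions. The first, due to D.~Müller, replaces the embedding $E(a,ka)\hookrightarrow P(s,t)$ by a ball-packing problem; the second is the classical dictionary relating ball packings of a polydisc to ball packings of a rational ruled surface. I keep $\mu=a/s$ and $k\geq 9$ fixed throughout.

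First I would note that, because $ka/a=k\in\N$, the ellipsoid $E(a,ka)=a\cdot E(1,k)$ has weight expansion $(a,\dots,a)$ with $k$ equal entries, so that it is a disjoint union of $k$ standard balls of capacity $a$ up to a set of measure zero. Müller's theorem~\cite{Mu} then gives that $E(a,ka)\hookrightarrow P(s,t)$ if and only if $\sqcup_{k}B(a)\hookrightarrow P(s,t)$. Combining this with the standard fact that $\sqcup_{k}B(a)$ embeds into the open polydisc $P(s,t)$ exactly when it embeds into the product $S^{2}(s)\times S^{2}(t)$ — the interior of $P(s,t)$ being symplectomorphic to the complement of two transverse coordinate spheres in $S^{2}(s)\times S^{2}(t)$, whose removal does not affect packings by finitely many balls — one gets: $E(a,ka)\hookrightarrow P(s,t)$ if and only if $\sqcup_{k}B(a)\hookrightarrow S^{2}(s)\times S^{2}(t)$.

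Rescaling so that the shorter factor of the product has area $1$ identifies $S^{2}(s)\times S^{2}(t)$ with the trivial bundle $M_{\mu}^{0}$ and scales the $k$ balls to a common capacity; the packing then exists precisely when that capacity is at most $\w_{k}(M_{\mu}^{0})$, which after undoing the rescaling reads
\[ \frac{a}{s}\ \leq\ \w_{k}\!\left(M_{\mu}^{0}\right). \]
It now suffices to substitute the explicit piecewise-linear formulas for $\w_{2p+1}(M_{\mu}^{0})$ and $\w_{2p}(M_{\mu}^{0})$ coming from the odd and even cases of Theorem~\ref{thm:MainTheoremTrivial} (equivalently, from Propositions~\ref{prop:TrivialBundleOddCase} and~\ref{thm:MainTheoremTrivialExplicitEven}); this reproduces verbatim the case distinctions appearing in statement (ii), which would finish the proof.

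The step I expect to cause trouble is not this final substitution but the set-up of the two reductions. One has to invoke Müller's equivalence in precisely the form needed — a closed ellipsoid with integer axis-ratio into an open polydisc, reduced to a genuine packing by equal balls — justify that the passage from $\operatorname{int}P(s,t)$ to $S^{2}(s)\times S^{2}(t)$ is legitimate for an arbitrary finite disjoint union of balls and not only for a single ball, and, most delicately, check that the normalization attached to the ordered pair $(s,t)$ really produces the \emph{trivial} ruled surface $M_{\mu}^{0}$ with the parameter and ball sizes that make the final inequality come out as displayed, rather than the nontrivial bundle $M_{\mu}^{1}$ or a blow-up of a ruled surface.
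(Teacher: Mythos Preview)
Your proposal is correct and follows essentially the same route as the paper's proof: both reduce the ellipsoid--polydisc embedding to a ball-packing problem via M\"uller's theorem, identify packings of the polydisc with packings of the closed product $S^{2}\times S^{2}=M_{\mu}^{0}$, and then read off the answer from Theorem~\ref{thm:MainTheoremTrivial}. The only cosmetic differences are that the paper (i) phrases M\"uller's result in its ``auxiliary ball'' form, as an embedding $\sqcup_{k}B(\w_{k})\cup B(1)\cup B(\mu)\hookrightarrow B(1+\mu)$, which is equivalent to the direct form you use, and (ii) justifies the passage between $P(s,t)$ and $M_{\mu}^{0}$ by arguing that for generic $J$ a ball packing of $M_{\mu}^{0}$ can be arranged to miss a section, rather than by your complement-of-spheres remark; these are two sides of the same coin. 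Your closing caveat about the normalization is well placed: the paper's own proof takes $\mu=t/s$, so the ``$\mu=a/s$'' in the statement is a typo, and with $\mu=t/s$ the rescaling works exactly as you describe.
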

\begin{proof}
For a generic almost complex structure $J$ on $M_{\mu}^0$, $\mu=t/s$, we can arrange that the image of an embedding of $k$ disjoint equal balls into $M_{\mu}^0$ misses some generic section of $S^2 \times S^2$ in the homology class $[S^{2}\times\{*\}]$. Hence, we can view such an embedding as an embedding into the polydisk $P(1, \mu)$. 
In~\cite{M}, D. McDuff proved that the existence of an embedding of $k$ balls of equal sizes $c$ in the ball $B^{4}(1)$ is equivalent to the existence of a symplectic embedding of an ellipsoid $E(c,k c)$ in the same ball. Recently D. Muller~\cite{Mu} used similar ideas to prove an analog result for the embeddings of ellipsoids into polydisks, see for instance~Proposition $10$ in~\cite{H4}. According to her results, if one has an symplectic embedding
\begin{equation}
  \Phi: \sqcup_{k} {\rm int} B(\w_{k}) \cup B(1) \cup B(\mu) \longrightarrow B(1+ \mu)
\end{equation}
then one obtains an embedding of $E(\w_{k},~ k\cdot\w_{k}) \rightarrow P(1,~\mu)$.
But it is clear that the problem of finding such embedding $\Phi$ reduces to proving that the vector $v_{0}=(\mu+1-c\,;\mu+1,c^{\times (k-1)}, 1-c)$ belongs to the symplectic cone of $X_{k+1}$. Hence the equivalence between (i) and~(ii) follows. 
\end{proof}

\subsection{Comparison with ECH capacities}

In a recent series of papers, M. Hutching's defines the embedded contact homology (ECH) capacities for Liouville domains $(Y, \xi)$ and, more generally, for Liouville domains with corners. The purpose of this section is to establish a connection between our results and  ECH capacities of ellipsoids and polydisks. Let us first give a brief overview of the necessary notations and  results existing in the literature.  The ECH capacities form a sequence $c_k(Y, \xi)$ which represents the spectrum of a filtered version of embedded contact homology, in which the filtering is defined using a certain action functional. The construction of this homology theory, as well as its mains properties, are discussed in Hutchings~\cite{H2} and~\cite{H4}. We will consider here the case of a Liouville domain given by an ellipsoid $E(a,b)$ and that of a Liouville domain with corners given as a polydisk $P(s,t)$. We will denote by $\mathcal{N}_k(a,b)$ the sequence of ECH capacities $c_k(E(a,b))$, and by $\mathcal{M}_{k}(s,t)$ the sequence $c_k(P(s,t))$. For our purpose, it is sufficient to recall that the following results:
\begin{thm}[see M. Hutchings~\cite{H2}, ~\cite{H4}] \label{thm:Hutchings}
\renewcommand{\labelenumi}{\roman{enumi})}
\renewcommand{\theenumi}{\roman{enumi})}
\begin{enumerate}
  \item $E(a, b) \hookrightarrow P(s,t)$ if, and only if $\mathcal N(a,b) <
    \mathcal M (s, t)$
  \item For an ellipsoid $E(a,b)$, the elements of the sequence
    $\mathcal { N}(a,b)$ are obtained by arranging in increasing order
    (with repetitions) all the numbers of the type $am+bn$ with $m,n$
    natural numbers.
  \item For a polydisk $P(s, t)$, the ECH capacities are organized in
    a sequence $\mathcal{M}(s,t)$ whose $i^{\text{th}}$ element is
    defined as
    \begin{equation}\label{polydisc}
      {\mathcal M}_i(\nu,\mu)= \min\{\nu m+ \mu n ~|~ (m+1)(n+1) \geq
      i+1, (m,n) \in \N \times \N \}.
    \end{equation} 
\end{enumerate}
\end{thm}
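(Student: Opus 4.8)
The statement splits into three parts that I would treat unequally. Parts~(ii) and~(iii) are Hutchings' explicit computations of the ECH capacities of an ellipsoid and of a polydisk; they follow from the internal structure of embedded contact homology of the relevant contact boundaries and cannot be recovered from the ball-packing machinery of the present paper, so I would simply quote them from~\cite{H2, H4}. The ``only if'' half of~(i)---that a symplectic embedding $E(a,b)\hookrightarrow P(s,t)$ forces the termwise inequality $\mathcal{N}(a,b)\le\mathcal{M}(s,t)$---is the monotonicity of ECH capacities under symplectic embeddings, again a general feature of the theory that I would cite rather than reprove. What the results established above \emph{do} give is an independent proof of the ``if'' half of~(i), the sharpness assertion, and, as announced in the introduction, only under the integrality hypothesis $b/a\in\N$ (for non-integral ratios this half too remains Hutchings' and would not be reproved here). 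The plan below proves exactly the integral case; together with the two cited inputs it yields the equivalence in~(i) whenever $b/a\in\N$, which is the version of Hutchings' theorem reproved here.

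Assume $b=ka$ with $k\in\N$, and after rescaling take $s=1$ and $\mu=t$ (so $\mu\ge 1$ by $s<t$; the case where the two factors of the polydisc are exchanged is symmetric). By~(ii), $\mathcal{N}_i(a,ka)=a\,N_i$, where $N_i$ is the $i$-th term (with multiplicity, indexed from~$0$) of the sorted multiset $\{p+kq:(p,q)\in\N^{2}\}$; by~(iii), $\mathcal{M}_i(1,\mu)=\min\{m+\mu n:(m+1)(n+1)\ge i+1\}$. Because $N_i$ is non-decreasing and $\mathcal{M}_i$ is a minimum over the region $(m+1)(n+1)\ge i+1$, the infinite system $\{a\,N_i\le\mathcal{M}_i(1,\mu):i\ge 0\}$ is equivalent to the single family
\[
a\,N_{(m+1)(n+1)-1}\ \le\ m+\mu n\,,\qquad (m,n)\in\N^{2}
\]
(both implications are immediate: left-to-right since $\mathcal{M}_{(m+1)(n+1)-1}\le m+\mu n$, right-to-left since the minimiser of $\mathcal{M}_i$ has index at most $(m+1)(n+1)-1$). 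An elementary lattice-point count expresses $\#\{(p,q):p+kq\le V\}$ in closed form in $V$ and $k$, hence gives an explicit description of $N_{(m+1)(n+1)-1}$; substituting it rewrites the displayed family as a family of bounds $a\le\rho(m,n;k,\mu)$, each affine in~$\mu$. The identity to be proved is then
\[
\inf_{(m,n)\in\N^{2}}\rho(m,n;k,\mu)\ =\ \w_k\!\left(M_\mu^{0}\right)\,,
\]
the right-hand side being the piecewise-linear function of Theorem~\ref{thm:MainTheoremTrivial}. Granting it, Corollary~\ref{muler2} for $k\ge 9$, and for $1\le k\le 8$ the packing numbers of Appendix~A combined with Müller's theorem~\cite{Mu}, turn $a/s\le\w_k(M_\mu^{0})$ into the embedding $E(a,ka)\hookrightarrow P(s,t)$; read forward this is the ``if'' half of~(i), read backward it re-derives the ``only if'' half as well.

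Proving the boxed identity is where the work is, and the efficient route is to use the obstructing exceptional classes already produced by the reduction algorithm (Corollaries~\ref{cor:TrivialOddCaseObstructionClasses} and~\ref{cor:TrivialevenCaseObstructionClasses}) as a dictionary between the binding ECH constraints and the reduction data. For $k=2p+1$ odd, one shows that only the pairs $(m,n)$ with $n=0$ and $m$ small give non-redundant bounds---recovering the three candidates $1$, $\tfrac{\mu+p}{2p+1}$, $c_{\vol}$---while every other $(m,n)$ is dominated on all of $[1,\infty)$; this matches the finite list of obstructing classes and the volume class from Corollary~\ref{cor:TrivialOddCaseObstructionClasses}. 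For $k=2p$ even, one shows that the relevant pairs are indexed by $n\ge 1$ with $m+1$ governed by the recurrence sequence $a_n$, that the corresponding bound equals $w_n(\mu)=\tfrac{a_{n-1}\mu+a_n}{2(a_n+a_{n-1})-1}$, and that $w_n$ is the pointwise minimum precisely on $\left[\gamma_n/\gamma_{n-1},\,\gamma_{n-1}/\gamma_{n-2}\right]$; here the identities of Lemmas~\ref{lemma:numerical} and~\ref{muandgamma} are exactly what forces the ECH parametrisation and the reduction-algorithm parametrisation to coincide, and the match with the obstructing classes $(d_n;z_n,y_n,x_n^{\times(2p-2)},t_n)$ is governed by $v_0\cdot E_n=D_n(\mu,c)$ as in Corollary~\ref{cor:TrivialevenCaseObstructionClasses}. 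The main obstacle is not conceptual but precisely this bookkeeping---above all, verifying that no ``spurious'' pair $(m,n)$ yields a bound strictly below $\w_k(M_\mu^{0})$---which I would settle in the even case by induction on $n$ using the recurrences, and in the odd case by an elementary domination argument over the finitely many relevant pairs.
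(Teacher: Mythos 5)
Your overall treatment coincides with the paper's: Theorem~\ref{thm:Hutchings} is quoted from Hutchings, with (ii), (iii) and the monotonicity (``only if'') half of (i) taken as cited inputs, and the paper only re-derives the sharpness half under the integrality hypothesis $b/a=k\in\N$, which is exactly the content of Proposition~\ref{ech} combined with Corollary~\ref{muler2} (and Appendix~A plus M\"uller for small $k$). Your reduction of the termwise system $a\,N_i\le\mathcal{M}_i(1,\mu)$ to the family indexed by $(m,n)\in\N^2$ is correct and is precisely the role the index $i_n=(a_n+1)(a_{n-1}+1)-1$ plays there. Where you diverge is in what you then declare to be ``where the work is'': the paper never proves the two-sided identity $\inf_{(m,n)}\rho(m,n)=\w_k(M^0_\mu)$. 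It only exhibits, in each regime of $\mu$, a single index at which the ECH bound equals the packing bound --- $\mathcal{N}_{i_n}(1,k)=2(a_n+a_{n-1})-1$ and $\mathcal{M}_{i_n}(1,\mu)=a_n+a_{n-1}\mu$, the matching being exactly Lemma~\ref{lemma:numerical}, and in the odd case the single index $2p+1$, i.e.\ $(m,n)=(p,1)$ rather than the $n=0$ pairs you list --- which yields $\mathcal{N}\le\mathcal{M}\Rightarrow a/s\le\w_k$, and then closes the loop through the reduction-algorithm embedding of Corollary~\ref{muler2} together with the cited monotonicity. Consequently the ``no spurious pair'' verification you single out as the main obstacle is both unnecessary and, in your sketch, unproven: once the embedding at $a/s=\w_k$ is produced by the reduction algorithm, monotonicity already forces every ECH bound to be at least $\w_k$, and the paper's closing Remark makes exactly this point, namely that checking the entire capacity sequences is what the argument is designed to circumvent. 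So either drop that half of your boxed identity (it comes for free) or recognize that carrying it out by the proposed induction/domination bookkeeping amounts to comparing all ECH capacities of $E(1,k)$ and $P(1,\mu)$, which is substantially harder than everything else in the proof and is not supplied in your plan.
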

Note that the reverse implication in statement $(i)$ of this theorem is a consequence of how the invariants are defined by Hutchings. The direct implication was recently proved by M. Hutchings in~\cite{H4} using D. Muller's \cite{Mu} result cited above, as well as a strategy provided by Mcduff in \cite{mchof} which proves sharpness of the ECH invariants for embeddings of ellipsoids into ellipsoids.

The computations of the generalized Gormov widths provides explicit and comprehensive ranges of parameters $a,b,s,t$ for which such embeddings exists in the case the ratio $a/b$ is an integer greater than $8$. Our Corollary~\ref{muler} gives an alternative proof of the direct implication in (i) in Theorem~\ref{thm:Hutchings} under the same integral condition. (For smaller values of $k$ one can trace the results using the Appendix A). This alternative proof sheds some insight on the difficulties and intricacies involved in computing explicitly the ECH invariants. We note that such computation is needed if one wants to find optimal values $a,,b,s,t$ for which embeddings $E(a, b) \hookrightarrow P(s,t)$ exist, without making use of the reduction algorithm.

\begin{prop}\label{ech} Let $k \geq 8$ be an integer. The following are equivalent:
\renewcommand{\labelenumi}{\roman{enumi})}
\renewcommand{\theenumi}{\roman{enumi})}
\begin{enumerate}
  \item $E(a, k a) \hookrightarrow P(s,t)$ 
  \item $\mathcal N(a,k a) < \mathcal M (s, t)$
  \item $\frac {a}{s}  \mathcal N(1,k) < \mathcal M (1, \frac {t}{s})$
  \item If $k =2p+1 $ then $\frac{a}{s} \leq \w_k =\min\{1, c_{\vol},
    \frac{\mu + p}{2p+1}\}.$
 If $k=2p$ then $\frac {a}{s} \leq \w_{k}
    =\min_{n \in \N} \{1, c_{\vol}, w_n \}.$ 
Moreover, Theorem~\ref{thm:MainTheoremTrivial} the precise value of this minimum is given by the index $n$ of the interval $I_n$ in which $\mu=\frac {t}{s}$ lies.
\end{enumerate}
\end{prop}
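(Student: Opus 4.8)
The plan is to prove Proposition~\ref{ech} by establishing the chain of equivalences $(i)\Leftrightarrow(ii)\Leftrightarrow(iii)\Leftrightarrow(iv)$ through a short cycle, leaning on results already in hand. The equivalence $(i)\Leftrightarrow(ii)$ is precisely part~$(i)$ of Theorem~\ref{thm:Hutchings} (M.~Hutchings), applied to $b=ka$. The equivalence $(ii)\Leftrightarrow(iii)$ is a pure scaling statement: from Theorem~\ref{thm:Hutchings}~$(ii)$--$(iii)$ one has $\mathcal N(a,ka)=a\,\mathcal N(1,k)$ and $\mathcal M(s,t)=s\,\mathcal M(1,t/s)$ termwise, since the defining formulas $am+bn$ and $\min\{\nu m+\mu n\mid (m+1)(n+1)\ge i+1\}$ are positively homogeneous of degree one in the parameters. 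Dividing the inequality $\mathcal N(a,ka)<\mathcal M(s,t)$ by $s>0$ yields $(iii)$, and conversely. So far this is essentially bookkeeping.

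The substantive content is $(i)\Leftrightarrow(iv)$, and here the plan is to invoke Corollary~\ref{muler2} (equivalently Corollary~\ref{muler}), which already asserts exactly that $E(a,ka)\hookrightarrow P(s,t)$ holds if and only if $\mu=a/s$ is bounded above by the appropriate generalized Gromov width $\w_k$ of the trivial ruled manifold $M_{\mu}^0$ — with $\mu$ in that corollary playing the role of $t/s$ on the geometric side. The one point needing care is the dictionary between the two appearances of ``$\mu$'': in Corollary~\ref{muler2} the relevant parameter governing the target polydisk $P(1,t/s)$ is $t/s$, while the ratio being bounded is $a/s$. I would spell out that, after rescaling $P(s,t)$ to $P(1,t/s)$ and $E(a,ka)$ to $E(a/s,\,k\,a/s)$, the embedding question becomes whether $k$ equal balls of capacity $a/s$ embed into $M_{t/s}^0$ minus a section, which by the strategy of Section~\ref{sec:strategy} is governed by the width $\w_k(M_{t/s}^0)$; thus $(i)$ is equivalent to $a/s\le \w_k(M_{t/s}^0)$, and Theorem~\ref{thm:MainTheoremTrivial} (in the explicit form of Proposition~\ref{prop:TrivialBundleOddCase} and Proposition~\ref{thm:MainTheoremTrivialExplicitEven}) identifies this width as the stated minimum, with the piece selected by the interval $I_n$ containing $t/s$. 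This gives $(i)\Leftrightarrow(iv)$ and closes the cycle.

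The main obstacle, such as it is, is not a hard estimate but a potential notational conflict: the statement of $(iv)$ as written mixes $\mu=a/s$ (the quantity being bounded) with the interval-membership condition ``$\mu=t/s$ lies in $I_n$'', and one must be careful that the width formula is evaluated at $t/s$, not at $a/s$, while the inequality constrains $a/s$. I would therefore open the proof by fixing the convention $\mu:=t/s$ for the width side and writing the inequality as $a/s\le\w_k(M_{t/s}^0)$, then note that when $a/s$ actually attains the bound one has $a/s=t/s$ only in degenerate cases, so the two roles of ``$\mu$'' genuinely must be kept distinct; this is the only place where a reader could be led astray, and a single clarifying sentence resolves it. Everything else follows formally from Theorem~\ref{thm:Hutchings}, Corollary~\ref{muler2}, and Theorem~\ref{thm:MainTheoremTrivial}, so the proof is essentially a two-line citation once the scaling and the $\mu$-dictionary are made explicit.
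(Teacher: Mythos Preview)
Your argument is logically complete and correct, but it takes a different route from the paper's, and the difference is precisely the point of the proposition. You close the cycle by invoking the \emph{full} equivalence $(i)\Leftrightarrow(ii)$ from Theorem~\ref{thm:Hutchings}, together with scaling for $(ii)\Leftrightarrow(iii)$ and Corollary~\ref{muler2} for $(i)\Leftrightarrow(iv)$. The paper, by contrast, deliberately uses only the \emph{easy} direction of Hutchings' theorem (embedding $\Rightarrow$ ECH inequality) for $(i)\Rightarrow(ii)$, and then establishes $(iii)\Rightarrow(iv)$ by a direct computation: for each $n$ it exhibits an index $i_n=(a_n+1)(a_{n-1}+1)-1$, shows combinatorially that $\mathcal M_{i_n}(1,\mu)=a_n+a_{n-1}\mu$ and $\mathcal N_{i_n}(1,k)=2(a_n+a_{n-1})-1$ (the latter via the identity of Lemma~\ref{lemma:numerical}), and reads off $a/s\le w_n(\mu)$ from the single inequality at that index. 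The odd case is handled similarly at the index $i=2p+1$.

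What this buys is exactly what the paragraph preceding the proposition advertises: an \emph{alternative} proof, for integer ratio $b/a=k$, of the hard (``direct'') implication in Theorem~\ref{thm:Hutchings}~$(i)$, independent of Hutchings' argument in~\cite{H4}. Your proof, while valid as a proof of the four-way equivalence, is circular from that standpoint: it consumes the sharpness of ECH capacities rather than re-deriving it. The paper's computation also makes transparent \emph{which} ECH indices witness the obstructions $w_n$, tying the recursive sequences $\{a_n\}$ directly to the combinatorics of $\mathcal N(1,k)$ and $\mathcal M(1,\mu)$; see also the Remark following the proposition. Your observation about the two roles of ``$\mu$'' is well taken and worth a clarifying sentence in either approach.
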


\begin{proof}
Note that $(i) \implies (ii)$ is the inverse implication  of point $i)$ from Hutching's theorem~\ref{thm:Hutchings}, and that $(ii) \implies(iii)$ is straightforward as all ECH capacities satisfy a rescaling property. The implication $(iv)\implies (i)$ is covered by Corrolary~\ref{muler}.

We will show here the remaining needed implication, namely that $(iii)
$ implies $(iv)$. So let us assume that $\frac {a}{s} \mathcal N(1,k)
< \mathcal M (1, \mu)$.  The fact that $\frac {a}{s} \leq
\min\{1,c_{\vol}\}$ is straightforward as the ECH capacities respect
volume and because the first entries of $\mathcal N(1,k)$ and
$\mathcal M (1, \frac {t}{s})$ are 1.  Let us prove the rest of the
inequalities.

Let us consider the case $k=2p$. Recall that for any $n >1$, $w_n= \frac{a_n +a_{n-1}\mu}{2(a_n+a_{n-1})-1}$.  We will introduce the sequence $x_n$ satisfying the identity $2(a_n+ a_{n-1}) -1 = 2p x_n + (-1)^{n+1}$. One can easily verify, using the recurrence~\eqref{MainRecurrence} for the sequence $a_n$, that the numbers $x_n$ are in fact natural numbers satisfying the relation:
\[x_{n+3} = (p-1)(x_{n+2}-x_{n+1}) + x_n + (-1)^{n+1} \]
Therefore 
\begin{equation}\label{relxa}
  x_n = \frac {2(a_n +a_{n-1}) -1+(-1)^{n}} {2p}
\end{equation}
For each $n>1$ we define the index $i$ to be
\begin{equation}\label{theindex}
  i_n:= (a_n +1) (a_{n-1} +1) -1
\end{equation}
Then it is clear from~\eqref{polydisc} that 
\[M_{i_n}(1, \mu) = a_n + a_{n-1} \mu.\]
Therefore our assumption is equivalent with
\begin{equation}\label{thewECH}
  \frac{a}{s} \leq \frac{ a_n + a_{n-1} \mu}{N_{i_n}(1,k)}
\end{equation}
We claim that for our choice of $i_n$ , we get that
\begin{equation}\label{identityholds}
  N_{i_n}(1,k) = 2p x_n + (-1)^{n+1}= 2(a_n+ a_{n-1}) - 1,
\end{equation}
hence the right hand side of~\eqref{thewECH} is exactly $\w_n$. The
remaining of the proof will be to justify the value of the
$i_n^{\text{th}}$ ECH capacity of $E(1,k)$ from relation~\eqref{identityholds}. To see this, first observe that for any
  integer $k$, $\mathcal{N}(1, k)$ is given by
\begin{multline}\label{ellipse}
  \Big( 1, \ldots k-1, k, k, k+1, k+1, \ldots 2k-1, 2i-1, (2k)^{\times 3}, \ldots  \\
  \ldots (3k-1)^{\times 3}, \ldots ,  (j k)^{\times (j+1)}, \ldots , ((j+1)k-1)^{\times (j+1)}, \ldots, \Big)
\end{multline}
In particular, any number of the form $k x-1 $ will appear as a value
of $N_i(1,k)$ exactly when
\begin{equation}\label{xeven}
  k x(x+1) /2 -x \leq i \leq kx (x+1)/2 -1
\end{equation}
and any number of the form $k x+1 $ will appear as a value of
$N_i(1,k)$ exactly when
\begin{equation}\label{xodd}
  k x(x+1) /2 +x +1 \leq i \leq kx (x+1)/2 +2x+2.
\end{equation}
The equation~\eqref{identityholds} will then follow from the following
claim used in conjunction with~\eqref{xeven} when $n$ is even and with~\eqref{xodd} when $n$ is odd:
\[2p x_n(x_n+1) + (-1)^n x_n + \frac {1+ (-1)^n}{2} = i_n\]
To prove this identity we first observe that it is equivalent, via the
identities~\eqref{relxa} and~\eqref{theindex}, with the identity
\begin{equation}
  \frac{(a_n+a_{n-1})^2}{p} - \frac{a_n+a_{n-1}}{p} = a_n a_{n-1}.
\end{equation}
But this was proved in Lemma~\ref{lemma:numerical}.

Let us now consider the case when $k=2p+1$. In this case we pick the index $i=2p+1$. We get that $\mathcal { N}_{2 p +1}(1, k)= (2 p+1) $ by~\eqref{xodd}. On the other hand, for $k=2 p +1$ the condition in the equation~\eqref{polydisc} is satisfied with equality if $(m,n)=(p,1)$ and it implies that $\mathcal { M}_{2 p+1}(1,\mu) =\mu+p$. Hence the inequality
$\frac {a}{s}\mathcal{N}_{2p+1}(1,k) \leq \mathcal{M}_{2p+1}(1,\mu)$ is equivalent with
\[\frac{a}{s} \leq \frac{\mu +p}{2p-1}\]
and the result follows. This concludes the proof.
\end{proof}
\begin{remark}
Note that one can think of this Proposition as one step forward towards proving Corollary~\ref{muler} (thus the equivalence $(i)<=>(iv)$ ) without our results regarding the reduction algorithm, by making use instead of    {\it both} implications available from Theorem~\ref{thm:Hutchings}. Indeed, one could conjure the numbers $N_{i_n}(1,k) $ and $M_{i_n}(1,\mu) $ (albeit we believe it difficult without the previous  knowledge on all recurrences and results  obtained form the algorithm) and obtain the implication  $(i) =>(iv)$. But the reverse of this implication requires that one shows that the {\it entire} vector  $\frac {a}{s} \mathcal N(1,k)
< \mathcal M (1, \mu)$ for the proposed values for  and that would mean computing all the ECH capacities for the two objects. But our main results  does, in addition providing insight to what particular values should we pick for $a/s$, circumvent an attempt to compute all values of the entries in the ECH capacities, by showing  that the embeddings exist for the proposed values $a/f= \w_k(\mu)$. 
\end{remark}

We will conclude, therefore, with the following two questions:
\begin{itemize}
\item Can we give a simpler proof of Theorem~\ref{thm:MainTheoremTrivial} using exclusively the computation of ECH capacities introduced by M. Hutchings~\ref{thm:Hutchings}\,?
\item Is there a way to reduce the computations of the generalized Gromov widths of the twisted bundle $M^1_{\mu}$ to a comparison of suitable sequences of ECH capacities\,?
\end{itemize}

\appendix
%%%%%%%%%%%%%%%%%%%%%%%%%%%%%%%%%%%%%%%%%%%%%%%%%%%%%%%%%%%%%%%%%%%%%%%%%%%%%%%%%
\section{Embeddings of $1\leq k\leq 7$ disjoint balls in $M_{\mu}^{i}$}
%%%%%%%%%%%%%%%%%%%%%%%%%%%%%%%%%%%%%%%%%%%%%%%%%%%%%%%%%%%%%%%%%%%%%%%%%%%%%%%%%

For $1\leq k\leq 7$, the set $\E_{K}\subset H_{2}(X_{k+1}; \Z)$ of exceptional homology classes of the $(k+1)$-fold blow-up of $\CP^{2}$ is finite. It consists of classes of the following types:
\[
\left(0\,;-1\right), \quad
\left(1\,;1^{\times 2}\right), \quad
\left(2\,;1^{\times 5}\right)\]
\[
\left(3\,;2^{\times 1},1^{\times 6}\right), \quad
\left(4\,; 2^{\times 3}, 1^{\times 5} \right), \quad 
\left(5\,; 2^{\times 6}, 1^{\times 2}\right), \quad
\left(6\,; 3, 2^{\times 7}\right)
\]
It follows that the symplectic cone $\mathcal{C}_{K}$ of $X_{k+1}$ is defined by finitely many inequalities. In particular, an easy computation yields the packing numbers $p_{k}(M_{\mu}^{i})$, $1\leq k\leq 7$. Using the same normalization as before, we obtain:

\printlabel{prop:PackingNumbersProduct<8}
\begin{prop}\label{prop:PackingNumbersProduct<8}
For the normalized product bundle $M_{\mu}^{0}=\left(S^{2}\times
S^{2}, \mu\sigma\oplus\sigma\right)$, the packing numbers
$p_{k}(M_{\mu}^{0})$, $1\leq k\leq 7$ are given by
% p1 and p2
\begin{alignat*}{2}
p_{1}(M_{\mu}^{0}) &= 
\frac{1}{2\mu} &
\quad p_{2}(M_{\mu}^{0}) &= 
\frac{1}{\mu}\\
% p3 and p4
p_{3}(M_{\mu}^{0}) &= 
\begin{cases}
\frac{3}{2\mu} \big( \frac{\mu+1}{3}\big)^{2} & \text{if $\mu\in[1,2)$} \\
\frac{3}{2\mu} & \text{if $\mu\in[2,\infty)$}
\end{cases} &
p_{4}(M_{\mu}^{0}) &=
\begin{cases}
\frac{2}{\mu} \big( \frac{\mu+1}{3}\big)^{2} & \text{if $\mu\in[1,2)$} \\
\frac{2}{\mu} & \text{if $\mu\in[2,\infty)$}
\end{cases}\\
% p5 and p6
p_{5}(M_{\mu}^{0}) &= 
\begin{cases}
\frac{5}{2\mu} \big( \frac{\mu+2}{5}\big)^{2} & \text{if $\mu\in[1,3)$} \\
\frac{5}{2\mu} & \text{if $\mu\in[3,\infty)$}
\end{cases} &
p_{6}(M_{\mu}^{0}) &= 
\begin{cases}
\frac{3}{\mu}\big(\frac{2\mu+2}{7}\big)^{2}& \text{if $\mu\in[1,\frac{4}{3})$}\\
\frac{3}{\mu}\big(\frac{\mu+2}{5}\big)^{2}& \text{if $\mu\in[\frac{4}{3},3)$}\\
\frac{3}{\mu} & \text{if $\mu\in[3,\infty)$}
\end{cases}\\
\end{alignat*}
% p7
\[p_{7}(M_{\mu}^{0}) =
\begin{cases}
\frac{7}{2\mu}\big(\frac{4\mu+4}{15}\big)^{2}&\text{if $\mu\in[1,\frac{8}{7})$}\\
\frac{7}{2\mu}\big(\frac{3\mu+4}{13}\big)^{2}& \text{if $\mu\in[\frac{8}{7},\frac{11}{8})$}\\
\frac{7}{2\mu}\big(\frac{\mu+3}{7}\big)^{2}&\text{if $\mu\in[\frac{11}{8},4)$}\\
\frac{7}{2\mu} & \text{if $\mu\in[4,\infty)$}
\end{cases}\]
In particular, the pairs $(\mu, k)$, $1\leq k\leq 7$, for which we
have full packings of $M_{\mu}^{0}$ are:
\[
\left\{ (1,2), (2,4), (\frac{4}{3}, 6), (3,6), (\frac{8}{7},7) \right\}
\]
\end{prop}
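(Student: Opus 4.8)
The plan is to exploit the fact that for $1\le k\le 7$ the set $\E_{K}\subset H_{2}(X_{k+1};\Z)$ of exceptional classes is finite and completely listed above, so that by Theorem~\ref{thm:CharacterizationEK} membership of the initial vector in the symplectic cone $\mathcal{C}_{K}$ is governed by finitely many linear inequalities. Concretely, for the trivial bundle we set, as in Section~\ref{sec:strategy},
\[
v_{\mu,c}^{0}=\left(\mu+1-c\,;\mu-c,\,c^{\times(k-1)},\,1-c\right)\in H_{2}(X_{k+1};\R),
\]
and we must find the supremal $c>0$ for which $v_{\mu,c}^{0}\in\overline{\mathcal{C}_{K}}$. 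First I would note that the volume (self-intersection) condition $v_{\mu,c}^{0}\cdot v_{\mu,c}^{0}\ge 0$ gives $c\le c_{\vol}=\sqrt{2\mu/k}$, which produces the ``generic'' value $p_{k}=1$ whenever it is the binding constraint. Then, for each exceptional class $E$ in the list, the requirement $v_{\mu,c}^{0}\cdot E\ge 0$ is an affine inequality in $c$ whose solution is an upper bound $c\le w_{E}(\mu)$; each such bound is linear in $\mu$ after clearing denominators. The width $\w_{k}(M_{\mu}^{0})$ is then the minimum of $c_{\vol}$ and all the $w_{E}(\mu)$, and $p_{k}=\tfrac{k}{2\mu}\w_{k}^{2}$.

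The second step is bookkeeping: for each fixed $k$, enumerate which exceptional classes in $\E_{K}(X_{k+1})$ actually have the shape $(d\,;m,e^{\times(k-1)},f)$ (up to the permutation of the last $k-1$ entries that we are free to apply, since those coefficients of $v^{0}_{\mu,c}$ are all equal), pair each with the vector $v^{0}_{\mu,c}$, and compute the resulting bound. For instance, the class $(1\,;1^{\times2})$ padded with zeros always gives $c\le 1$; the class $(2\,;1^{\times5})$ gives a bound that becomes active for $k\ge5$; the class $(3\,;2,1^{\times6})$ enters for $k=6,7$; and so on. Crossing these linear-in-$\mu$ bounds against each other and against $c_{\vol}$ partitions $[1,\infty)$ into finitely many intervals on which a single constraint is dominant — these are exactly the interval endpoints $2,\tfrac43,3,\tfrac{11}{8},\tfrac87,4$ appearing in the statement. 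Squaring and multiplying by $\tfrac{k}{2\mu}$ turns the width formulae into the stated packing numbers. Finally, to extract the list of full-packing pairs one simply checks on each interval whether $\w_{k}(\mu)=c_{\vol}(\mu)$ can hold, i.e. whether the dominant exceptional bound meets the volume curve; equality $w_{E}(\mu)=c_{\vol}(\mu)$ is a quadratic in $\mu$, and one records those roots lying in the relevant interval, yielding precisely $\{(1,2),(2,4),(\tfrac43,6),(3,6),(\tfrac87,7)\}$.

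The main obstacle is not conceptual but combinatorial: one must correctly determine, for each $k\le 7$, the full finite set of exceptional classes and then identify which of the finitely many linear bounds is the effective one on each subinterval of $\mu$ — a nontrivial case analysis because several bounds coincide at the special values of $\mu$ (e.g. three distinct exceptional classes contribute to $p_{7}$ on $[1,\tfrac87)\cup[\tfrac87,\tfrac{11}{8})\cup[\tfrac{11}{8},4)$). I would organize this by first listing, for generic $\mu$, the slopes and intercepts of all candidate bounds $w_{E}(\mu)$, then sorting them; monotonicity of each $w_{E}$ in $\mu$ together with the single crossing property of a line with the concave curve $c_{\vol}$ guarantees that the minimum is piecewise given by consecutive members of the sorted list, so the interval structure and the full-packing points fall out mechanically. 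A completely analogous computation with $v_{\mu,c}^{1}=(\mu+1\,;\mu,c^{\times k})$ handles the twisted bundle in Proposition~\ref{prop:PackingNumbersTwisted<8}, with the single full-packing pair $(1/4,6)$ emerging the same way.
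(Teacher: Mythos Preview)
Your approach is correct and is exactly what the paper does: the appendix simply records that $\E_{K}$ is finite for $k+1\le 8$, lists the seven types of exceptional classes, and states that ``an easy computation yields the packing numbers'' without spelling out the case analysis you outline. One correction to your closing aside about Proposition~\ref{prop:PackingNumbersTwisted<8}: the twisted bundle has five full-packing pairs for $1\le k\le 7$, namely $\{(1,3),(\frac{1}{4},6),(\frac{1}{7},7),(\frac{3}{8},7),(3,7)\}$, not just $(\frac{1}{4},6)$.
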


\printlabel{prop:PackingNumbersTwisted<8}
\begin{prop}\label{prop:PackingNumbersTwisted<8}
For the normalized twisted bundle $M_{\mu}^{1}$, the packing numbers
$p_{k}(M_{\mu}^{1})$, $1\leq k\leq 7$ are given by
% p1 and p2
\begin{alignat*}{2}
p_{1}(M_{\mu}^{1}) &=
\frac{1}{2\mu+1} & 
\quad p_{2}(M_{\mu}^{1}) &= 
\begin{cases}
\frac{2}{2\mu+1} \big( \frac{\mu+1}{2}\big)^{2} & \text{if $\mu\in(0,1)$} \\
\frac{2}{2\mu+1} & \text{if $\mu\in[1,\infty)$}
\end{cases}\\
% p3 and p4
p_{3}(M_{\mu}^{1}) &= 
\begin{cases}
\frac{3}{2\mu+1} \big( \frac{\mu+1}{2}\big)^{2} & \text{if $\mu\in(0,1)$} \\
\frac{3}{2\mu+1} & \text{if $\mu\in[1,\infty)$}
\end{cases}
&\quad p_{4}(M_{\mu}^{1}) &=
\begin{cases}
\frac{4}{2\mu+1}\big(\frac{\mu+2}{4}\big)^{2}&\text{if $\mu\in(0,2)$}\\
\frac{4}{2\mu+1} & \text{if $\mu\in[2,\infty)$}
\end{cases}\\
% p5 and p6
p_{5}(M_{\mu}^{1}) &= 
\begin{cases}
\frac{5}{2\mu+1} \big( \frac{2\mu+2}{5}\big)^{2} & \text{if $\mu\in(0,\frac{2}{3}]$} \\
\frac{5}{2\mu+1} \big( \frac{\mu+2}{4}\big)^{2} & \text{if $\mu\in(\frac{2}{3}, 2]$} \\
\frac{5}{2\mu+1} & \text{if $\mu\in(2,\infty)$}
\end{cases} &
\quad p_{6}(M_{\mu}^{1}) &= 
\begin{cases}
\frac{6}{2\mu+1}\big(\frac{2\mu+2}{5}\big)^{2}& \text{if $\mu\in(0,\frac{1}{4}]$}\\
\frac{6}{2\mu+1}\big(\frac{2\mu+3}{7}\big)^{2}& \text{if $\mu\in(\frac{1}{4},\frac{3}{5}]$}\\
\frac{6}{2\mu+1}\big(\frac{\mu+3}{6}\big)^{2}& \text{if $\mu\in(\frac{3}{5},3]$}\\
\frac{6}{2\mu+1} & \text{if $\mu\in(3,\infty)$}
\end{cases}\\
\end{alignat*}
% p7
\begin{equation*}
p_{7}(M_{\mu}^{1}) =
\begin{cases}
\frac{7}{2\mu+1}\big(\frac{3\mu+3}{8}\big)^{2}&\text{if $\mu\in(0,\frac{1}{7}]$}\\
\frac{7}{2\mu+1}\big(\frac{4\mu+5}{13}\big)^{2}& \text{if $\mu\in(\frac{1}{7},\frac{3}{8}]$}\\
\frac{7}{2\mu+1}\big(\frac{4\mu+6}{15}\big)^{2}&\text{if $\mu\in(\frac{3}{8},\frac{6}{11}]$}\\
\frac{7}{2\mu+1}\big(\frac{3\mu+6}{14}\big)^{2}&\text{if $\mu\in(\frac{6}{11},\frac{3}{2}]$}\\
\frac{7}{2\mu+1}\big(\frac{\mu+3}{6}\big)^{2}&\text{if $\mu\in(\frac{3}{2},3]$}\\
\frac{7}{2\mu+1} & \text{if $\mu\in(3,\infty)$}
\end{cases} \\
\end{equation*}
In particular, the pairs $(\mu, k)$, $1\leq k\leq 7$, for which we have full packings of $M_{\mu}^{1}$ are:
\[
\left\{(1,3), (\frac{1}{4}, 6), (\frac{1}{7}, 7), (\frac{3}{8},7), (3,7)\right\}
\]
\end{prop}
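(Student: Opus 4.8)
The plan is to exploit the fact that, for $1\leq k\leq 7$, the set $\E_{K}\subset H_{2}(X_{k+1};\Z)$ is the \emph{finite} list displayed at the start of the appendix, so that membership in the symplectic cone is decided by finitely many inequalities, exactly as in the parallel statement for $M_{\mu}^{0}$ (Proposition~\ref{prop:PackingNumbersProduct<8}). Following Section~\ref{sec:strategy}, I first identify the $k$-fold blow-up of $M_{\mu}^{1}$ by $k$ equal balls of capacity $c$ with $X_{k+1}=\CP^{2}\nblowup{(k+1)}$ carrying the class $v_{\mu,c}^{1}=(\mu+1\,;\mu,c^{\times k})$, and recall that a symplectic embedding of $\sqcup_{k}B(c)$ into $M_{\mu}^{1}$ exists if and only if $v_{\mu,c}^{1}\in\mathcal{C}_{K}$; passing to closures, $\w_{k}(M_{\mu}^{1})$ is the largest $c$ for which $v_{\mu,c}^{1}$ lies in $\overline{\mathcal{C}_{K}}$. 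By Theorem~\ref{thm:CharacterizationEK}, this amounts to the volume inequality $v_{\mu,c}^{1}\cdot v_{\mu,c}^{1}\geq 0$, i.e.\ $c\leq c_{\vol}=\sqrt{(2\mu+1)/k}$, together with $v_{\mu,c}^{1}\cdot E\geq 0$ for every $E\in\E_{K}$.

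Since the reflections $r_{\alpha_{i}}$, $i\geq 1$, belong to $D_{K}(1,k+1)$ and permute the coefficients of $E_{1},\dots,E_{k+1}$, and since $\E_{K}$ is $D_{K}$-invariant, it suffices to impose $v_{\mu,c}^{1}\cdot E\geq 0$ for every class obtained from the listed types (padded by zeros to length $k+1$) by distributing the exceptional-divisor coefficients in all possible ways. For $E=(e_{0}\,;e_{1},\dots,e_{k+1})$ one has $v_{\mu,c}^{1}\cdot E=(\mu+1)e_{0}-\mu e_{1}-c\,S$, where $S$ is the sum of the coefficients paired with the $c$'s; the classes $E_{i}$ (the only ones with $e_{0}=0$) yield only the trivial bound $c>0$, while every other $E$, for each placement of the distinguished slot, yields an upper bound $c\leq \ell_{E}(\mu)$ with $\ell_{E}$ affine in $\mu$. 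Hence $\w_{k}(M_{\mu}^{1})=\min\{c_{\vol}(\mu),\ \min_{E}\ell_{E}(\mu)\}$ is the lower envelope of finitely many affine functions together with one square-root function, so it is piecewise of the stated form; one then locates the finitely many crossing points of these functions, which produces exactly the intervals listed, and substitutes into $p_{k}(M_{\mu}^{1})=k\,\w_{k}(M_{\mu}^{1})^{2}/(2\mu+1)$ to obtain the displayed packing numbers. The pairs $(\mu,k)$ admitting a full packing are precisely those with $\w_{k}=c_{\vol}$, equivalently $p_{k}=1$; these occur exactly where the volume curve meets the lowest active affine bound, and solving the corresponding equations yields the list $\{(1,3),(1/4,6),(1/7,7),(3/8,7),(3,7)\}$. (The same argument, with $v_{\mu,c}^{0}=(\mu+1-c\,;\mu-c,c^{\times(k-1)},1-c)$ in place of $v_{\mu,c}^{1}$, proves Proposition~\ref{prop:PackingNumbersProduct<8}.)

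The genuinely laborious part is the bookkeeping for $k=6,7$: there one must pair $v_{\mu,c}^{1}$ with the classes of types $(3\,;2,1^{6})$, $(4\,;2^{3},1^{5})$, $(5\,;2^{6},1^{2})$ and $(6\,;3,2^{7})$ (suitably zero-padded) under \emph{all} placements of the distinguished coefficient $\mu$ and of the padding zeros, compare the resulting affine bounds on $c$ to determine which one is active on each subinterval of $\mu$, and check the boundary values where two bounds, or a bound and $c_{\vol}$, coincide --- these are precisely the breakpoints $\tfrac14,\tfrac35,3$ for $k=6$ and $\tfrac17,\tfrac38,\tfrac6{11},\tfrac32,3$ for $k=7$, and the ones lying on the volume curve furnish the sporadic full packings. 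I expect this comparison step --- making sure no affine bound has been overlooked and that the envelope has been correctly identified on each range of $\mu$ --- to be the main obstacle, and it is most safely carried out with the same symbolic computations used in the body of the paper.
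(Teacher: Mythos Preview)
Your proposal is correct and follows precisely the approach indicated by the paper: the appendix simply notes that for $1\leq k\leq 7$ the set $\E_{K}\subset H_{2}(X_{k+1};\Z)$ is the finite list given, so that $\mathcal{C}_{K}$ is cut out by finitely many inequalities, and then states that ``an easy computation yields the packing numbers''. Your write-up spells out exactly what that computation is---pairing $v_{\mu,c}^{1}=(\mu+1\,;\mu,c^{\times k})$ with each exceptional class under all placements of the distinguished coefficient, taking the minimum of the resulting affine bounds (together with $c_{\vol}$), and reading off the piecewise formula---which is the intended argument.
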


%%%%%%%%%%%%%%%%%%%%%%%%%%%%%%%%%%%%%%%%%%%%%%%%%%%%%%%%%%%%%%%%%%%%%%%%%%%%%%%%

\end{document}